%% file: arxiv_v2.tex
\documentclass{article}
\usepackage[cm]{fullpage}
\usepackage{graphicx}
\usepackage{amsmath, amssymb, amsfonts, amsthm}
\usepackage{mathdots}
\usepackage{caption}
\usepackage{subcaption}
\usepackage{xcolor}
\usepackage{enumitem}
\usepackage[numbers]{natbib}
\usepackage{algorithm}
\usepackage{algpseudocode}
\usepackage{tikz}
\usetikzlibrary{calc}
\usetikzlibrary{arrows.meta}
\usepackage{bm}
\usepackage{array,booktabs,tabularx}
\usepackage{makecell}
\usepackage{nicefrac}

\renewcommand{\arraystretch}{1.25}
\newcolumntype{C}[1]{>{\centering\arraybackslash}m{#1}}

\theoremstyle{definition}
\newtheorem{definition}{Definition}
\theoremstyle{plain}
\newtheorem{theorem}{Theorem}
\newtheorem{lemma}{Lemma}
\newtheorem{proposition}{Proposition}
\newtheorem{corollary}{Corollary}

\usepackage[hidelinks]{hyperref}
\usepackage[capitalise]{cleveref}

\makeatletter
\renewcommand*\env@matrix[1][*\c@MaxMatrixCols c]{
  \hskip -\arraycolsep
  \let\@ifnextchar\new@ifnextchar
  \array{#1}}
\makeatother

\makeatletter
\if@titlepage\else
  \renewenvironment{abstract}{%
    \small
    \paragraph{\abstractname}
  }{\par\bigskip}
\fi
\makeatother

\allowdisplaybreaks

\title{A Theory of Composition and Duality of\\ Extremal Optimal Fixed-Point Algorithms}
\author{TaeHo Yoon\\
\texttt{tyoon7@jh.edu} 
\and Benjamin Grimmer\\
\texttt{grimmer@jhu.edu}}
\date{Department of Applied Mathematics and Statistics, Johns Hopkins University}

\input{macros}

\begin{document}

\maketitle

\begin{abstract}
In this work, we reveal a rich combinatorial structure underlying exact minimax optimal algorithms for classical nonexpansive fixed-point problems. This viewpoint unifies all extremal optimal methods and provides a systematic and practical framework for designing new algorithms via diagrams. Specifically, we study fixed-step algorithms represented by a lower triangular matrix $H$, and show that the set of optimal $(N-1)$-step algorithms has exactly $(N-1)!$ vertices (extremal algorithms), each of which naturally corresponds to an arc diagram, a graph that encodes its convergence proof. Using these arc diagrams, we can compose, decompose, and analyze the properties of distinct optimal vertex algorithms. Furthermore, we determine when the H-dual operation, given by taking the anti-diagonal transpose of $H$, preserves the optimality of a vertex algorithm, and in such cases we characterize the convergence proof of the dual algorithm. Based on this machinery, we develop new optimal algorithms with quasi-anytime guarantees; that is, they admit an increasing integer sequence $\{m_k\}_{k=0}^\infty$ such that each $y_{m_k}$ has the optimal residual guarantee, and are additionally robust to fixed-point operators that violate nonexpansiveness.

\vspace{.1cm}
\noindent
\textbf{Keywords\phantom{.} } Fixed-point problems $\cdot$ Acceleration $\cdot$ Rates of convergence $\cdot$ Computational complexity $\cdot$ Minimax optimality

\vspace{.1cm}
\noindent
\textbf{Mathematics Subject Classification\phantom{.} } 47H09 $\cdot$ 47J26 $\cdot$ 65J15 $\cdot$ 68Q25 $\cdot$ 90C47 $\cdot$ 90C60
\end{abstract}

\section{Introduction}

Consider the class of nonexpansive operators $\opT \colon \cX \to \cX$ on Hilbert space $\cX$, and the associated fixed-point problems
\begin{align}
\label{eqn:fixed-point}
    \underset{y \in \cX}{\text{find}} \quad y = \opT(y) .
\end{align}
In first-order optimization, the problem of solving \eqref{eqn:fixed-point} efficiently is often formulated in the form
\begin{align}
\label{eqn:minimax-optimality}
    \underset{\cA \in \fA}{\textrm{minimize}} \,\,\, \underset{(\opT, y_0)}{\textrm{maximize}} \,\,\, \cM \left( \cA, \opT, y_0, N \right)
\end{align}
where the maximization is taken over all nonexpansive operators $\opT\colon \cX\to\cX$ with at least one fixed point $y_\star$ and over initial points $y_0 \in \cX$, $\fA$ is the algorithm class of interest, and $\cM$ is a measure of error after $N$ evaluations of the operator $\opT$, where $\cM(\cdot) = 0$ indicates convergence.

A natural, standard error metric for nonexpansive fixed-point problems is the squared norm of the fixed-point residual $\sqnorm{y - \opT(y)}$. 
For this setting, the optimal value of~\eqref{eqn:minimax-optimality} is well-known; \citep{ParkRyu2022_exact} shows that with $\cM \left( \cA, \opT, y_0, N \right) = \frac{\sqnorm{y_{N-1} - \opT (y_{N-1})}}{\sqnorm{y_0 - y_\star}}$, one has $\underset{(\opT,y_0)}{\sup} \cM \left( \cA, \opT, y_0, N \right) \ge \frac{4}{N^2}$ for any deterministic first-order algorithm $\cA$, and \citep{Lieder2021_convergence, Kim2021_accelerated, YoonKimSuhRyu2024_optimal} present algorithms exactly matching this lower bound (we use $y_{N-1}$ because computing the fixed-point residual requires an additional evaluation of $\opT$).
However, identifying the \textit{solutions} to \eqref{eqn:minimax-optimality}, i.e., understanding which $\cA$ achieve \textit{minimax optimal} convergence is a deeper question.
This has only recently been answered by \citep{YoonRyuGrimmerInvariance2025} for the class $\fA_\textrm{H}$ of \textit{H-matrix representable} algorithms, which use fixed step-size updates
\begin{align}
\label{eqn:H-matrix-representation}
    y_{k+1} = y_k - \sum_{j=0}^k h_{k+1,j+1} \left( y_j - \opT (y_j) \right) 
\end{align}
where $h_{k,j}\in \reals$ are predetermined for $k=1,\dots,N-1$ and $j=1,\dots,k$.
More precisely, \citep{YoonRyuGrimmerInvariance2025} characterized the set $\cH_\star(N-1)$ of all $(N-1)\times (N-1)$ lower triangular matrices $H = [h_{k,j}]$ of these step-sizes whose corresponding algorithms are minimax optimal, solving~\eqref{eqn:minimax-optimality}.
The class $\fA_H$ and the optimal set $\cH_\star(N-1)$ can be transformed into the class of Mann iterations \citep{Mann1953_mean} and its optimal subset, which has been long studied in the literature on fixed-point iterations.

Among the infinitely many optimal fixed-point algorithms, known methods including the Optimal Halpern Method (OHM) \cite{Lieder2021_convergence} and Dual-OHM \cite{YoonKimSuhRyu2024_optimal} stand out as \textit{extremal} instances in two key senses: they can be expressed using convenient recursive forms, and their convergence proofs are readily understandable.
While the characterization provided in \citep{YoonRyuGrimmerInvariance2025} is exhaustive, their description of $\cH_\star(N-1)$ is based on algebraic invariance and inequalities rather than an explicit parametrization, obscuring the structure of $\cH_\star(N-1)$.
In particular, it leaves the problem of concretely understanding the extremal optimal algorithms with comprehensible structure and simple proofs largely open.

In this work, we present a study of the optimal \textit{vertex} algorithms, which are the geometric extremal points of $\cH_\star(N-1)$ possessing the simplest possible proof structure. 
We discover a useful combinatorial representation of vertex algorithms and a broader structure underlying them.
This new systematic language enables the complete enumeration and principled design of previously unknown extremal algorithms with optimal guarantees and yet new properties, which had not been directly possible based on the abstract framework of \cite{YoonRyuGrimmerInvariance2025}.
Our results also fully reveal the effect of H-duality \citep{KimOzdaglarParkRyu2023_timereversed, YoonKimSuhRyu2024_optimal}, a recently popularized notion of involutory relationship between first-order methods, for optimal fixed-point algorithms.

\begin{table}[t]
\centering
\caption{Arc diagrams of distinct optimal vertex algorithms for $N=5$, together with their H-matrices and definitions by recurrences. RDO(2) stands for Repeated Dual-OHM with period $2$, which we introduce in Section~\ref{section:repeated-dual-ohm}.}
\label{table:algorithms_arc_diagrams}
\setlength{\tabcolsep}{4pt}
\renewcommand{\arraystretch}{1.2}

\begin{tabular}{C{0.10\textwidth} C{0.20\textwidth} C{0.20\textwidth} C{0.38\textwidth}}
\toprule
Algorithm & Arc diagram & H-matrix & Recurrence \\
\midrule
\raisebox{-1.3\height}{OHM}
&
\raisebox{-0.7\height}{%
\begin{tikzpicture}[baseline=(current bounding box.center), scale=0.72,
    dot/.style={circle,fill,inner sep=1.5pt},
    every node/.style={font=\small}
]
    \foreach \i in {1,...,5}{
        \coordinate (L\i) at (\i,0);
        \fill (L\i) circle (2pt);
        \node[below=2pt] at (L\i) {\i};
    }
    \draw[line width=0.8pt, color=gray, densely dashed] (L1) -- (L5);
    \foreach \i/\j in {1/2,2/3,3/4,4/5}{
        \draw[line width=0.8pt] (L\i) arc[start angle=180,end angle=0,radius=0.5];
    }
\end{tikzpicture}%
}
&
$\begin{bmatrix}
\frac12 \\
-\frac16 & \frac23 \\
-\frac1{12} & -\frac16 & \frac34 \\
-\frac1{20} & -\frac1{10} & -\frac{3}{20} & \frac45
\end{bmatrix}$
&
\raisebox{-0.5\height}{$\displaystyle y_{k+1} = \frac{k+1}{k+2}\opT y_k + \frac{1}{k+2}y_0$}
\\
\midrule

\raisebox{-1.9\height}{Dual-OHM}
&
\raisebox{-0.4\height}{%
\begin{tikzpicture}[baseline=(current bounding box.center), scale=0.72,
    dot/.style={circle,fill,inner sep=1.5pt},
    every node/.style={font=\small}
]
    \foreach \i in {1,...,5}{
        \coordinate (R\i) at (\i,0);
        \fill (R\i) circle (2pt);
        \node[below=2pt] at (R\i) {\i};
    }
    \draw[line width=0.8pt, color=gray, densely dashed] (R1) -- (R5);
    \foreach \i/\rad in {1/2,2/1.5,3/1,4/0.5}{
        \draw[line width=0.8pt] (R\i) arc[start angle=180,end angle=0,radius=\rad];
    }
\end{tikzpicture}%
}
&
\raisebox{-0.2\height}{$\begin{bmatrix}
\frac{4}{5} \\
-\frac{3}{20} & \frac{3}{4} \\
-\frac{1}{10} & -\frac{1}{6} & \frac{2}{3} \\
-\frac{1}{20} & -\frac{1}{12} & -\frac{1}{6} & \frac{1}{2}
\end{bmatrix}$}
&
\raisebox{-0.8\height}{%
$\displaystyle
y_{k+1} = y_k + \frac{N-k-1}{N-k}\bigl(\opT y_k - \opT y_{k-1}\bigr)$
}
\\
\midrule

\raisebox{-1.4\height}{RDO(2)}
&
\raisebox{-0.8\height}{%
\begin{tikzpicture}[baseline=(current bounding box.center), scale=0.72,
    dot/.style={circle,fill,inner sep=1.5pt},
    every node/.style={font=\small}
]
    \foreach \i in {1,...,5}{
        \coordinate (P\i) at (\i,0);
        \fill (P\i) circle (2pt);
        \node[below=2pt] at (P\i) {\i};
    }
    \draw[line width=0.8pt, color=gray, densely dashed] (P1) -- (P5);
    \foreach \i/\r in {1/1,2/0.5,3/1,4/0.5}{
        \draw[line width=0.8pt] (P\i) arc[start angle=180,end angle=0,radius=\r];
    }
\end{tikzpicture}%
}
&
$\begin{bmatrix}
\frac{2}{3}  \\
-\frac{1}{6} & \frac{1}{2} \\
-\frac{1}{5} & -\frac{1}{5} & \frac{6}{5} \\
0 & 0 & -\frac{3}{10} & \frac{1}{2}
\end{bmatrix}$
&
\parbox[c]{\linewidth}{\small\centering
$\begin{aligned}
    y_{2k+1} & = y_{2k} - \frac{2(2k+1)}{2k+3} (y_{2k} - \opT y_{2k}) \\
    & \qquad + \frac{2}{2k+3}\,(y_0-y_{2k}) \\
    y_{2k+2} & = \frac{1}{2} \left( y_{2k+1} + \opT y_{2k+1} + \frac{2k+1}{2k+3} (y_{2k} - \opT y_{2k}) \right) 
\end{aligned}$}
\\
\bottomrule
\end{tabular}
\end{table}

\paragraph{Main contributions and organization.}
We organize our main technical contributions as follows.

\begin{itemize}
    \item \textbf{Section~\ref{section:general-theory-of-optimal-vertex-algorithms}.} We develop the foundational theory of optimal vertex algorithms. 
    We establish a complete correspondence between vertices of $\cH_\star(N-1)$ and \textit{arc diagrams}---a specific type of tree graphs, and equip this representation with a gluing operation and decomposition, which explain how vertex algorithms are constructed and how their intermediate guarantees arise.
    \begin{itemize}
        \item \textbf{Sections~\ref{subsection:vertex-algorithm-characterization-existence-computation} and \ref{subsection:arc-diagram-representation}.}
        We show that $\cH_\star(N-1)$ has precisely $(N-1)!$ vertices, each uniquely determined by the set of inequalities used in its convergence proof (\cref{theorem:all-sparsity-patterns-give-optimal-vertex}). 
        Each vertex algorithm is identified with an arc diagram on $N$ nodes, a specific form of undirected tree graph encoding the sparsity pattern of the convergence proof (\cref{definition:diagram-of-algorithm}). 
        Given an arc diagram, the associated optimal vertex algorithm's H-matrix $H \in \cH_\star(N-1)$ can be computed via Meta Algorithms~\ref{meta-alg:find-H-from-Q} and \ref{meta-alg:find-Q-from-sparsity-pattern}.
    
        \item \textbf{Sections~\ref{subsection:gluing-operation} and \ref{subsection:decomposable-and-basic}.}
        We introduce a gluing operation that composes arc diagrams or H-matrices to produce a larger vertex algorithm whose update rule and convergence proof inherit those of their components (Definitions~\ref{definition:gluing-of-diagrams}--\ref{definition:gluing-of-lambdas} \& \cref{theorem:gluing}). 
        Conversely, we define decomposable algorithms and diagrams, which arise from gluing smaller components (Definitions~\ref{definition:decomposable-diagram}--\ref{definition:decomposable-algorithm}). 
        A decomposable vertex algorithm enjoys optimal guarantees at intermediate iterates, whose indices can be read directly from its arc diagram (\cref{corollary:intermediate-iterate-guarantee-for-decomposable-algorithm}).
    \end{itemize}

    \item \textbf{Section~\ref{section:duality-via-diagrams}.} We identify the condition for an optimal vertex algorithm to remain optimal under the so-called H-dual operation \citep{KimOzdaglarParkRyu2023_timereversed, YoonKimSuhRyu2024_optimal}, which is the anti-diagonal transpose of $H$.
    While it is known that H-dual preserves a fixed-point algorithm's performance in special cases, it has been only partially understood when that symmetry manifests.
    We provide a clear and complete characterization of it for optimal vertex algorithms: H-dual optimality is equivalent to the arc diagram being non-crossing (\cref{theorem:dual-optimality-and-non-crossing-arc-diagram}), a purely combinatorial property. 
    Furthermore, we show that when this is the case, the H-dual is another vertex algorithm whose arc diagram can be computed via Meta Algorithm~\ref{meta-alg:basic-diagram-dualization}.

    \item \textbf{Section~\ref{section:deducing-algorithm-design-and-analysis}.} We introduce two classes of novel algorithms designed via arc diagrams: \textit{Repeated Dual OHM (RDO)} and \textit{Fractal Self-Dual Method (FSDM)}.
    Both algorithms are quasi-anytime optimal, i.e., there exists an increasing integer sequence $\{m_k\}_{k=0}^\infty$ such that each $y_{m_k}$ is optimal.
    We additionally show that having more intermediate-iterate guarantees comes at a cost of reducing an algorithm's robustness, and OHM and Dual-OHM are at the opposite extremes of this trade-off spectrum (Propositions~\ref{proposition:intermediate-guarantee-costs-robustness}--\ref{proposition:OHM-is-least-robust}), while the new algorithms, RDO and FSDM, lie in between.
\end{itemize}

We defer proofs involving substantial linear algebra or algebraic verification to the appendix. 
While these are necessary for completeness, they are not the main conceptual contributions of the work.
This organization keeps the main text focused on the central ideas and preserves the flow of the exposition without interruptions from lengthy technical proofs.

\section{Background and prior results on optimal algorithms}

This section covers preliminary material that is needed for developing new results in subsequent sections.

\subsection{Basic definitions and notations}

For $y \in \cX$ and an operator $\opT\colon \cX \to \cX$, we denote $\opT(y) = \opT y$ for simplicity.
We say that an operator $\opT$ is nonexpansive if $\|\opT x - \opT y\| \le \|x-y\|$ for all $x,y\in \cX$.
If $y_\star \in \cX$ satisfies $y_\star = \opT y_\star$, it is a fixed point of $\opT$.
For $y \in \cX$, its fixed-point residual is the vector $y-\opT y$, which is $0 \in \cX$ if and only if $y$ is a fixed point of $\opT$.
Hence, the norm of the fixed-point residual is a natural measure of convergence for the fixed-point problem \eqref{eqn:fixed-point}.

Define $\opA\colon \cX \rightrightarrows \cX$ as the set-valued operator (i.e., $\opA x := \opA(x) \subseteq \cX$ for $x\in \cX$) uniquely characterized by $\opT = 2(\opI + \opA)^{-1} - \opI$, where $\opI$ is the identity operator on $\cX$. 
Then, $\opA$ is maximally monotone: $\inprod{x-y}{u-v} \ge 0$ for any $x,y \in \cX$ and $u\in \opA x, v\in \opA y$, and the graph $\gra \opA = \{(x,u)\,|\, x\in \cX, u\in \opA x\}$ of $\opA$ cannot be extended to a strictly larger graph of a monotone operator, which is a standard fact in convex analysis as shown in \citep[Proposition~23.8]{bauschke2017convex} or \citep{BauschkeMoffatWang2012_firmly}.
This establishes a one-to-one correspondence between maximally monotone operators and nonexpansive operators.
Furthermore, given $y \in \cX$, if we write $x = \frac{y + \opT y}{2} = \frac{1}{2}(\opI + \opT) (y)$, then
\[
    x = \frac{y + \opT y}{2} = \frac{1}{2}(\opI + \opT) (y) \iff
    x \in (\opI + \opA)^{-1} (y) \iff y \in x + \opA x \iff y-x \in \opA x .
\]
In particular, in the H-matrix representation \eqref{eqn:H-matrix-representation} of an iterative algorithm, we let $x_{k+1} = \frac{1}{2}(\opI + \opT) (y_k)$ for $k=0,\dots,N-1$ and $g_{k+1} = y_k - x_{k+1} \in \opA x_{k+1}$.
Because $y_j - \opT y_j = 2g_{j+1}$, we can rewrite \eqref{eqn:H-matrix-representation} as
\begin{align}
\label{eqn:H-matrix-representation-via-g-vectors}
    y_{k+1} = y_k - \sum_{j=0}^k 2h_{k+1,j+1} g_{j+1} .
\end{align}
Furthermore, the nonexpansivity inequality between the iterates $y_k$ and $y_j$ is a rescaled monotonicity inequality between $x_{k+1}$ and $x_{j+1}$:
\begin{align}
\label{eqn:nonexpansivity-and-monotonicity}
    \sqnorm{y_k - y_j} - \sqnorm{\opT y_k - \opT y_j} = 4 \inprod{x_{k+1} - x_{j+1}}{g_{k+1} - g_{j+1}} \ge 0 
\end{align}
where $k,j=0,\dots,N-1$.

\subsection{Complexity lower bound}

The following result from \citep{ParkRyu2022_exact} establishes a tight lower bound on the dimension-independent worst-case rate with respect to the squared fixed-point residual.

\begin{lemma}[Theorem~4.6 of \citep{ParkRyu2022_exact}]
\label{lemma:lower-bound}
Let $N \in \mathbb{N}$ be given, and let $R>0$. Then, for any iterative algorithm making deterministic updates using $N-1$ operator evaluations, there exists a nonexpansive operator $\opT\colon \reals^d \to \reals^d$ with $d\ge 2N-2$ that has a fixed point $y_\star \in \reals^d$, and $y_0 \in \reals^d$ such that $\|y_0 - y_\star\| = R$, for which $\sqnorm{y_{N-1} - \opT y_{N-1}} \ge \frac{4\sqnorm{y_0 - y_\star}}{N^2}$.
\end{lemma}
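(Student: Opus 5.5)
The plan is to reduce the statement to the corresponding lower bound for monotone inclusions, via the resolvent correspondence $\opT = 2(\opI+\opA)^{-1}-\opI$ and the identity $y_k - \opT y_k = 2g_{k+1}$ with $g_{k+1}\in \opA x_{k+1}$. We build a linear worst-case instance.

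\textbf{Step 1: normalizations.} By translation we take $y_\star = 0$. By a homogeneity argument we take $R=1$: if $\opT$ is nonexpansive, so is $y\mapsto c\,\opT(y/c)$, and the algorithm is assumed deterministic, so it can be simulated on the rescaled instance. We then use a resisting-oracle or zero-chain argument. By the standard span reduction, which uses orthogonal invariance of the nonexpansive class to rotate the instance, a deterministic algorithm may be assumed to keep its iterates $y_k$ in $y_0 + \mathrm{span}\{y_0-\opT y_0,\dots,y_{k-1}-\opT y_{k-1}\}$. This reduction is the reason $d \ge 2N-2$ appears: the rotation needs room for the iterates and the operator responses.

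\textbf{Step 2: the instance.} Take $\opA$ to be a skew-symmetric linear operator; it is monotone and is the classical worst case for the residual. Equivalently, $\opT$ is a linear isometry, or a $\tfrac12$-averaged map built from a tridiagonal chain. Concretely, on $\reals^{N-1}$, or on its doubling to $\reals^{2N-2}$ to make rotations available, we choose a chain structure. Each application of $\opT$ then expands the Krylov subspace $\mathcal{K}_k = \mathrm{span}\{y_0, (\opI-\opT)y_0, \dots\}$ by at most one coordinate. Hence $y_{N-1}-y_0 \in \mathcal{K}_{N-1}$ for any span-respecting method, which gives $y_{N-1} = p(\opT)\,y_0$ with $p$ a polynomial of degree at most $N-1$ satisfying $p(1)=1$ (in the variable of $\opT$).

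\textbf{Step 3: the polynomial bound.} We then need
\[
\|(\opI-\opT)\,p(\opT)\,y_0\|^2 \ge \tfrac{4}{N^2}\|y_0\|^2 .
\]
The $1/N^2$ rate comes from $\min_{p(1)=1}\max_{|z|\le1,\,z\in\sigma(\opT)}|(1-z)p(z)|$, which behaves like $2/N$. To get the exact constant, choose $\opT$ with eigenvalues at the $N$-th roots of a suitable reflection, or as the rotation realizing the equality in Park--Ryu's PEP construction, and choose $y_0$ so that its spectral measure forces the minimum. The exact tightness matches the optimal Halpern rate, which is attained, so the constant is right.

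\textbf{Main obstacle.} The hardest step is obtaining the exact constant $4/N^2$ rather than an order-$1/N^2$ bound, together with the nonlinear-algorithm reduction to span methods in dimension $2N-2$. For the constant, we would first try the explicit Park--Ryu instance: the operator $\opA$ defined through a specific $(N-1)$-dimensional lower-triangular structure, doubled via a $2\times2$ rotation block. We would verify the inequality by exhibiting dual multipliers, i.e.\ the PEP certificate of OHM, which yields equality. Since this lemma is cited from prior work, the proof in this paper may simply invoke \citep{ParkRyu2022_exact}.
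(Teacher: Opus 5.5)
The paper does not prove this lemma. It imports it verbatim from Park--Ryu \citep{ParkRyu2022_exact}, so your closing remark that one may simply cite it is exactly what the paper does. As a self-contained argument, however, your sketch has a genuine gap at its core: the inequality $\sqnorm{y_{N-1}-\opT y_{N-1}}\ge 4\sqnorm{y_0-y_\star}/N^2$ is never established.

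\textbf{What is wrong.} You list several candidate instances (a skew-symmetric $\opA$, a ``tridiagonal chain'', eigenvalues at roots of a ``suitable reflection'', the PEP worst case) without fixing one. Both devices you propose for pinning down the constant point the wrong way. That OHM attains $4/N^2$ only shows that no lower bound with a constant larger than $4$ can hold. The PEP certificate of OHM certifies an \emph{upper} bound on OHM's own iterate, so equality on some instance shows OHM is tight there, not that every other admissible $y_{N-1}$ does at least as badly.

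The spectral quantity $\min_p\max_{z\in\sigma(\opT)}|(1-z)p(z)|$ is also the wrong object for the class in the lemma. A deterministic method may choose its polynomial adaptively after seeing $\opT y_0,\opT y_1,\dots$. What is needed is a single pair $(\opT,y_0)$ for which $\min_{z\in y_0+\mathcal{K}}\norm{(\opI-\opT)z}$ is large over the whole affine Krylov set. For the same reason a chain on $\reals^{N-1}$ cannot work. On a linear instance in $\reals^{N-1}$, $N-1$ evaluations let an adaptive Krylov method recover the minimal polynomial of the initial residual and output an exact fixed point. The chain must therefore leave at least one coordinate of $y_0$ untouched.

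\textbf{The missing computation.} It is short once the instance is fixed, and it uses the operator the paper itself runs in Section~\ref{section:experiments}. On $\reals^N$ take $\opT(x_1,\dots,x_N)=(-x_N,x_1,\dots,x_{N-1})$, a linear isometry with unique fixed point $y_\star=0$, and $y_0=\frac{R}{\sqrt{N}}\mathbf{1}$. Then $(\opI-\opT)y_0=\frac{2R}{\sqrt{N}}e_1$. Also $\opI-\opT$ maps $\mathrm{span}\{e_1,\dots,e_k\}$ into $\mathrm{span}\{e_1,\dots,e_{k+1}\}$ for $k\le N-1$. By induction, every span-respecting iterate $y_k\in y_0+\mathrm{span}\{(\opI-\opT)y_0,\dots,(\opI-\opT)y_{k-1}\}$ satisfies $y_k-y_0\in\mathrm{span}\{e_1,\dots,e_k\}$. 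In particular $z=y_{N-1}$ has $z_N=\frac{R}{\sqrt{N}}$. The entries of $(\opI-\opT)z$ are $z_1+z_N,\,z_2-z_1,\dots,z_N-z_{N-1}$, which sum to $2z_N$. Cauchy--Schwarz then gives $\sqnorm{(\opI-\opT)z}\ge (2z_N)^2/N=4R^2/N^2$, which is the exact constant with no certificate required.

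\textbf{What still remains.} You still need the passage from span-respecting methods to arbitrary deterministic ones, which your Step~1 only asserts (``may be assumed''). This is not a without-loss-of-generality reduction. It is a resisting-oracle construction that rotates the instance adaptively, using the orthogonal invariance of the nonexpansive class. This is where the larger ambient dimension $d\ge 2N-2$ enters, and it is what Park--Ryu's Theorem~4.6 adds beyond the span-restricted bound.
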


The rest of the paper will focus on algorithms that match this lower bound without any gap, i.e., exact minimax optimal algorithms for our nonexpansive fixed-point problem setting.

\subsection{Optimal Halpern Method (OHM) and its H-dual}

The Halpern iteration \citep{Halpern1967_fixed} is a classical algorithm that interpolates the operator evaluation $\opT y_k$ with the initial point $y_0$:
it has the general update rule $y_{k+1} = \beta_k y_0 + (1-\beta_k) \opT y_k$.
With suitable choices of $\beta_k$, Halpern iteration converges at an up-to-constant-optimal rate $\sqnorm{y_{N-1} - \opT y_{N-1}} = \cO\left(\frac{\sqnorm{y_0 - y_\star}}{N^2}\right)$, which was first shown by \citep{SabachShtern2017_first} for fixed-point problems in normed spaces.
Since then, the Halpern iteration has been studied as a key mechanism for optimal acceleration of fixed-point problems and related settings in optimization \citep{Diakonikolas2020_halpern, Lieder2021_convergence, ParkRyu2022_exact, bravoUniversalBoundsFixed2022, contrerasOptimalErrorBounds2023, leeAcceleratingValueIteration2023, bravo2026minimax}.
In particular, \citep{Lieder2021_convergence, Kim2021_accelerated} showed\footnote{
To be precise, \cite{Kim2021_accelerated} analyzed an algorithm named \textit{Accelerated Proximal Point Method (APPM)}, which was expressed using a difference recurrence of the form \eqref{eqn:H-matrix-representation-via-g-vectors}. It was later noted in the literature \cite{contrerasOptimalErrorBounds2023, ryuLargescaleConvexOptimization2022} that APPM is in fact equivalent to \ref{eqn:OHM}.
} 
that the following version of the Halpern iteration using $\beta_k = \frac{1}{k+2}$:
\begin{align}
\tag{OHM}
\label{eqn:OHM}
    y_{k+1} = \frac{1}{k+2} y_0 + \frac{k+1}{k+2} \opT y_k
\end{align}
which we refer to as \textit{Optimal Halpern Method (OHM)}, converges at the rate
\begin{align}
\label{eqn:opt-rate}
\tag{opt-rate}
    \sqnorm{y_{N-1} - \opT y_{N-1}} \le \frac{4\sqnorm{y_0 - y_\star}}{N^2}    
\end{align}
matching the lower bound of \cref{lemma:lower-bound} exactly; hence it is minimax optimal.

The H-matrix for OHM can be obtained by rewriting its first $N-1$ iterations in the form \eqref{eqn:H-matrix-representation} (as done in \citep{Kim2021_accelerated}):
\begin{align*}
    \left(H_{\text{OHM}}(N-1)\right)_{k,j} =
    \begin{cases}
        -\frac{j}{k(k+1)} & \text{if } j < k \\
        \frac{k}{k+1}     & \text{if } j = k
    \end{cases}
    = \begin{bmatrix}
        \nicefrac{1}{2} \\
        \nicefrac{-1}{6} & \nicefrac{2}{3} \\
        \vdots & \vdots & \ddots \\
        -\nicefrac{1}{N(N-1)} & -\nicefrac{2}{N(N-1)} & \dots & \nicefrac{(N-1)}{N}
    \end{bmatrix} \in \reals^{(N-1) \times (N-1)} .
\end{align*}
Given any lower triangular H-matrix $H = [h_{k,j}] \in \reals^{(N-1)\times (N-1)}$ and the associated fixed-point algorithm defined via \eqref{eqn:H-matrix-representation}, we consider its \textit{H-dual}, obtained by reflecting $H$ along its anti-diagonal:
\begin{align}
\tag{H-dual}
\label{eqn:H-dual}
    H^\at = \begin{bmatrix} h_{N-j,N-k} \end{bmatrix}_{\substack{k=1,\dots,N-1 \\ j=1,\dots,k}} 
    = \begin{bmatrix}
        h_{N-1,N-1} \\
        h_{N-1,N-2} & h_{N-2,N-2} \\
        \vdots & \vdots & \ddots \\
        h_{N-1,1} & h_{N-2,1} & \cdots & h_{1,1}
    \end{bmatrix} .
\end{align}
This is also an $(N-1) \times (N-1)$ lower triangular matrix, and therefore defines a fixed-step algorithm.
Furthermore, it is known in special cases that the H-dual operation preserves the worst-case performance of certain fixed-point algorithms \citep{YoonKimSuhRyu2024_optimal}.
OHM is an example of such a correspondence---its H-dual, called \textit{Dual-OHM}, has the H-matrix 
\begin{align*}
    \left(H_{\text{Dual-OHM}}(N-1)\right)_{k,j} =
    \begin{cases}
        -\frac{N-k}{(N-j)(N-j+1)} & \text{if } j < k \\
        \frac{N-k}{N-k+1}         & \text{if } j = k
    \end{cases} 
    = \begin{bmatrix}
        \nicefrac{(N-1)}{N} \\
        \nicefrac{-(N-2)}{N(N-1)} & \nicefrac{(N-2)}{(N-1)} \\
        \vdots & \vdots & \ddots \\
        -\nicefrac{1}{N(N-1)} & -\nicefrac{1}{(N-1)(N-2)} & \dots & \nicefrac{1}{2}
    \end{bmatrix} 
\end{align*}
and the update rule can be rewritten into a simple recurrence: for $k=0,\dots,N-2$,
\begin{align}
\label{eqn:Dual-OHM} 
\tag{Dual-OHM}
    y_{k+1} = y_k + \frac{N-k-1}{N-k} \left( \opT y_k - \opT y_{k-1} \right) 
\end{align}
with the convention $\opT y_{-1} = y_0$. 
Dual-OHM also achieves \eqref{eqn:opt-rate} and thus is minimax optimal \citep{YoonKimSuhRyu2024_optimal}.

\subsection{Characterization of optimal H-matrices and Q-functions}

Denote by $\cH_\star(N-1)$ the set of all lower triangular $(N-1)\times (N-1)$ H-matrices whose associated algorithm defined by~\eqref{eqn:H-matrix-representation} satisfies \eqref{eqn:opt-rate} for any nonexpansive $\opT\colon\cX \to \cX$ with a fixed point $y_\star$.
While the previous discussion shows
\[
    H_\text{OHM}(N-1), H_\text{Dual-OHM}(N-1) \in \cH_\star(N-1) ,
\]
a more holistic characterization of $\cH_\star(N-1)$ was provided in \citep{YoonRyuGrimmerInvariance2025} via equality and inequality constraints in $H$.
For $m,j=1,\dots,N-1$, the \textit{Q-functions} of a lower triangular $H \in \reals^{(N-1)\times (N-1)}$ are defined as
\begin{align*}
    Q(m,j; H) = \sum_{\substack{ j = j(1) \le i(1) < j(2) \le i(2) < \cdots \le i(m-1) < j(m) \le i(m) \le N-1}} \prod_{r=1}^m h_{i(r),j(r)} ,
\end{align*}
following \cite{YoonRyuGrimmerInvariance2025}.
This is the sum of all products of $m$ entries of $H$, formed by starting at some $h_{i(1),j}$ with $j \le i(1)\le N-1$ and appending the product with another entry with column index $j(2) > i(1)$, and repeating a similar process until one gets $m$ entries.
When $m+j > N$, the summation is vacuous (it is not possible to form such a product), so $Q(m,j; H) = 0$.
When $m+j = N$, the only admissible choice of indices is when $i(r)=j(r)=j+r-1$, so $Q(m,j;H) = Q(N-j,j;H) = h_{j,j} h_{j+1,j+1} \cdots h_{N-1,N-1}$.
For a more concrete illustration, when $N=4$ so $H$ is $3\times 3$, we have
\begin{align*}
    & Q(1,1;H) = h_{1,1} + h_{2,1} + h_{3,1} , \quad Q(1,2;H) = h_{2,2} + h_{3,2} , \quad Q(1,3;H) = h_{3,3} \\
    & Q(2,1;H) = h_{1,1} h_{2,2} + h_{1,1} h_{3,2} + h_{1,1} h_{3,3} + h_{2,1} h_{3,3} , \quad Q(2,2;H) = h_{2,2} h_{3,3} , \quad Q(2,3;H) = 0 \\
    & Q(3,1;H) = h_{1,1} h_{2,2} h_{3,3} , \quad Q(3,2;H) = 0 , \quad Q(3,3;H) = 0 .
\end{align*}
Given these notions, \citep[Theorem~3]{YoonRyuGrimmerInvariance2025} proved that $H \in \cH_\star(N-1)$ if and only if it satisfies the equality constraints
\begin{align}
\label{eqn:H-invariance}
    \sum_{j=1}^{N-1} Q(k, j; H) = \sum_{j=1}^{N-k} Q(k, j; H) = \frac{1}{N} \binom{N}{k+1} \qquad (k=1,\dots,N-1),
\end{align}
called the \textit{H-invariance condition}, together with the inequality constraints
\begin{alignat}{2}
    & \lambda_{N,j}^\star (H) = N\sum_{m=1}^{N-1} (-1)^{m-1} Q(m,j; H) = N\sum_{m=1}^{N-j} (-1)^{m-1} Q(m,j; H) \ge 0 \qquad & & (j=1,\dots,N-1) \label{eqn:H-certificate-N} \\
    &
    \begin{aligned}
        \lambda_{k,j}^\star (H) & = N\sum_{\ell=1}^{N-1} \sum_{m=1}^{N-1} (-1)^{\ell+m-1} \binom{\ell+m}{m} Q(\ell,j; H) Q(m,k; H) \\
        & = N\sum_{\ell=1}^{N-j} \sum_{m=1}^{N-k} (-1)^{\ell+m-1} \binom{\ell+m}{m} Q(\ell,j; H) Q(m,k; H) \ge 0
    \end{aligned}
    \quad & & (1 \le j < k \le N-1) \label{eqn:H-certificate-others}
\end{alignat}
where $\lambda^\star(H)$ are called \textit{H-certificates}.

\paragraph{H-certificates are unique multipliers in convergence proof.}
In \citep{YoonRyuGrimmerInvariance2025}, it was also shown that for $H$ satisfying~\eqref{eqn:H-invariance}, $\lambda_{k,j}^\star(H)$ defined by \eqref{eqn:H-certificate-N} and \eqref{eqn:H-certificate-others} are unique values satisfying the following ``universal proof template'' for optimal algorithms:
\begin{align}
\label{eqn:optimal-family-proof-core-identity}
\begin{aligned}
    0 & = N \sqnorm{g_N} + \inprod{g_N}{x_N - y_0} + \sum_{k=1}^N \sum_{j=1}^{k-1} \frac{\lambda_{k,j}^\star (H)}{4} \left( \sqnorm{y_{k-1} - y_{j-1}} - \sqnorm{\opT y_{k-1} - \opT y_{j-1}} \right) \\
    & = N \sqnorm{g_N} + \inprod{g_N}{x_N - y_0} + \sum_{k=1}^N \sum_{j=1}^{k-1} \lambda_{k,j}^\star (H) \inprod{x_k - x_j}{g_k - g_j}
\end{aligned}
\end{align}
where the second identity uses \eqref{eqn:nonexpansivity-and-monotonicity}.
Therefore, if $\lambda_{k,j}^\star(H) \ge 0$ for all $1\le j < k \le N$, the rightmost double summation is nonnegative, which implies
\begin{align*}
    & 0 \ge N \sqnorm{g_N} + \inprod{g_N}{x_N - y_0} \ge N \sqnorm{g_N} + \inprod{g_N}{y_\star - y_0} \ge N \sqnorm{g_N} - \left( \frac{N}{2} \sqnorm{g_N} + \frac{1}{2N} \sqnorm{y_0 - y_\star} \right) \\
    & \implies \sqnorm{g_N} \le \frac{\sqnorm{y_0 - y_\star}}{N^2} \iff \sqnorm{y_{N-1} - \opT y_{N-1}} \le \frac{4\sqnorm{y_0 - y_\star}}{N^2} 
\end{align*}
where we use $\inprod{g_N}{x_N - y_\star} = \frac{1}{4} \left( \sqnorm{y_{N-1} - y_\star} - \sqnorm{\opT y_{N-1} - \opT y_\star} \right) \ge 0$ for the second inequality.

\section{Optimal vertex algorithms and diagram representations}
\label{section:general-theory-of-optimal-vertex-algorithms}

The core objects of interest in this paper are the extremal points---i.e., \textit{vertices}---of the optimal H-matrix set $\cH_\star(N-1)$.
Each vertex is an intersection of a maximal number of zero sets of H-certificates $\lambda_{k,j}^\star(H)$, which are the boundary surfaces of $\cH_\star(N-1)$.
In other words, vertices represent the algorithms with sparsest/simplest convergence proofs, where a minimal number of inequalities are active (positive) within the proof template~\eqref{eqn:optimal-family-proof-core-identity} showing that $H$ satisfies \eqref{eqn:opt-rate}.

\subsection{Characterization, existence and computation of vertex algorithms}
\label{subsection:vertex-algorithm-characterization-existence-computation}

We start by characterizing the minimal proofs associated with the vertex algorithms.
In the following, we drop the $H$-dependence if it is clear from the context.

\begin{lemma}
\label{lemma:nonzero-lambda-exists}
If $H \in \cH_\star(N-1)$, then for $j=1,\dots,N-1$, there exists at least one $k \in \{j+1,\dots,N\}$ such that $\lambda_{k,j}^\star > 0$.
\end{lemma}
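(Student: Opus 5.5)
We argue by contradiction: assume that for some $j \in \{1,\dots,N-1\}$ we have $\lambda_{k,j}^\star = 0$ for every $k \in \{j+1,\dots,N\}$. Since $H \in \cH_\star(N-1)$, all H-certificates are nonnegative. The natural plan is to exploit the proof template \eqref{eqn:optimal-family-proof-core-identity}. That identity holds for all vectors: it is an algebraic identity in the free variables $y_0, g_1,\dots,g_N$, where $x_k = y_{k-1} - g_k$ and $y_k$ is given by \eqref{eqn:H-matrix-representation-via-g-vectors}. Under our assumption, $g_j$ enters the identity only through the terms $\lambda_{j,i}^\star \inprod{x_j - x_i}{g_j - g_i}$ with $i<j$. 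It also enters implicitly through the later iterates $y_k$, $k\ge j$, which depend on $g_j$ via the H-matrix, and hence through $x_N$ and the $x_k$ with $k>j$.

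The cleanest route is to compare coefficients. In the expanded quadratic form in the variables $(y_0, g_1,\dots,g_N)$, I would look at the coefficient of $\sqnorm{g_j}$. The term $\lambda_{k,i}^\star\inprod{x_k-x_i}{g_k-g_i}$ contributes to $\sqnorm{g_j}$ in two ways. The first is directly, when $j\in\{k,i\}$, through $\inprod{-g_j}{\pm g_j}$, giving $-\lambda$ times the coefficient. The second is when both $x$ and $g$ involve $g_j$, which requires $j\in\{k,i\}$ and $x_k - x_i$ to depend on $g_j$. Since $x_i$ only involves $g_1,\dots,g_i$, every contribution to $\sqnorm{g_j}$ needs one of the indices equal to $j$. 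Terms with $k=j$ (indices $i<j$) give coefficient $-\lambda_{j,i}^\star$, because $x_i$ does not involve $g_j$. Terms with $i=j<k$ vanish by assumption. The leading terms $N\sqnorm{g_N}+\inprod{g_N}{x_N-y_0}$ contribute only when $j=N$, which is excluded, or through the cross term $\inprod{g_N}{\cdot}$, which is not of the form $\sqnorm{g_j}$ for $j<N$.

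Therefore the coefficient of $\sqnorm{g_j}$ equals $-\sum_{i<j}\lambda_{j,i}^\star$, and the identity forces this to be zero. Nonnegativity then gives $\lambda_{j,i}^\star=0$ for all $i$. So $g_j$ appears in no nonzero term except through the $x_k$ with $k>j$, which occur only paired with $g_k$ or $g_N$. Next I would examine the cross coefficients between $g_j$ and the other variables. The term $\inprod{g_N}{x_N}$ contains $-2\sum_i h_{N-1,i}\cdots$, meaning the dependence of $y_{N-1}$ on $g_j$. More precisely, the coefficient of $\inprod{g_N}{g_j}$ is a sum of the leading cross term and the terms $\lambda_{N,i}^\star$ with $i\ne j$. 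I expect this bookkeeping to be the main obstacle: I need to show these contributions cannot all cancel. That in turn requires using the H-invariance \eqref{eqn:H-invariance} to see that $g_j$ genuinely affects $y_{N-1}$.

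A more conceptual alternative, which I would try first, is a perturbation argument. If $g_j$ has no active inequality linking it to any later iterate, then the proof is insensitive to the monotonicity at $x_j$ versus future points. One could then construct a worst-case instance in which $\opA$ is altered at $x_j$ and violates the bound. However, the coefficient approach is more concrete. The decisive step is showing that $\lambda_{j,i}^\star=0$ for all $i<j$, together with $\lambda_{k,j}^\star=0$, contradicts the identity. After substitution, every term containing $g_j$ disappears except those arising through the $H$-dependence of later $x_k$. Matching the coefficient of $\inprod{g_j}{y_0}$ gives, via the term $\inprod{g_N}{x_N-y_0}$ and the chain $\lambda_{k,i}$, a relation that forces $h$-dependent quantities such as $Q(\cdot,j;H)$ to vanish. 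That conclusion contradicts \eqref{eqn:H-invariance}, since for $k=N-j$ the invariance gives $Q(N-j,j;H)\cdots\neq 0$.
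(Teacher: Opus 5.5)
Your first step is correct. Read \eqref{eqn:optimal-family-proof-core-identity} as an identity in free vectors $g_1,\dots,g_N$. The coefficient of $\sqnorm{g_j}$ is then $-\sum_{i<j}\lambda_{j,i}^\star+\sum_{k>j}\lambda_{k,j}^\star(2\sum_{q=j}^{k-1}h_{q,j}-1)$. Under your assumption and nonnegativity this makes node $j$ isolated, i.e.\ $\Lambda(H)e_j=0$.

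The gap is that the proposal stops exactly where the contradiction must be produced, and the mechanism you point to cannot produce it.
\begin{itemize}
\item There is no coefficient of $\inprod{g_j}{y_0}$ to match. Both $x_k-x_i$ and $x_N-y_0=-g_N-2\sum_{\ell<N}Q(1,\ell;H)g_\ell$ involve only the $g$'s, so $y_0$ cancels from the identity entirely.
\item The coefficient of $\inprod{g_N}{g_j}$ gives only one relation, $(1+\sum_{m<N}\lambda_{N,m}^\star)Q(1,j;H)=\sum_{j<m<N}\lambda_{N,m}^\star\sum_{q=j}^{m-1}h_{q,j}$. This forces nothing to vanish.
\item The perturbation idea is not developed.
\end{itemize}
A smaller point: $Q(N-j,j;H)\neq 0$ does not come from the invariance at $k=N-j$. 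It comes from $Q(N-1,1;H)=h_{1,1}\cdots h_{N-1,N-1}=1/N$.

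The missing idea is a way to propagate the isolation of $j$ through all powers of the step matrix. Within your framework this can be done as follows.
\begin{itemize}
\item The identity is equivalent to $\Lambda A+A^\transpose\Lambda=-NE_{N,N}$ with $\Lambda=\Lambda(H)$ and $A=\frac12 I+J\widetilde H$ (cf.\ \cref{lemma:Lambda-is-unique-solution-to-matrix-equation}).
\item If $\Lambda w=0$, then $\Lambda Aw=-Nw_Ne_N$ and $0=w^\transpose\Lambda Aw=-Nw_N^2$. Hence $w_N=0$ and $Aw\in\ker\Lambda$.
\item Iterating from $w=e_j$ gives $e_N^\transpose(J\widetilde H)^m e_j=0$ for all $m$.
\item Since $(J\widetilde H)_{k,j}=\sum_{\ell=j}^{k-1}h_{\ell,j}$, \cref{lemma:Q-lambda-identities}(a) gives $e_N^\transpose(J\widetilde H)^m e_j=Q(m,j;H)$. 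This contradicts $Q(N-j,j;H)=h_{j,j}\cdots h_{N-1,N-1}\neq 0$.
\end{itemize}

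The paper takes a different, purely algebraic route. It uses the explicit formulas \eqref{eqn:H-certificate-N}--\eqref{eqn:H-certificate-others}. By induction over $\lambda_{N,j}^\star,\lambda_{N-1,j}^\star,\dots,\lambda_{j+1,j}^\star$, it shows that a vanishing column $j$ forces $\vS_{N-j}\vq_j=0$ with $\vq_j=(Q(1,j),\dots,Q(N-j,j))^\transpose$; each step divides by $Q(s,N-s)\neq 0$. It then concludes $\vq_j=0$ from $\det\vS_{N-j}=1$. That argument needs neither nonnegativity nor your isolation step.
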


This result, whose proof is presented in Appendix~\ref{section:proof-of-sparsity-lemma}, shows that minimal proofs are the ones that have exactly one $k(j) \in \{j+1,\dots,N\}$ satisfying $\lambda_{k(j), j}^\star > 0$ for each $j=1,\dots,N-1$. 
Then a natural question is: do all $(N-1)!$ possible combinations of $(k(1), \dots, k(N-1))$, each of which specifies a sparsity pattern of H-certificates, have a corresponding optimal vertex algorithm $H \in \cH_\star(N-1)$?
We answer this question in the affirmative, as follows.

\begin{theorem}
\label{theorem:all-sparsity-patterns-give-optimal-vertex}
Let $k(1),k(2),\dots,k(N-1)$ be given, where each $k(j)$ is an integer satisfying $j<k(j)\le N$.
For any such choice of $k(j)$'s, there uniquely exists an optimal vertex algorithm $H \in \cH_\star(N-1)$ such that
\begin{align}
\label{eqn:lambda-sparsity-pattern}
    \lambda_{k(j),j}^\star(H) > 0 \text{ and } \lambda_{\ell,j}^\star(H) = 0 \text{ for all } \ell \ne k(j) , \,\, j < \ell \le N
\end{align}
for each $j=1,\dots,N-1$.
\end{theorem}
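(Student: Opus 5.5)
Given a pattern $k(1),\dots,k(N-1)$, I will build the proof identity \eqref{eqn:optimal-family-proof-core-identity} directly with the prescribed sparsity and then read off $H$ from it. The plan is to reverse the usual logic: instead of starting from $H$ and computing $\lambda^\star(H)$, treat the unknowns as the $N-1$ positive multipliers $\lambda_j := \lambda_{k(j),j}$ together with the step sizes, and demand that
\[
N\sqnorm{g_N} + \inprod{g_N}{x_N - y_0} + \sum_{j=1}^{N-1} \lambda_j \inprod{x_{k(j)} - x_j}{g_{k(j)} - g_j} = 0
\]
hold identically as a quadratic form in $(y_0 - y_\star, g_1,\dots,g_N)$. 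Here $x_k = y_{k-1} - g_k$ and $y_k$ is given by \eqref{eqn:H-matrix-representation-via-g-vectors}. Then the uniqueness of multipliers established in \citep{YoonRyuGrimmerInvariance2025} (for $H$ satisfying \eqref{eqn:H-invariance}) identifies $\lambda_j$ with $\lambda^\star_{k(j),j}(H)$. The remaining $\lambda^\star_{\ell,j}(H)$ must then vanish, and \eqref{eqn:lambda-sparsity-pattern} holds. Since all certificates are nonnegative, $H\in\cH_\star(N-1)$.

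\textbf{Solving the system.} Matching coefficients gives three groups of equations. The coefficients of $\inprod{y_0}{g_k}$ force $\sum_j \lambda_j(e_{k(j)}-e_j)$ to equal the $N$-th unit vector, i.e., a flow-conservation condition on the graph with edges $\{j,k(j)\}$. This graph is a tree on $\{1,\dots,N\}$ rooted at $N$, since every $j<N$ has a unique parent $k(j)>j$. The condition therefore determines the $\lambda_j$ uniquely: $\lambda_j$ equals the number of nodes in the subtree rooted at $j$ (or some similarly explicit positive count). In particular each $\lambda_j$ is strictly positive. Next, the diagonal terms $\sqnorm{g_k}$ and the cross terms $\inprod{g_i}{g_k}$ with $i<k$ are linear in the entries $h$ once $\lambda$ is fixed. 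I will order the unknowns row by row, by increasing $k$, and show the resulting system is triangular with nonzero pivots involving the $\lambda_j$. That gives existence and uniqueness of $H$ (equivalently, Meta Algorithms~\ref{meta-alg:find-H-from-Q}/\ref{meta-alg:find-Q-from-sparsity-pattern}). Counting confirms consistency: there are $N(N-1)/2$ entries of $H$ against the cross terms minus those already absorbed.

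\textbf{Optimality and the vertex property.} Once the identity holds with nonnegative multipliers, the argument after \eqref{eqn:optimal-family-proof-core-identity} yields \eqref{eqn:opt-rate}, so $H\in\cH_\star(N-1)$ and \eqref{eqn:H-invariance} holds. To show that $H$ is a vertex, I will argue that the active constraints $\lambda^\star_{\ell,j}=0$ for $\ell\ne k(j)$ already determine $H$ uniquely within $\cH_\star$; this is the uniqueness from the triangular solve. Suppose $H=\tfrac12(H'+H'')$ with $H',H''\in\cH_\star$. Lemma~\ref{lemma:nonzero-lambda-exists} and nonnegativity show that the zeros must persist in $H'$ and $H''$. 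Each of those then has at most one positive certificate per column, and by Lemma~\ref{lemma:nonzero-lambda-exists} exactly one, with the same pattern. So $H'=H''=H$. This step needs care because $\lambda^\star$ is polynomial, not linear, in $H$. I would instead argue extremality via the equations that are linear in $H$ for fixed $\lambda$, or define vertices as points with a maximal active set, which is likely the paper's convention.

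\textbf{Main obstacle.} The hardest step is proving that the linear system for $H$ is nonsingular for every tree pattern, including crossing ones. I would attempt an explicit recursive construction: process $k=1,\dots,N-1$, with the $g_k$ coefficient equations determining row $k$ of $H$, and check that the pivot is a positive combination of subtree sizes.
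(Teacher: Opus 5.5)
Your reduction is sound in principle: if you exhibit $H$ and multipliers $\lambda_j>0$ with the prescribed support that make the identity hold, then the rate follows and $H\in\cH_\star(N-1)$. Uniqueness of certificates then gives $\lambda_j=\lambda^\star_{k(j),j}(H)$.

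The gap is the step that is supposed to solve for $(\lambda,H)$. The identity contains only $x_N-y_0$ and differences $x_k-x_j$. Since $x_k-y_0=-\sum_{\ell<k}\bigl(\sum_{m=\ell}^{k-1}2h_{m,\ell}\bigr)g_\ell-g_k$, both $y_0$ and $y_\star$ cancel identically, so there are no $\inprod{y_0}{g_k}$ coefficients to match. The only conservation law in sight, $\Lambda(H)\mathbf 1=-e_N$, holds for every choice of weights.

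So the tree alone does not determine the multipliers, and the subtree-size formula is false:
\begin{itemize}
    \item OHM has $\lambda^\star_{j+1,j}=j(j+1)/N$, so $\lambda^\star_{2,1}=2/3$ when $N=3$.
    \item Dual-OHM has $\lambda^\star_{N,j}=N/((N-j)(N-j+1))$, which gives $1/2$ and $3/2$ when $N=3$, although every subtree there is a single node.
\end{itemize}
What the $\sqnorm{g_k}$ coefficients actually give is an $H$-dependent balance:
\[
\sum_{b<k}\lambda_{k,b}+\sum_{a>k}\lambda_{a,k}\Bigl(1-\sum_{m=k}^{a-1}2h_{m,k}\Bigr)=\begin{cases} N-1 & \text{if } k=N,\\ 0 & \text{if } k<N.\end{cases}
\]
Thus the system is bilinear in $(\lambda,H)$. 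Moreover, for fixed $\lambda$ the $N(N+1)/2$ coefficient equations outnumber the $N(N-1)/2$ entries of $H$ by $N$. There is therefore no ``fix $\lambda$, then do a triangular solve with positive pivots'' procedure. Existence, uniqueness and strict positivity of a joint solution for every tree, crossing or not, is the whole content of the theorem, and your plan leaves it unproved.

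The missing idea is the paper's change of variables to Q-functions, processed backward over columns $j=N-1,\dots,1$.
\begin{itemize}
    \item Suppose the directions $\vv_k$ of $(Q(1,k),\dots,Q(N-k,k))$ are known for all $k>j$. Then each $\lambda^\star_{\ell,j}$ is a nonzero scalar (namely $N\,Q(N-\ell,\ell)$, or $N$ if $\ell=N$) times a fixed linear form in $\vq_j=(Q(1,j),\dots,Q(N-j,j))$.
    \item That linear form is row $N-\ell+1$ of $\vL_j\vS_{N-j}$, where $\vS_{N-j}$ is a Pascal-type matrix of determinant one. So the $N-j-1$ vanishing conditions leave exactly the line spanned by $(\vL_j\vS_{N-j})^{-1}\ve_{N-k(j)+1}$.
    \item The genuinely hard step is proving this vector is entrywise positive. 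The paper does this by an induction showing $\vM_j=(\vL_j\vS_{N-j})^{-1}$ has positive entries, using $\vP=\vR^\transpose\vR$ and row-sign consistency of $\vR_{N-j}\vM_j$.
    \item H-invariance then pins down the scales $Q(N-j,j)>0$. Meta Algorithm~\ref{meta-alg:find-H-from-Q} recovers $H$, and $\lambda^\star_{k(j),j}>0$ falls out as a ratio of positive quantities.
\end{itemize}

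Separately, your midpoint argument for the vertex property cannot work, because $\cH_\star(N-1)$ is not convex. For $N=3$, H-invariance forces $h_{1,1}h_{2,2}=\frac13$, and the midpoint of OHM and Dual-OHM violates this. As you suspected, ``vertex'' here means the maximal active set of certificates.
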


\begin{algorithm}[t]
\caption{Meta algorithm for recovering $H$ from values of $Q$}
\label{meta-alg:find-H-from-Q} 
\textbf{Input: } $q_{k,j}$ for $k=1,\dots,N-1$ and $j=1,\dots,N-k$, satisfying $q_{N-j,j} \ne 0$ for $j=1,\dots,N-1$
\begin{algorithmic}
\State $h_{N-1,N-1} = q_{1,N-1}$
\For{$k=N-2,N-3,\dots,1$}
    \State $h_{k,k} = \frac{q_{N-k,k}}{q_{N-k-1,k+1}}$
    \For{$i=k+1,\dots,N-2$}
        \State $h_{i,k} = \frac{1}{q_{N-i-1,i+1}} \left[ q_{N-i,k} - \sum_{\ell=k+1}^i \left(\sum_{m=k}^{\ell-1} h_{m,k}\right) q_{N-i-1,\ell} \right] - \sum_{m=k}^{i-1} h_{m,k}$ 
    \EndFor
    \State $h_{N-1,k} = q_{1,k} - \sum_{m=k}^{N-2} h_{m,k}$
\EndFor
\end{algorithmic}
\textbf{Output: } Lower triangular matrix $H = (h_{i,k})_{\substack{i=1,\dots,N-1, k=1,\dots,i}}$
\end{algorithm}

The proof of this theorem involves heavy linear algebraic arguments; it is deferred to Appendix~\ref{section:vertex-algorithms-existence-proof}.
At a high level, we show that the selection of a sparsity pattern uniquely determines the values of $Q(\cdot,\cdot)$, which in turn determine $H$ and from which we can verify the desired condition~\eqref{eqn:lambda-sparsity-pattern}. As a key insight to the first step in this process, we find that although the $\lambda_{k,j}^\star$ are mostly quadratic in Q-functions, the system of equations $\lambda_{\ell,j}^\star = 0$ for $\ell \ne k(j)$ reduces to a linear system with respect to $Q(\cdot,\cdot)$ when recursively solved backward, i.e., in the order $j=N-1,N-2,\dots,1$. After finding the values of $Q(\cdot,\cdot)$, we recover $H$ from them: the following lemma, which was essentially proved in \citep{YoonRyuGrimmerInvariance2025}, shows that this can be done via Meta Algorithm~\ref{meta-alg:find-H-from-Q}.

\begin{lemma}
There exists a one-to-one correspondence between lower triangular $H \in \reals^{(N-1) \times (N-1)}$ with nonzero main diagonal entries $h_{1,1}, h_{2,2}, \dots, h_{N-1,N-1}$ and $\{Q(k,j)\}_{\substack{k=1,\dots,N-1\\j=1,\dots,N-k}}$ with nonzero $Q(1,N-1), Q(2,N-2), \dots, Q(N-1,1)$.
The $H$ corresponding to a given Q-function profile is determined by Meta Algorithm~\ref{meta-alg:find-H-from-Q}.
\end{lemma}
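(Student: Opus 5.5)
The plan is to show that the map $H \mapsto \{Q(k,j;H)\}$ is a bijection between the two sets, and that Meta Algorithm~\ref{meta-alg:find-H-from-Q} gives its inverse. The proof is by backward induction on the column index $k$. The basic observation is that $Q(m,k)$ depends only on the columns $k,k+1,\dots,N-1$ of $H$. Moreover, it splits according to the first factor $h_{i,k}$:
\begin{align*}
    Q(m,k) = \sum_{i=k}^{N-1} h_{i,k}\, Q(m-1,i+1),
\end{align*}
with the conventions $Q(0,\cdot)=1$ and $Q(m-1,N)=\delta_{m,1}$. So if columns $k+1,\dots,N-1$ are already known, the column-$k$ data $(Q(1,k),\dots,Q(N-k,k))$ is a \emph{linear} image of the unknowns $(h_{k,k},\dots,h_{N-1,k})$, with coefficients drawn from the already determined values $Q(\cdot,\ell)$, $\ell>k$.

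\textbf{Step 1: direction $H \Rightarrow Q$ and nonvanishing.} For $m+j=N$ the paper has already noted $Q(N-j,j)=h_{j,j}\cdots h_{N-1,N-1}$. Hence the diagonal of $H$ is nonzero if and only if every $Q(N-j,j)$ is nonzero. The forward map is therefore well defined between the two stated sets.

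\textbf{Step 2: triangularity of the column system.} I order the unknowns $h_{k,k},h_{k+1,k},\dots,h_{N-1,k}$ and the equations $m=N-k,N-k-1,\dots,1$. In the equation for $Q(m,k)$, the coefficient of $h_{i,k}$ is $Q(m-1,i+1)$, which vanishes once $(m-1)+(i+1)>N$, that is, once $i>N-m$. The system is therefore triangular. Its pivots are
\begin{align*}
    Q(N-k-1,k+1),\; Q(N-k-2,k+2),\;\dots,\; Q(0,N)=1,
\end{align*}
all of which are nonzero by Step 1 or the induction hypothesis. So column $k$ is uniquely determined by $\{Q(\cdot,k)\}$ together with the later columns. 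Injectivity follows at once.

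For surjectivity, I start from arbitrary prescribed values $q_{m,j}$ with nonzero anti-diagonal. I solve column $N-1$ first ($h_{N-1,N-1}=q_{1,N-1}$) and then proceed backward. By the induction hypothesis, the previously built columns reproduce $Q(\cdot,\ell)=q_{\cdot,\ell}$ for $\ell>k$. This makes the triangular system for column $k$ well posed and solvable, so the constructed $H$ has $Q(m,k;H)=q_{m,k}$ for every $m$. Its diagonal entries are $h_{k,k}=q_{N-k,k}/q_{N-k-1,k+1}\ne 0$.

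\textbf{Step 3: matching the Meta Algorithm.} The remaining work, which I expect to be the main bookkeeping obstacle, is to check that forward substitution on this triangular system gives exactly the formulas in Meta Algorithm~\ref{meta-alg:find-H-from-Q}. These formulas are written in terms of the partial sums $\sum_{m=k}^{\ell-1}h_{m,k}$, so I will apply summation by parts (Abel summation) to $\sum_i h_{i,k}Q(m-1,i+1)$. This gives
\begin{align*}
    Q(m,k)=\sum_{\ell} \Big(\sum_{m'=k}^{\ell-1}h_{m',k}\Big)\big(Q(m-1,\ell)-Q(m-1,\ell+1)\big)+\cdots.
\end{align*}
I will then use the identity
\begin{align*}
    Q(m,\ell)-Q(m,\ell+1)=h_{\ell,\ell}Q(m-1,\ell+1)+\cdots,
\end{align*}
which comes from splitting by whether column $\ell$ is used. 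The goal is to reduce the equation for $Q(N-i,k)$ to the displayed update for $h_{i,k}$, with pivot $q_{N-i-1,i+1}$, and the $i=N-1$ case to $Q(1,k)=\sum_i h_{i,k}$. If the telescoping does not close cleanly, a fallback suffices: the uniqueness in Step 2 means that any solution of the triangular system is the answer. It is then enough to verify that the Meta Algorithm's outputs satisfy the column equations, which can be done by induction on $i$.
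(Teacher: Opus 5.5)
There is a concrete error: the recursion your whole plan rests on is false. Your architecture is otherwise the right one: backward induction over columns, linearity of $Q(\cdot,k)$ in column $k$ once the later columns are fixed, a square triangular system with pivots $Q(N-i-1,i+1)$, and Step~1 via $Q(N-j,j)=h_{j,j}\cdots h_{N-1,N-1}$.

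In the definition of $Q(m,k)$ the second column index only has to satisfy $j(2)>i(1)$, not $j(2)=i(1)+1$. So after factoring out $h_{i,k}$, the remaining factor is the tail sum $\sum_{\ell=i+1}^{N-1}Q(m-1,\ell)$, not $Q(m-1,i+1)$. The paper's own $N=4$ example exposes this. Your formula gives $Q(2,1)=h_{1,1}(h_{2,2}+h_{3,2})+h_{2,1}h_{3,3}$, which misses the term $h_{1,1}h_{3,3}$.

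The same confusion appears in your Step~3 identity ``$Q(m,\ell)-Q(m,\ell+1)=h_{\ell,\ell}Q(m-1,\ell+1)+\cdots$, splitting by whether column $\ell$ is used''. You are conflating $Q(m,\ell)$, whose first column is exactly $\ell$, with a tail sum over first columns $\ge\ell$; the splitting only makes sense for the latter. Consequently neither your Abel-summation computation nor your fallback would reproduce Meta Algorithm~\ref{meta-alg:find-H-from-Q}. The fallback fails because the Meta Algorithm's outputs satisfy the correct column equations, not the ones you wrote.

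The repair is local. For $m\ge 2$ the correct relation is
\[
  Q(m,k)=\sum_{i=k}^{N-1}h_{i,k}\sum_{\ell=i+1}^{N-1}Q(m-1,\ell)=\sum_{\ell=k+1}^{N-1}Q(m-1,\ell)\,P_\ell,\qquad P_\ell:=\sum_{i=k}^{\ell-1}h_{i,k}.
\]
This is exactly \cref{lemma:Q-lambda-identities}(a), the recursion the paper imports from prior work when it defers this lemma.

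With this relation, the coefficient of $h_{i,k}$ still vanishes for $i>N-m$. At $i=N-m$ it equals $Q(m-1,N-m+1)$, because every other tail term has $(m-1)+\ell>N$. So your triangularity, your list of pivots, and your injectivity and surjectivity arguments survive unchanged.

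Working directly in the partial sums also removes Step~3 entirely. Take $m=N-i$. Only the terms with $\ell\le i+1$ survive, so
\[
  Q(N-i,k)=\sum_{\ell=k+1}^{i}Q(N-i-1,\ell)P_\ell+Q(N-i-1,i+1)P_{i+1}.
\]
Solve this for $P_{i+1}$ and use $h_{i,k}=P_{i+1}-P_i$. This gives precisely the inner-loop formula of the Meta Algorithm. The case $i=k$ (where $P_k=0$) gives $h_{k,k}=q_{N-k,k}/q_{N-k-1,k+1}$, and $Q(1,k)=P_N$ gives its final line. With this correction your argument becomes a complete proof.
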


In a nutshell, given a sparsity pattern of $\lambda^\star$, the associated optimal vertex $H$ can be computed through an explicit numerical procedure, involving nothing more complicated than solving linear systems.

\subsection{Arc diagram representation of vertex algorithms}
\label{subsection:arc-diagram-representation}

Given a vertex algorithm $H \in \cH_\star(N-1)$, we have $\lambda_{k(j),j}^\star (H) > 0$ for exactly one $k(j) \in \{j+1,\dots,N\}$ for each $j=1,\dots,N-1$.
Therefore, we can represent $H$ by the graph $\cG(H)$ with the set of nodes $\cV = \{1,\dots,N\}$ and the set of edges $\cE = \{(k(j), j): j=1,\dots,N-1\}$.
This is an \textit{arc diagram}\footnote{Interestingly, the same type of object appeared in \cite{black2024linear} in the context of memoryless pivot rules for linear programs on simplices, where it was referred to as an \textit{arborescence}, following graph-theoretic terminology. In this work, we call it an arc diagram for simplicity, and keep the graph-theoretic discussion minimal.} where each $j\in \cV$ with $j<N$ has a unique edge---or arc---connecting it to a larger node.
Note that graph theoretically, $\cG(H)$ is a tree, as all nodes $j < N$ are connected to $N$ along the unique increasing path $(j, k(j), k(k(j)), \dots, N)$ which we denote by $\cC(j)$.
Absence of cycles follows from the connected graph having $N$ nodes and $N-1$ edges.
Furthermore, any such arc diagram $\cG$ on $N$ nodes conversely determines a unique vertex $H \in \cH_\star(N-1)$ satisfying $\cG = \cG(H)$ by \cref{theorem:all-sparsity-patterns-give-optimal-vertex}.

In \cref{table:algorithms_arc_diagrams}, we display three examples of arc diagrams with $N=5$: for OHM we have $\lambda_{j+1,j}^\star(H) > 0 \iff k(j) = j+1$ for $j=1,\dots,N-1$, so every node is connected to its immediate successor. 
For Dual-OHM we have $\lambda_{N,j}^\star(H) > 0 \iff k(j) = N$ for $j=1,\dots,N-1$, so every node is directly connected to $N$.
The third example is a member of the new algorithm class named \textit{Repeated Dual-OHM (RDO)}, which we introduce later in Section~\ref{section:repeated-dual-ohm}.
It is the case with period $p=2$, and is characterized by its novel sparsity pattern where $\lambda_{j+2,j}^\star (H) > 0$ if $j$ is odd and $\lambda_{j+1,j}^\star(H) > 0$ if $j$ is even, which is visually evident from its arc diagram.

For reasons that will be clarified in later sections, we assign weights $\lambda_{k(j),j}^\star (H)$ to each edge $(k(j), j) \in \cE$ and view $\cG(H)$ as a weighted graph.
With this viewpoint, $\cG(H)$ can also be regarded as a weighted (complete) graph with any edge not in the form $(k(j),j)$ having weight $0$; in fact, in this way, we can more generally match any $H \in \cH_\star(N-1)$ with a weighted graph with edge weights $\lambda_{k,j}^\star (H)$. 
We summarize the discussion above into a formal definition below.

\begin{definition}
\label{definition:diagram-of-algorithm}
For $H \in \cH_\star(N-1)$, \textit{the diagram} $\cG(H)$ \textit{of $H$} is the weighted undirected graph with nodes $\cV = \{1,\dots,N\}$ and edges $\cE = \{(k,j): 1\le j < k \le N\}$, where each $(k,j) \in \cE$ has the weight $\lambda_{k,j}^\star (H)$. 
In the case where $H$ is a vertex of $\cH_\star(N-1)$, all edges other than $(k(j),j)$ ($j=1,\dots,N-1$) have weight $0$, so $\cG(H)$ (identified with its support graph) is a tree, called an \textit{arc diagram of $H$}.
\end{definition}

\subsection{Gluing operation for composing algorithms and diagrams}
\label{subsection:gluing-operation}

Arc diagrams are not merely concise visual representations of algorithms; in fact, they serve as convenient handles for composing two distinct optimal algorithms. 
We first define the operation for composing arc diagrams.
\begin{definition}
\label{definition:gluing-of-diagrams}
Let $1\le N' \le N-1$, and let $\cG_1$ and $\cG_2$ be arc diagrams on $N'$ and $N-N'$ nodes, respectively. 
Define their \textit{gluing} $\cG_1 \glue \cG_2$ as the arc diagram on $N$ nodes, obtained by
\begin{enumerate}
    \item Placing $\cG_2$ to the right of $\cG_1$, after relabeling the nodes of $\cG_2$ from $1,\dots,N-N'$ to $N'+1, \dots, N$, and then
    \item Inserting an additional arc joining the rightmost nodes of $\cG_1$ and $\cG_2$.
\end{enumerate}
\end{definition}

\begin{figure}[t]
\centering

\begin{tikzpicture}[
    scale=0.68,
    dot/.style={circle,fill,inner sep=1.8pt},
    every node/.style={font=\large},
    >=stealth
]

\def\xcA{0}
\def\xcB{6.3}
\def\xcC{16.4}

\def\yGraph{0}
\def\yGLabel{-1.55}
\def\yHLabel{-3.85}
\def\yMat{-8}

\begin{scope}[shift={(\xcA-3,\yGraph)}]
    \foreach \i in {1,...,5}{
        \coordinate (A\i) at (\i,0);
        \fill (A\i) circle (2.2pt);
        \node[below=2pt] at (A\i) {\i};
    }
    \draw[line width=0.8pt, color=gray, densely dashed] (A1) -- (A5);
    \foreach \i/\r in {1/1,2/0.5,3/1,4/0.5}{
        \draw[line width=0.8pt] (A\i) arc[start angle=180,end angle=0,radius=\r];
    }
\end{scope}

\node (G1label) at (\xcA,\yGLabel) {$\cG_1$};
\node (H1label) at (\xcA,\yHLabel) {$H_1$};
\node (H1mat) at (\xcA,\yMat) {$\begin{bmatrix}
\frac{2}{3}  \\
-\frac{1}{6} & \frac{1}{2} \\
-\frac{1}{5} & -\frac{1}{5} & \frac{6}{5} \\
0 & 0 & -\frac{3}{10} & \frac{1}{2}
\end{bmatrix}$};

\draw[<->, thick] (\xcA,\yGLabel-0.5) -- (\xcA,\yHLabel+0.5);

\begin{scope}[shift={(\xcB-3,\yGraph)}]
    \foreach \i in {1,...,5}{
        \coordinate (B\i) at (\i,0);
        \fill (B\i) circle (2.2pt);
        \node[below=2pt] at (B\i) {\i};
    }
    \draw[line width=0.8pt, color=gray, densely dashed] (B1) -- (B5);
    \foreach \i/\j in {1/2,2/3,3/4,4/5}{
        \draw[line width=0.8pt] (B\i) arc[start angle=180,end angle=0,radius=0.5];
    }
\end{scope}

\node (G2label) at (\xcB,\yGLabel) {$\cG_2$};
\node (H2label) at (\xcB,\yHLabel) {$H_2$};
\node (H2mat) at (\xcB,\yMat) {$\begin{bmatrix}
\frac12 \\
-\frac16 & \frac23 \\
-\frac1{12} & -\frac16 & \frac34 \\
-\frac1{20} & -\frac1{10} & -\frac{3}{20} & \frac45
\end{bmatrix}$};

\draw[<->, thick] (\xcB,\yGLabel-0.5) -- (\xcB,\yHLabel+0.5);

\begin{scope}[shift={(\xcC-5.5,\yGraph)}]
    \foreach \i in {1,...,10}{
        \coordinate (C\i) at (\i,0);
        \fill (C\i) circle (2.2pt);
        \node[below=2pt] at (C\i) {\i};
    }

    \draw[line width=0.8pt, color=gray, densely dashed] (C1) -- (C10);

    \foreach \i/\r in {1/1,2/0.5,3/1,4/0.5}{
        \draw[line width=0.8pt] (C\i) arc[start angle=180,end angle=0,radius=\r];
    }

    \draw[line width=0.8pt,red] (C5) arc[start angle=180,end angle=0,radius=2.5];

    \foreach \i in {6,7,8,9}{
        \draw[line width=0.8pt] (C\i) arc[start angle=180,end angle=0,radius=0.5];
    }
\end{scope}

\node (G12label) at (\xcC,\yGLabel) {$\cG_1 \glue \cG_2$};
\node (H12label) at (\xcC,\yHLabel) {$H_1 \glue H_2$};
\node (H12mat) at (\xcC,\yMat) {$\begin{bmatrix}
\frac{2}{3}  \\
-\frac{1}{6} & \frac{1}{2} \\
-\frac{1}{5} & -\frac{1}{5} & \frac{6}{5} \\
0 & 0 & -\frac{3}{10} & \frac{1}{2} \\
\redd{-\frac{3}{20}} & \redd{-\frac{3}{20}} & \redd{-\frac{9}{20}} & \redd{-\frac{1}{4}} & \redd{\frac{5}{2}} \\
0 & 0 & 0 & 0 & \redd{-\frac{1}{4}} & \frac{1}{2} \\
0 & 0 & 0 & 0 & \redd{-\frac{1}{4}} & -\frac16 & \frac23 \\
0 & 0 & 0 & 0 & \redd{-\frac{1}{4}} & -\frac1{12} & -\frac16 & \frac34 \\
0 & 0 & 0 & 0 & \redd{-\frac{1}{4}} & -\frac1{20} & -\frac1{10} & -\frac{3}{20} & \frac45
\end{bmatrix}$};

\draw[<->, thick] (\xcC,\yGLabel-0.5) -- (\xcC,\yHLabel+0.5);

\end{tikzpicture}
 
\caption{\textbf{Top:} Arc diagrams for RDO(2) and OHM with $N=5$, and their gluing $\cG_1 \glue \cG_2$. \textbf{Bottom:} The corresponding vertices of $\cH_\star(N-1)$ and their gluing $H_1 \glue H_2$, as defined in \cref{definition:gluing-of-H-matrices}.
By \cref{theorem:gluing}, $H_1 \glue H_2$ is the optimal vertex algorithm corresponding to $\cG_1 \glue \cG_2$.
The entries and arc added by the gluing operation are highlighted in red.
}
\label{figure:gluing-diagram-and-H-matrices}
\end{figure}

The top row of Figure~\ref{figure:gluing-diagram-and-H-matrices} provides a clear visual illustration of how the gluing of arc diagrams works.

As discussed in Section~\ref{subsection:arc-diagram-representation}, there are unique optimal vertex algorithms $H_1, H_2$ such that $\cG_1 = \cG(H_1)$ and $\cG_2 = \cG(H_2)$.
Likewise, there exists a unique optimal vertex algorithm $H$ satisfying $\cG_1 \glue \cG_2 = \cG(H)$.
Hence, through this correspondence, we can define an equivalent gluing operation for optimal vertex algorithms.
Interestingly, it turns out that the resulting ``glued matrix'' $H_1 \glue H_2$ manifests in the form that literally glues the two matrices by adding an additional row and column in between (see the bottom row of Figure~\ref{figure:gluing-diagram-and-H-matrices}).
We provide the formal definition of the gluing operation for H-matrices below, which can be performed with any two optimal (possibly non-vertex) $H_1, H_2$.
Using this notion, the aforementioned equivalence is concisely restated in \cref{theorem:gluing}.

\begin{definition}
\label{definition:gluing-of-H-matrices}
Let $1\le N' \le N-1$, and let $H_1 \in \cH_\star(N'-1)$, $H_2 \in \cH_\star(N-N'-1)$.
Define their \textit{gluing} $H_1 \glue H_2$ by
\begin{align}
\label{eqn:H-matrix-gluing}
    H_1 \glue H_2 = \begin{bmatrix}
        \begin{array}{c|cc}
             H_1 & \\ \begin{array}{ccc} h_{N',1} & \cdots & h_{N',N'-1} \end{array} & h_{N',N'} \\
            \hline 
            0_{(N-N'-1) \times (N'-1)} & \begin{array}{c} h_{N'+1,N'} \\ \vdots \\ h_{N-1,N'} \end{array} & H_2
        \end{array}
    \end{bmatrix}
\end{align}
where 
\begin{itemize}
    \item $h_{N',j} = -\frac{N-N'}{N} \sum_{k=j}^{N'-1} (H_1)_{k,j}$ for $j=1,\dots,N'-1$, and $h_{N',N'} = \frac{N'(N-N')}{N}$.

    \item $h_{N'+1,N'}, \dots, h_{N-1,N'}$ are unique values satisfying $Q(k,N'; H_1 \glue H_2) = \frac{N'(N-N'+1)}{N(k+1)} \binom{N-N'-1}{k-1}$ for $k=1,\dots,N-N'$, which can be computed by running the loop in Meta Algorithm~\ref{meta-alg:find-H-from-Q} with $k=N'$ (recovery step for column $N'$).
\end{itemize}
\end{definition}

As we state below, if $H_1, H_2$ are optimal vertex algorithms, then $\cG(H_1) \glue \cG(H_2) = \cG(H_1 \glue H_2)$.
This suggests that $\lambda^\star(H_1 \glue H_2)$ may also be related to $\lambda^\star(H_1)$ and $\lambda^\star(H_2)$ in a structured fashion.
Indeed, this relationship turns out to be delightfully simple, and we isolate it into the following definition, applicable for any optimal methods (not just vertices).

\begin{definition}
\label{definition:gluing-of-lambdas}
Let $1\le N' \le N-1$, and let $H_1 \in \cH_\star(N'-1)$, $H_2 \in \cH_\star(N-N'-1)$. We define the \textit{gluing} of their H-certificates $\lambda^\star(H_1) \glue \lambda^\star(H_2)$ as the set of $\lambda^\star_{k,j}$, defined for any $1\le j < k \le N$, satisfying
\begin{itemize}
    \item For $1\le j < k \le N'$, 
    $\lambda_{k,j}^\star = \frac{N'}{N} \lambda_{k,j}^\star (H_1)$.
    
    \item For $N'+1 \le j < k \le N$, $\lambda_{k,j}^\star = \frac{N}{N-N'} \lambda_{k-N',j-N'}^\star (H_2)$.
    
    \item $\lambda_{N,N'}^\star = \frac{N'}{N-N'}$.

    \item $\lambda_{k,j}^\star = 0$ for $1\le j < N' < k \le N$ and $\lambda_{k,N'}^\star = 0$ for $k=N'+1,\dots,N-1$.
\end{itemize}
\end{definition}

In plain words, $\lambda^\star(H_1) \glue \lambda^\star(H_2)$ is a rescaled version of $\lambda^\star(H_1)$ when constrained to the first $N'-1$ indices, and a rescaled version of $\lambda^\star(H_2)$ when constrained to the last $N-N'-1$ indices.
All the other $\lambda^\star_{k,j}$ are zero, except $\lambda^\star_{N,N'} = \frac{N'}{N-N'}$.

Now we are ready to state the main theorem of this section.

\begin{theorem}[Gluing theorem]
\label{theorem:gluing}
Let $1\le N' \le N-1$, $H_1 \in \cH_\star(N'-1)$, and $H_2 \in \cH_\star(N-N'-1)$.
Then $H_1 \glue H_2 \in \cH_\star(N-1)$ and $\lambda^\star(H_1 \glue H_2) = \lambda^\star(H_1) \glue \lambda^\star(H_2)$.
Moreover, if $H_1, H_2$ are respectively vertices of $\cH_\star(N'-1)$ and $\cH_\star(N-N'-1)$, then $H_1 \glue H_2$ is a vertex of $\cH_\star(N-1)$ and $\cG(H_1 \glue H_2) = \cG(H_1) \glue \cG(H_2)$.

\end{theorem}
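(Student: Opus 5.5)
The plan is to avoid the Q-function formulas for $\lambda^\star$ altogether and verify the proof-template identity \eqref{eqn:optimal-family-proof-core-identity} directly. Recall from \citep{YoonRyuGrimmerInvariance2025} that, once $H$ satisfies \eqref{eqn:H-invariance}, the multipliers making \eqref{eqn:optimal-family-proof-core-identity} an identity (in the vectors $g_1,\dots,g_N$ and $y_0$) are unique and equal $\lambda^\star(H)$. Conversely, if some nonnegative family $\lambda$ makes \eqref{eqn:optimal-family-proof-core-identity} hold identically, the argument following \eqref{eqn:optimal-family-proof-core-identity} shows that $H$ attains \eqref{eqn:opt-rate}, so $H\in\cH_\star(N-1)$. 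Uniqueness then forces $\lambda=\lambda^\star(H)$.

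It therefore suffices to show that $H=H_1\glue H_2$ together with $\lambda=\lambda^\star(H_1)\glue\lambda^\star(H_2)$ satisfies \eqref{eqn:optimal-family-proof-core-identity} identically. The nonnegativity of $\lambda$ is immediate from Definition~\ref{definition:gluing-of-lambdas}.

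\textbf{Step 1: the first block.} For the first $N'$ steps, the iterates $y_0,\dots,y_{N'-1}$ of $H$ coincide with those of $H_1$, since $H_1$ is the top-left block. Hence the $H_1$ identity holds with $N'$ in place of $N$:
\[
0=N'\sqnorm{g_{N'}}+\inprod{g_{N'}}{x_{N'}-y_0}+\sum_{1\le j<k\le N'}\lambda^\star_{k,j}(H_1)\inprod{x_k-x_j}{g_k-g_j}.
\]
Multiplying it by $N'/N$ accounts for all terms with indices up to $N'$.

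\textbf{Step 2: the second block.} Shift the starting point to $\tilde y_0:=y_{N'}$ and set $\tilde g_i=g_{N'+i}$. The rows $N'+1,\dots,N-1$ of $H$ contain $H_2$ plus the extra column $N'$ (entries $h_{i,N'}$) and zeros elsewhere. So the tail is $H_2$ run from $y_{N'}$, perturbed by a drift term $-2\big(\sum h_{i,N'}\big) g_{N'}$. Apply the $H_2$ identity to this shifted run and multiply by $N/(N-N')$. The drift in $g_{N'}$, the difference $y_{N'}-y_0$ (expressed through row $N'$ as a combination of $g_1,\dots,g_{N'}$), and the single new multiplier $\lambda_{N,N'}=N'/(N-N')$ should all combine: the inner product $\inprod{x_N-x_{N'}}{g_N-g_{N'}}$ has to exchange $\inprod{g_N}{x_N-y_{N'}}$ for $\inprod{g_N}{x_N-y_0}$ and cancel the leftover term $\frac{N'}{N}\big(N'\sqnorm{g_{N'}}+\inprod{g_{N'}}{x_{N'}-y_0}\big)$ from Step 1.

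\textbf{Main obstacle.} Showing this cancellation is the main difficulty, and it is exactly where the specific entries $h_{N',j}=-\frac{N-N'}{N}\sum_k(H_1)_{k,j}$, $h_{N',N'}=N'(N-N')/N$, and the Q-normalization of column $N'$ must enter. I would first reduce everything to a quadratic form in $g_1,\dots,g_N$ and $y_0$. I would then compare coefficients of the cross terms $\inprod{g_N}{g_j}$ for $j\le N'$, $\inprod{g_{N'}}{g_i}$ for $i>N'$, and $\sqnorm{g_{N'}}$. The condition on $Q(k,N';H)$ should encode precisely the cancellation of the $g_{N'}$-drift propagated through $H_2$. If the direct coefficient matching becomes intractable, a fallback is to check \eqref{eqn:H-invariance} for $H$ by decomposing Q-sums along the block structure, which factor through column $N'$. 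One would then evaluate $\lambda^\star$ via \eqref{eqn:H-certificate-N}--\eqref{eqn:H-certificate-others} using the product structure of Q across blocks.

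\textbf{Vertex statement.} If $H_1$ and $H_2$ are vertices, each $j<N$ has exactly one positive $\lambda_{k,j}$ in the glued family. For $j<N'$ it is inherited from $H_1$, for $j>N'$ it is inherited (shifted) from $H_2$, and for $j=N'$ it is $k=N$. This is exactly the pattern of $\cG(H_1)\glue\cG(H_2)$, so by Theorem~\ref{theorem:all-sparsity-patterns-give-optimal-vertex} the matrix $H$ is the unique vertex with that arc diagram.
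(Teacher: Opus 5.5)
Your reduction is the paper's own. You verify \eqref{eqn:gluing-proof-template} by scaling the $H_1$ template by $N'/N$, running a shifted copy of $H_2$ from $\widetilde y_0=y_{N'}$ scaled by $N/(N-N')$, and adding the arc term with weight $N'/(N-N')$. You then conclude by uniqueness of H-certificates, and the vertex part via \cref{theorem:all-sparsity-patterns-give-optimal-vertex} is fine. The gap is that you stop exactly at the step that carries the content. Write $s_k=\sum_{\ell=N'}^{k-1}2h_{\ell,N'}$. The shifted template then leaves the drift term $\frac{N}{N-N'}\sum_{N'<j<k\le N}\lambda^\star_{k-N',j-N'}(H_2)(s_k-s_j)\inprod{g_{N'}}{g_k-g_j}$, whose $\inprod{g_{N'}}{g_j}$-coefficients are $\lambda^\star(H_2)$-weighted differences of partial sums of column $N'$. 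Column $N'$ is given only implicitly, through the prescribed values $Q(k,N';H)$. So ``comparing coefficients'' cannot evaluate these numbers unless you connect $\lambda^\star(H_2)$, the Q-functions, and column $N'$. The paper makes that connection with exactly the Q-formulas \eqref{eqn:H-certificate-N}--\eqref{eqn:H-certificate-others} you set out to avoid. It combines them with \cref{lemma:Q-lambda-identities}(a)--(c) and the binomial evaluation in \cref{lemma:gluing-helper-identities}(b) to prove $(\mathrm{I})+(\mathrm{III})=0$. Your ``should all combine'' asserts that cancellation rather than proving it.

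Your Q-free plan can be made to work, but it needs one identification your outline lacks. The Q-conditions force $h_{k,N'}=-\frac{N'}{N}\sum_{j=N'+1}^{k}h_{k,j}$ for $N'<k\le N-1$, i.e.\ column $N'$ is $-\frac{N'}{N}$ times the row sums of $H_2$; this matches Figure~\ref{figure:gluing-diagram-and-H-matrices} and the RDO vectors $b_k$.

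To check it, put $M=N-N'$ and let $w_i$ be the sum of all entries in rows $1,\dots,i-1$ of $H_2$.
\begin{itemize}
\item For $m\ge1$, \cref{lemma:Q-lambda-identities}(a) applied to column $N'$ of $H$ gives $Q(m+1,N';H)=\frac{N'}{N}\sum_i Q(m,i;H_2)(M-w_i)$.
\item The same lemma for $H_2$, summed over all its columns, gives $\sum_i Q(m,i;H_2)w_i=\sum_t Q(m+1,t;H_2)$.
\item H-invariance of $H_2$ then yields $\frac{N'}{N}\bigl[\binom{M}{m+1}-\frac1M\binom{M}{m+2}\bigr]=\frac{N'(M+1)}{N(m+2)}\binom{M-1}{m}$, which is the prescribed value. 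The case $Q(1,N';H)$ is the same computation using the total sum $\frac{M-1}{2}$ of $H_2$.
\end{itemize}
Since column recovery is unique, this is the column of Definition~\ref{definition:gluing-of-H-matrices}.

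With this identification, the tail is exactly $H_2$ driven by $\widehat g_i=g_{N'+i}-\frac{N'}{N}g_{N'}$, with $\widehat x_i=x_{N'+i}+\frac{N'}{N}g_{N'}$. All differences in the $H_2$ monotonicity terms are therefore unchanged, and no drift term survives. What remains is a quadratic identity in $g_N,g_{N'},x_N,x_{N'},y_0$, using $y_{N'}=\frac{N'}{N}(x_{N'}+g_{N'})+\frac{N-N'}{N}y_0-\frac{2N'(N-N')}{N}g_{N'}$. That identity checks by direct expansion. You need to supply either this lemma or the paper's computation to close the proof.
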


An equivalent way of stating \cref{theorem:gluing} is that $H_1 \glue H_2$ admits the convergence proof
\begin{align}
\label{eqn:gluing-proof-template}
\begin{aligned}
    0 & = N \sqnorm{g_N} + \inprod{g_N}{x_N - y_0} + \sum_{1\le j < k \le N'} \frac{N'}{N} \lambda_{k,j}^\star (H_1) \inprod{x_k - x_j}{g_k - g_j} + \frac{N'}{N-N'} \inprod{x_{N} - x_{N'}}{g_N - g_{N'}} \\
    & \quad + \sum_{N'+1 \le j < k \le N} \frac{N}{N-N'} \lambda_{k-N',j-N'}^\star (H_2) \inprod{x_k - x_j}{g_k - g_j} .
\end{aligned}
\end{align}
We prove \cref{theorem:gluing} in Appendix~\ref{section:proof-of-gluing-theorem} by directly verifying that \eqref{eqn:gluing-proof-template} holds for $H_1 \glue H_2$.

\paragraph{Description of the algorithm gluing and its implications.}
The algorithm defined by $H_1 \glue H_2$ can be understood as running the algorithm $H_1$ for the first $N'-1$ steps, then running the intermediate step 
\begin{align*}
    y_{N'} & = y_{N'-1} - 2h_{N',N'} g_{N'} + \frac{N-N'}{N} \sum_{j=1}^{N'-1} \sum_{k=j}^{N'-1} 2\left( H_1 \right)_{k,j} g_j = y_{N'-1} - \frac{2N'(N-N')}{N} g_{N'} + \frac{N-N'}{N} (y_0 - y_{N'-1})
\end{align*}
and finally running the algorithm $H_2$ with modification corresponding to $h_{N'+1,N'}, \dots, h_{N-1,N'}$.
In particular, because the first $N'-1$ steps of $H$ are identical to running $H_1$, which is an optimal $(N'-1)$-iteration algorithm, we obtain the following result.

\begin{corollary}
\label{corollary:intermediate-iterate-guarantee-for-glued-algorithm}
Let $N', H_1, H_2$ be as in \cref{theorem:gluing}. Then the algorithm defined by $H_1 \glue H_2$ satisfies
\[
    \sqnorm{y_{N'-1} - \opT y_{N'-1}} \le \frac{4\sqnorm{y_0 - y_\star}}{(N')^2} .
\]
\end{corollary}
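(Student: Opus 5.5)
The plan is to reduce the claim to the optimality of $H_1$ itself. The key structural observation is that, in the H-matrix representation \eqref{eqn:H-matrix-representation}, the iterate $y_{k}$ depends only on $y_0$ and on rows $1,\dots,k$ of the H-matrix. Concretely, $y_{k+1}$ is determined by $y_0,\dots,y_k$ and the entries $h_{k+1,1},\dots,h_{k+1,k+1}$.

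By \cref{definition:gluing-of-H-matrices}, the leading $(N'-1)\times(N'-1)$ block of $H_1 \glue H_2$ is exactly $H_1$. Rows $1,\dots,N'-1$ of $H_1\glue H_2$ contain no other nonzero entries, since all added entries lie in rows $N',\dots,N-1$. So the first step is an induction on $k=0,1,\dots,N'-1$: for any nonexpansive $\opT$ and any $y_0$, the iterates $y_0,\dots,y_{N'-1}$ generated by $H_1\glue H_2$ coincide with those generated by $H_1$. The base case $k=0$ is trivial. The inductive step uses only that row $k+1\le N'-1$ of the glued matrix equals row $k+1$ of $H_1$.

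The second step uses $H_1\in\cH_\star(N'-1)$. By definition of $\cH_\star(N'-1)$, the algorithm defined by $H_1$ satisfies \eqref{eqn:opt-rate} with $N$ replaced by $N'$, namely $\sqnorm{y_{N'-1}-\opT y_{N'-1}}\le 4\sqnorm{y_0-y_\star}/(N')^2$. This holds for every nonexpansive $\opT$ with a fixed point $y_\star$. Since the iterate $y_{N'-1}$ is the same for both algorithms, the bound transfers to $H_1\glue H_2$.

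Note that \cref{theorem:gluing} itself is not needed, only the block structure of \cref{definition:gluing-of-H-matrices}. The only delicate point is the degenerate case $N'=1$. Here $H_1$ is the empty $0\times 0$ matrix and $y_{N'-1}=y_0$, so the bound reads $\sqnorm{y_0-\opT y_0}\le 4\sqnorm{y_0-y_\star}$. This follows directly from nonexpansiveness: $\|y_0-\opT y_0\|\le \|y_0-y_\star\|+\|\opT y_\star-\opT y_0\|\le 2\|y_0-y_\star\|$. It is also consistent with the convention that $\cH_\star(0)$ contains the empty matrix. There is no real obstacle beyond checking the indexing of the block structure.
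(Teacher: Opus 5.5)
Your proposal is correct and follows the same reasoning as the paper. The paper likewise notes that the first $N'-1$ steps of $H_1 \glue H_2$ coincide with running $H_1 \in \cH_\star(N'-1)$, so the optimal rate for $y_{N'-1}$ transfers directly; your explicit check of the degenerate case $N'=1$ is a harmless extra.
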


\paragraph{Algorithms and H-certificates of size zero.}
Note that the definitions above and \cref{theorem:gluing} allow for the cases $N'=1$ or $N-N'=1$, which yields one of the following: the arc diagram that has a single node and no edge, the $0\times 0$ matrix $[\,\,]$, or $\lambda^\star\left([\,\,]\right) = \emptyset$.
All three notions of gluing (Definitions~\ref{definition:gluing-of-diagrams}--\ref{definition:gluing-of-lambdas}) are well-defined in these cases; in particular, if $H_1 = [\,\,]$ (\textit{resp}.\ $H_2 = [\,\,]$), the columns below $H_1$ (\textit{resp}.\ rows to the left of $H_2$) in the block structure~\eqref{eqn:H-matrix-gluing} will simply vanish.
In the smallest extreme, we will have $N'=N-N'=1$, and in this case,
\begin{align*}
    [\,\,] \glue [\,\,] = \begin{bmatrix} \frac{1}{2} \end{bmatrix} .    
\end{align*}

\subsection{Decomposable and basic algorithms and diagrams}
\label{subsection:decomposable-and-basic}

Conversely to the previous section where we discussed how to compose algorithms and diagrams, we can also \textit{decompose} algorithms and diagrams. 

\begin{figure}[t]
\centering

\begin{subfigure}[b]{0.47\textwidth}
    \centering
    \begin{tikzpicture}[scale=0.9,
        dot/.style={circle,fill,inner sep=1.8pt},
        every node/.style={font=\large}
    ]
        \foreach \i in {1,...,8}{
            \coordinate (L\i) at (\i,0);
            \fill (L\i) circle (2.2pt);
            \node[below=3pt] at (L\i) {\i};
        }

        \draw[line width=0.8pt, color=gray, densely dashed] (L1) -- (L8);

        \foreach \a/\b in {1/4,2/3,3/4,5/7,6/7,7/8}{
            \pgfmathsetmacro{\rad}{(\b-\a)/2}
            \draw[line width=0.8pt]
                (L\a) arc[start angle=180,end angle=0,radius=\rad];
        }

        \draw[line width=0.8pt, color=red]
            (L4) arc[start angle=180,end angle=0,radius=(8-4)/2];
        
    \end{tikzpicture}
    \caption{A non-crossing arc diagram on $8$ nodes.}
    \label{fig:noncrossing-8}
\end{subfigure}
\hfill
\begin{subfigure}[b]{0.47\textwidth}
    \centering
    \begin{tikzpicture}[scale=0.9,
        dot/.style={circle,fill,inner sep=1.8pt},
        every node/.style={font=\large}
    ]
        \foreach \i in {1,...,8}{
            \coordinate (R\i) at (\i,0);
            \fill (R\i) circle (2.2pt);
            \node[below=3pt] at (R\i) {\i};
        }

        \draw[line width=0.8pt, color=gray, densely dashed] (R1) -- (R8);

        \foreach \a/\b in {3/4,5/7,6/7,7/8}{
            \pgfmathsetmacro{\rad}{(\b-\a)/2}
            \draw[line width=0.8pt]
                (R\a) arc[start angle=180,end angle=0,radius=\rad];
        }

        \foreach \a/\b in {1/4,2/5,4/8}{
            \pgfmathsetmacro{\rad}{(\b-\a)/2}
            \draw[line width=0.8pt, color=blue]
                (R\a) arc[start angle=180,end angle=0,radius=\rad];
        }

    \end{tikzpicture}
    \caption{A crossing arc diagram on $8$ nodes.}
    \label{fig:crossing-8}
\end{subfigure}

\caption{Examples of basic (non-crossing) and non-basic (crossing) arc diagrams on $N=8$ nodes. \textbf{Left:} The diagram is decomposable at $N'=4$. By removing the red arc $(N,N')=(8,4)$, the graph is separated into two subdiagrams, both of which are again decomposable.
\textbf{Right:} Each arc colored in blue crosses another arc, and the diagram is not decomposable.
}
\label{figure:basic-and-crossing-arc-diagrams}
\end{figure}

\begin{definition}
\label{definition:decomposable-diagram}
Let $\cG$ be an arc diagram on nodes $\cV = \{1,\dots,N\}$.
If there exists $1\le N' \le N-1$ such that $k(N') = N$ and $k(j) \le N'$ for all $j=1,\dots,N'-1$, then we say $\cG$ is \textit{decomposable at} $N'$.
\end{definition}

\cref{definition:decomposable-diagram} means that removing the arc $(N,N')$ from $\cG$ divides it into two connected components $\cG_{1,N'}$ with nodes $\{1,\dots,N'\}$, and $\cG_{N'+1,N}$ with nodes $\{N'+1, \dots, N\}$ (which can be relabeled to $\{1,\dots,N-N'\}$), so $\cG = \cG_{1,N'} \glue \cG_{N'+1,N}$.

The corresponding notion of decomposability for H-matrices is defined as follows.

\begin{definition}
\label{definition:decomposable-algorithm}
$H \in \cH_\star(N-1)$ is \textit{decomposable at} $N' \in \{1,\dots,N-1\}$ if
\begin{itemize}
    \item $\lambda_{N,N'}^\star (H) > 0$ and $\lambda_{k,N'}^\star (H) = 0$ for $k=N'+1,\dots,N-1$, and
    \item $\lambda_{k,j}^\star (H) = 0$ for all $j=1,\dots,N'-1$ and $k=N'+1,\dots,N$.
\end{itemize}
\end{definition}

From the above definitions, it is immediate that $H$ is an optimal vertex algorithm decomposable at $N'$ if and only if its arc diagram $\cG(H)$ is decomposable at $N'$.
This is not a merely combinatorial property, but it has the following important implication on the algorithm's convergence property.

\begin{corollary}
\label{corollary:intermediate-iterate-guarantee-for-decomposable-algorithm}
If $H$ is a vertex of $\cH_\star(N-1)$ and is decomposable at $N' \in \{1,\dots,N-1\}$, then it has an optimal guarantee on $y_{N'-1}$, i.e., $\sqnorm{y_{N'-1} - \opT y_{N'-1}} \le \frac{4\sqnorm{y_0 - y_\star}}{(N')^2}$.
\end{corollary}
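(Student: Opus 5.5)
The plan is to reduce the statement to \cref{corollary:intermediate-iterate-guarantee-for-glued-algorithm} by showing that a vertex $H$ decomposable at $N'$ is exactly the glued algorithm $H_1 \glue H_2$ for suitable optimal vertices $H_1, H_2$.

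First, by the remark after \cref{definition:decomposable-algorithm}, $\cG(H)$ is decomposable at $N'$ in the sense of \cref{definition:decomposable-diagram}. Removing the arc $(N,N')$ gives $\cG(H) = \cG_{1,N'} \glue \cG_{N'+1,N}$. Here $\cG_{1,N'}$ is an arc diagram on $N'$ nodes: every $j<N'$ has $k(j)\le N'$, so each such node has a unique arc to a larger node within $\{1,\dots,N'\}$. Likewise $\cG_{N'+1,N}$, after relabeling, is an arc diagram on $N-N'$ nodes, since each $j\in\{N'+1,\dots,N-1\}$ has $k(j)>j>N'$.

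Second, by \cref{theorem:all-sparsity-patterns-give-optimal-vertex} there are unique vertices $H_1 \in \cH_\star(N'-1)$ and $H_2\in\cH_\star(N-N'-1)$ with $\cG(H_1)=\cG_{1,N'}$ and $\cG(H_2)=\cG_{N'+1,N}$. The degenerate cases $N'=1$ or $N-N'=1$ use the size-zero conventions.

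Third, \cref{theorem:gluing} says $H_1\glue H_2$ is a vertex of $\cH_\star(N-1)$ with $\cG(H_1\glue H_2)=\cG(H_1)\glue\cG(H_2)=\cG(H)$. The uniqueness part of \cref{theorem:all-sparsity-patterns-give-optimal-vertex} states that a sparsity pattern determines the vertex uniquely. Hence $H = H_1\glue H_2$.

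Finally, \cref{corollary:intermediate-iterate-guarantee-for-glued-algorithm} applies directly. The first $N'-1$ rows of $H$ coincide with $H_1$, so $y_{N'-1}$ is the output of the optimal $(N'-1)$-step algorithm $H_1$, which gives the bound $4\sqnorm{y_0-y_\star}/(N')^2$. For $N'=1$ the bound reads $\sqnorm{y_0-\opT y_0}\le 4\sqnorm{y_0-y_\star}$, which follows directly from nonexpansiveness.

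The only delicate step is the identification $H=H_1\glue H_2$. It relies on the uniqueness in \cref{theorem:all-sparsity-patterns-give-optimal-vertex} and on the compatibility of the two gluing notions in \cref{theorem:gluing}. I would check there that the arc diagram of a vertex matches the diagram condition, not merely the H-certificate pattern. This holds because a vertex has exactly one positive $\lambda^\star_{k,j}$ per column $j$, so the vertex is determined by its arc diagram.
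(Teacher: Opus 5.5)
Your proposal is correct and follows the paper's proof essentially step for step. You split $\cG(H)=\cG_{1,N'}\glue\cG_{N'+1,N}$, take the vertices $H_1,H_2$ given by \cref{theorem:all-sparsity-patterns-give-optimal-vertex}, identify $H=H_1\glue H_2$ via \cref{theorem:gluing} and uniqueness, and then invoke \cref{corollary:intermediate-iterate-guarantee-for-glued-algorithm}. Your added check that the two blocks are valid arc diagrams, and your direct treatment of the $N'=1$ case, are harmless and correct extra details.
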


\begin{proof}
Because $H$ is decomposable at $N'$, its arc diagram $\cG=\cG(H)$ is decomposable at $N'$ and $\cG = \cG_{1,N'} \glue \cG_{N'+1,N}$.
Let $H_1$ and $H_2$ be the optimal vertex algorithms corresponding to $\cG_{1,N'}$ and $\cG_{N'+1,N}$.
Then
\[
    \cG(H_1 \glue H_2) = \cG(H_1) \glue \cG(H_2) = \cG_{1,N'} \glue \cG_{N'+1,N} = \cG ,
\]
which implies $H = H_1 \glue H_2$ by \cref{theorem:all-sparsity-patterns-give-optimal-vertex}.
Hence, the conclusion follows from \cref{corollary:intermediate-iterate-guarantee-for-glued-algorithm}.
\end{proof}

Next, we characterize the maximally decomposable vertex algorithms (\textit{resp}.\ diagrams), which can be recursively decomposed until one reaches $0\times 0$ blocks $[\,\,]$ (\textit{resp}.\ single-node diagrams).
They can be defined recursively as follows.

\begin{definition}
\label{definition:basic-matrix-and-diagram}
We define the $0\times 0$ empty matrix $[\,\,]$ to be basic.
Recursively, we define any H-matrix as \textit{basic} if it is a gluing of smaller basic H-matrices.
Analogously, we define the single node diagram to be basic, and recursively define any arc diagram as basic if it is a gluing of smaller basic diagrams.
\end{definition}

\paragraph{Connection to prior work.}
This definition is reminiscent of the recent framework for composing the step-sizes for gradient descent on smooth convex functions~\citep{grimmerComposingOptimizedStepsize, zhangAcceleratedGradientDescent2026}, which appears in the context of accelerating gradient descent via careful step-size selection \citep{grimmerProvablyFasterGradient2024, altschulerAccelerationStepsizeHedging2025a, altschulerAccelerationStepsizeHedging2025, grimmerAcceleratedObjectiveGap2025}.
In that setting, one-dimensional step-size schedule vectors can be composed using distinct join operations (the so-called $f$-, $g$- or $s$-joins) depending on the performance measure of interest.
In contrast, we glue full H-matrices in a way that preserves the entire proof structure, yielding another H-matrix that is optimal with respect to the same measure of convergence.
In this sense, our result can be viewed as a structurally richer and more symmetric analogue of step-size composition in \citep{grimmerComposingOptimizedStepsize, zhangAcceleratedGradientDescent2026}, and exploring this connection further seems to be an interesting future direction.\medskip

Definition~\ref{definition:basic-matrix-and-diagram} of basic H-matrices and arc diagrams, as written, is not explicit. However, there is the following simple and explicit combinatorial characterization through arc diagrams (also see Figure~\ref{figure:basic-and-crossing-arc-diagrams}).

\begin{proposition}
\label{proposition:basic-iff-noncrossing}
$H \in \reals^{(N-1)\times (N-1)}$ is basic if and only if it is a vertex of $\cH_\star(N-1)$ and its arc diagram $\cG(H)$ is non-crossing, i.e., there is no pair of indices $(j_1, j_2)$ such that $j_1 < j_2 < k(j_1) < k(j_2)$.
\end{proposition}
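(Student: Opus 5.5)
The proof reduces \cref{proposition:basic-iff-noncrossing} to a purely combinatorial statement about arc diagrams: an arc diagram is basic (in the sense of \cref{definition:basic-matrix-and-diagram}) if and only if it is non-crossing. The transfer between matrices and diagrams comes from \cref{theorem:gluing} and \cref{theorem:all-sparsity-patterns-give-optimal-vertex}.

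First, suppose $H$ is basic. We induct on $N$. The base case $[\,\,]$ is the trivial vertex whose diagram is the single node. If $H = H_1 \glue H_2$ with $H_1, H_2$ basic, then by induction each $H_i$ is a vertex with non-crossing diagram. \cref{theorem:gluing} then shows that $H$ is a vertex with $\cG(H) = \cG(H_1)\glue\cG(H_2)$. It remains to see that gluing preserves non-crossing. Arcs of $\cG_1$ live in $\{1,\dots,N'\}$ and arcs of the shifted $\cG_2$ live in $\{N'+1,\dots,N\}$, so these cannot cross each other. The new arc $(N',N)$ has endpoints at the extremes: an arc $(j,k(j))$ crossing it would need $j<N'<k(j)<N$ or $N'<j<N<k(j)$, and neither is possible. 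Hence $\cG(H)$ is non-crossing.

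Conversely, let $H$ be a vertex with non-crossing arc diagram $\cG$; again we induct on $N$. If $N=1$, $H=[\,\,]$ is basic. For $N\ge 2$, the node $N-1$ has $k(N-1)=N$, so at least one node connects directly to $N$. Let $N'$ be the \emph{smallest} $j$ with $k(j)=N$. We claim $\cG$ is decomposable at $N'$, i.e.\ $k(j)\le N'$ for all $j<N'$.
\begin{itemize}
\item If some $j<N'$ had $N'<k(j)<N$, then $j<N'<k(j)<k(N')=N$ is a crossing, which is excluded.
\item If $k(j)=N$, this contradicts the minimality of $N'$.
\end{itemize}
So $\cG=\cG_{1,N'}\glue\cG_{N'+1,N}$, and both pieces are non-crossing sub-diagrams. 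Sub-diagrams inherit non-crossing, and relabeling by a shift preserves it. By induction, the vertex algorithms $H_1,H_2$ of these pieces are basic. By \cref{theorem:gluing}, $\cG(H_1\glue H_2)=\cG$, and the uniqueness part of \cref{theorem:all-sparsity-patterns-give-optimal-vertex} gives $H=H_1\glue H_2$. Hence $H$ is basic.

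The main point needing care is the choice of $N'$ in the decomposition step. Choosing the largest or an arbitrary child of $N$ fails, because arcs from nodes below it could land on other children of $N$. Taking the smallest child of $N$ makes the crossing argument go through. The remaining subtlety is only bookkeeping: checking that the degenerate cases $N'=1$ or $N'=N-1$ are covered by the size-zero conventions, which the paper already states are well-defined.
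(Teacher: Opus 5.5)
Your proposal is correct and follows the paper's proof essentially step for step. The paper also inducts on $N$ and uses \cref{theorem:gluing} with the left-block/right-block/gluing-arc separation for the forward direction. For the converse it decomposes at the leftmost node joined to $N$, which the paper isolates as \cref{lemma:leftmost-arc-to-N-gives-decomposition}, and identifies $H=H_1\glue H_2$ via the uniqueness in \cref{theorem:all-sparsity-patterns-give-optimal-vertex}.
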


To prove this result, we first introduce a handy lemma characterizing the non-crossing property.
\begin{lemma}
\label{lemma:leftmost-arc-to-N-gives-decomposition}
If $\cG$ is a non-crossing arc diagram on $\{1,\dots,N\}$ and
\[
    N' = \min\{j\in\{1,\dots,N-1\}: k(j)=N\} 
\]
is the leftmost vertex joined directly to $N$, then $\cG$ is decomposable at $N'$.
Furthermore, this is the unique index $i \in \{1,\dots,N-1\}$ at which $\cG$ is decomposable.
\end{lemma}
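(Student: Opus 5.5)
The plan is to verify the two conditions of \cref{definition:decomposable-diagram} directly from the non-crossing property, and then to rule out decomposability at any other index.

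\textbf{Existence of the decomposition.} The set $\{j\le N-1: k(j)=N\}$ is nonempty, since $k(N-1)=N$ necessarily. Hence $N'$ is well defined and $k(N')=N$ holds by definition. It remains to show $k(j)\le N'$ for every $j<N'$. Suppose instead that some $j<N'$ has $k(j)>N'$. Since $j<N'$ and $k(N')=N$ is the minimal-index arc into $N$, we have $k(j)\ne N$, so $N'<k(j)<N$. Then $j<N'<k(j)<k(N')=N$, which is exactly a crossing with $j_1=j$, $j_2=N'$. This contradicts the hypothesis, so $\cG$ is decomposable at $N'$.

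\textbf{Uniqueness.} Let $\cG$ be decomposable at some $i$. Then $k(i)=N$, so $i\ge N'$ by minimality of $N'$. If $i>N'$, then $N'\le i-1$ and the second condition of decomposability at $i$ requires $k(N')\le i<N$. This contradicts $k(N')=N$. Hence $i=N'$. Note that this uniqueness step does not even use the non-crossing property; it only uses the minimality of $N'$.

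The only mildly delicate point is the case distinction $k(j)=N$ versus $k(j)<N$ in the existence step. The first case is excluded by minimality of $N'$ and the second by non-crossing. No real obstacle is expected.
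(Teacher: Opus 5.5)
Your proposal is correct and follows essentially the same argument as the paper: for existence, minimality of $N'$ rules out $k(j)=N$ and non-crossing rules out $N'<k(j)<N$. For uniqueness, minimality of $N'$ together with $k(N')=N>i$ rules out any other index $i$ (and your observation that the set is nonempty because $k(N-1)=N$ fills a detail the paper leaves implicit).
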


\begin{proof}
By definition of $N'$, we have $k(N')=N$.
Now we claim that $k(j)\le N'$ for $j=1,\dots,N'-1$, which will immediately show that $\cG$ is decomposable at $N'$. 
Assume to the contrary that $k(j)>N'$ for some $1\le j < N'$. 
By minimality of $N'$ among the nodes that are directly connected to $N$, we must have $k(j)<N$. Therefore
\[
    j < N' < k(j) < N = k(N'),
\]
which contradicts the non-crossing assumption.

Conversely, if $\cG$ is decomposable at $i$, then we must have $k(i) = N$ and $k(j) \le i < N$ for $j < i$, which implies $i = \min\{j\in\{1,\dots,N-1\}: k(j)=N\} = N'$.
\end{proof}

\begin{proof}[Proof of \cref{proposition:basic-iff-noncrossing}]

We argue by induction on $N$.
The base case is $N=1$, when $H = [\,\,]$ is the $0\times 0$ empty matrix and $\cG$ is a singleton graph with no edge. In this case, there is nothing to prove.
Now let $N\ge 2$ and suppose that the equivalence holds for all H-matrices of size strictly less than $N-1$.
Let $H \in \reals^{(N-1)\times (N-1)}$.

\smallskip
\noindent $[\Longrightarrow]$ Assuming that $H$ is basic, we show that its arc diagram $\cG$ is non-crossing. By definition, $H = H_1 \glue H_2$ for some smaller basic H-matrices $H_1 \in \reals^{(N_1-1)\times(N_1-1)}$ and $H_2 \in \reals^{(N_2-1)\times(N_2-1)}$, where $N_1+N_2=N$.
By the induction hypothesis, $H_1$ and $H_2$ are optimal vertex algorithms whose arc diagrams $\cG_1 = \cG(H_1)$ and $\cG_2 = \cG(H_2)$ are non-crossing.
Then by \cref{theorem:gluing}, $H = H_1\glue H_2$ is an optimal vertex algorithm corresponding to the arc diagram $\cG = \cG_1 \glue \cG_2$. 
Now all edges of $\cG$ are one of the following three types:
\begin{itemize}
    \item edges within the left block $\cG_1$
    \item edges within the right block $\cG_2$ 
    \item the gluing arc joining the rightmost vertex of $\cG_1$ to the rightmost vertex of $\cG$.
\end{itemize}
Two edges inside the same block do not cross because $\cG_1$ and $\cG_2$ are non-crossing.
Now observe that an edge in the left block has both endpoints in $\{1, \dots, N_1\}$, whereas an edge in the right block has both endpoints in $\{N_1+1, \dots, N_1+N_2 = N\}$, which shows that a left-block edge, a right-block edge and the gluing arc $(N,N_1)$ cannot cross each other.
This proves that $\cG$ is non-crossing.

\smallskip
\noindent$[\Longleftarrow]$
Conversely, now we assume that $H$ is an optimal vertex algorithm with a non-crossing arc diagram $\cG$, and show that $H$ is basic. Let $N'$ be as in \cref{lemma:leftmost-arc-to-N-gives-decomposition}, so that $\cG$ is decomposable at $N'$.
Then, removing the arc $(N,N')$ separates $\cG$ into two subdiagrams $\cG_{1,N'}$ and $\cG_{N'+1,N}$, and they are both non-crossing because $\cG$ is.
By \cref{theorem:all-sparsity-patterns-give-optimal-vertex}, $\cG_{1,N'}$ and $\cG_{N'+1,N}$ uniquely correspond to optimal vertex algorithms $H_1 \in \cH_\star(N'-1)$ and $H_2 \in \cH_\star(N-N'-1)$.
Then, $H_1$ and $H_2$ are basic by the induction hypothesis, and because $H$ is the unique vertex algorithm corresponding to $\cG = \cG_{1,N'} \glue \cG_{N'+1,N}$, we have $H = H_1 \glue H_2$ by \cref{theorem:gluing}.
Therefore $H$ is basic.

This completes the induction, and hence the proof.
\end{proof}

OHM is a basic algorithm, obtained by recursively gluing $[\,\,]$ to the right end. More precisely, we have
\begin{align*}
    H_\text{OHM}(k+1) = H_\text{OHM}(k) \glue [\,\,] \text{ for } k \ge 0 \implies H_\text{OHM}(N-1) = (( \cdots (([\,\,] \glue [\,\,]) \glue [\,\,]) \glue \dots )\glue [\,\,]) \glue [\,\,]
\end{align*}
where we adopt the convention $H_\text{OHM}(0) = [\,\,]$, and $\glue$ is repeated $N-1$ times in the last expression.
On the other hand, Dual-OHM is also basic, and it is obtained by recursively gluing $[\,\,]$ to the left end, i.e.,
\begin{align*}
    H_\text{Dual-OHM}(k+1) = [\,\,] \glue H_\text{Dual-OHM}(k) \text{ for } k \ge 0 \implies H_\text{Dual-OHM}(N-1) = [\,\,] \glue ( [\,\,] \glue ( \dots \glue ([\,\,] \glue ([\,\,] \glue [\,\,])) \cdots )) 
\end{align*}
where we similarly let $H_\text{Dual-OHM}(0) = [\,\,]$.

So far, we have defined and studied basic algorithms from a purely combinatorial motivation.
However, in \cref{section:duality-via-diagrams}, we show that the non-crossing property of arc diagrams is in fact inherently related to the concept of H-duality \citep{KimOzdaglarParkRyu2023_timereversed, YoonKimSuhRyu2024_optimal, YoonRyuGrimmerInvariance2025}, which establishes a connection between the proofs of algorithms that are in an H-dual relationship.
At the moment, we continue the combinatorial exploration and add another result on basic diagrams, which will be revisited later.

\begin{proposition}
\label{proposition:catalan-many-basic-arc-diagrams}
The numbers of basic (non-crossing) arc diagrams on $N$ nodes and of the corresponding vertex algorithms are both the Catalan number $C_{N-1} = \frac{1}{N}\binom{2N-2}{N-1}$.
\end{proposition}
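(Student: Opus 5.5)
Let $b_N$ denote the number of basic (equivalently, by \cref{proposition:basic-iff-noncrossing}, non-crossing) arc diagrams on $N$ nodes. The plan is to show that $b_N$ satisfies the Catalan recurrence
\[
    b_1 = 1, \qquad b_N = \sum_{N'=1}^{N-1} b_{N'}\, b_{N-N'} \quad (N \ge 2),
\]
which has solution $b_N = C_{N-1}$. Once the count for diagrams is settled, the count for vertex algorithms follows at once: \cref{theorem:all-sparsity-patterns-give-optimal-vertex} puts arc diagrams on $N$ nodes in bijection with vertices of $\cH_\star(N-1)$, and \cref{proposition:basic-iff-noncrossing} says this bijection sends non-crossing diagrams exactly to basic H-matrices.

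For the recurrence, I would define a map
\[
    \Phi\colon \bigsqcup_{N'=1}^{N-1} \mathcal{B}_{N'} \times \mathcal{B}_{N-N'} \to \mathcal{B}_N, \qquad (\cG_1,\cG_2) \mapsto \cG_1 \glue \cG_2,
\]
where $\mathcal{B}_n$ denotes the set of non-crossing arc diagrams on $n$ nodes.

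\textbf{Well-definedness.} This is the forward direction of the proof of \cref{proposition:basic-iff-noncrossing}: edges within one block, edges across the two blocks, and the gluing arc $(N,N')$ never cross.

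\textbf{Surjectivity.} Take $\cG \in \mathcal{B}_N$. By \cref{lemma:leftmost-arc-to-N-gives-decomposition}, $\cG$ is decomposable at $N' = \min\{j : k(j) = N\}$. Removing the arc $(N,N')$ then gives $\cG = \cG_{1,N'} \glue \cG_{N'+1,N}$ with both pieces non-crossing.

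\textbf{Injectivity.} Suppose $\cG_1 \glue \cG_2 = \cG$ with $\cG_1$ on $N'$ nodes. Then $\cG$ is decomposable at $N'$, since $k(N') = N$ and all arcs from $j < N'$ stay inside $\{1,\dots,N'\}$. The uniqueness part of \cref{lemma:leftmost-arc-to-N-gives-decomposition} forces $N'$ to be that leftmost index. With $N'$ fixed, the two pieces are recovered by deleting the arc $(N,N')$ and relabeling, so $(\cG_1,\cG_2)$ is determined by $\cG$. In other words, decompositions of non-crossing diagrams are unique.

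Combining the three steps gives $b_N = \sum_{N'} b_{N'} b_{N-N'}$, and $b_1 = 1$ counts the single-node diagram. Setting $c_n = b_{n+1}$ turns this into $c_0 = 1$ and $c_n = \sum_{i=0}^{n-1} c_i\, c_{n-1-i}$, the standard Catalan recurrence. Hence $c_n = C_n = \frac{1}{n+1}\binom{2n}{n}$, that is, $b_N = \frac{1}{N}\binom{2N-2}{N-1}$. If a self-contained argument is wanted, I would cite this recurrence or solve it via the generating function $c(x) = 1 + x\,c(x)^2$.

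The only delicate point is injectivity, i.e., that each non-crossing diagram has exactly one gluing decomposition. This is exactly the uniqueness statement already proved in \cref{lemma:leftmost-arc-to-N-gives-decomposition}, so I expect no real obstacle beyond keeping the indexing (nodes versus matrix size) consistent.
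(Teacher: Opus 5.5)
Your proposal is correct and follows essentially the same route as the paper. The uniqueness of the splitting index in \cref{lemma:leftmost-arc-to-N-gives-decomposition} makes gluing a bijection onto non-crossing diagrams, which yields the index-shifted Catalan recurrence $a_N=\sum_{N'=1}^{N-1}a_{N'}a_{N-N'}$; your transfer of the count to vertex algorithms via \cref{theorem:all-sparsity-patterns-give-optimal-vertex} and \cref{proposition:basic-iff-noncrossing} spells out a step the paper leaves implicit.
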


\begin{proof}
\cref{lemma:leftmost-arc-to-N-gives-decomposition} shows that a non-crossing arc diagram $\cG$ has a unique splitting point $N'$, and then we have $\cG=\cG_1\glue \cG_2$ where $\cG_1$ and $\cG_2$ are basic arc diagrams on $N'$ and $N-N'$ nodes, respectively.
Conversely, for each $N'\in\{1,\dots,N-1\}$ and any pair $(\cG_1,\cG_2)$ of basic arc diagrams on $N'$ and $N-N'$ nodes, the gluing $\cG_1\glue \cG_2$ injectively yields a basic arc diagram on $N$ nodes. 
Therefore, if $a_N$ denotes the number of basic arc diagrams with $N$ nodes, then $a_1=1$ and 
\[
a_N=\sum_{N'=1}^{N-1} a_{N'}a_{N-N'}
\]
for $N=2,3,\dots$, which is exactly the Catalan recurrence with index shifted by 1. Thus $a_N = C_{N-1} = \frac{1}{N}\binom{2N-2}{N-1}$.
\end{proof}

\paragraph{Intermediate-iterate guarantees for basic algorithms.}
If $H$ is a basic algorithm, then we can recursively apply \cref{corollary:intermediate-iterate-guarantee-for-decomposable-algorithm} and obtain $\sqnorm{y_{j-1} - \opT y_{j-1}} \le \frac{4\sqnorm{y_0 - y_\star}}{j^2}$ for all $j \in \cC(1) = \{1, k(1), k(k(1)), \dots, N\}$. 
That is, we have optimal guarantees for intermediate iterates along the unique increasing path from $1$ to $N$, which are the ``overarching bumps'' in the non-crossing arc diagram $\cG(H)$.
In the case of OHM, we have $\cC(1) = \{1, 2, \dots, N\}$ so we have optimal guarantees on all intermediate iterates, which aligns with the fact that OHM is (the only) anytime optimal algorithm \citep{YoonRyuGrimmerInvariance2025}.
On the other hand, in the case of Dual-OHM, we have $\cC(1) = \{1, N\}$, and the guarantee $\sqnorm{y_0 - \opT y_0} \le 4\sqnorm{y_0 - y_\star}$ for $N'=1$ is vacuous, so it guarantees optimality only at the last iterate $y_{N-1}$.
Interestingly, however, stronger guarantees on intermediate iterates appear to come at the expense of the algorithm's robustness, in the sense described in Section~\ref{subsection:robustness}.

\section{Characterization of H-duality via arc diagrams}
\label{section:duality-via-diagrams}

H-dual \citep{KimFessler2021_optimizing, KimOzdaglarParkRyu2023_timereversed, KimParkOzdaglarDiakonikolasRyu2023_mirror} is an operation that takes the anti-diagonal transpose of H-matrices, and it naturally preserves the H-invariance conditions~\eqref{eqn:H-invariance} \citep{YoonKimSuhRyu2024_optimal}.
On the other hand, there exist optimal H-matrices whose H-duals are not optimal, as H-certificates may become negative when $H \in \cH_\star(N-1)$ is dualized.
In particular, when $N=4$ we have $(N-1)! = 6$ optimal vertex algorithms, and among them, there are two H-dual pairs, one is self-dual, and the remaining one becomes suboptimal under H-dual (i.e., $H^\at \notin \cH_\star(N-1)$) \citep{YoonRyuGrimmerInvariance2025}.

In this section, we thoroughly study the effect of H-dual on optimal vertex algorithms.
Based on the explicit characterization of the mapping between the H-certificates $\lambda^\star (H)$ and $\lambda^\star (H^\at)$ for $H \in \cH_\star(N-1)$, we prove that an optimal vertex algorithm stays optimal if and only if its arc diagram $\cG$ is basic, or equivalently non-crossing.
When this is the case, $H^\at$ is also a vertex of $\cH_\star(N-1)$, its arc diagram $\cG\left(H^\at\right)$ can be directly computed from $\cG$ via a simple recursive procedure, and nonzero $\lambda^\star_{k,j}(H^\at)$ are given by inverses of nonzero H-certificates $\lambda^\star(H)$ of $H$ under a combinatorial index correspondence.

\subsection{Transformation of H-certificates under H-dual}

Let $H \in \cH_\star(N-1)$ be an optimal vertex algorithm, and let $\cG(H) = (\cV, \cE, \lambda^\star(H))$ be its arc diagram, weighted by $\lambda^\star(H)$.
Within this subsection, we denote $\lambda_{k,j}^\star (H) = \lambda_{k,j}$ and $\cG = \cG(H)$ for simplicity.
Let $L(\cG)$ be the graph Laplacian of $\cG$, i.e., 
\begin{align*}
    L(\cG) = D(\cG) - A(\cG)
\end{align*}
where $D(\cG)$ is the diagonal degree matrix
\begin{align*}
    D(\cG) = \mathrm{diag}\left( \lambda_{2,1} + \lambda_{3,1} + \dots + \lambda_{N,1} , \lambda_{2,1} + \lambda_{3,2} + \dots + \lambda_{N,2} , \dots , \lambda_{N,1} + \lambda_{N,2} + \cdots + \lambda_{N,N-1} \right)
\end{align*}
which is a diagonal matrix whose $(j,j)$-entry is $\sum_{k=1}^{j-1} \lambda_{j,k} + \sum_{k=j+1}^N \lambda_{k,j}$ and $A(\cG)$ is the adjacency matrix
\begin{align*}
    A(\cG) = \begin{bmatrix}
        0 & \lambda_{2,1} & \lambda_{3,1} & \cdots & \lambda_{N,1} \\
        \lambda_{2,1} & 0 & \lambda_{3,2} & \cdots & \lambda_{N,2} \\
        \lambda_{3,1} & \lambda_{3,2} & 0 & \cdots & \lambda_{N,3} \\
        \vdots & \vdots & \vdots & \ddots & \vdots \\ 
        \lambda_{N,1} & \lambda_{N,2} & \lambda_{N,3} & \cdots & 0
    \end{bmatrix} .
\end{align*}
Then, let $\Lambda(H) = -E_{N,N} - L(\cG) \in \reals^{N \times N}$, where $e_N = \begin{bmatrix} 0 & \cdots & 0 & 1 \end{bmatrix}^\transpose \in \reals^N$ and $E_{N,N} = e_N e_N^\transpose$.
In other words, $\Lambda(H)$ is the negative graph Laplacian with $1$ additionally subtracted from the bottom-rightmost entry:
\begin{align*}
    \Lambda(H) = \begin{bmatrix}
        -\lambda_{2,1} - \lambda_{3,1} - \dots -\lambda_{N,1} & \lambda_{2,1} & \cdots & \lambda_{N,1} \\
        \lambda_{2,1} & -\lambda_{2,1} - \lambda_{3,2} - \dots -\lambda_{N,2} & \cdots & \lambda_{N,2} \\
        \vdots & \vdots & \ddots & \vdots \\
        \lambda_{N,1} & \lambda_{N,2} & \cdots & -\lambda_{N,1} - \lambda_{N,2} - \cdots - \lambda_{N,N-1} - 1
    \end{bmatrix} .
\end{align*}
In fact, $\Lambda(H)$ can be defined algebraically for any lower-triangular $H \in \reals^{(N-1)\times (N-1)}$ satisfying H-invariance~\eqref{eqn:H-invariance}, with possibly negative H-certificates as off-diagonal entries.
With that understanding, we state:

\begin{theorem}[\citep{shuUnifiedTreatmentDuality2026}]
\label{theorem:lambda-of-H-dual}
For any $H \in \cH_\star(N-1)$, $\Lambda(H)$ is invertible and $\Lambda(H^\at) = \Delta \Lambda(H)^{-1} \Delta$ where
\[
    \Delta = \begin{bmatrix} 
        {} & {} & -1 & 1 \\
        & \iddots & 1 \\
        -1 & \iddots \\
        1 &
    \end{bmatrix} \in \reals^{N \times N}
\]
is the matrix of discrete derivatives with order reversing. 
\end{theorem}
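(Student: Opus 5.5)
We will prove the theorem by realizing $\Lambda(H)$ as the coefficient matrix of a quadratic form, then reading the identity $\Lambda(H^\at) = \Delta \Lambda(H)^{-1}\Delta$ as a Schur-complement or Lagrangian-duality statement between the proof of $H$ and the proof of $H^\at$.

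\textbf{Step 1: rewrite the proof template as a matrix identity.} We start from the universal proof template~\eqref{eqn:optimal-family-proof-core-identity}. Let $G=[g_1,\dots,g_N]$. Using~\eqref{eqn:H-matrix-representation-via-g-vectors}, we express $x_k = y_{k-1} - g_k$ and $y_{k-1} = y_0 - \sum 2h_{\cdot,\cdot} g_\cdot$ in the basis $(y_0, g_1,\dots,g_N)$. The template holds identically in $y_0$, which forces the coefficients of $y_0$ to cancel; this is exactly where the H-invariance condition~\eqref{eqn:H-invariance} enters. What remains is a Gram identity in $G$ of the form
\[
0 = \operatorname{tr}\bigl(G\,[\,\text{sym}(\Lambda(H)\,X(H)) + \text{const}\,]\,G^\transpose\bigr),
\]
where $X(H)$ is the matrix of $x$-coordinates. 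The terms $\sum \lambda_{k,j}\inprod{x_k-x_j}{g_k-g_j}$ contribute $-\inprod{x}{L g}$, and the terms $N\sqnorm{g_N}$ and $\inprod{g_N}{x_N-y_0}$ supply the $-E_{N,N}$ correction. It is cleaner to change variables to differences $u = \Delta$-type increments of $g$, so that $X(H)$ becomes $I$ plus a lower-triangular matrix $M(H)$ built from $2H$ padded to $N\times N$. The identity then reads
\[
\text{sym}\bigl(\Lambda(H)\, S(H)\bigr) = C,
\]
with $S(H)$ lower triangular and invertible and $C$ a fixed matrix.

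\textbf{Step 2: characterize $\Lambda(H)$ uniquely.} Since $\Lambda$ is symmetric, the identity $\text{sym}(\Lambda S)=C$ together with the prescribed zero/Laplacian structure determines $\Lambda$; this is the uniqueness of H-certificates already established in \citep{YoonRyuGrimmerInvariance2025}. Invertibility of $\Lambda(H)$ follows because $-L(\cG)$ is negative semidefinite with kernel spanned by the all-ones vector $\mathbf{1}$, and $-E_{N,N}$ is negative on that vector. Hence $\Lambda(H)\prec 0$ whenever the $\lambda$'s are nonnegative. For general $H$ satisfying H-invariance we would instead derive invertibility from the identity itself, via $\det$ of the triangular $S$.

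\textbf{Step 3: apply the anti-diagonal reflection.} Let $J$ be the reversal permutation. Then $H^\at = J H^\transpose J$, and we expect $S(H^\at)$ to be related to $\Delta^{-\transpose} S(H)^{-\transpose}\Delta$-type expressions. We substitute $\Lambda' := \Delta\Lambda(H)^{-1}\Delta$ into the equation $\text{sym}(\Lambda' S(H^\at)) = C$. Multiplying the identity for $H$ on the left by $\Lambda(H)^{-1}$ and on the right by $\Lambda(H)^{-1}$, and conjugating by $\Delta$, should turn it into the identity for $H^\at$. The fixed matrix $C$ must be invariant under this congruence, which is a direct calculation with $\Delta$ (discrete derivative with order reversal). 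We must also check that $\Lambda'$ has the required form: symmetric, and equal to a negative Laplacian minus $E_{N,N}$. This amounts to checking $\Lambda'\mathbf{1} = -e_N$, i.e.\ $\Lambda(H)^{-1}\Delta\mathbf{1} = \Delta^{-1}(-e_N)$. Since $\Delta \mathbf{1}$ and $\Delta^{-1}e_N$ are simple vectors, this reduces to $\Lambda(H)e_N$-type relations that follow from the Laplacian structure. Uniqueness from Step 2 then gives $\Lambda(H^\at)=\Lambda'$.

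\textbf{Main obstacle.} The hardest part is Step 3: finding the exact relation between $S(H^\at)$ and $S(H)$ under which $C$ is congruence-invariant. We would first verify it on OHM/Dual-OHM with $N=3$, then guess the general relation $S(H^\at) = \Delta^{-1}J S(H)^\transpose J\Delta^{-\transpose}$ up to the fixed correction, and prove it entrywise.
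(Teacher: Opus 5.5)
Your architecture matches the paper's proof: write the template as a matrix identity for a symmetric $\Lambda$ with $\Lambda\mathbf 1=-e_N$, use uniqueness, get invertibility from $L(\cG)+E_{N,N}\succ 0$, transform the identity by $\Lambda^{-1}$ and $\Delta$, check $\Delta\Lambda^{-1}\Delta\,\mathbf 1=-e_N$, and invoke uniqueness for $H^\at$. The gap is that you stop at the only step with real content, namely how $S$ transforms under H-dual. Neither of your guessed relations is right: $\Delta^{-\transpose}S(H)^{-\transpose}\Delta$ is wrong, and so is $\Delta^{-1}JS(H)^\transpose J\Delta^{-\transpose}$ (your $J$ being the reversal). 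So ``should turn it into the identity for $H^\at$'' is not yet an argument.

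Here is the missing step, in the paper's notation: $J$ is the lower-triangular all-ones matrix, $T$ is the reversal (your $J$), and $\widetilde H$ is the $N\times N$ strictly lower-triangular padding of $H$.
\begin{itemize}
\item No change of variables is needed. With $G=[g_1,\dots,g_N]$, one has $[x_1-y_0,\dots,x_N-y_0]=-G\,S(H)^\transpose$ where $S(H)=I+2J\widetilde H$. The template then reads $\Lambda S+S^\transpose\Lambda=-2NE_{N,N}$.
\item Multiply on both sides by $\Lambda^{-1}$ and use $\Lambda^{-1}e_N=-\mathbf 1$. This gives $S\Lambda^{-1}+\Lambda^{-1}S^\transpose=-2N\mathbf 1\mathbf 1^\transpose$.
\item Conjugate by the symmetric $\Delta$ and use $\Delta\mathbf 1=e_N$. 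This gives
\[
\Lambda' S' + S'^{\transpose}\Lambda' = -2N E_{N,N}, \qquad \Lambda'=\Delta\Lambda^{-1}\Delta, \quad S'=\Delta^{-1}S(H)^\transpose\Delta .
\]
\item The missing identity is $S'=S(H^\at)$, with no inverse of $S$ and no extra reversals. It follows from $\Delta^{-1}=JT$, $J^\transpose T=TJ$ and $T\widetilde H^\transpose T=\widetilde{H^\at}$:
\[
\Delta^{-1}S(H)^\transpose\Delta = I + 2\,JT\widetilde H^\transpose J^\transpose T J^{-1} = I + 2J\bigl(T\widetilde H^\transpose T\bigr) = S(H^\at).
\]
\end{itemize}
Your claim that $C$'s invariance is ``a direct calculation with $\Delta$'' is also off. $C=-NE_{N,N}$ is not invariant under $\Delta$-conjugation alone, since $\Delta E_{N,N}\Delta=E_{1,1}$. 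It returns to itself only by passing through $\mathbf 1\mathbf 1^\transpose$, which uses $\Lambda\mathbf 1=-e_N$.

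There are also some smaller points.
\begin{itemize}
\item To apply uniqueness to $H^\at$, you must know that $H^\at$ satisfies H-invariance, which is known from \citep{YoonKimSuhRyu2024_optimal}. This is needed because $\Lambda(H^\at)$ is defined by the Q-function formulas, and those are the template multipliers only under H-invariance.
\item Uniqueness of the solution itself is automatic. $S$ is unit lower triangular, so $X\mapsto XS+S^\transpose X$ has all eigenvalues equal to $2$.
\item H-invariance is not what cancels the $y_0$-terms. Those vanish identically, because the template involves only $x_k-x_j$ and $x_N-y_0$.
\item The claim $\ker L(\cG)=\mathrm{span}\{\mathbf 1\}$ needs connectivity of the support graph. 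That is exactly \cref{lemma:nonzero-lambda-exists}.
\item Your aside that invertibility for general H-invariant $H$ follows from $\det S$ is unsupported. The right-hand side is rank one, and Shu and Wang \cite{shuUnifiedTreatmentDuality2026} have to assume invertibility. It is not needed for $H\in\cH_\star(N-1)$.
\end{itemize}
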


This is a particular case of \citep{shuUnifiedTreatmentDuality2026} restricted to $H \in \cH_\star(N-1)$.
While their result is for generic H-matrices, it is generally not guaranteed that $\Lambda(H)$ is invertible, so they state it as an assumption.
In our case, invertibility of $\Lambda(H)$ follows from the fact that $L(\cG)$ is the Laplacian of the weighted connected graph $\cG$, so it is positive semidefinite and $\ker L(\cG) = \mathrm{Span}\{\mathbf{1}\}$, which implies that $-\Lambda(H) = L(\cG) + E_{N,N}$ is positive definite.
For completeness, we present the self-contained proof of \cref{theorem:lambda-of-H-dual} in Appendix~\ref{section:proof-of-lambda-inversion-theorem}.

A relevant line of prior work is the so-called H-duality theorems \citep{KimOzdaglarParkRyu2023_timereversed, KimParkOzdaglarDiakonikolasRyu2023_mirror, YoonKimSuhRyu2024_optimal}, which establish an explicit correspondence between convergence proofs for an algorithm and for its H-dual.
However, these results were limited to specific proof structures that use only a restricted subset of inequalities, and thus are not directly applicable to proving \cref{theorem:lambda-of-H-dual}.
A more closely related work in spirit is \citep{kimProofExactConvergence2024}, which develops a general correspondence between proof certificates of H-dual algorithms, albeit in the setting of smooth convex minimization.
From this perspective, \cref{theorem:lambda-of-H-dual} can be viewed as a completion of H-duality theory for exact optimal fixed-point algorithms, as it shows that $H$ has an optimal H-dual if and only if $\Delta\Lambda(H)^{-1}\Delta$ is nonnegative off its diagonal, and those values are the proof certificates for $H^\at$.
On the other hand, the fully general H-duality, incorporating suboptimal algorithms such as Krasnosel'ski\u{\i}--Mann iteration \citep{Krasnoselskii1955_two, Mann1953_mean}, remains open.
Toward this end, the result of \citep{shuUnifiedTreatmentDuality2026} characterizes a correspondence between proof certificates of generic $H$ and $H^\at$ under a broad class of proof structures that often captures the tightest (best rate) proof associated with $H$.

For optimal vertex algorithms, the mapping from $\Lambda(H)$ to $\Lambda(H^\at)$ in \cref{theorem:lambda-of-H-dual} becomes more interpretable.

\subsubsection{H-duality for optimal vertex algorithms}
\label{section:H-duality-for-vertices}

To state \cref{theorem:dual-optimality-and-non-crossing-arc-diagram}, the main result of this section, first we briefly introduce the following handy graph-theoretic notion: given an arc diagram $\cG$ and its node $v$, define $\ell(v) = \min \{j\,|\,v\in \cC(j)\}$. 
In other words, $\ell(v)$ is the smallest descendant of $v$, where descendant is defined as a node that is joined with $v$ via the increasing path.

\begin{theorem}
\label{theorem:dual-optimality-and-non-crossing-arc-diagram}
Let $H \in \cH_\star(N-1)$ be an optimal vertex algorithm.
Then $H^\at \in \cH_\star(N-1)$ if and only if the arc diagram $\cG(H)$ is non-crossing (basic).
Additionally, when this is the case, $H^\at$ is also a vertex algorithm with, for $1\le j < i \le N$,
\begin{align*}
    \lambda_{N-j+1,N-i+1}^\star (H^\at) = \begin{cases}
        \frac{1}{\lambda_{k(i-1),i-1}^\star (H)} & \text{if } j = \ell(i-1) \\
        0 & \text{otherwise.}
    \end{cases}
\end{align*}
\end{theorem}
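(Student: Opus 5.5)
We will work entirely through \cref{theorem:lambda-of-H-dual}: $H^\at \in \cH_\star(N-1)$ if and only if every off-diagonal entry of $\Delta\Lambda(H)^{-1}\Delta$ is nonnegative. H-invariance is preserved by H-dual, and those entries are exactly the certificates $\lambda^\star(H^\at)$. The plan is therefore to compute $\Lambda(H)^{-1}$ explicitly for a vertex $H$, whose diagram $\cG$ is a tree.

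\emph{Step 1: inverting $\Lambda(H)$.} Write $-\Lambda(H) = L(\cG) + E_{N,N}$, the Laplacian of the tree grounded at the root $N$. Its inverse is the standard tree Green's function. Give each arc $(k(v),v)$ the resistance $r_v = 1/\lambda_{k(v),v}$, and give the grounding a unit resistance at $N$. Then
\[
\bigl(-\Lambda(H)^{-1}\bigr)_{a,b} = 1 + \sum_{v \in \cC(a)\cap\cC(b),\, v\ne N} r_v ,
\]
which is the resistance of the common part of the paths from $a$ and $b$ to the root. One can check this directly by multiplying by $L+E_{N,N}$ row by row, using that row $v$ of the Laplacian only involves $v$, its parent $k(v)$, and its children.

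\emph{Step 2: applying $\Delta$.} Conjugating by $\Delta$ reverses the index order and takes second mixed differences. Writing $G_{a,b}$ for the Green's function above, the entry of $\Delta\Lambda(H)^{-1}\Delta$ at the reversed position $(N-j+1, N-i+1)$ with $j<i$ is, up to sign,
\[
G_{i,j} - G_{i-1,j} - G_{i,j-1} + G_{i-1,j-1},
\]
with boundary conventions at index $1$ coming from the first row and column of $\Delta$. This is a sum of $r_v$ weighted by the indicator
\[
\chi_v(i,j) = \mathbf 1[v\in\cC(i)\cap\cC(j)] - \mathbf 1[v\in\cC(i-1)\cap\cC(j)] - \mathbf 1[v\in\cC(i)\cap\cC(j-1)] + \mathbf 1[v\in\cC(i-1)\cap\cC(j-1)].
\]
For each $v$, the set of descendants $D(v)=\{a : v\in\cC(a)\}$ is what matters.

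\emph{Step 3: non-crossing case.} If $\cG$ is non-crossing, each $D(v)$ is an interval $[\ell(v), v]$. Then $\chi_v(i,j)$ vanishes unless $i-1 = v$ and $j=\ell(v)$, where it equals one. So the entry is $r_{i-1} = 1/\lambda^\star_{k(i-1),i-1}(H)$ when $j = \ell(i-1)$ and $0$ otherwise, which is nonnegative. This gives optimality of $H^\at$ and the stated formula.

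For $H^\at$ to be a vertex, the sparsity pattern must have exactly one positive entry per column. For each $i$, exactly one $j$ works, namely $j=\ell(i-1)<i$, so this holds; \cref{theorem:all-sparsity-patterns-give-optimal-vertex} then identifies $H^\at$ as that vertex. The diagonal entries, and the extra $-1$ on the root, must be checked against the $-E_{N,N}$ term. I expect this boundary bookkeeping to be routine but fiddly.

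\emph{Step 4: crossing case (main obstacle).} If $j_1<j_2<k(j_1)<k(j_2)$, some $D(v)$ fails to be an interval. Take $v = k(j_1)$: its descendants include $j_1$ but not $j_2$, even though $j_1<j_2<v$. I will pick the largest gap, $a<b$ with $a\in D(v)$, $b\notin D(v)$, $b+1\in D(v)$ or similar. The aim is to exhibit an index pair $(i,j)$ where the mixed difference collects $-r_v$ with no compensating positive terms, so the corresponding $\lambda^\star(H^\at)$ is negative.

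The delicate part is choosing $(i,j)$ so that other nodes $w$ on the common ancestor chain do not contribute positively. I will choose $v$ as the crossing-witness node of minimal height, and $(i,j)$ at a boundary of the non-interval set. I will then argue that every ancestor $w$ of $v$ has $D(w)\supseteq$ the relevant four indices, so $\chi_w=0$. This combinatorial case analysis is where I expect the real work to lie.
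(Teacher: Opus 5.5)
Steps 1--3 follow the paper's proof: your Green's function is \cref{corollary:nu-explicit-form}, and the non-crossing direction is the paper's interval argument. The genuine gap is Step 4, the ``only if'' direction. It is only a plan, and its one concrete claim is false: other nodes need not have $\chi_w(i,j)=0$.

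For a counterexample, take $N=6$ with $k(1)=3$, $k(2)=6$, $k(3)=5$, $k(4)=k(5)=6$. Use the crossing pair $(j_1,j_2)=(1,2)$ and $v=k(j_1)=3$, so $D(3)=\{1,3\}$. Consider either boundary choice, $(i,j)=(3,1)$ or $(i,j)=(4,2)$. The ancestor $w=5$, with $D(5)=\{1,3,5\}$, has $\chi_5(i,j)=1\neq 0$, because $D(5)$ misses $0,2$ (resp.\ $2,4$).

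You also never treat descendants of $v$ or nodes unrelated to $v$, and these can contribute as well. For example, with $v=5$ the descendant $w=3$ has $\chi_3(3,1)=1$. So, as written, nothing shows that the chosen entry is negative.

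What you need is a sign statement, not vanishing. It holds for every $v$ with $D(v)$ not an interval, so no minimality is needed; your check that $v=k(j_1)$ qualifies is correct.

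\begin{itemize}
\item Write $\chi_w(i,j)=\delta_w(i)\delta_w(j)$, where $\delta_w(a)=\mathbf{1}[a\in D(w)]-\mathbf{1}[a-1\in D(w)]$.
\item Choose $j<i$ to be the left endpoints of two distinct maximal runs of $D(v)$. Then $i,j\in D(v)$ and $i-1,j-1\notin D(v)$.
\item If $w$ is an ancestor of $v$, then $D(w)\supseteq D(v)$, so $\delta_w(i),\delta_w(j)\in\{0,1\}$.
\item If $w$ is a descendant of $v$, then $D(w)\subseteq D(v)$ excludes $i-1$ and $j-1$, again giving values in $\{0,1\}$.
\item Otherwise $D(w)\cap D(v)=\emptyset$, since each $\cC(a)$ is a chain, giving values in $\{-1,0\}$.
\end{itemize}

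Hence $\chi_w(i,j)\ge 0$ for all $w$ and $\chi_v(i,j)=1$. Therefore $\lambda^\star_{N-j+1,N-i+1}(H^\at)\le -1/\lambda^\star_{k(v),v}(H)<0$.

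The paper avoids locating a specific entry. It assumes all certificates of $H^\at$ are nonnegative and sums the mixed differences over the rectangle $j\in\{j_1+1,\dots,j_2\}$, $i\in\{k(j_1)+1,\dots,k(j_2)\}$. This telescopes to minus the sum of $1/\lambda^\star_{k(v),v}(H)$ over the symmetric difference of $\cC(k(j_1))$ and $\cC(k(j_2))$, a set that contains $k(j_1)$, which gives a contradiction. The telescoping argument is shorter; the sign argument above exhibits an explicit negative certificate.

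Two smaller points in Step 3.
\begin{itemize}
\item You assert without proof that $D(v)$ is an interval when $\cG$ is non-crossing; this is \cref{lemma:descendant-interval} in the paper. It is easy, but it should be stated. If $a<b<v$ with $a\in D(v)$ and $b\notin D(v)$, the path from $a$ passes over $b$ via some arc $(k(a_t),a_t)$. The path from $b$ cannot pass through $k(a_t)\in D(v)$, so it passes over $k(a_t)$ via some arc $(k(b'),b')$ with $a_t<b'<k(a_t)<k(b')$, which is a crossing.
\item The diagonal bookkeeping you anticipate is unnecessary. \cref{theorem:lambda-of-H-dual} is already a full matrix identity, and optimality of $H^\at$ depends only on H-invariance (preserved by H-dual) and the off-diagonal entries.
\end{itemize}
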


By \cref{proposition:catalan-many-basic-arc-diagrams}, this immediately implies:
\begin{corollary}
\label{corollary:catalan-many-dual-optimal-algorithms}
The number of vertex algorithms in $\cH_\star(N-1)$ that remain optimal under H-dual is $C_{N-1} = \frac{1}{N} \binom{2N-2}{N-1}$. 
\end{corollary}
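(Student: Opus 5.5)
The corollary is a counting statement. The plan is to set up a bijection between the relevant set of H-matrices and a set of arc diagrams whose size is already known, so no new analysis is needed.

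First, I fix the correspondence between vertices and diagrams. By \cref{theorem:all-sparsity-patterns-give-optimal-vertex} and the discussion in Section~\ref{subsection:arc-diagram-representation}, the map $H \mapsto \cG(H)$ is a bijection from the vertices of $\cH_\star(N-1)$ onto arc diagrams on $N$ nodes, i.e., onto choices $(k(1),\dots,k(N-1))$ with $j<k(j)\le N$. Injectivity comes from the uniqueness in that theorem, and surjectivity from its existence part.

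Second, I restrict this bijection to the dual-optimal vertices. By \cref{theorem:dual-optimality-and-non-crossing-arc-diagram}, a vertex $H$ satisfies $H^\at \in \cH_\star(N-1)$ if and only if $\cG(H)$ is non-crossing. So the set of vertex algorithms that remain optimal under H-dual is exactly the preimage of the non-crossing arc diagrams. Its cardinality therefore equals the number of non-crossing arc diagrams on $N$ nodes.

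Third, I invoke \cref{proposition:catalan-many-basic-arc-diagrams}, which, via \cref{proposition:basic-iff-noncrossing}, counts the basic (equivalently, non-crossing) arc diagrams on $N$ nodes as $C_{N-1}=\frac{1}{N}\binom{2N-2}{N-1}$. That gives the claim.

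I do not expect a real obstacle, since all the substance sits in \cref{theorem:dual-optimality-and-non-crossing-arc-diagram}. The only point to state carefully is that we count vertices $H$ of $\cH_\star(N-1)$ with $H^\at$ optimal, not dual pairs, so each vertex is counted once. Self-dual vertices cause no double counting because we count $H$ itself rather than the pair $\{H, H^\at\}$.
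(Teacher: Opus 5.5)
Your proposal is correct and follows the paper's own argument: the paper deduces this corollary directly from \cref{theorem:dual-optimality-and-non-crossing-arc-diagram} (a vertex stays optimal under H-dual iff its arc diagram is non-crossing) together with the Catalan count of \cref{proposition:catalan-many-basic-arc-diagrams}. Like you, it implicitly relies on the vertex--diagram bijection from \cref{theorem:all-sparsity-patterns-give-optimal-vertex}.
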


While the full proof of \cref{theorem:dual-optimality-and-non-crossing-arc-diagram} will be presented in Section~\ref{section:proof-of-dual-optimality-and-basic-diagram-theorem}, here we briefly sketch the key ideas.
By \cref{theorem:lambda-of-H-dual}, for $H\in \cH_\star(N-1)$, if we let
\[
    \Lambda(H)^{-1} = \left( \nu_{i,j} \right)_{\substack{1\le i \le N \\ 1\le j \le N}}
\]
then we have
\begin{align}
\label{eqn:lambda-H-dual-explicit}
    \Lambda(H^\at) & = \Delta \Lambda(H)^{-1} \Delta \nonumber \\
    & = 
    \begin{bmatrix}
        (\nu_{N,N} - \nu_{N-1,N}) - (\nu_{N,N-1} - \nu_{N-1,N-1}) & \cdots & (\nu_{N,2} - \nu_{N-1,2}) - (\nu_{N,1} - \nu_{N-1,1}) & \nu_{N,1} - \nu_{N-1,1} \\
        \vdots & \ddots & \vdots & \vdots \\
        (\nu_{2,N} - \nu_{1,N}) - (\nu_{2,N-1} - \nu_{1,N-1}) & \cdots & (\nu_{2,2} - \nu_{1,2}) - (\nu_{2,1} - \nu_{1,1}) & \nu_{2,1} - \nu_{1,1} \\
        \nu_{1,N} - \nu_{1,N-1} & \cdots & \nu_{1,2} - \nu_{1,1} & \nu_{1,1}
    \end{bmatrix} .
\end{align}
Here $\nu_{i,j} = \nu_{j,i}$ comes from the inverse of the graph Laplacian with a single entry offset by $1$.
Hence, by borrowing standard tools from graph theory, we can express these values using certain subgraphs of $\cG$. 
If $H$ is a vertex of $\cH_\star(N-1)$, then $\Lambda(H^\at)$ admits a particularly clear graph-theoretic interpretation, and this enables us to characterize which arcs should have positive weights in the arc diagram of $H^\at$, and to compute those weights.
In fact, the characterization of $\cG^\at = \cG(H^\at)$ from $\cG = \cG(H)$ is given as follows.

\begin{corollary}
\label{corollary:dual-of-decomposable-basic-diagram}
Let $\cG$ be a basic arc diagram on $\{1,\dots,N\}$, and suppose that $\cG$ is decomposable at $N' \in \{1,\dots,N-1\}$.
Then $\cG^\at = \cG_{N'+1,N}^\at \glue \cG_{1,N'}^\at$.
In other words, H-dual acts on a basic diagram as respectively dualizing the two components obtained by removing the arc $(N,N')$, reversing their order, and gluing them back together.
\end{corollary}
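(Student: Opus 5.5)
We will derive the corollary from \cref{theorem:dual-optimality-and-non-crossing-arc-diagram} by reading off the arc diagram of $H^\at$. The explicit formula there says that the support of $\cG^\at$ is
\[
\{(N-\ell(i-1)+1,\, N-i+1) : 2\le i\le N\}.
\]
Substituting $v=i-1\in\{1,\dots,N-1\}$, every node $v$ of $\cG$ contributes exactly one arc of $\cG^\at$, namely the arc joining $N-v$ to $N-\ell(v)+1$, where $N-v < N-\ell(v)+1$ because $\ell(v)\le v$. So the task is purely combinatorial: show that this arc set equals the arc set of $\cG_{N'+1,N}^\at \glue \cG_{1,N'}^\at$. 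We argue by strong induction on $N$, applying the same recipe to the smaller basic diagrams $\cG_{1,N'}$ and $\cG_{N'+1,N}$. These are non-crossing, so \cref{theorem:dual-optimality-and-non-crossing-arc-diagram} applies to them as well.

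First we compute $\ell$ in $\cG$ using the decomposition. For $v\le N'-1$, the increasing paths into $v$ stay in $\{1,\dots,N'\}$, since no arc leaves the left block except $(N,N')$. Hence $\ell_\cG(v)=\ell_{\cG_{1,N'}}(v)$. For $v=N'$, the same observation gives $\ell_\cG(N')=\ell_{\cG_{1,N'}}(N')=1$. For $N'+1\le v\le N-1$, the descendants of $v$ lie in the right block, so $\ell_\cG(v)=N'+\ell_{\cG_{N'+1,N}}(v-N')$; the one thing to check is that node $N$ is never such a $v$.

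Next we translate these into arcs of $\cG^\at$ and compare with the gluing. The glued diagram places $\cG_{N'+1,N}^\at$ on nodes $\{1,\dots,N-N'\}$ and $\cG_{1,N'}^\at$ on nodes $\{N-N'+1,\dots,N\}$, with the gluing arc $(N,N-N')$.
\begin{itemize}
\item For $v=N'$, the arc is $(N-\ell(N')+1,\,N-N')=(N,N-N')$, which is exactly the gluing arc.
\item For $v\le N'-1$, the arc $(N-\ell(v)+1,\,N-v)$ equals the shift by $N-N'$ of the arc $(N'-\ell_1(v)+1,\,N'-v)$. By the theorem applied to $\cG_{1,N'}$, the latter is an arc of $\cG_{1,N'}^\at$, and the shift puts it in the right block.
\item For $v\ge N'+1$, write $w=v-N'$. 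The arc becomes $((N-N')-\ell_2(w)+1,\,(N-N')-w)$, which by the theorem applied to $\cG_{N'+1,N}$ is an arc of $\cG_{N'+1,N}^\at$ in the left block without shift.
\end{itemize}
Since both diagrams have exactly $N-1$ arcs and the map above is a bijection onto the glued arc set, the two diagrams coincide. Both are arc diagrams of vertices (of $\cH_\star(N-1)$ and of the glued algorithm, respectively), so \cref{theorem:all-sparsity-patterns-give-optimal-vertex} identifies the corresponding algorithms, and equivalently $H^\at=H_2^\at\glue H_1^\at$ by \cref{theorem:gluing}.

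The main obstacle is the index bookkeeping: verifying that $\ell$ restricts correctly to each block, including the edge cases $N'=1$ and $N'=N-1$ where a block has a single node, and that the shifts align with the relabeling in \cref{definition:gluing-of-diagrams}. No analytic difficulty remains once the formula of \cref{theorem:dual-optimality-and-non-crossing-arc-diagram} is taken as given.
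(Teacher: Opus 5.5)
Your proposal is correct and follows essentially the same route as the paper: both compute $\ell_\cG$ blockwise from the decomposition at $N'$. Both then match the arcs $(N-\ell(v)+1,\,N-v)$ supplied by \cref{theorem:dual-optimality-and-non-crossing-arc-diagram} against the gluing arc $(N,N-N')$ and the two shifted dual blocks. The only cosmetic difference is that the announced strong induction is never used, since you apply that theorem directly to the two smaller non-crossing diagrams.
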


\cref{corollary:dual-of-decomposable-basic-diagram} yields a concise implementation for computing the dual arc diagram $\cG^\at$ via recursion (Meta Algorithm~\ref{meta-alg:basic-diagram-dualization}).
Note that in Meta Algorithm~\ref{meta-alg:basic-diagram-dualization}, the recursion terminates and returns $\cG^\at$ in the base cases for which the dual is already known. 
In addition to the singleton case $N=1$, one may conveniently take the OHM and Dual-OHM families as base cases, as their duals are simply Dual-OHM and OHM, respectively (see Figure~\ref{figure:recursive-dualization-example}).
We provide the proof of \cref{corollary:dual-of-decomposable-basic-diagram} in Section~\ref{section:proof-of-diagram-dualization-corollary}.

\begin{algorithm}[t]
\caption{Meta algorithm for dualizing a basic arc diagram}
\label{meta-alg:basic-diagram-dualization}
\begin{algorithmic}[1]
\Function{Dualize}{$\cG$}
    \State $N \gets |\cV(\cG)|$
    \If{$\cG$ belongs to a chosen base family with known dual (e.g., for $N=1$, $\cG^\at = \cG$)}
        \State \Return $\cG^\at$
    \EndIf
    \State Find the unique $N' \in \{1,\dots,N-1\}$ such that
    \State \hspace{1.5em} $k(N')=N$ and $k(j)\le N'$ for all $j=1,\dots,N'-1$
    \State \Return $\Call{Dualize}{\cG_{N'+1,N}} \glue \Call{Dualize}{\cG_{1,N'}}$
\EndFunction
\end{algorithmic}
\end{algorithm}

\newcommand{\ArcBlock}[6]{%
\begin{scope}[shift={({#2},{#3})}]
  \foreach \i in {1,...,#4}{
    \coordinate (#1n\i) at (0.9*\i,0);
    \fill (#1n\i) circle (1.6pt);
    \node[below=2pt,font=\scriptsize] at (#1n\i) {\i};
  }
  \draw[line width=0.6pt, color=gray, densely dashed] (#1n1) -- (#1n#4);
  \foreach \a/\b/\clr in {#5}{
    \pgfmathsetmacro{\rad}{0.45*(\b-\a)}
    \draw[line width=0.8pt,\clr] (#1n\a)
      arc[start angle=180,end angle=0,radius=\rad];
  }
  \coordinate (#1mid) at ($(#1n1)!0.5!(#1n#4)$);
  \coordinate (#1top) at ($(#1mid)+(0,1.00)$);
  \coordinate (#1bot) at ($(#1mid)+(0,-0.58)$);
  \node[align=center,font=\scriptsize] at ($(#1mid)+(0,-1.23)$) {#6};
\end{scope}
}

\begin{figure}[!p]
\thispagestyle{empty}
\centering

\begin{minipage}[c][\textheight][c]{\textwidth}
\centering

\begin{tikzpicture}[
    every node/.style={font=\small},
    splitarrow/.style={
      gray!70, line width=0.55pt,
      -{Latex[length=2.2mm,width=1.7mm]},
      shorten >=1pt, shorten <=1pt
    },
    dualarrow/.style={
      gray!85, line width=0.75pt, densely dashed,
      -{Triangle[open,length=2.6mm,width=2.1mm]},
      shorten >=1pt, shorten <=1pt
    },
    gluearrow/.style={
      gray!70, line width=0.55pt,
      -{Stealth[length=2.0mm,width=1.6mm]},
      shorten >=1pt, shorten <=1pt
    }
]

\def\yA{0.0}
\def\yB{-3.6}
\def\yC{-7.2}
\def\yD{-10.8}
\def\yE{-14.4}
\def\yF{-18.0}

\node[anchor=east] at (-0.5,\yA) {Input $\cG = (\cG_a) \glue (\cG_b \glue \cG_c)$};
\ArcBlock{Top}{1.2}{\yA}{10}{
1/2/black,2/3/black,3/4/black,4/10/red,
5/7/black,6/7/black,7/10/black,8/9/black,9/10/black}

\node[anchor=east] at (-0.5,\yB) {Split $\cG$ into $\cG_b \glue \cG_c$ and $\cG_a$};
\ArcBlock{GR}{0.5}{\yB}{6}{
1/3/black,2/3/black,3/6/red,4/5/black,5/6/black}{}
\coordinate (GRtop) at ($(GRmid)+(0,1.40)$);

\ArcBlock{GL}{8.1}{\yB}{4}{
1/2/black,2/3/black,3/4/black}{}
\coordinate (GLtop) at ($(GLmid)+(0,1.4)$);

\node[anchor=east] at (-0.5,\yC) {Split $\cG_b \glue \cG_c$ into $\cG_c$ and $\cG_b$};
\ArcBlock{H3a}{0.6}{\yC}{3}{
1/2/black,2/3/black}{}
\coordinate (H3atop) at ($(H3amid)+(0,1.1)$);

\ArcBlock{H3b}{4.3}{\yC}{3}{
1/3/black,2/3/black}{}
\coordinate (H3btop) at ($(H3bmid)+(0,1.1)$);

\ArcBlock{H4}{8.1}{\yC}{4}{
1/2/black,2/3/black,3/4/black}{}
\coordinate (H4top) at ($(H4mid)+(0,0.7)$);

\node[anchor=east] at (-0.5,\yD) {Dualize base cases $\cG_c,\cG_b,\cG_a$};
\ArcBlock{D3a}{0.6}{\yD}{3}{
1/3/black,2/3/black}{}
\coordinate (D3atop) at ($(D3amid)+(0,1.1)$);

\ArcBlock{D3b}{4.3}{\yD}{3}{
1/2/black,2/3/black}{}
\coordinate (D3btop) at ($(D3bmid)+(0,0.7)$);

\ArcBlock{D4}{8.1}{\yD}{4}{
1/4/black,2/4/black,3/4/black}{}
\coordinate (D4top) at ($(D4mid)+(0,1.6)$);

\node[anchor=east] at (-0.5,\yE) {Glue to obtain $\cG_c^\at \glue \cG_b^\at$};
\ArcBlock{GRA}{0.5}{\yE}{6}{
1/3/black,2/3/black,3/6/red,4/5/black,5/6/black}{}
\coordinate (GRAtop) at ($(D3amid)-(0,2.5)$);

\ArcBlock{GLA}{8.1}{\yE}{4}{
1/4/black,2/4/black,3/4/black}{}
\coordinate (GLAtop) at ($(GLAmid)+(0,1.6)$);

\node[anchor=east] at (-0.5,\yF) {Glue to obtain $\cG^\at = (\cG_c^\at \glue \cG_b^\at) \glue \cG_a^\at$};
\ArcBlock{Bot}{1.2}{\yF}{10}{
1/3/black,2/3/black,3/6/black,4/5/black,5/6/black,
6/10/red,7/10/black,8/10/black,9/10/black}


\coordinate (TopLsrc) at ($(Topn1)!0.5!(Topn4) + (0,-0.5)$);
\coordinate (TopRsrc) at ($(Topn5)!0.5!(Topn10) + (0,-0.5)$);

\coordinate (GRLsrc) at ($(GRn1)!0.5!(GRn3) + (0,-0.5)$);
\coordinate (GRRsrc) at ($(GRn4)!0.5!(GRn6) + (0,-0.5)$);

\coordinate (GRAinL) at ($(GRAn1)!0.5!(GRAn3) + (0,1.4)$);
\coordinate (GRAinR) at ($(GRAn4)!0.5!(GRAn6) + (0,0.5)$);

\coordinate (BotInL) at ($(Botn1)!0.5!(Botn6) + (0,1.5)$);
\coordinate (BotInR) at ($(Botn7)!0.5!(Botn10) + (0,1.5)$);

\draw[splitarrow]
  (TopLsrc) .. controls +(0,-1.15) and +(0,1.10) .. (GLtop);
\draw[splitarrow]
  (TopRsrc) .. controls +(0,-1.15) and +(0,1.10) .. (GRtop);

\draw[splitarrow]
  (GRLsrc) .. controls +(0,-1.05) and +(0,1.00) .. (H3btop);
\draw[splitarrow]
  (GRRsrc) .. controls +(0,-1.05) and +(0,1.00) .. (H3atop);

\draw[splitarrow] (GLbot) -- (H4top);

\draw[dualarrow] (H3abot) -- (D3atop);
\draw[dualarrow] (H3bbot) -- (D3btop);
\draw[dualarrow] (H4bot)  -- (D4top);

\draw[gluearrow]
  (D3abot) -- (GRAtop);
\draw[gluearrow]
  (D3bbot) .. controls +(0,-0.90) and +(0,1.22) .. (GRAinR);
\draw[gluearrow]
  (D4bot) -- (GLAtop);

\draw[gluearrow]
  (GRAbot) .. controls +(0,-0.50) and +(-0.70,1.00) .. (BotInL);
\draw[gluearrow]
  (GLAbot) .. controls +(0,-1.00) and +(0.70,1.00) .. (BotInR);

\end{tikzpicture}

\caption{An example of recursive dualization of a basic arc diagram with $N=10$, using a version of Meta Algorithm~\ref{meta-alg:basic-diagram-dualization} where OHM and Dual-OHM are treated as base cases. 
Here $\cG_a, \cG_b, \cG_c$ each denote the subdiagrams $\cG_{1,4}$, $\cG_{5,7}$ and $\cG_{8,10}$, which respectively correspond to OHM, Dual-OHM and OHM.
The crossing arrows indicate decomposition steps in which the order of the two components is reversed, dashed arrows indicate dualization, and solid non-crossing arrows indicate backward gluing steps. 
Whenever present, the red arc marks the arc used for the current decomposition or gluing step.}
\label{figure:recursive-dualization-example}

\end{minipage}
\end{figure}

\subsection{Proof of H-duality for optimal vertex algorithms}

\subsubsection{A brief review of graph theory}
\label{subsubsection:graph-theory}

For $H\in \cH_\star(N-1)$, consider its weighted diagram $\cG(H) = (\cV,\cE,\lambda^\star(H))$ as defined in \cref{definition:diagram-of-algorithm}.
Recall that a subgraph $\cT$ is a \textit{tree} if it is connected and acyclic, and a subgraph $\cF$ is a \textit{forest} if it is a disjoint union of trees (possibly including singletons).
If $\cT$ (\textit{resp.}\ $\cF$) contains every node in $\cV$ then $\cT$ (\textit{resp.}\ $\cF$) is a \textit{spanning tree} (\textit{resp.\ spanning forest}) of $\cG(H)$.
For convenience, we will simply denote $\cG(H)$ by $\cG$ and $\lambda_{k,j}^\star(H)$ by $\lambda_{k,j}$ in the following, if it is clear from the context.

We will use a classical but fundamental result characterizing the minors of the Laplacian $L(\cG)$, and eventually, the entries of $\Lambda(H)^{-1}$.
Let $\cU, \cW \subset \cV$ be subsets of vertices of the same size $r$ (i.e.\ $|\cU| = |\cW| = r$), for some $1\le r\le N-1$. 
Let $\mathfrak{F}_{\cU,\cW}$ be the set of all spanning forests of $\cG$ satisfying
\begin{itemize}
    \item Each $\cF \in \mathfrak{F}_{\cU,\cW}$ has $r$ disjoint connected components (trees).
    \item Each component $\cT$ of $\cF$ contains exactly one vertex in $\cU$ and exactly one vertex in $\cW$.
\end{itemize}
Each $\cF \in \mathfrak{F}_{\cU,\cW}$ defines a bijection $\pi_\cF: \cU \to \cW$ by 
\[
    \pi_\cF(u) = \left( \text{the unique }w \in \cW \text{ in the component } \cT \text{ of }\cF \text{ containing } u \right)
    .
\]
We define $\mathrm{sign}(\cF) = (-1)^{n(\pi_\cF)}$ where $n(\pi_\cF)$ is the number of inversions in $\pi_\cF$, i.e., pairs $(u_1,u_2) \in \cU \times \cU$ such that $u_1 < u_2$ and $\pi_\cF(u_1) > \pi_\cF(u_2)$.

\begin{lemma}[All minors matrix tree theorem {\cite{ChaikenCombinatorial1982}}]
\label{lemma:all-minors-matrix-tree-theorem}
Let $\cU,\cW\subset \cV$ with $|\cU|=|\cW|=r$, and write
\[
\Sigma(\cU):=\sum_{u\in\cU}u,
\qquad
\Sigma(\cW):=\sum_{w\in\cW}w.
\]
Denote by $L(\cG)_{\cV\setminus\cU, \cV\setminus\cW}$ the $(N-r)\times (N-r)$ submatrix (minor matrix) of $L(\cG)$ obtained by eliminating the rows indexed by $\cU$ and columns indexed by $\cW$.
Then
\[
\det L(\cG)_{\cV\setminus\cU,\cV\setminus\cW}
=
(-1)^{\Sigma(\cU)+\Sigma(\cW)}
\sum_{\cF\in\mathfrak F_{\cU,\cW}}
\mathrm{sign}(\cF)\,\lambda(\cF),
\]
where $\mathrm{sign}(\cF)=(-1)^{n(\pi_\cF)}$ and $\lambda(\cF) = \prod_{(i,j) \in \cE_\cF} \lambda_{i,j}$.
\end{lemma}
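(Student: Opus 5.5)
The plan is to use the standard factorization of the weighted Laplacian through the oriented incidence matrix, then apply the Cauchy--Binet formula. The combinatorial bookkeeping of signs is the only delicate part.

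\textbf{Factorization.} Orient every edge $(k,j)\in\cE$ from $j$ to $k$. Let $B\in\reals^{N\times|\cE|}$ be the incidence matrix whose column $b_e$ for $e=(k,j)$ equals $e_k-e_j$, and let $W=\mathrm{diag}(\lambda_e)_{e\in\cE}$. Then $L(\cG)=BWB^\transpose$. Writing $B_{\cV\setminus\cU,\cdot}$ for the rows of $B$ outside $\cU$, Cauchy--Binet gives
\[
\det L(\cG)_{\cV\setminus\cU,\cV\setminus\cW}=\sum_{\cS\subseteq\cE,\ |\cS|=N-r}\Big(\prod_{e\in\cS}\lambda_e\Big)\det B_{\cV\setminus\cU,\cS}\,\det B_{\cV\setminus\cW,\cS}.
\]
Edges of weight zero may be kept in the sum, since they contribute nothing.

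\textbf{Which edge sets survive.} First, if $\cS$ contains a cycle, the columns of $B_{\cdot,\cS}$ are linearly dependent, so both determinants vanish. Hence $\cS$ is a spanning forest with $N-(N-r)=r$ components. Second, I claim $\det B_{\cV\setminus\cU,\cS}\neq0$ if and only if each component of $\cS$ contains exactly one vertex of $\cU$, and in that case the determinant is $\pm1$.
\begin{itemize}
\item If some component $\cT$ meets no vertex of $\cU$, then summing the rows indexed by the vertices of $\cT$ gives zero (each edge of $\cT$ contributes $+1$ and $-1$), so the rows are dependent.
\item Since there are $r$ components and $|\cU|=r$, the remaining case is exactly one $\cU$-vertex per component. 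Then repeatedly expanding along a leaf not in $\cU$, whose row has a single nonzero entry $\pm1$, shows the determinant is $\pm1$ by induction.
\end{itemize}
The same holds for $\cW$. So the surviving $\cS$ are exactly the forests $\cF\in\mathfrak F_{\cU,\cW}$, and each contributes $\lambda(\cF)$ times a sign $\epsilon(\cF)=\det B_{\cV\setminus\cU,\cF}\det B_{\cV\setminus\cW,\cF}$.

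\textbf{Sign computation.} This is where I expect the main obstacle; the key is to avoid tracking orientations edge by edge. For a subset $\cU$, form the $N\times N$ matrix $M_\cU=[\,B_{\cdot,\cF}\mid e_{u_1}\cdots e_{u_r}\,]$ with $u_1<\dots<u_r$. Laplace expansion along the appended unit columns gives
\[
\det M_\cU=(-1)^{\Sigma(\cU)+c}\det B_{\cV\setminus\cU,\cF},
\]
where $c$ depends only on $N$ and $r$. Define $M_\cW$ analogously with $w_1<\dots<w_r$, using the same constant $c$. Within each tree, $e_u-e_{\pi_\cF(u)}$ is a signed sum of the columns along the path from $u$ to $\pi_\cF(u)$, so it lies in the column span of $B_{\cdot,\cF}$. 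Column operations therefore turn $M_\cU$ into $[\,B_{\cdot,\cF}\mid e_{\pi_\cF(u_1)}\cdots e_{\pi_\cF(u_r)}\,]$ without changing the determinant. This last matrix is $M_\cW$ with its last $r$ columns permuted, hence
\[
\det M_\cU=\mathrm{sign}(\cF)\det M_\cW .
\]
Multiplying through, and using $(\det B_{\cV\setminus\cW,\cF})^2=1$, yields
\[
\epsilon(\cF)=(-1)^{\Sigma(\cU)+\Sigma(\cW)}\,\mathrm{sign}(\cF).
\]
The constant $c$ cancels because it appears in both expansions. Substituting into the Cauchy--Binet sum gives the lemma.

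In writing this up, the care needed is in fixing the Laplace-expansion sign convention consistently and in verifying that the inversion count of $\pi_\cF$ exactly matches the column permutation.
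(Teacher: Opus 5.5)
Your proof is correct and complete. It also takes a genuinely different route from the paper: the paper gives no proof of this lemma and simply imports it from Chaiken. Chaiken's argument is combinatorial (it cancels the non-forest terms in the expansion of the minor) and is formulated for arbitrary matrices with zero column sums, i.e.\ weighted \emph{directed} graphs with rooted forests.

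You instead factor the symmetric Laplacian as $L(\cG)=BWB^\transpose$ and apply Cauchy--Binet. This reduces everything to two facts about incidence matrices of forests. First, a maximal minor $\det B_{\cV\setminus\cU,\cS}$ vanishes unless $\cS$ is a forest each of whose components meets $\cU$ exactly once. Second, it is then $\pm1$.

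Your handling of the sign is the cleanest part of the argument. You border $B_{\cdot,\cF}$ with $e_{u_1},\dots,e_{u_r}$, slide each $e_u$ to $e_{\pi_\cF(u)}$ along tree paths, and compare with the $\cW$-bordered matrix. This yields $(-1)^{\Sigma(\cU)+\Sigma(\cW)}\mathrm{sign}(\cF)$ directly: the Laplace constant $c=\sum_{i=N-r+1}^{N} i$ cancels, and the resulting column permutation has exactly $n(\pi_\cF)$ inversions.

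What you give up is generality. The factorization needs $L$ to be symmetric, so your argument does not recover Chaiken's directed version. What you gain is a short, self-contained linear-algebra proof. It visibly holds for arbitrary real edge weights, including the zero weights off the support of $\cG$. That undirected setting is exactly the one in which the lemma is used in \cref{lemma:Lambda-inverse-expression}.
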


Applying \cref{lemma:all-minors-matrix-tree-theorem} to our context, we can characterize the determinant and inverse of $\Lambda(H)$.

\begin{lemma}
\label{lemma:Lambda-inverse-expression}
We have
\begin{align*}
    \det\left( -\Lambda(H) \right) = \det\left( L(\cG) + E_{N,N} \right) = \det L(\cG)_{\cV\setminus \{N\}, \cV\setminus \{N\}} = \sum_{\substack{\cT: \text{spanning}\\ \text{tree of }\cG}} \lambda(\cT) := \tau > 0 ,
\end{align*} 
$\nu_{N,i} = \nu_{i,N} = -1$ for $i=1,\dots,N$, and
\begin{align*}
    \nu_{i,j} = \nu_{j,i} = -1 - \frac{1}{\tau} \sum_{\cF \in \mathfrak{F}_{\{i, N\}, \{j, N\}}} \lambda(\cF) 
\end{align*}
for $i,j=1,\dots,N-1$.
\end{lemma}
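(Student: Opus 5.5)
The plan is to compute the determinant and the inverse by cofactor expansion, and to evaluate every minor that appears with \cref{lemma:all-minors-matrix-tree-theorem}. Throughout, write $M = -\Lambda(H) = L(\cG) + E_{N,N}$.

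\emph{Determinant.} Since $\det$ is linear in the $(N,N)$ entry, we have
\[
\det M = \det L(\cG) + \det L(\cG)_{\cV\setminus\{N\},\cV\setminus\{N\}} .
\]
The first term vanishes because $L(\cG)\mathbf{1}=0$. For the second term, apply \cref{lemma:all-minors-matrix-tree-theorem} with $\cU=\cW=\{N\}$. Forests with one component are spanning trees, the sign is trivially $+1$, and the power $(-1)^{2N}$ equals $1$. This gives $\tau$. Positivity of $\tau$ holds because $\cG$ contains the spanning tree formed by the arcs $(k(j),j)$, whose weights are positive by \cref{lemma:nonzero-lambda-exists}, and all other weights are nonnegative. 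Hence $\tau>0$ and $\Lambda(H)$ is invertible.

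\emph{Last row and column of the inverse.} I will avoid cofactors here and verify the identity directly. Since $L(\cG)\mathbf{1}=0$, we get $M\mathbf{1} = E_{N,N}\mathbf{1} = e_N$, so $M^{-1}e_N=\mathbf{1}$. It follows that $\Lambda(H)^{-1}e_N = -\mathbf{1}$, i.e. $\nu_{i,N}=-1$ for all $i$. Symmetry of $\Lambda(H)$ then gives $\nu_{N,i}=-1$.

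\emph{Remaining entries $i,j\le N-1$.} Here we use Cramer's rule,
\[
(M^{-1})_{i,j} = \frac{(-1)^{i+j}\det M_{\cV\setminus\{j\},\cV\setminus\{i\}}}{\tau}.
\]
The cofactor is computed as follows.
\begin{enumerate}
\item Expand $\det M_{\cV\setminus\{j\},\cV\setminus\{i\}}$ once more by linearity in the entry coming from $E_{N,N}$; this entry is still present since $i,j\ne N$. This writes it as $\det L(\cG)_{\cV\setminus\{j\},\cV\setminus\{i\}} + \det L(\cG)_{\cV\setminus\{j,N\},\cV\setminus\{i,N\}}$.
\item Evaluate the first term. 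It is a first-order cofactor of a Laplacian, and by \cref{lemma:all-minors-matrix-tree-theorem} with $\cU=\{j\},\cW=\{i\}$ it equals $(-1)^{i+j}\tau$.
\item Evaluate the second term with $\cU=\{j,N\}$, $\cW=\{i,N\}$. The sign factor is $(-1)^{i+j+2N}=(-1)^{i+j}$. Each forest has two components, one of which contains $N$. The bijection pairs $N\mapsto N$ and $j\mapsto i$. Since $N$ is the largest element of both sets, $\pi_\cF$ has no inversions and $\mathrm{sign}(\cF)=+1$.
\end{enumerate}
Altogether the cofactor is $(-1)^{i+j}\bigl(\tau+\sum_{\cF\in\mathfrak{F}_{\{i,N\},\{j,N\}}}\lambda(\cF)\bigr)$. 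Multiplying by $(-1)^{i+j}$ and dividing by $\tau$ gives $(M^{-1})_{i,j}$, and negating yields the claimed formula for $\nu_{i,j}$. The set $\mathfrak{F}_{\{i,N\},\{j,N\}}$ is symmetric under swapping $i$ and $j$, which gives $\nu_{i,j}=\nu_{j,i}$.

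The main obstacle is bookkeeping rather than substance. Two points need care. First, the convention in \cref{lemma:all-minors-matrix-tree-theorem}: rows are removed by $\cU$ and columns by $\cW$, so the transposition in Cramer's rule must be tracked. Second, the removed rows and columns of $E_{N,N}$ must be handled correctly when a minor deletes row or column $N$ versus keeps it. For $i$ or $j=N$ the perturbation disappears from the minor, which is consistent with the direct $M\mathbf{1}=e_N$ argument above. When $i=j$, the forests are those separating $i$ from $N$, which is the standard two-tree count.
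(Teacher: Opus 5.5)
Your proof is correct. It differs from the paper's in how the entries $\nu_{i,j}$ with $i,j\le N-1$ are obtained.

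\textbf{The paper's route.} It uses $\Lambda(H)\mathbf{1}=-e_N$ to write $-\Lambda(H)^{-1}$ in block form with last row and column $\mathbf{1}$. A Schur-complement identity then gives $R=\widehat L^{-1}+\mathbf{1}\mathbf{1}^\transpose$, where $\widehat L=L(\cG)_{\cV\setminus\{N\},\cV\setminus\{N\}}$. The constant $-1$ comes from this rank-one term. The only minors that must be evaluated by \cref{lemma:all-minors-matrix-tree-theorem} are the second-order ones $L(\cG)_{\cV\setminus\{j,N\},\cV\setminus\{i,N\}}$, i.e.\ the cofactors of $\widehat L$.

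\textbf{Your route.} You apply Cramer's rule to $M=L(\cG)+E_{N,N}$ directly. You then use multilinearity in the retained $(N,N)$ entry, which sits in the bottom-right corner of each cofactor submatrix, so no sign is introduced. This splits each cofactor into a first-order Laplacian cofactor plus the same second-order minor. The $-1$ then arises combinatorially, from the fact that every first-order cofactor of a Laplacian equals $\pm\tau$. Likewise, for the determinant you use multilinearity together with $\det L(\cG)=0$, where the paper adds the first $N-1$ columns to the last one; these are equivalent.

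\textbf{Comparison.} Your argument is more uniform: every quantity is read off the all-minors theorem, with no block inversion, at the cost of one extra invocation of it (the $r=1$ case). The paper's argument isolates the rank-one part by pure linear algebra. It then needs only the forest expansion of $\widehat L^{-1}$, which is the inverse of the Laplacian grounded at $N$, a standard object.

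Your remaining steps also hold. The bookkeeping is right: rows are removed by $\cU$ and columns by $\cW$, the transposition in Cramer's rule is tracked, and $\mathrm{sign}(\cF)=+1$ because $N\mapsto N$. The positivity argument is sound: \cref{lemma:nonzero-lambda-exists} lets you choose arcs $(k(j),j)$ with positive weights forming a spanning tree, and all other spanning trees contribute nonnegative terms.
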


\begin{proof}

We have $L(\cG)\mathbf 1 = \mathbf{0}$, so the first $N-1$ rows of $-\Lambda(H) = L(\cG) + E_{N,N}$ have row sum equal to $0$, while the last row sums up to $1$.
Hence, if we add the first $N-1$ columns of $-\Lambda(H)$ to its last column, it becomes
\[
    \begin{bmatrix} 0 & \cdots & 0 & 1 \end{bmatrix}^\transpose .
\]
This column operation does not change the determinant, and the expansion along the last column gives
\[
    \det (-\Lambda(H)) = \det \widehat L 
\]
where $\widehat L = L(\cG)_{\cV\setminus\{N\},\,\cV\setminus\{N\}}$ is the minor obtained by eliminating the last row and last column from $L(\cG)$.
Then by the matrix-tree theorem,
\[
\det \widehat L
=
\sum_{\substack{\cT:\text{spanning}\\\text{tree of }\cG}}
\lambda(\cT) = \tau 
\]
as desired.
Note that for $H \in \cH_\star (N-1)$, the support graph of $\cG$ is connected and all its positive-edge weights are nonnegative, so $\tau>0$, i.e., $\widehat L$ is invertible.

Next, $\Lambda(H) \mathbf{1} = -e_N \implies \Lambda(H)^{-1} e_N = -\mathbf{1}$ and $\Lambda(H)^{-1}$ is symmetric, so $\nu_{N,i} = \nu_{i,N} = -1$ for $i=1,\dots,N$.
For the remaining $\nu_{i,j}$ with $1\le i,j \le N-1$, we can write $-\Lambda(H)^{-1}$ in the block structure
\[
-\Lambda(H)^{-1} =
\begin{bmatrix}
R & \mathbf{1} \\
\mathbf{1}^\transpose & 1
\end{bmatrix}
\]
whose Schur complement for the upper left block is $R - \mathbf{1}\mathbf{1}^\transpose$, so we have
\begin{align*}
    \begin{bmatrix}
    R & \mathbf{1} \\
    \mathbf{1}^\transpose & 1
    \end{bmatrix}^{-1} = 
    \begin{bmatrix}
    (R - \mathbf{1}\mathbf{1}^\transpose)^{-1} & * \\
    * & *
    \end{bmatrix} =
    -\Lambda(H) = 
    \begin{bmatrix}
    \widehat L & * \\
    * & *
    \end{bmatrix} ,
\end{align*}
i.e., $R = \widehat L^{-1} + \mathbf{1}\mathbf{1}^\transpose$.
This implies $\nu_{i,j} = -1-(\widehat L^{-1})_{i,j}$ for $1\le i,j \le N-1$.
Next, using the adjugate formula for matrix inverses, we have
\[
(\widehat L^{-1})_{i,j}
=
\frac{(-1)^{i+j}\det \widehat L_{\{1,\dots,N-1\}\setminus\{j\}, \{1,\dots,N-1\}\setminus\{i\}}}{\det \widehat L}
= \frac{(-1)^{i+j}}{\tau} \det \widehat L_{\{1,\dots,N-1\}\setminus\{j\}, \{1,\dots,N-1\}\setminus\{i\}}
.
\]
Because $\widehat L=L(\cG)_{\cV\setminus\{N\}, \cV\setminus\{N\}}$, we can rewrite the minor of $\widehat L$ as
\[
\widehat L_{\{1,\dots,N-1\}\setminus\{j\}, \{1,\dots,N-1\}\setminus\{i\}}
=
L(\cG)_{\cV\setminus\{j,N\}, \cV\setminus\{i,N\}}.
\]
Hence, we can apply \cref{lemma:all-minors-matrix-tree-theorem} with $\cU=\{j,N\}$ and $\cW=\{i,N\}$ to obtain
\[
\det L(\cG)_{\cV\setminus\{j,N\}, \cV\setminus\{i,N\}}
=
(-1)^{i+j}
\sum_{\cF\in \mathfrak F_{\{j,N\},\{i,N\}}}
\mathrm{sign}(\cF)\,\lambda(\cF).
\]
For any $\cF\in \mathfrak F_{\{j,N\},\{i,N\}}$, the component containing $N$ must pair $N$ with itself, and the other component must pair $j$ with $i$, so the induced bijection has no inversions, i.e., \(\mathrm{sign}(\cF)=1\).
Also, because $\widehat L$ is symmetric, we can rewrite the forest family as \(\mathfrak F_{\{i,N\},\{j,N\}}\). Together, this yields
\[
(\widehat L^{-1})_{i,j}
=
\frac{1}{\tau}
\sum_{\cF\in \mathfrak F_{\{i,N\},\{j,N\}}} \lambda(\cF) 
\implies \nu_{i,j} = -1 - \frac{1}{\tau} \sum_{\cF\in \mathfrak F_{\{i,N\},\{j,N\}}} \lambda(\cF)
\]
as desired.
\end{proof}

Now, if we specialize to the case of vertex algorithms, $\cG$ itself is a tree (neglecting the edges of weight $0$) and thus, \cref{lemma:Lambda-inverse-expression} implies $\tau = \det\left( L(\cG) + E_{N,N} \right) = \lambda(\cG) = \prod_{j=1}^{N-1} \lambda_{k(j),j}$.
Additionally, for any $i,j < N$, a spanning forest $\cF \in \mathfrak{F}_{\{i, N\}, \{j, N\}}$ should have $N-2$ edges and two connected components, so it will be obtained by eliminating exactly one edge from $\cG$.
Recall that $\cC(j)$ denotes the unique increasing path to $N$, starting at $j$.
Then we have the following characterization.

\begin{corollary}
\label{corollary:nu-explicit-form}
Let $H \in \cH_\star(N-1)$ be a vertex algorithm and consider its arc diagram $\cG = \cG(H)$.
Then for $1\le i,j\le N$,
\begin{align*}
    \nu_{i,j} = -1 - \sum_{v \in \cC(i)\cap \cC(j)\setminus\{N\}} \frac{1}{\lambda_{k(v),v}} = -1 - \sum_{v=1}^{N-1} \frac{1_{v\in \cC(i)} 1_{v\in \cC(j)}}{\lambda_{k(v),v}} .
\end{align*}
\end{corollary}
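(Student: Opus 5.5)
The plan is to specialize \cref{lemma:Lambda-inverse-expression} to the tree case. When $i=N$ or $j=N$, $\cC(N)=\{N\}$, so the sum $\sum_{v\in\cC(i)\cap\cC(j)\setminus\{N\}}$ is empty. The formula then reads $\nu_{i,j}=-1$, which is exactly what the lemma gives. It remains to treat $1\le i,j\le N-1$. By the lemma,
\[
\nu_{i,j} = -1 - \frac{1}{\tau}\sum_{\cF\in\mathfrak F_{\{i,N\},\{j,N\}}}\lambda(\cF).
\]
Since $\cG$ is a tree, we have $\tau=\lambda(\cG)=\prod_{v=1}^{N-1}\lambda_{k(v),v}$, as noted just before the statement.

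Next I enumerate $\mathfrak F_{\{i,N\},\{j,N\}}$. Edges of weight zero contribute nothing to $\lambda(\cF)$, so only forests inside the support tree count. Such a forest has two components and hence $N-2$ edges, so it is $\cG$ with exactly one arc $(k(v),v)$ removed, for some $v\in\{1,\dots,N-1\}$. Removing that arc splits the tree into two parts: the subtree $S_v$ consisting of all nodes whose increasing path passes through $v$, that is, $\{u: v\in\cC(u)\}$, and its complement, which contains $N$.

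The forest belongs to $\mathfrak F_{\{i,N\},\{j,N\}}$ exactly when one component contains $i$ and $j$ but not $N$, and the other contains $N$. In the case $i=j$ this reading still holds, since the definition requires one vertex of each set per component. So the condition is $i,j\in S_v$, that is, $v\in\cC(i)\cap\cC(j)$. Conversely, any such $v\neq N$ gives a valid forest. Each valid forest has $\lambda(\cF)=\tau/\lambda_{k(v),v}$. Dividing by $\tau$ and summing over $v\in\cC(i)\cap\cC(j)\setminus\{N\}$ yields the formula. The indicator form is just a rewriting of the same sum.

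The main point needing care is the membership characterization. I must verify that the component of $\cG\setminus(k(v),v)$ not containing $N$ is exactly $\{u: v\in\cC(u)\}$. This follows from uniqueness of paths in a tree, together with the fact that the path from $u$ to $N$ is the increasing path $\cC(u)$. That path uses the arc $(k(v),v)$ if and only if $v\in\cC(u)$ with $v\neq N$. I must also handle the degenerate $i=j$ case cleanly.
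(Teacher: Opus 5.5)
Your proposal is correct and follows essentially the same route as the paper's proof. Both specialize \cref{lemma:Lambda-inverse-expression} to the tree case, identify each forest in $\mathfrak F_{\{i,N\},\{j,N\}}$ with the removal of a single arc $(k(v),v)$ where $v\in\cC(i)\cap\cC(j)\setminus\{N\}$, and use $\lambda(\cF)=\tau/\lambda_{k(v),v}$. Your remarks on zero-weight edges and on the case $i=j$ add a little care that the paper leaves implicit.
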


\begin{proof}
If either $i=N$ or $j=N$, then $\cC(i)\cap \cC(j)\setminus\{N\} = \emptyset$ and the summation becomes vacuous. This aligns with the formula $\nu_{N,j} = \nu_{i,N} = -1$ for $i,j=1,\dots,N$, proved in \cref{lemma:Lambda-inverse-expression}.

Note that because $H$ is a vertex, $\cG$ is a tree, and thus it is the only spanning tree of itself.
That is, $\tau = \lambda(\cG) = \prod_{u=1}^{N-1} \lambda_{k(u),u}$.
Now for $1\le i,j\le N-1$, we have $\cF \in \mathfrak{F}_{\{i, N\}, \{j, N\}}$ if and only if $\cF$ is obtained by removing an arc $(k(v), v)$ from $\cG$, and it has both $i$ and $j$ in its connected component not containing $N$.
When an arc $(k(v),v)$ is removed, it separates a node $u$ from $N$ if and only if $v \in \cC(u) \setminus \{N\}$, i.e., when $v$ lies on the simple path $\cC(u)$ from $u$ to $N$ (note that $v$ cannot be $N$ because $N$ is the terminal node).
Hence, removing $(k(v), v)$ places both $i,j$ in the component separated from $N$ if and only if $v \in \cC(i) \cap \cC(j) \setminus \{N\}$.
For the resulting forest $\cF$, we have $\lambda(\cF) = \prod_{\substack{1\le u\le N-1 \\ u\ne v}} \lambda_{k(u),u} = \frac{\tau}{\lambda_{k(v),v}}$.
Therefore, by \cref{lemma:Lambda-inverse-expression},
\begin{align*}
    \nu_{i,j} = -1 - \sum_{\cF \in \mathfrak{F}_{\{i, N\}, \{j, N\}}} \frac{\lambda(\cF)}{\tau} = -1 - \sum_{v \in \cC(i)\cap \cC(j)\setminus\{N\}} \frac{1}{\lambda_{k(v),v}} .
\end{align*}
\end{proof}

\begin{lemma}
\label{lemma:non-crossing-component}
Let $\cG$ be a basic (i.e., non-crossing) arc diagram.
Suppose $\cF$ is a spanning forest of $\cG$ with two disjoint connected components: $\cT_1$ containing $i,j$ such that $j\le i<N$, and $\cT_2$ containing $N$. Then we have $\{j,j+1,\dots,i\} \subseteq \cT_1$.
\end{lemma}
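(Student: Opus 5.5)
The plan is to exploit that $\cF$ is obtained from the tree $\cG$ by deleting exactly one arc. $\cF$ is a spanning forest with two components, so it has $N-2$ edges, and $\cG$ itself is a tree with $N-1$ arcs. Hence $\cF=\cG\setminus\{(k(v),v)\}$ for a unique $v\in\{1,\dots,N-1\}$. As in the proof of \cref{corollary:nu-explicit-form}, the component not containing $N$ is then exactly the set of nodes $u$ with $v\in\cC(u)$, i.e. the descendants of $v$ together with $v$. Call this set $D(v)$; then $\cT_1=D(v)$, and the hypothesis says $i,j\in D(v)$. The claim therefore reduces to a purely combinatorial statement about non-crossing arc diagrams: \emph{for every $v<N$, the set $D(v)$ is an integer interval $\{\ell(v),\ell(v)+1,\dots,v\}$.} Granting this, $j,i\in D(v)$ gives $\ell(v)\le j\le i\le v$, and hence $\{j,\dots,i\}\subseteq D(v)=\cT_1$.

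To prove the interval property, first note that every $u\in D(v)$ satisfies $u\le v$, because paths $\cC(u)$ are increasing. It then suffices to show that if $u<w<v$ and $u\in D(v)$, then $w\in D(v)$.

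Suppose instead that $w\notin D(v)$, and follow the increasing path $\cC(w)$. This path passes over $v$ (it ends at $N>v$) and never visits $v$. So there is a unique arc $(k(a),a)$ on $\cC(w)$ with $a<v<k(a)$; we have $a\ge w>u$ and $a\ne v$. Now follow $\cC(u)$, which reaches $v$. Consider the arc $(k(b),b)$ of $\cC(u)$ that jumps over $a$, namely the arc with $b<a<k(b)\le v$. Such an arc exists because $u<a<v$ and the path goes from $u$ to $v$; it cannot land exactly on $a$, since then $a\in\cC(u)$ and $\cC(u)$ would continue past $v$ along $\cC(a)$ without hitting $v$, a contradiction. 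We then have $b<a<k(b)\le v<k(a)$.

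The strict case $k(b)<k(a)$ is exactly a crossing, contradicting non-crossingness. Equality $k(b)=k(a)$ is impossible because $k(b)\le v<k(a)$. This contradiction proves the interval property.

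I expect the main obstacle to be the bookkeeping in choosing the arcs $a$ and $b$ correctly, in particular ruling out the degenerate cases where a path lands exactly on an endpoint rather than jumping over it. The identification of $\cT_1$ with $D(v)$ is routine and mirrors the argument already given for \cref{corollary:nu-explicit-form}.
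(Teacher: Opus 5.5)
Your proof is correct, but it runs in the opposite logical order from the paper.

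The paper argues directly on the given forest. If some node strictly between $j$ and $i$ lay in $\cT_2$, its increasing path to $N$ would avoid $\cT_1$. Meanwhile $\cT_1$ contains the segment of $\cC(j)$ up to the meeting point $m=\min(\cC(i)\cap\cC(j))\ge i$. The arc of that first path jumping over $m$, together with the arc of $\cC(j)$ jumping over that arc's left endpoint, forms a crossing. Only afterwards does the paper identify the component with the descendant set $\cI(v)$ and deduce the interval property (\cref{lemma:descendant-interval}) from this lemma.

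You instead use the fact that $\cF$ is the tree minus one arc to identify $\cT_1$ with the descendant set first. You then prove the interval property directly, with the subtree's root (your $v$) playing the role of the paper's $m$. The crossing mechanism is the same in both: an arc straddling a node on one increasing path, then an arc on the other path straddling its left endpoint.

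Your route therefore establishes \cref{lemma:descendant-interval} on its own, makes the present statement an immediate corollary, and avoids the bookkeeping with the path between $i$ and $j$ and its meeting point. The paper's route instead reasons about the forest directly, without first describing its components as descendant sets.

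A minor point: your aside ruling out $k(b)=k(a)$ is redundant, since $k(b)\le v<k(a)$ already gives strict inequality.
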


\begin{proof}
Suppose that $v \notin \cT_1$ for some $j < v < i$. Then we must have $v \in \cT_2$.
That is, $\cT_2$ is the connected component containing both $v$ and $N$, so it must contain the unique simple path $\cC(v)$ joining them.
This implies, $\cC(v) \cap \cT_1 = \emptyset$.
On the other hand, $\cT_1$ contains the unique simple path between $i$ and $j$, and thus contains the segment of $\cC(j)$ joining $j$ to $m = \min \left(\cC(j)\cap \cC(i)\right)$.
Note that we must have $i\le m < N$; if $m=N$ then $j \in \cT_1$ is joined to $N \in \cT_2$ within $\cF$, a contradiction.

Let $v_0 \in \cC(v)$ be the node satisfying $v_0 < m < k(v_0)$. Such a vertex exists because the increasing path $v < k(v) < k(k(v)) < \cdots$ should terminate at $N$, and it does not contain $m$ because $m\in \cT_1$ while $\cC(v) \subseteq \cT_2$. 
Similarly, as the increasing path $j < k(j) < k(k(j)) < \cdots$ should terminate at $m$ without intersecting $\cC(v)$, there exists $j_0 \in \cC(j)$ satisfying $j_0 < v_0 < k(j_0)$.
This contradicts the assumption that $\cG$ is non-crossing, as $j_0 < v_0 < k(j_0) \le m < k(v_0)$.
\end{proof}

\begin{lemma}
\label{lemma:descendant-interval}
Let $\cG$ be a basic arc diagram.
For each $v = 1,\dots,N-1$, the set of its descendants
\[
\cI(v) := \{\, j\in\{1,\dots,N-1\} : v\in \cC(j)\,\} 
\]
is an interval, i.e., $\cI(v) = \{\ell(v),\ell(v)+1,\dots,v\}$.
\end{lemma}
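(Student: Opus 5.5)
We will show directly that $\cI(v)$ contains every integer between $\ell(v)$ and $v$. First note the trivial facts: $v\in\cI(v)$ since $v\in\cC(v)$, and every $j\in\cI(v)$ satisfies $j\le v$ because $\cC(j)$ is an increasing path. Moreover, $\ell(v)=\min\cI(v)$ by definition. So it suffices to prove that if $j\in\cI(v)$ and $j<u<v$, then $u\in\cI(v)$.

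Our plan is a contradiction argument via the non-crossing property, in the same spirit as the proof of \cref{lemma:non-crossing-component}. Take $j\in\cI(v)$ with $j<v$, so the increasing path $\cC(j)$ runs $j=w_0<w_1<\cdots<w_s=v$, with $w_{t+1}=k(w_t)$. Fix $u$ with $j<u<v$ and suppose $v\notin\cC(u)$. Since $u\ne w_t$ for all $t$ would need checking separately, we first dispose of the case $u=w_t$ for some $t$: then $v\in\cC(u)$ immediately. Otherwise there is a unique $t$ with $w_t<u<w_{t+1}$.

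Now follow $\cC(u)$: $u<k(u)<k(k(u))<\cdots<N$. This path cannot pass through $v$ by assumption. We claim it also cannot pass through any $w_{t'}$ with $t'>t$, since then $v\in\cC(w_{t'})\subseteq\cC(u)$ (as $w_{t'}\le v$ lies on the path to $v$). Therefore, since the path starts at $u<w_{t+1}$ and must eventually exceed $w_{t+1}$ (it ends at $N\ge v\ge w_{t+1}$, and cannot end at $w_{t+1}$), there is a node $u_0\in\cC(u)$ with $u_0<w_{t+1}<k(u_0)$, and $u_0\ge u>w_t$. This gives $w_t<u_0<w_{t+1}=k(w_t)<k(u_0)$, i.e., the arcs $(k(w_t),w_t)$ and $(k(u_0),u_0)$ cross, contradicting that $\cG$ is basic (non-crossing, by \cref{proposition:basic-iff-noncrossing}).

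The only delicate step is the claim that $\cC(u)$ jumps strictly over $w_{t+1}$ rather than landing on it; this is exactly where we use that landing on any $w_{t'}$ ($t'\ge t+1$) would put $v$ on $\cC(u)$. With that, the interval property follows, giving $\cI(v)=\{\ell(v),\dots,v\}$.
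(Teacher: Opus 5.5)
Your proof is correct, but it is organized differently from the paper's.

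The paper first shows, without using the non-crossing property, that $\cI(v)$ equals the component not containing $N$ of the forest obtained by deleting the arc $(k(v),v)$. It then applies \cref{lemma:non-crossing-component} with $i=v$ and $j=\ell(v)$. The proof of that lemma needs a two-stage jumping argument: $\cC(v)$ must jump over $m=\min(\cC(i)\cap\cC(j))$ at some $v_0$, and then $\cC(j)$ must jump over $v_0$.

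You instead work directly on the path $\cC(j)$ from $j$ to $v$. Because $u$ already sits strictly under an arc $(w_{t+1},w_t)$ of that path, a single jump of $\cC(u)$ over $w_{t+1}$ produces the crossing $w_t<u_0<w_{t+1}<k(u_0)$, and no forests are needed.

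Your route is shorter and fully self-contained. The paper's route fits its spanning-forest framework: identifying $\cI(v)$ with the component cut off from $N$ by removing $(k(v),v)$ is the same fact used in \cref{corollary:nu-explicit-form}. It also proceeds through a more general statement about two-component forests.

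A small simplification: you only use $w_{t+1}\notin\cC(u)$, so your claim for all $w_{t'}$ with $t'>t$ is more than you need.
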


\begin{proof}
Since $\cG$ is a tree, removing a single arc from $\cG$ yields a spanning forest with two connected components.
Let $\cF$ be the forest obtained by removing the arc $(k(v), v)$, so that $v$ lies in the component $\cT$ of $\cF$ not containing $N$.
Because $\cF$ contains all edges with starting node (smaller node) less than $v$, we see that any $j\in \cI(v)$ is joined to $v$ by $\cC(j)$ within $\cF$, so $\cI(v) \subseteq \cT$.
Conversely, if $j \notin \cI(v)$, then $v\notin \cC(j)$, and $\cF$ contains all edges within the path $\cC(j)$.
Therefore, $j$ is joined to $N$ in $\cF$, so $j$ belongs to a different connected component than $v$, i.e., $j\notin \cT$.
This shows that $\cT \subseteq \cI(v)$, and eventually, $\cT = \cI(v)$.

Now, as defined in Section~\ref{section:H-duality-for-vertices}, $\ell(v)$ is the smallest descendant of $v$, so clearly $\ell(v) \in \cI(v) = \cT$.
So $\cT$ is the connected component of $\cF$ containing $\ell(v)$ and $v$ but not $N$; hence, by \cref{lemma:non-crossing-component}, we have $\{\ell(v),\ell(v)+1,\dots,v\} \subseteq \cT = \cI(v)$.
However, this inclusion is in fact an equality because $\ell(v)$ is the smallest element of $\cI(v)$, and by definition $\cI(v)$ cannot contain any node larger than $v$.
This completes the proof.
\end{proof}

\subsubsection{Proof of \texorpdfstring{\cref{theorem:dual-optimality-and-non-crossing-arc-diagram}}{Theorem 4}}
\label{section:proof-of-dual-optimality-and-basic-diagram-theorem}

First, we assume $j_1 < j_2 < k(j_1) < k(j_2)$ for some $j_1, j_2$ and show that $H^\at$ cannot be optimal.
Suppose to the contrary, so that $H^\at$ has nonnegative H-certificates, i.e., all off-diagonal entries of $\Lambda(H^\at)$ in~\eqref{eqn:lambda-H-dual-explicit} are nonnegative.
Then
\begin{align*}
    \nu_{i,j} - \nu_{i-1,j} - \nu_{i,j-1} + \nu_{i-1,j-1} \ge 0
\end{align*}
for all $1\le j < i \le N$, where we let $\nu_{\cdot,0} = \nu_{0,\cdot} = -1$ for convenience.
Summing this inequality for all $j = j_1 + 1 , \dots, j_2$ and $i = k(j_1) + 1, \dots , k(j_2)$, we obtain
\begin{align}
\label{eqn:nonnegative-flux-contradiction}
    \nu_{k(j_2), j_2} - \nu_{k(j_2), j_1} - \nu_{k(j_1), j_2} + \nu_{k(j_1), j_1} \ge 0 .
\end{align}
Rewrite $k(j_1) = i_1$ and $k(j_2) = i_2$ for simplicity.
Then we have $\cC(i_1) \subset \cC(j_1) = \{j_1\} \cup \cC(i_1)$ and $\cC(i_2) \subset \cC(j_2) = \{j_2\} \cup \cC(i_2)$. 
On the other hand, because $j_2 \notin \cC(i_1)$ and $j_1 \notin \cC(i_2)$, we have $\cC(i_1) \cap \cC(j_2) = \cC(i_1) \cap \cC(i_2) = \cC(j_1) \cap \cC(i_2)$. 
By \cref{corollary:nu-explicit-form}, this implies
\begin{align*}
    \nu_{i_2,j_2} - \nu_{i_1,j_2} = -\sum_{v \in \cC(i_2)\setminus \{N\}} \frac{1}{\lambda_{k(v),v}} + \sum_{v \in (\cC(i_1) \cap \cC(i_2)) \setminus \{N\}} \frac{1}{\lambda_{k(v),v}} = -\sum_{v \in \cC(i_2)\setminus \cC(i_1)} \frac{1}{\lambda_{k(v),v}}
\end{align*}
and similarly
\begin{align*}
    \nu_{i_1,j_1} - \nu_{i_2,j_1} = -\sum_{v \in \cC(i_1)\setminus \{N\}} \frac{1}{\lambda_{k(v),v}} + \sum_{v \in (\cC(i_1) \cap \cC(i_2)) \setminus \{N\}} \frac{1}{\lambda_{k(v),v}} = -\sum_{v \in \cC(i_1)\setminus \cC(i_2)} \frac{1}{\lambda_{k(v),v}} .
\end{align*}
In particular, we have $\nu_{i_2,j_2} - \nu_{i_1,j_2} \le 0$ and $\nu_{i_1,j_1} - \nu_{i_2,j_1} < 0$ because $i_1 \in \cC(i_1) \setminus \cC(i_2)$. This contradicts~\eqref{eqn:nonnegative-flux-contradiction}.

Conversely, suppose the arc diagram $\cG$ of $H$ is non-crossing.
Select $1\le j<i \le N$ arbitrarily.
Using \eqref{eqn:lambda-H-dual-explicit} and \cref{corollary:nu-explicit-form}, we can write
\begin{align*}
    \lambda_{N-j+1,N-i+1}^\star (H^\at) & = \nu_{i,j} - \nu_{i-1,j} - \nu_{i,j-1} + \nu_{i-1,j-1} \\
    & = -\sum_{v=1}^{N-1} \frac{(1_{v\in \cC(i)} - 1_{v\in \cC(i-1)}) (1_{v\in \cC(j)} - 1_{v\in \cC(j-1)})}{\lambda_{k(v),v}} \\
    & = \sum_{v=1}^{N-1} \frac{(1_{v\in \cC(i-1)} - 1_{v\in \cC(i)}) (1_{v\in \cC(j)} - 1_{v\in \cC(j-1)})}{\lambda_{k(v),v}}
\end{align*}
where we set $\cC(N) := \{N\}$ so that $1_{v\in \cC(N)} = 0$ for $v=1,\dots,N-1$, and $\cC(0):=\emptyset$.
Because we have 
\[
    \cI(v) = \{u \in \{1,\dots,N-1\}: v \in \cC(u)\} = \{\ell(v), \ell(v)+1, \dots, v\}
\]
by \cref{lemma:descendant-interval}, the product $(1_{v\in \cC(i-1)} - 1_{v\in \cC(i)}) (1_{v\in \cC(j)} - 1_{v\in \cC(j-1)})$ is nonzero only when $i$ and $j$ both lie at the boundary of $\cI(v)$. More precisely, we have
\begin{align*}
    (1_{v\in \cC(i-1)} - 1_{v\in \cC(i)}) (1_{v\in \cC(j)} - 1_{v\in \cC(j-1)}) = 1 \iff i = v+1, j = \ell(v)
\end{align*}
and otherwise the product is $0$ and the corresponding summand does not count.
Therefore,
\begin{align*}
    \lambda_{N-j+1,N-i+1}^\star (H^\at) = \begin{cases}
        \frac{1}{\lambda_{k(i-1),i-1}} & \text{if } j = \ell(i-1) \\
        0 & \text{otherwise.}
    \end{cases}
\end{align*}
This shows that $H^\at$ is optimal (as its H-certificates are all nonnegative), and it is a vertex algorithm because for each $N-i+1$ ($i=2,\dots,N$), we have a unique $N-j+1$ ($1\le j < i$) such that $\lambda_{N-j+1,N-i+1}^\star (H^\at)$ is positive. 

\subsubsection{Proof of \texorpdfstring{\cref{corollary:dual-of-decomposable-basic-diagram}}{Corollary 4}}
\label{section:proof-of-diagram-dualization-corollary}

\begin{proof}
Let
\[
    \cG_L := \cG_{1,N'}, \qquad
    \cG_R := \cG_{N'+1,N},
\]
where the nodes of $\cG_R$ are relabeled to $\{1,\dots,N_R\}$ with $N_R:=N-N'$.
Denote the sets $\cI(\cdot)$ associated with $\cG, \cG_L$ and $\cG_R$ (defined as in \cref{lemma:descendant-interval}) by $\cI_\cG (\cdot), \cI_{\cG_L} (\cdot)$ and $\cI_{\cG_R} (\cdot)$, and their minimum elements by $\ell_{\cG}(\cdot), \ell_{\cG_L}(\cdot)$ and $\ell_{\cG_R}(\cdot)$, respectively.
We show that 
\[
    \cI_\cG(v)=
    \begin{cases}
        \cI_{\cG_L}(v) & \text{if } 1\le v\le N'-1 \\[3pt]
        \{1,\dots,N'\} & \text{if } v=N' \\[3pt]
        N' + \cI_{\cG_R}(v-N') & \text{if } N'+1\le v\le N-1 .
    \end{cases}
\]
First, the case $1\le v \le N'-1$ is clear because $\cI_\cG (v)$ depends only on the structure of $\cG_L$.
Next, because $\cG$ is decomposable at $N'$, we have $v < k(v) \le N'$ for any $v \in \{1,\dots,N'-1\}$, which implies $N' \in \cC(v)$. Equivalently, $\cI_\cG(N') = \{1,\dots,N'\}$.
Finally, for $N'+1 \le v \le N-1$, we have $v \notin \cC(j)$ for any $j=1,\dots,N'$ so we can exclude the nodes $1,\dots,N'$ from consideration when determining $\cI_\cG(v)$. 
Thus $\cI_\cG(v)$ is determined solely by the structure of $\cG_R$ up to node relabeling, which shows the identity above.
Therefore, we obtain
\begin{equation}
\label{eq:ell-decomposable}
    \ell_\cG(v)=
    \begin{cases}
        \ell_{\cG_L}(v), & 1\le v\le N'-1,\\[3pt]
        1, & v=N',\\[3pt]
        N' + \ell_{\cG_R}(v-N'), & N'+1\le v\le N-1 .
    \end{cases}
\end{equation}
Now if we denote by $\cE^\at$ the edge set of $\cG^\at$, then as shown in the proof of \cref{theorem:dual-optimality-and-non-crossing-arc-diagram},
\[
    (N-j+1, N-i+1) \in \cE^\at 
    \iff j = \ell(i-1) .
\]
Applying this with $i=N'+1$ and $j=1$ we have $(N, N-N') = (N, N_R) \in \cE^\at$, which is the gluing arc.

For $1\le v < w \le N_R$,
\[
    (w,v) \in \cE^\at \iff N-w+1 = \ell(N-v) = N' + \ell_{\cG_R} (N-N'-v) \iff N_R - w + 1 = \ell_{\cG_R}(N_R - v)
\]
and the last condition is precisely the condition for $(w,v) \in \cE(\cG_R^\at)$, where $\cE(\cG_R^\at)$ is the edge set of $\cG_R^\at$.

Finally, for $N_R + 1 \le v < w \le N$, let $v'=v-N_R$ and $w'=w-N_R$ so that $N-v=N'-v'$ and $N-w=N'-w'$. Then
\begin{align*}
    (w,v) \in \cE^\at \iff N-w+1 = \ell(N-v) = \ell_{\cG_L}(N-v) \iff N'-w'+1 = \ell_{\cG_L} (N'-v')
\end{align*}
and the last condition is equivalent to $(w',v') \in \cE(\cG_L^\at)$, where $\cE(\cG_L^\at)$ is the edge set of $\cG_L^\at$.
Hence the right block of $\cG^\at$ has the same structure as $\cG_L^\at$, with node labels shifted to the right by $N_R$.

Put altogether, by definition of gluing, the above characterization implies
\[
    \cG^\at = \cG_R^\at \glue \cG_L^\at .
\]
\end{proof}

\subsection{Self-dual vertex algorithms}
\label{section:self-dual-vertex-algorithms}

We say that an optimal vertex algorithm \(H\) is \textit{self-dual} if \(H^\at = H\); equivalently, if its arc diagram satisfies \(\cG^\at = \cG\).
The recursive dualization rule in \cref{corollary:dual-of-decomposable-basic-diagram} suggests that a self-dual basic diagram must split into two blocks of equal size, with one block being the dual of the other.
This necessary condition is in fact also sufficient, and yields the exact count of self-dual vertex algorithms---this is again a Catalan number.

\begin{theorem}
\label{theorem:self-dual-vertex-algorithms}
Let \(N\ge 2\), let \(H \in \cH_\star (N-1)\) be an optimal vertex algorithm, and let \(\cG = \cG(H)\) be its arc diagram.
Then \(H\) is self-dual if and only if \(N\) is even and there exists a basic arc diagram \(\cG_0\) on \(\{1,\dots,N/2\}\) such that
\begin{align}
\label{eqn:self-dual-vertex-structure}
    \cG = \cG_0 \glue \cG_0^\at ,
\end{align}
i.e., \(\cG\) is decomposable at \(N/2\) and removing the arc \((N,N/2)\) separates it into subdiagrams that are dual to each other.
Consequently, there is no self-dual vertex algorithm when \(N\) is odd, while for even \(N\) the number of self-dual vertex algorithms is
\[
    C_{N/2-1} = \frac{1}{N/2}\binom{N-2}{N/2-1}.
\]
\end{theorem}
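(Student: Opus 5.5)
Self-duality $H^\at = H$ requires in particular $H^\at \in \cH_\star(N-1)$, so by \cref{theorem:dual-optimality-and-non-crossing-arc-diagram} a self-dual vertex algorithm must have a non-crossing (basic) arc diagram $\cG$. Also $H^\at$ is then the vertex algorithm with diagram $\cG^\at$. Since vertices correspond bijectively to arc diagrams (\cref{theorem:all-sparsity-patterns-give-optimal-vertex}), $H^\at = H$ is equivalent to $\cG^\at = \cG$. So the plan is to work purely with basic diagrams. Let $N'$ be the unique decomposition index from \cref{lemma:leftmost-arc-to-N-gives-decomposition}, and write $\cG = \cG_L \glue \cG_R$ with $\cG_L = \cG_{1,N'}$ and $\cG_R = \cG_{N'+1,N}$.

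For necessity, apply \cref{corollary:dual-of-decomposable-basic-diagram} to get $\cG^\at = \cG_R^\at \glue \cG_L^\at$. This glued diagram is basic, so by the uniqueness part of \cref{lemma:leftmost-arc-to-N-gives-decomposition} its unique decomposition index is the size of its left block, namely $N - N'$. If $\cG^\at = \cG$, the two decomposition indices must agree, so $N - N' = N'$. Hence $N$ is even and $N' = N/2$. Comparing the left and right components cut off by the arc $(N,N/2)$ then gives $\cG_L = \cG_R^\at$ and $\cG_R = \cG_L^\at$. Taking $\cG_0 = \cG_L$, which is basic, yields \eqref{eqn:self-dual-vertex-structure}.

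For sufficiency, let $\cG = \cG_0 \glue \cG_0^\at$ with $\cG_0$ basic on $N/2$ nodes.
\begin{itemize}
\item $\cG_0^\at$ is basic, since the dual of a basic diagram is again the diagram of an H-dual-optimal vertex and is non-crossing by \cref{theorem:dual-optimality-and-non-crossing-arc-diagram} applied to the dual. So $\cG$ is basic and decomposable at $N/2$.
\item By \cref{corollary:dual-of-decomposable-basic-diagram}, $\cG^\at = (\cG_0^\at)^\at \glue \cG_0^\at$.
\item We need the involution property $(\cG_0^\at)^\at = \cG_0$. This follows from $(H^\at)^\at = H$ together with the bijection between vertices and diagrams.
\end{itemize}
Therefore $\cG^\at = \cG$, and $H$ is self-dual.

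For the count, note that when $N$ is odd the necessity argument forces $N' = N/2$, which is impossible, so no self-dual vertex exists. When $N$ is even, the map $\cG_0 \mapsto \cG_0 \glue \cG_0^\at$ is injective, because $\cG_0$ is recovered by deleting the arc $(N,N/2)$. It is surjective onto self-dual diagrams by the characterization above. So the count equals the number of basic diagrams on $N/2$ nodes, which is $C_{N/2-1}$ by \cref{proposition:catalan-many-basic-arc-diagrams}.

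The main care point is the step identifying the decomposition index of $\cG_R^\at \glue \cG_L^\at$ as $N - N'$. This relies on the uniqueness clause of \cref{lemma:leftmost-arc-to-N-gives-decomposition}, which requires the glued diagram to be non-crossing. That in turn holds because the gluing of basic diagrams is basic, as in the proof of \cref{proposition:basic-iff-noncrossing}.
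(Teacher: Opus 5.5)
Your proposal is correct and follows the paper's proof essentially step by step. Both arguments get basicness of $\cG$ from \cref{theorem:dual-optimality-and-non-crossing-arc-diagram} and force $N'=N-N'$ via uniqueness of the decomposition index (\cref{lemma:leftmost-arc-to-N-gives-decomposition}) after applying \cref{corollary:dual-of-decomposable-basic-diagram}; both then use that same corollary for sufficiency and the bijection $\cG_0\mapsto\cG_0\glue\cG_0^\at$ for the count, with only cosmetic differences (you make the involution $(\cG_0^\at)^\at=\cG_0$ explicit and cite \cref{proposition:catalan-many-basic-arc-diagrams} rather than \cref{corollary:catalan-many-dual-optimal-algorithms}).
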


\begin{proof}
Assume first that \(H^\at = H \in \cH_\star(N-1) \). By \cref{theorem:dual-optimality-and-non-crossing-arc-diagram}, its arc diagram \(\cG\) is basic, i.e., non-crossing.
By \cref{lemma:leftmost-arc-to-N-gives-decomposition}, $\cG$ is decomposable at $N' = \min\{j\in\{1,\dots,N-1\}: k(j)=N\}$, and this decomposition index is unique.
Denote the two arc diagrams obtained by removing $(N,N')$ from $\cG$ by $\cG_L$ (the left component) and $\cG_R$ (the right component).
Then 
\begin{align*}
    \cG_L \glue \cG_R = \cG = \cG^\at = \cG_R^\at \glue \cG_L^\at
\end{align*}
by \cref{corollary:dual-of-decomposable-basic-diagram}.
This implies that $\cG$ is decomposable at both \(N'\) and \(N-N'\), so we must have $N' = N-N'$ by uniqueness of the decomposition index.
Therefore, \(N\) is even and \(N'=N/2\).
Furthermore, comparing the left and right blocks of the two decompositions, we obtain $\cG_L = \cG_R^\at$ and $\cG_R = \cG_L^\at$.
Therefore
\[
    \cG = \cG_L \glue \cG_L^\at ,
\]
which is exactly \eqref{eqn:self-dual-vertex-structure} with \(\cG_0=\cG_L\).

Conversely, suppose \(N\) is even and \eqref{eqn:self-dual-vertex-structure} holds for some basic arc diagram \(\cG_0\) on \(N/2\) nodes.
Because \(\cG_0\) is basic, its dual \(\cG_0^\at\) is also basic by \cref{theorem:dual-optimality-and-non-crossing-arc-diagram}, and hence \(\cG\) is basic as well.
Additionally, by \cref{corollary:dual-of-decomposable-basic-diagram},
\[
    \cG^\at = (\cG_0^\at)^\at \glue \cG_0^\at
    = \cG_0 \glue \cG_0^\at
    = \cG .
\]
Therefore, the optimal vertex algorithm $H$ corresponding to $\cG$ is self-dual.

Finally, the above characterization implies that no self-dual vertex algorithm exists when \(N\) is odd, and when \(N\) is even, the map
\[
    \cG_0 \mapsto \cG_0 \glue \cG_0^\at
\]
is a bijection from the set of basic arc diagrams on \(N/2\) nodes to the set of self-dual vertex algorithms on \(N\) nodes.
Hence the counting argument directly follows from \cref{corollary:catalan-many-dual-optimal-algorithms}, and the proof is complete.
\end{proof}

\section{Application to novel algorithm design and principled analyses}
\label{section:deducing-algorithm-design-and-analysis}

\subsection{Two classes of new optimal algorithms}

Here, we design two new fixed-point methods with natural arc diagram forms. By virtue of these forms, their analysis is immediate by our composing theory, ensuring they are minimax optimal without additional effort. Although both are optimal (as are OHM and Dual-OHM), our theory allows us to contrast these methods in terms of their level of guarantees at intermediate iterates and their robustness to violation of nonexpansiveness.

\subsubsection{Repeated Dual-OHM}
\label{section:repeated-dual-ohm}

We consider periodic arc diagrams $\cG$ as in Figure~\ref{figure:periodic-dual-ohm-p=5}—this corresponds to the H-matrices defined by taking a fixed period $p \in \mathbb N$, and letting $H_{\text{RDO}(p)}^{(0)} = [\,\,]$ and
\begin{align*}
    H_{\text{RDO}(p)}^{(n)} = H_{\text{RDO}(p)}^{(n-1)} \glue H_{\text{Dual-OHM}} (p-1) \in \cH_\star(np) \subset \reals^{np \times np}
\end{align*}
for $n=1,2,\dots$.
We define the algorithm whose H-matrix for the first $np$ iterations is $H_{\text{RDO}(p)}^{(n)}$ for all $n=1,2,\dots$ as \textit{Repeated Dual-OHM (RDO) with period $p$}.
Note that this is well-defined without specifying a terminal iteration $N$ because each $H_{\text{RDO}(p)}^{(n)}$ is an upper left submatrix of $H_{\text{RDO}(p)}^{(m)}$ for any $m>n$.

\begin{figure}[t]
\centering
\begin{tikzpicture}[baseline=(current bounding box.center), scale=0.68,
    dot/.style={circle,fill,inner sep=1.5pt},
    every node/.style={font=\small}
]
    \foreach \i in {1,...,16}{
        \coordinate (P\i) at (\i,0);
        \fill (P\i) circle (2pt);
        \node[below=2pt] at (P\i) {\i};
    }
    \coordinate (P16) at (16,0);
    \coordinate (P17) at (17,0);

    \draw[line width=0.8pt, color=gray, densely dashed] (P1) -- (P17);

    \begin{scope}
        \foreach \b in {6,11,16}{
            \foreach \d/\r in {5/2.5,4/2,3/1.5,2/1,1/0.5}{
                \pgfmathtruncatemacro{\a}{\b-\d}
                \draw[line width=0.8pt]
                    (P\a) arc[start angle=180,end angle=0,radius=\r];
            }
        }
    \end{scope}

    \node[draw=none, fill=white, inner sep=1.5pt] at (16.7,1.25) {$\cdots$};
\end{tikzpicture}
\caption{An example arc diagram for Repeated Dual-OHM with period $p=5$. It periodically repeats the pattern from Dual-OHM, and thereby has guarantees on the iterates $y_5, y_{10}, y_{15}, \dots$.}
\label{figure:periodic-dual-ohm-p=5}
\end{figure}

By applying \cref{theorem:gluing} recursively, we see that for $n\ge 3$,
\begin{align}
\label{eq:periodic-dual-ohm-block-structure}
H_{\text{RDO}(p)}^{(n)} =
\begin{bmatrix}
\begin{array}{c|c|c|c|c|c}
H_{\text{Dual-OHM}} (p) & & & & \\[0.2em]
\hline
a_1^\transpose & \frac{p(p+1)}{2p+1} & & & \\
\hline
0_{(p-1)\times p} & b_1 & H_{\text{Dual-OHM}} (p-1) & & \\[0.2em]
\hline
\multicolumn{3}{c|}{a_2^\transpose} & \frac{p(2p+1)}{3p+1} & \\
\hline
\multicolumn{3}{c|}{0_{(p-1)\times (2p)}} & b_2 & H_{\text{Dual-OHM}} (p-1) \\
\hline
\multicolumn{3}{c|}{\vdots} & \vdots & \vdots & \ddots
\end{array}
\end{bmatrix},
\end{align}
where $a_1 \in \reals^p, a_2 \in \reals^{2p}, \dots$ are the H-matrix rows corresponding to the update rule
\begin{align*}
    y_{kp+1} = y_{kp} - \frac{p(kp+1)}{(k+1)p+1} (y_{kp} - \opT y_{kp}) + \frac{p}{(k+1)p+1} (y_0 - y_{kp})
\end{align*}
for each $k=1,2,\dots$ and
\begin{align*}
    b_k = \left( \frac{1}{(k+1)p + 1} - \frac{1}{p} \right)
    \begin{bmatrix}
        p-1 \\ p-2 \\ \vdots \\ 1
    \end{bmatrix} \in \reals^{p-1} .
\end{align*}
The computation of $b_k$ follows from the fact that it is independent of the left block diagram $\cG_{1,kp+1}$, and the $b_k$ produced by gluing with the arc diagram of $H_\text{Dual-OHM}(p-1)$ was provided in \citep[Proposition~7]{YoonRyuGrimmerInvariance2025}.
The update rule of RDO can be expressed as an efficient recurrence that requires keeping only two past vectors, one of which is $y_0$, in memory.

\begin{proposition}
The recurrence defining the algorithm associated with the H-matrix~\eqref{eq:periodic-dual-ohm-block-structure} is given by 
\begin{align}
\label{eqn:RDO-p-recurrence}
y_{kp+\ell+1} = \begin{cases}
    y_{kp} - \frac{p(kp+1)}{(k+1)p+1} (y_{kp} - \opT y_{kp}) + \frac{p}{(k+1)p+1}\,(y_0-y_{kp}) & \text{if } \ell = 0 \\
    y_{kp+1} + \frac{p-1}{p} \left( \opT y_{kp+1} - y_{kp+1} + \frac{kp+1}{(k+1)p+1} (y_{kp} - \opT y_{kp}) \right) & \text{if } \ell = 1 \text{ and } p \ge 2 \\
    y_{kp+\ell} + \frac{p-\ell}{p-\ell+1} \left(\opT y_{kp+\ell} - \opT y_{kp+\ell-1} \right) & \text{if } 2\le \ell \le p-1
\end{cases}
\end{align}
for $k=0,1,\dots$.
\end{proposition}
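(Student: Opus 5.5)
The plan is to read off the recurrence row by row from the block structure \eqref{eq:periodic-dual-ohm-block-structure}, using the $g$-vector form \eqref{eqn:H-matrix-representation-via-g-vectors}, i.e.\ $y_{m+1} = y_m - \sum_{j\le m} 2h_{m+1,j+1} g_{j+1}$ with $2g_{j+1} = y_j - \opT y_j$. By the periodic structure, each period has the same shape; only the scalars depending on $k$ change. So we fix $k\ge 0$ and verify the three cases $\ell=0$, $\ell=1$, and $2\le \ell\le p-1$ separately. For $k=0$, the first block is $H_{\text{Dual-OHM}}(p)$ in the display; in the gluing formulation it is $[\,\,]\glue H_{\text{Dual-OHM}}(p-1)$, which has the same form. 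Hence $k=0$ can be handled uniformly once we note that the formula at $\ell=0$, $k=0$ gives $y_1 = y_0 - \frac{p}{p+1}(y_0-\opT y_0)$, the first Dual-OHM step with $N=p+1$.

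\emph{Case $\ell=0$.} This is the gluing row at $N' = kp+1$, with total size $N=(k+1)p+1$. The description of the intermediate step after \cref{theorem:gluing} already gives
\[
y_{N'} = y_{N'-1} - \tfrac{2N'(N-N')}{N} g_{N'} + \tfrac{N-N'}{N}(y_0-y_{N'-1}).
\]
Substituting $N'=kp+1$, $N-N'=p$ and $2g_{kp+1}=y_{kp}-\opT y_{kp}$ yields the first line directly. The point to check is that the later blocks do not alter this row. This holds because $H^{(n)}$ contains $H^{(k+1)}$ as its upper-left submatrix, and that row is exactly the gluing row of $H^{(k)}\glue H_{\text{Dual-OHM}}(p-1)$.

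\emph{Cases $\ell\ge1$.} Here rows $kp+\ell+1$ have nonzero entries only in column $kp+1$ (the vector $b_{k}$ at position $\ell$, suitably indexed) and in the Dual-OHM$(p-1)$ block. So
\[
y_{kp+\ell+1} = y_{kp+\ell} - 2(b)_\ell g_{kp+1} - \sum_{m=1}^{\ell} 2(H_{\text{Dual-OHM}}(p-1))_{\ell,m}\, g_{kp+m+1}.
\]
For $\ell=1$, we plug in $(H_{\text{Dual-OHM}}(p-1))_{1,1}=\frac{p-1}{p}$ and $b_\ell = \bigl(\frac{1}{(k+1)p+1}-\frac1p\bigr)(p-\ell)$. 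A short rearrangement, $-(p-1)\bigl(\frac{1}{(k+1)p+1}-\frac1p\bigr) = \frac{p-1}{p}\cdot\frac{kp+1}{(k+1)p+1}$, gives the second line. I will double-check the indexing convention of $b_k$ (whether the block after the $k$-th glue uses $b_k$ with denominator $(k+1)p+1$), since this is where an off-by-one could slip in.

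For $2\le\ell\le p-1$, I would not sum the rows directly. Instead, I take the difference of consecutive rows, as in the derivation of \eqref{eqn:Dual-OHM} in \citep{YoonKimSuhRyu2024_optimal}. Concretely, I show $y_{kp+\ell+1}-y_{kp+\ell} = \frac{p-\ell}{p-\ell+1}\bigl(\opT y_{kp+\ell}-\opT y_{kp+\ell-1}\bigr)$. Writing $\opT y_j = y_j - 2g_{j+1}$ and using the already established expression for $y_{kp+\ell}-y_{kp+\ell-1}$, this reduces to two coefficient identities. The first is that the Dual-OHM$(p-1)$ entries satisfy the known ratio relations $h_{\ell,m} = \frac{p-\ell}{p-\ell+1}(h_{\ell-1,m})$ for $m<\ell-1$, together with the corresponding relations for the subdiagonal and diagonal entries, as in the Dual-OHM recurrence. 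The second is that $b_\ell/b_{\ell-1} = \frac{p-\ell}{p-\ell+1}$, which is immediate from $b_\ell\propto p-\ell$. Since the column-$(kp+1)$ contribution scales by the same factor as the Dual-OHM part, the difference recurrence is inherited unchanged.

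The main obstacle is bookkeeping: matching the indices of $b_k$ and of the blocks in \eqref{eq:periodic-dual-ohm-block-structure} to iterate indices $kp+\ell$. The algebra itself is routine, and it can be sanity-checked against the RDO(2) recurrence in \cref{table:algorithms_arc_diagrams}.
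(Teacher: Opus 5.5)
Your proposal is correct and follows the paper's proof essentially step for step. The $\ell=0$ line is read off from the gluing row at $N'=kp+1$, $N=(k+1)p+1$, which is what the paper calls ``how $a_k$ are chosen''. The $\ell=1$ line is a direct substitution of $b_k$ and $(H_{\text{Dual-OHM}}(p-1))_{1,1}$. For $2\le \ell\le p-1$, the paper likewise writes $\opT y_{kp+\ell}-\opT y_{kp+\ell-1}$ using the increment with $\ell\leftarrow \ell-1$ and checks that it equals $\frac{p-\ell+1}{p-\ell}$ times the $\ell$-th increment. That is exactly your coefficient-ratio argument, including the observation that the $g_{kp+1}$ coefficient scales correctly only because $\ell\ge 2$.

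Your indexing of $b_k$ is consistent with the paper, and so is your uniform treatment of $k=0$: $b_0$ reproduces the first column of $H_{\text{Dual-OHM}}(p)=[\,\,]\glue H_{\text{Dual-OHM}}(p-1)$.
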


\begin{proof}
The first line in \eqref{eqn:RDO-p-recurrence} follows from how $a_k$ are chosen.
Next, because we have
\[
    \bigl(H_{\text{Dual-OHM}}(p-1)\bigr)_{\ell,j}
=
\begin{cases}
-\dfrac{p-\ell}{(p-j)(p-j+1)} & \text{if } j<\ell\\[0.8ex]
\dfrac{p-\ell}{p-\ell+1} & \text{if } j=\ell,
\end{cases}
\]
from the H-matrix representation~\eqref{eq:periodic-dual-ohm-block-structure}, the update formula becomes
\begin{align}
    y_{kp+\ell+1} - y_{kp+\ell} & = -2\left( \frac{1}{(k+1)p+1} - \frac{1}{p} \right) (p-\ell) g_{kp+1} + \sum_{j=1}^{\ell-1}\frac{2(p-\ell)}{(p-j)(p-j+1)} g_{kp+j+1} - \frac{2(p-\ell)}{p-\ell+1} g_{kp+\ell+1} \nonumber \\    
    & = \frac{2(p-\ell)(kp+1)}{p((k+1)p+1)} g_{kp+1} + \sum_{j=1}^{\ell-1}\frac{2(p-\ell)}{(p-j)(p-j+1)} g_{kp+j+1} - \frac{2(p-\ell)}{p-\ell+1} g_{kp+\ell+1} .
\label{eqn:RDO-delta}
\end{align}
For \(\ell=1\), \eqref{eqn:RDO-delta} becomes
\begin{align*}
    y_{kp+2} = y_{kp+1} - \frac{p-1}{p} (2g_{kp+2}) + \frac{(p-1)(kp+1)}{p((k+1)p+1)} (2g_{kp+1})
\end{align*}
which agrees with \eqref{eqn:RDO-p-recurrence} once we substitute $2g_{kp+1} = y_{kp} - \opT y_{kp}$ and $2g_{kp+2} = y_{kp+1} - \opT y_{kp+1}$.

For \(2\le \ell \le p-1\), rewriting 
\[
    \opT y_{kp+\ell}-\opT y_{kp+\ell-1} = y_{kp+\ell} - y_{kp+\ell-1} - 2g_{kp+\ell+1} + 2g_{kp+\ell}
\]
using \eqref{eqn:RDO-delta} with the substitution \(\ell \leftarrow \ell-1\) yields
\[
    \opT y_{kp+\ell}-\opT y_{kp+\ell-1} = \frac{2(p-\ell+1)(kp+1)}{p((k+1)p+1)} g_{kp+1} + \sum_{j=1}^{\ell-2}\frac{2(p-\ell+1)}{(p-j)(p-j+1)} g_{kp+j+1} + \frac{2}{p-\ell+2} g_{kp+\ell} - 2g_{kp+\ell+1} .
\]
This precisely agrees with $\frac{p-\ell+1}{p-\ell}$ times \eqref{eqn:RDO-delta}; the only term that is not immediate is the coefficient of $g_{kp+\ell}$, for which we have
\[
    \frac{p-\ell+1}{p-\ell} \frac{2(p-\ell)}{(p-(\ell-1))(p-(\ell-1)+1)} = \frac{2}{p-\ell+2} 
\]
as desired. Thus,
\[
    \opT y_{kp+\ell}-\opT y_{kp+\ell-1} = \frac{p-\ell+1}{p-\ell} \left( y_{kp+\ell+1} - y_{kp+\ell} \right) ,
\]
which is equivalent to the recurrence in~\eqref{eqn:RDO-p-recurrence}.
\end{proof}

By the discussion in the last paragraph of \cref{section:general-theory-of-optimal-vertex-algorithms} (or by recursively applying \cref{corollary:intermediate-iterate-guarantee-for-decomposable-algorithm}), we see that RDO has the following convergence property:
\begin{proposition}
For any $p\in \mathbb N$, RDO with period $p$ exhibits the rate $\sqnorm{y_{kp} - \opT y_{kp}} \le \frac{4\sqnorm{y_0 - y_\star}}{(kp + 1)^2}$ for $k=1,2,\dots$.
\end{proposition}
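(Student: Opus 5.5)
We fix $p$ and $k\ge 1$ and consider the algorithm run for the first $N-1$ steps with $N := (k+1)p+1$ or larger. Equivalently, we may use the truncated H-matrix $H^{(k+1)}_{\text{RDO}(p)}$, or any $H^{(n)}_{\text{RDO}(p)}$ with $n\ge k+1$. This is legitimate because $H^{(n)}_{\text{RDO}(p)}$ is an upper-left submatrix of $H^{(m)}_{\text{RDO}(p)}$ for $m>n$, so the iterates $y_0,\dots,y_{kp}$ do not depend on the horizon.

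The strategy is to realize the index $kp+1$ as a gluing point. Unrolling the recursive definition gives
\[
H^{(n)}_{\text{RDO}(p)} = H^{(k)}_{\text{RDO}(p)} \glue \widetilde H ,
\]
with $\widetilde H$ an optimal matrix. Then \cref{corollary:intermediate-iterate-guarantee-for-glued-algorithm} with $N' = kp+1$ yields $\sqnorm{y_{kp}-\opT y_{kp}} \le 4\sqnorm{y_0-y_\star}/(kp+1)^2$ directly.

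The cleanest route goes through arc diagrams. By \cref{theorem:gluing} applied inductively, $H^{(n)}_{\text{RDO}(p)}$ is a vertex of $\cH_\star(np)$. Its arc diagram is
\[
\cG_n = (\cdots((\bullet \glue \cG_D)\glue \cG_D)\cdots)\glue \cG_D ,
\]
where $\bullet$ is the single-node diagram and $\cG_D$ is the Dual-OHM diagram on $p$ nodes. This diagram has $np+1$ nodes. Reading off the gluing arcs, we get $k(jp+1) = (j+1)p+1$ for $j=0,\dots,n-1$. Inside each block, the Dual-OHM arcs join nodes $jp+2,\dots,(j+1)p$ to $(j+1)p+1$. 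Hence $\cC(1) = \{1, p+1, 2p+1, \dots, np+1\}$. Every node $v\le kp+1$ has $k(v)\le kp+1$ except $v=kp+1$ itself, whose arc goes to $(k+1)p+1$. Truncating at $N=(k+1)p+1$, i.e.\ taking $n=k+1$, we have $k(kp+1)=N$ and $k(j)\le kp+1$ for all $j<kp+1$. So $\cG_{k+1}$ is decomposable at $N'=kp+1$ in the sense of \cref{definition:decomposable-diagram}, and \cref{corollary:intermediate-iterate-guarantee-for-decomposable-algorithm} gives the bound at $y_{N'-1}=y_{kp}$. Alternatively, this is the remark on basic algorithms: the diagram is non-crossing, and $kp+1\in\cC(1)$.

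The only point needing care is the bookkeeping of sizes. The base $H^{(0)}=[\,\,]$ corresponds to one node, each gluing with $H_{\text{Dual-OHM}}(p-1)$ (on $p$ nodes) adds $p$ nodes, and so $H^{(n)}$ corresponds to $np+1$ nodes. This matches $H^{(n)}\in\cH_\star(np)$ and makes the gluing node indices exactly $jp+1$. Once this indexing is pinned down, the rest is an immediate invocation of the earlier corollaries, so I expect no substantive obstacle.
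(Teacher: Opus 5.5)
Your proof is correct and follows the paper's approach: the paper obtains the bound by recursively applying \cref{corollary:intermediate-iterate-guarantee-for-decomposable-algorithm}, or equivalently by noting that $kp+1\in\cC(1)$ in the non-crossing RDO diagram, which is exactly your argument. One small slip: the identity $H^{(n)}_{\text{RDO}(p)} = H^{(k)}_{\text{RDO}(p)}\glue\widetilde H$ holds only for $n=k+1$, because for $n>k+1$ the arc from $kp+1$ ends at $(k+1)p+1\neq np+1$. This is harmless, since you ultimately take $n=k+1$ and the iterates $y_0,\dots,y_{kp}$ do not depend on the horizon.
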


\paragraph{OHM and Dual-OHM are special cases of RDO.}
RDO with period $p=1$ corresponds to OHM---in this case only the first line of~\eqref{eqn:RDO-p-recurrence} is active, which becomes the OHM update rule $y_{k+1} = \frac{1}{k+2} y_0 + \frac{k+1}{k+2} \opT y_k$.
On the other hand, restricting~\eqref{eqn:RDO-p-recurrence} to the first $p$ iterations (so $k=0$) and letting $p=N-1$ yields the Dual-OHM update rule $y_{\ell+1} = y_\ell + \frac{N-\ell-1}{N-\ell} (\opT y_\ell - \opT y_{\ell-1})$ with the identification $\opT y_{-1} = y_0$, which agrees with the first line $y_1 = y_0 - \frac{N-1}{N} (y_0 - \opT y_0)$.

\paragraph{RDO with generalized schedules.}
In fact, one can run RDO with any sequence $\{p_k\}_{k=1,2,\dots}$ of natural numbers indicating the length of Dual-OHM's arc diagram pattern.
Formally, we define $H_{\text{RDO}(p_1)} = H_{\text{Dual-OHM}} (p_1)$ and
\begin{align*}
    H_{\text{RDO}(p_1,\dots,p_n)} = H_{\text{RDO}(p_1,\dots,p_{n-1})} \glue H_{\text{Dual-OHM}} (p_n - 1) \in \reals^{(p_1 + \dots + p_n) \times (p_1 + \dots + p_n)}
\end{align*}
for $n\ge 2$, and refer to the associated algorithm as \textit{RDO with schedule} $\{p_k\}$.
Letting $S_0 = 0$ and $S_n = \sum_{i=1}^n p_i$ for $n\ge 1$, the recurrence will be given by
\begin{align*}
y_{S_{k}+\ell+1} =
\begin{cases}
    y_{S_{k}} - \frac{p_{k+1} (S_k+1)}{S_{k+1}+1} (y_{S_k} - \opT y_{S_k}) + \frac{p_{k+1}}{S_{k+1}+1}\,(y_0-y_{S_k}) & \text{if } \ell = 0 \\
    y_{S_k+1} + \frac{p_{k+1}-1}{p_{k+1}} \left( \opT y_{S_k+1} - y_{S_k+1} + \frac{S_k+1}{S_{k+1}+1} (y_{S_k} - \opT y_{S_k}) \right) & \text{if } \ell = 1 \text{ and } p_{k+1} \ge 2 \\
    y_{S_k+\ell} + \frac{p_{k+1}-\ell}{p_{k+1}-\ell+1} \left(\opT y_{S_k+\ell} - \opT y_{S_k+\ell-1} \right) & \text{if } 2\le \ell \le p_{k+1} - 1 
\end{cases}
\end{align*}
for $k=0,1,\dots$, and the algorithm will satisfy $\sqnorm{y_{S_k} - \opT y_{S_k}} \le \frac{4\sqnorm{y_0 - y_\star}}{(S_k + 1)^2}$.

\subsubsection{Fractal Self-Dual Method}

Based on the characterization of self-dual algorithms from Section~\ref{section:self-dual-vertex-algorithms}, in this section, we explore the algorithms that are \textit{maximally} self-dual.
This can be designed inductively; starting from $H_{\text{FSDM}}(0) = [\,\,]$, we can define
\begin{align*}
    H_{\text{FSDM}}(N-1) = H_{\text{FSDM}}\left( \frac{N}{2}-1 \right) \glue H_{\text{FSDM}}^\at \left( \frac{N}{2}-1 \right)
\end{align*}
if $N$ is even\footnote{A similar rule can be used for odd $N$, e.g., $H_{\text{FSDM}}(N-1) = H_{\text{FSDM}}\left( \frac{N-1}{2} \right) \glue H_{\text{FSDM}}^\at \left( \frac{N-3}{2} \right)$ so that $H_{\text{FSDM}}(N-1)$ is structurally similar to its own dual, but it cannot be truly self-dual due to \cref{theorem:self-dual-vertex-algorithms}.}, so that it is self-dual.
If $\frac{N}{2}$ is again even, then we will have $H_{\text{FSDM}}\left( \frac{N}{2}-1 \right) = H_{\text{FSDM}}\left( \frac{N}{4}-1 \right) \glue H_{\text{FSDM}}^\at \left( \frac{N}{4}-1 \right)$, which is also self-dual.
This indicates that for $n=1,2,\dots$, $H_\text{FSDM}(2^n - 1)$ is self-dual with its dyadic sub-blocks (either in the H-matrix or in the arc diagram) being again self-dual, allowing for the fractal-like repeated decomposition as in Figure~\ref{figure:fractal-self-dual-N=16}.
We define the algorithm whose H-matrix for the first $2^n - 1$ iterations is $H_\text{FSDM}(2^n - 1)$ for all $n=1,2,\dots$ as \textit{Fractal Self-Dual Method (FSDM)}.

\begin{figure}[t]
\centering
\begin{tikzpicture}[baseline=(current bounding box.center), scale=0.68,
    dot/.style={circle,fill,inner sep=1.5pt},
    every node/.style={font=\small}
]
    \foreach \i in {1,...,16}{
        \coordinate (P\i) at (\i,0);
        \fill (P\i) circle (2pt);
        \node[below=2pt] at (P\i) {\i};
    }

    \coordinate (P17) at (17,0);

    \draw[line width=0.8pt, color=gray, densely dashed] (P1) -- (P17);

    \draw[line width=0.8pt]
        (P8) arc[start angle=180,end angle=0,radius=4];

    \foreach \b in {8,16}{
        \pgfmathtruncatemacro{\a}{\b-4}
        \draw[line width=0.8pt]
            (P\a) arc[start angle=180,end angle=0,radius=2];
    }

    \foreach \b in {4,8,12,16}{
        \pgfmathtruncatemacro{\a}{\b-2}
        \draw[line width=0.8pt]
            (P\a) arc[start angle=180,end angle=0,radius=1];
    }

    \foreach \b in {2,4,6,8,10,12,14,16}{
        \pgfmathtruncatemacro{\a}{\b-1}
        \draw[line width=0.8pt]
            (P\a) arc[start angle=180,end angle=0,radius=0.5];
    }
    
    \node[draw=none, fill=white, inner sep=1.5pt] at (16.7, 1.8) {$\cdots$};
\end{tikzpicture}
\caption{The arc diagram for the fractal self-dual algorithm $H_{\text{FSDM}}(15)$ on $16$ nodes. Each dyadic sub-block is again self-dual.}
\label{figure:fractal-self-dual-N=16}
\end{figure}

For $j \in \mathbb N$, denote by $\nu_2 (j)$ its 2-adic valuation, i.e., the largest exponent $i$ such that $2^i$ divides $j$.
Then we have $k(j) = j + 2^{\nu_2 (j)}$; by construction $k(2^n) = 2^{n+1}$, and for $2^n < j < 2^{n+1}$ we have $\nu_2 (j) = \nu_2 (j - 2^n)$ so we can apply induction with the fact that $\cG_{1, 2^n} = \cG_{2^n + 1, 2^{n+1}}$.
Define $m(j) = j - 2^{\nu_2 (j)}$, which is a node that makes $j$ the midpoint of $m(j)$ and $k(j)$.
These nodes can be understood as endpoints of the smallest dyadic block containing $j$; for example, for $j=12$, it is the midpoint of the block $\cG_{8,16}$ enclosed by the arc $(16, 8)$, and for $j=14$, it is the midpoint of $\cG_{12,16}$ enclosed by the arc $(16, 12)$ (see Figure~\ref{figure:fractal-self-dual-N=16}). 

We can characterize the recurrence for FSDM as follows:
\begin{proposition}
\label{proposition:FSDM-recurrence}
For $j=1,2,\dots$, take the binary expansion of $m(j)$ as
\[
    m(j) = 2^{i_1} + 2^{i_2} + \dots + 2^{i_p} , \quad i_1 > i_2 > \dots > i_p 
\]
and let $t_r = \sum_{q=1}^r 2^{i_q}$ for $r=1,\dots,p$ so that $2^{i_1} = t_1 < t_2 < \cdots < t_p = m(j)$.
Then the $y_j$-update for FSDM is given by
\begin{align}
\label{eqn:FSDM-recurrence}
    y_j = y_{j-1} - 2^{\nu_2 (j)} g_{j} + \frac{1}{2} \left( y_{m(j)} - y_{j-1} \right) + \sum_{r=1}^p 2^{\nu_2 (j) - r} g_{t_{p-r+1}} 
\end{align}
where $g_t = \frac{1}{2}\left( y_{t-1} - \opT y_{t-1} \right)$ for $t\ge 1$.
In particular, if $j = 2^n$ so that $m(j) = 0$, the binary expansion becomes a vacuous summation and \eqref{eqn:FSDM-recurrence} simply becomes 
\[
    y_{2^n} = y_{2^n - 1} - 2^n g_{2^n} + \frac{1}{2} \left( y_0 - y_{2^n - 1}\right) .
\]
\end{proposition}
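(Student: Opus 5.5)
Row $j$ of $H_{\text{FSDM}}$ is determined by the gluing formulas, so the plan is to read it off the recursive block structure and then rewrite it in the $y$-variables. The natural tool is induction on $n$ with $j\le 2^{n+1}-1$. Write $H_{\text{FSDM}}(2^{n+1}-1)=H_{\text{FSDM}}(2^n-1)\glue H_{\text{FSDM}}(2^n-1)$. We may use $H_{\text{FSDM}}^\at = H_{\text{FSDM}}$ (self-duality, \cref{theorem:self-dual-vertex-algorithms}) and $\cG_{1,2^n}=\cG_{2^n+1,2^{n+1}}$. The rows then fall into three cases.

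\textbf{Case $j\le 2^n-1$.} The row is inherited from $H_1$, and \eqref{eqn:FSDM-recurrence} holds by the induction hypothesis.

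\textbf{Case $j=2^n$ (the glued row).} Use \cref{definition:gluing-of-H-matrices} with $N=2^{n+1}$, $N'=2^n$. This gives $h_{N',N'}=N'(N-N')/N=2^{n-1}$, and the intermediate-step formula gives $y_{2^n}=y_{2^n-1}-2\cdot 2^{n-1}g_{2^n}+\tfrac12(y_0-y_{2^n-1})$. That is exactly the claimed special case, since $m(2^n)=0$ and the sum is empty.

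\textbf{Case $2^n<j<2^{n+1}$ (the block $H_2$ plus the column $N'$).} Write $j=2^n+j'$ with $1\le j'<2^n$. The rows of $H_2$ reproduce the FSDM update for $j'$, shifted by $2^n$ in every index. The only new ingredient is the column-$2^n$ entries $h_{2^n+1,2^n},\dots,h_{2^{n+1}-1,2^n}$, which are fixed by the $Q$-conditions in \cref{definition:gluing-of-H-matrices}.

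Here is how the shifted update matches \eqref{eqn:FSDM-recurrence}. We have $m(j)=2^n+m(j')$ and $\nu_2(j)=\nu_2(j')$. The binary expansion of $m(j)$ is that of $m(j')$ with an extra leading term $2^{i_1}=2^n$, so $p$ increases by one and the partial sums become $t_r=2^n+t'_{r-1}$, with $t_1=2^n$. The recurrence of $H_2$, written in shifted indices, therefore supplies every term of \eqref{eqn:FSDM-recurrence} except two. First, the base point $y_0$ of the $H_2$ block must be replaced by $y_{2^n}$. Second, there must be one extra term $2^{\nu_2(j)-p}g_{2^n}$ coming from $r=p$, i.e.\ $t_{p-r+1}=t_1=2^n$.

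\textbf{Main obstacle.} What remains is to show that the column-$2^n$ entries yield exactly this correction. I expect this step to be the hard one, since these entries are defined only implicitly through Meta Algorithm~\ref{meta-alg:find-H-from-Q}. My first attempt would avoid computing them directly. Instead, I would use the paper's claim that these entries are independent of the left block, and observe that the left block enters the subsequent updates only through $y_{2^n}$ and $g_{2^n}$. I would then express $y_{2^n}-y_{2^n-1}$ through its $g$-expansion and check, via \eqref{eqn:H-matrix-representation-via-g-vectors}, that the substitution $y_0\to y_{2^n}$ in the $H_2$-recurrence accounts for all contributions in columns $\le 2^n$ except a multiple of $g_{2^n}$. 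Finally, I would pin down that multiple as $2^{\nu_2(j)-p}$ by verifying the $Q$-conditions $Q(k,2^n)=\frac{2^n(2^n+1)}{2^{n+1}(k+1)}\binom{2^n-1}{k-1}$.

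If that verification becomes unwieldy, the fallback is the equivalent proof-template route of \cref{theorem:gluing}. One checks that the proposed recurrence satisfies identity \eqref{eqn:gluing-proof-template} with the known certificates, and then invokes the uniqueness in \cref{theorem:all-sparsity-patterns-give-optimal-vertex} to conclude that the recurrence is FSDM.
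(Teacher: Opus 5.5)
Your skeleton matches the paper's. You induct on $n$, inherit the rows for $j<2^n$, read off the glued row from $h_{2^n,2^n}=2^{n-1}$, and for $2^n<j<2^{n+1}$ shift the $H^{(n)}$ recurrence so that everything agrees except a multiple of $g_{2^n}$. The paper makes that shift precise with $\widetilde y_s=y_{2^n+s}+\sum_{\ell=2^n+1}^{2^n+s}2h_{\ell,2^n}g_{2^n}$, which obeys the $H^{(n)}$ recurrence exactly.

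\textbf{The gap.} The leftover multiple is $-\bigl(\sum_{i=m(j)+1}^{j-1}h_{i,2^n}+2h_{j,2^n}\bigr)$, and showing it equals $2^{\nu_2(j)-p}$ is the substance of the proposition. Your plan for it has no working mechanism, because it requires the column entries $h_{2^n+s,2^n}$ explicitly.
\begin{itemize}
\item Checking the $Q$-conditions for a guessed column would need $Q(k,2^n;H^{(n+1)})$ for every $k$. By \cref{lemma:Q-lambda-identities}(a), that involves the full $Q$-profile of $H^{(n)}$, for which you have no formula.
\item The proof-template fallback gives no additional handle on these entries.
\item Independence from the left block does not help, since the column is governed by the right block $H^{(n)}$.
\end{itemize}

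\textbf{The missing idea is self-duality,} which you list at the outset but never use. Because $H^{(n+1)}$ equals its anti-diagonal transpose, $h_{2^n+s,2^n}=h_{2^n,2^n-s}$. So the implicitly defined column is the reversed glued row, and \cref{definition:gluing-of-H-matrices} gives that row explicitly as $-\frac12$ times the column sums $a^{(n)}$ of $H^{(n)}$.
\begin{itemize}
\item The block structure yields $a^{(n+1)}=\bigl[\tfrac12 a^{(n)},\ \tfrac{2^n+1}{4},\ a^{(n)}\bigr]$.
\item Hence the column is $-\frac12 b^{(n)}$, where $b^{(n)}$ is the reversal of $a^{(n)}$ and $b^{(n+1)}=\bigl[b^{(n)},\ \tfrac{2^n+1}{4},\ \tfrac12 b^{(n)}\bigr]$ (\cref{lemma:FSDM-H-matrix}).
\end{itemize}

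The required coefficient identity then becomes $\frac12\sum_{i=m(u)+1}^{u-1}b^{(n)}_i+b^{(n)}_u=2^{\nu_2(u)-q-1}$, applied with $u=j-2^n$. Here $q=p-1$ is the number of terms in the binary expansion of $m(u)$, and $\nu_2(u)=\nu_2(j)$. This identity needs its own induction on $n$, using the $b$-recursion and $\sum_i b^{(n)}_i=\frac{2^n-1}{2}$ (\cref{lemma:b_vector_sum}). That binary-digit bookkeeping is exactly what produces the factor $2^{-p}$, and it is absent from your proposal.
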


Hence, during the runtime of FSDM, at the $j$-th iteration, one has to keep within the memory the values of $y_0$ and $y_t, g_t$ from active dyadic sub-blocks, whose corresponding iteration numbers have not completely passed.
For example, during iterations $12$ to $16$, one must consistently keep $y_8, y_{12}$ and $g_8, g_{12}$ within the memory stack, but they can be freed after iteration $16$.
This means that the memory requirement for the recurrence~\eqref{eqn:FSDM-recurrence} is proportional to the fractal depth, and does not exceed $2n$ during the first $2^n$ iterations.
In other words, $\cO(\log N)$ memory is required for running $N$ iterations.

The proof of \cref{proposition:FSDM-recurrence}, presented in Appendix~\ref{section:proof-of-fsdm-recurrence}, relies on the H-matrix characterization of FSDM.
For convenience, denote $H^{(0)} = H_\text{FSDM}(0) = [\,\,]$ and
\[
    H^{(n)} = H_{\text{FSDM}}(2^n-1)\in \mathbb R^{(2^n-1)\times(2^n-1)}
\]
for $n=1,2,\dots$.
The structure of $H^{(n)}$ is a fractal---its precise recursive formula is given in \cref{lemma:FSDM-H-matrix}, and it is visually demonstrated in Figure~\ref{figure:fsdm-H-matrix-heatmap}, which is a heatmap of nonzero entries with gray regions indicating zeros.

\noindent
\begin{minipage}[t]{0.65\linewidth}
\vspace{0pt}
\begin{lemma}
\label{lemma:FSDM-H-matrix}
For \(n\ge 1\), the matrices \(H^{(n)}\) take the form
\[
    H^{(n)}
    =
    \left[
    \begin{array}{c|c|c}
    H^{(n-1)} & \\
    \hline
    -\frac12 \big( a^{(n-1)} \big) & 2^{n-2} \\
    \hline
    0_{(2^{n-1}-1) \times (2^{n-1}-1)} & -\frac12 b^{(n-1)} & H^{(n-1)}
    \end{array}
    \right] 
\]
where the row vectors $a^{(n)}$ are determined by the recurrence
\[
    a^{(1)} = \begin{bmatrix} \frac12 \end{bmatrix},
    \qquad
    a^{(n)} = \begin{bmatrix} \frac12 a^{(n-1)} & \frac{2^{n-1}+1}{4} & a^{(n-1)} \end{bmatrix} \in \reals^{2^n - 1}
\]
and $b^{(n)}$ is the column vector obtained by reversing the order of entries in $a^{(n)}$.
In particular, the diagonal entries are $h^{(n)}_{i,i}=2^{\nu_2(i)-1}$ for $i = 1,\dots,2^n-1$.
\end{lemma}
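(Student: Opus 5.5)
We prove the lemma by induction on $n$, applying the gluing formula of \cref{definition:gluing-of-H-matrices} with $N=2^n$ and $N'=2^{n-1}$. The base case $n=1$ is $[\,\,]\glue[\,\,]=\begin{bmatrix}\frac12\end{bmatrix}$. Here the row $a^{(0)}$ and the column $b^{(0)}$ are empty, and the diagonal entry is $2^{-1}=2^{\nu_2(1)-1}$.

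For the inductive step, assume $H^{(n-1)}$ has the stated form and is self-dual. Self-duality holds for $n-1=0$, and it propagates by \cref{theorem:self-dual-vertex-algorithms}: $H^{(n-1)}$ is basic by \cref{proposition:basic-iff-noncrossing}, so $\cG(H^{(n-1)})\glue\cG(H^{(n-1)})^\at=\cG(H^{(n)})$ is self-dual. Since $(H^{(n-1)})^\at=H^{(n-1)}$, we get $H^{(n)}=H^{(n-1)}\glue H^{(n-1)}$. \cref{definition:gluing-of-H-matrices} then gives the following pieces directly.
\begin{itemize}
\item The upper-left block, the lower-right block, and the zero block are as claimed.
\item The new diagonal entry is $h_{N',N'}=\frac{N'(N-N')}{N}=2^{n-2}$.
\item The new row has entries $h_{N',j}=-\frac{N-N'}{N}\sum_{k\ge j}(H^{(n-1)})_{k,j}=-\frac12 c^{(n-1)}_j$, where $c^{(n-1)}$ denotes the row vector of column sums of $H^{(n-1)}$.
\end{itemize}
So the row claim reduces to showing $c^{(n-1)}=a^{(n-1)}$.

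To identify the column sums, we prove $c^{(m)}=a^{(m)}$ by induction on $m$, using the block form of $H^{(m)}$. We also need the total $s_m:=\sum_j a^{(m)}_j=2^{m-1}-\frac12$. This follows either from the recurrence for $a^{(m)}$, since $\frac32(2^{m-2}-\frac12)+\frac{2^{m-1}+1}{4}=2^{m-1}-\frac12$, or from H-invariance~\eqref{eqn:H-invariance} with $k=1$, which gives total $Q(1,\cdot)$ equal to $\frac{1}{2^m}\binom{2^m}{2}$. The three groups of columns of $H^{(m)}$ are handled as follows.
\begin{itemize}
\item The first $2^{m-1}-1$ columns have sums $a^{(m-1)}-\frac12 a^{(m-1)}=\frac12 a^{(m-1)}$.
\item The middle column has sum $2^{m-2}-\frac12 s_{m-1}=2^{m-2}-\frac12(2^{m-2}-\frac12)=\frac{2^{m-1}+1}{4}$.
\item The last $2^{m-1}-1$ columns have sums $c^{(m-1)}=a^{(m-1)}$.
\end{itemize}
Together these reproduce the recurrence defining $a^{(m)}$. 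This column-sum bookkeeping, and in particular the middle-column identity, is the only genuinely computational step, and I expect it to be the main point to check carefully.

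For the new column below the diagonal, we avoid running Meta Algorithm~\ref{meta-alg:find-H-from-Q}. Since $H^{(n)}$ is self-dual, $h_{i,N'}=h^\at_{i,N'}=h_{N-N',N-i}=h_{N',N-i}$ for $i=N'+1,\dots,N-1$. Thus the column is the row $-\frac12 a^{(n-1)}$ read in reverse, which is $-\frac12 b^{(n-1)}$.

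Finally, the diagonal claim follows by induction. The middle index $i=2^{n-1}$ has $\nu_2(i)=n-1$ and entry $2^{n-2}$. For $i<2^{n-1}$ the entry comes from the upper-left block. For $i=2^{n-1}+i'$ with $0<i'<2^{n-1}$ we have $\nu_2(i)=\nu_2(i')$, so the entry from the lower-right copy of $H^{(n-1)}$ matches.
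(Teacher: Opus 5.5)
Your proof is correct and follows essentially the same route as the paper. Both arguments glue with $N=2^n$, $N'=2^{n-1}$ and identify the new row as $-\frac12$ times the column sums of $H^{(n-1)}$. Both then show these column sums obey the recurrence for $a^{(n)}$ via the total-sum identity $\sum_j a^{(n-1)}_j=\frac{2^{n-1}-1}{2}$, and read off the new column as the reversed row using self-duality. Your explicit argument that self-duality propagates (so the lower-right block is $H^{(n-1)}$ itself) and your check of the diagonal formula $h^{(n)}_{i,i}=2^{\nu_2(i)-1}$ fill in steps the paper leaves implicit.
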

\end{minipage}
\hfill
\begin{minipage}[t]{0.32\linewidth}
\vspace{0pt}
\centering
\includegraphics[width=\linewidth]{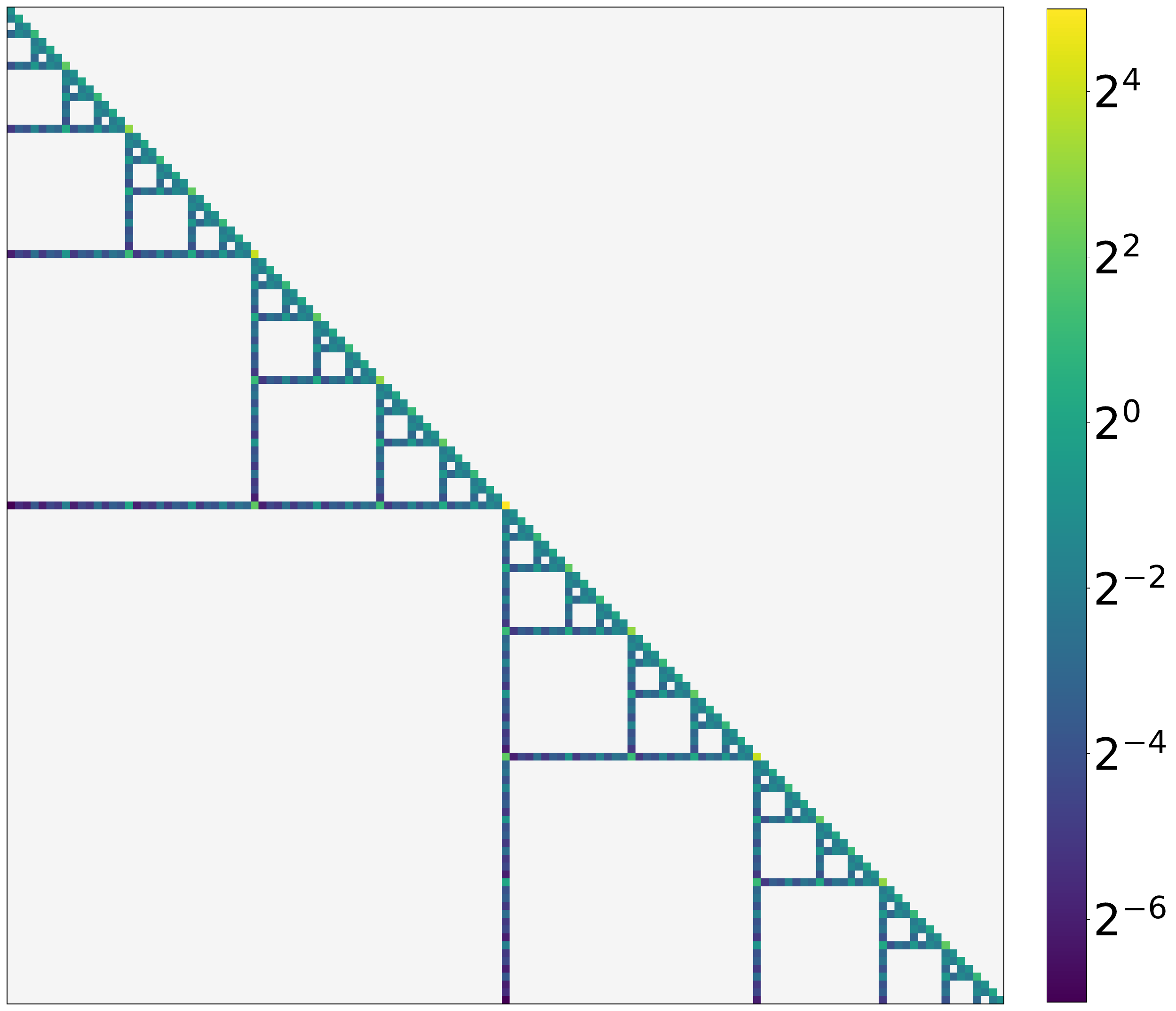}
\captionof{figure}{Heatmap of $H^{(n)}, n=7$.}
\label{figure:fsdm-H-matrix-heatmap}
\end{minipage}

\begin{proof}
We argue by induction on \(n\).
There is nothing to prove in the base case \(n=1\), when we have $H^{(1)} = \begin{bmatrix}\frac12\end{bmatrix}$.

Next, we see that by \cref{theorem:gluing}, applied with $N=2^n$ and $N'=2^{n-1}$, we must have the claimed block matrix structure with
\begin{align*}
    a^{(n-1)}_j = \sum_{i=j}^{2^{n-1}-1} h_{i,j}
\end{align*}
for $j=1,\dots,2^{n-1}-1$, i.e., $a^{(n-1)}$ is the vector obtained by taking the sum of each column from $H^{(n-1)}$.
Because the column sum of $H^{(n)}$ is equal to that of $H^{(n-1)}$ in the last $2^{n-1}-1$ columns, $a^{(n-1)}$ is embedded into the last part of $a^{(n)}$.
The first $2^{n-1}-1$ entries of $a^{(n)}$ get halved from $a^{(n-1)}$ due to the effect of the middle row $-\frac{1}{2} a^{(n-1)}$ in $H^{(n)}$. 

It remains to show that $b^{(n)}$ is the order-reversed version of $a^{(n)}$ and the middle entry of $a^{(n)}$ is $\frac{2^{n-1}+1}{4}$.
The first part follows from the fact that $H^{(n)}$ is self-dual because its arc diagram is.
Then the second claim is equivalent to
\begin{align}
\label{eqn:FSDM-a_nm1-summation}
    2^{n-2} - \frac{1}{2} \sum_{j=1}^{2^{n-1}-1} a_j^{(n-1)} = \frac{2^{n-1} + 1}{4} \iff \sum_{j=1}^{2^{n-1}-1} a_j^{(n-1)} = \frac{2^{n-1} - 1}{2} ,
\end{align}
which can be checked by induction.
For $n=2$ we have $a^{(1)}_1 = \frac{1}{2} = \frac{2^1-1}{2}$.
For the induction step, assuming that~\eqref{eqn:FSDM-a_nm1-summation} holds, we have
\begin{align*}
    \sum_{j=1}^{2^{n}-1} a_j^{(n)} = \frac{3}{2} \sum_{j=1}^{2^{n-1}-1} a_j^{(n-1)} + \frac{2^{n-1} + 1}{4} = \frac{3(2^{n-1} - 1) + 2^{n-1} + 1}{4} = \frac{2^{n+1} - 2}{4} = \frac{2^n - 1}{2} 
\end{align*}
which completes the induction.
\end{proof}

Finally, recursive application of \cref{corollary:intermediate-iterate-guarantee-for-decomposable-algorithm} shows that FSDM converges with the following property.

\begin{proposition}
FSDM exhibits the rate $\sqnorm{y_{2^n-1} - \opT y_{2^n-1}} \le \frac{4\sqnorm{y_0 - y_\star}}{2^{2n}}$ for $n=1,2,\dots$.
\end{proposition}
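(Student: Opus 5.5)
The plan is to read the guarantees directly off the arc diagram of FSDM, using \cref{corollary:intermediate-iterate-guarantee-for-decomposable-algorithm}. Fix $n\ge 1$. Consider the first $2^n-1$ iterations of FSDM, which by construction are governed by $H^{(n)} = H_{\text{FSDM}}(2^n-1)$. This is an optimal vertex algorithm in $\cH_\star(2^n-1)$ with $N=2^n$ nodes. By \cref{theorem:gluing}, the matrix $H^{(n)} = H^{(n-1)} \glue (H^{(n-1)})^\at$ lies in $\cH_\star(2^n-1)$. This uses the fact that $(H^{(n-1)})^\at$ is again an optimal vertex algorithm, which holds by \cref{theorem:dual-optimality-and-non-crossing-arc-diagram}: $H^{(n-1)}$ is basic, being built entirely from gluings, so its dual is optimal and a vertex. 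Moreover $H^{(n-1)}$ is self-dual, so the right block is literally $H^{(n-1)}$.

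The guarantee at the final iterate $y_{2^n-1} = y_{N-1}$ then follows immediately from membership in $\cH_\star(N-1)$, that is, from \eqref{eqn:opt-rate} with $N=2^n$. This gives $\sqnorm{y_{2^n-1}-\opT y_{2^n-1}} \le 4\sqnorm{y_0-y_\star}/2^{2n}$.

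It remains to check that FSDM is well defined as a single infinite algorithm. We need to verify that the iterate $y_{2^n-1}$ produced by the infinite algorithm coincides with the last iterate of the $(2^n-1)$-step method $H^{(n)}$. For this, note that $H^{(n)}$ is the upper-left $(2^n-1)\times(2^n-1)$ submatrix of $H^{(m)}$ for every $m>n$. By the block form in \cref{lemma:FSDM-H-matrix}, or directly from \eqref{eqn:H-matrix-gluing}, the first $N'-1$ rows of $H_1\glue H_2$ equal $H_1$. Induction on $m$ then gives this nesting. Since the update \eqref{eqn:H-matrix-representation} for $y_{k+1}$ depends only on row $k+1$ of $H$, the first $2^n-1$ iterates of FSDM are exactly those of $H^{(n)}$.

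Alternatively, I could argue through the diagram as the text suggests. The arc diagram of $H^{(m)}$ is decomposable at $2^{m-1}$, so \cref{corollary:intermediate-iterate-guarantee-for-decomposable-algorithm} gives the optimal bound on $y_{2^{m-1}-1}$. Recursing into the left block $H^{(m-1)}$, which is decomposable at $2^{m-2}$, covers all the indices $2^n$ for $n<m$.

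The only mildly delicate step is the consistency and nesting claim above, together with confirming that the dual block is optimal so that \cref{theorem:gluing} applies. Both reduce to already-established results. The base case $n=1$ is $H^{(1)}=[\tfrac12]$, which is OHM with $N=2$.
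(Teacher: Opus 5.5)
Your proposal is correct and essentially matches the paper, which obtains the bound by recursively applying \cref{corollary:intermediate-iterate-guarantee-for-decomposable-algorithm} to the decomposition of $H^{(m)}$ at $2^{m-1}$; this is your ``alternative'' route. Your main route (the first $2^n-1$ iterates of FSDM are exactly those of $H^{(n)}\in\cH_\star(2^n-1)$, because gluing keeps $H_1$ as the upper-left block) is the same mechanism, via \cref{corollary:intermediate-iterate-guarantee-for-glued-algorithm}, by which that corollary is proved, and your supporting checks (optimality of the dual block, self-duality, and the base case $H^{(1)}=[\tfrac12]$) are sound.
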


\subsection{Sum of H-certificates as a measure of robustness}
\label{subsection:robustness}

Consider the setting where the fixed-point operator $\opT\colon \cX \to \cX$ is nonexpansive only up to a bounded violation, i.e.,
\begin{align}
\label{eqn:nonexpansive-with-violation}
    \sqnorm{\opT x - \opT y} \le \sqnorm{x - y} + \epsilon  
\end{align}
for all $x,y \in \cX$ for some fixed $\epsilon > 0$.
For such $\opT$, the proof template \eqref{eqn:optimal-family-proof-core-identity} for $H \in \cH_\star(N-1)$ yields
\begin{align*}
    0 & = N \sqnorm{g_N} + \inprod{g_N}{x_N - y_0} + \sum_{k=1}^N \sum_{j=1}^{k-1} \frac{\lambda_{k,j}^\star (H)}{4} \left( \sqnorm{y_{k-1} - y_{j-1}} - \sqnorm{\opT y_{k-1} - \opT y_{j-1}} \right) \\
    & \ge N \sqnorm{g_N} + \inprod{g_N}{x_N - y_0} - \frac{\epsilon}{4} \sum_{j=1}^{N-1} \sum_{k=j+1}^N \lambda_{k,j}^\star(H) \\
    & \ge N \sqnorm{g_N} + \inprod{g_N}{y_\star - y_0} - \frac{\epsilon}{4} - \frac{\epsilon}{4} \sum_{j=1}^{N-1} \sum_{k=j+1}^N \lambda_{k,j}^\star(H) \\
    & \ge \frac{N}{2} \sqnorm{g_N} - \frac{1}{2N} \sqnorm{y_0 - y_\star} - \frac{\epsilon}{4} \left( 1 + \sum_{j=1}^{N-1} \sum_{k=j+1}^N \lambda_{k,j}^\star(H) \right)
\end{align*}
where the third line follows from \eqref{eqn:nonexpansive-with-violation} applied with $x=y_\star$ and $y=y_{N-1}$ which implies
\begin{align*}
    \inprod{g_N}{x_N - y_\star} & = \inprod{\frac{y_{N-1} - \opT y_{N-1}}{2}}{\frac{y_{N-1} + \opT y_{N-1}}{2} - y_\star } \\
    & = \inprod{\frac{y_{N-1} - y_\star - (\opT y_{N-1} - \opT y_\star)}{2}}{\frac{y_{N-1} - y_\star + \opT y_{N-1} - \opT y_\star}{2}} \\
    & = \frac{1}{4} \left( \sqnorm{y_{N-1} - y_\star} - \sqnorm{\opT y_{N-1} - \opT y_\star} \right) \ge -\frac{\epsilon}{4} ,
\end{align*}
and the last inequality follows from Young's inequality $\inprod{g_N}{y_\star - y_0} \ge -\frac{N}{2} \sqnorm{g_N} - \frac{1}{2N} \sqnorm{y_0 - y_\star}$.
Then we obtain the guarantee
\begin{align*}
    \sqnorm{g_N} \le \frac{\sqnorm{y_0 - y_\star}}{N^2} + \frac{\epsilon}{2N} \left( 1 + \sum_{j=1}^{N-1} \sum_{k=j+1}^N \lambda_{k,j}^\star(H) \right) .
\end{align*}
Through this analysis, we can interpret the sum of H-certificates as the algorithm's sensitivity to the oracle's violation of the nonexpansiveness assumption, so the smaller it is, the more robust the algorithm is to a perturbation of the problem class.
Define
\begin{align}
\label{eqn:sum-of-H-certificates-as-robustness}
    \rho(H) = \sum_{j=1}^{N-1} \sum_{k=j+1}^N \lambda_{k,j}^\star(H) .
\end{align}

\paragraph{Minimum of $\rho(H)$.}
For any optimal $H\in \cH_\star(N-1)$, we have $\sum_{j=1}^{N-1} \lambda_{N,j}^\star(H) = N-1$ by \cref{lemma:Q-lambda-identities}(b) (see also \citep[Theorem~5]{YoonRyuGrimmerInvariance2025}). 
Therefore, $\rho(H) \ge N-1$ and the equality is achieved if and only if $\lambda_{k,j}^\star(H) = 0$ for all $1\le j < k\le N-1$, i.e., iff $H = H_\text{Dual-OHM}(N-1)$.
On the other hand, for basic vertex algorithms, having a guarantee on an intermediate iterate $y_{N'-1}$ (i.e., being decomposable at $N'$) incurs an increase in $\rho(H)$.
More precisely, we have:

\begin{proposition}
\label{proposition:intermediate-guarantee-costs-robustness}
Let $H \in \cH_\star(N-1)$ be an optimal vertex algorithm and suppose that it is decomposable at $1\le N' \le N-1$. 
Then
\[
    \rho(H) \ge N-1 + \frac{N'(N'-1)}{N} .
\]
\end{proposition}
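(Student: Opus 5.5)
The plan is to reduce the bound to the gluing structure from \cref{theorem:gluing}, combined with the lower bound $\rho(\cdot)\ge(\text{size})$ established in the paragraph on the minimum of $\rho(H)$ just above.

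First, decompose $H$. Since $H$ is a vertex decomposable at $N'$, the proof of \cref{corollary:intermediate-iterate-guarantee-for-decomposable-algorithm} gives $H = H_1 \glue H_2$. Here $H_1 \in \cH_\star(N'-1)$ and $H_2 \in \cH_\star(N-N'-1)$ are the vertex algorithms of $\cG_{1,N'}$ and $\cG_{N'+1,N}$. By \cref{theorem:gluing}, $\lambda^\star(H) = \lambda^\star(H_1)\glue\lambda^\star(H_2)$. Summing all entries according to \cref{definition:gluing-of-lambdas} then gives the exact identity
\[
    \rho(H) = \frac{N'}{N}\,\rho(H_1) + \frac{N'}{N-N'} + \frac{N}{N-N'}\,\rho(H_2).
\]
The three terms come from the rescaled left block, the gluing arc $\lambda^\star_{N,N'}$, and the rescaled right block. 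All cross terms vanish.

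Next, bound each block. For any optimal $H'\in\cH_\star(M-1)$ we have $\rho(H')\ge M-1$, because $\sum_j \lambda^\star_{M,j}(H') = M-1$ (\cref{lemma:Q-lambda-identities}(b)) and all certificates are nonnegative. Hence $\rho(H_1)\ge N'-1$ and $\rho(H_2)\ge N-N'-1$. In the degenerate cases $N'=1$ or $N-N'=1$ the corresponding block is $[\,\,]$, whose empty certificate set has $\rho=0$, so the bound still holds. Substituting,
\[
    \rho(H) \ge \frac{N'(N'-1)}{N} + \frac{N' + N(N-N'-1)}{N-N'}.
\]

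Finally, simplify the second fraction. Its numerator is $N(N-N') - (N-N') = (N-1)(N-N')$, so the fraction equals $N-1$. This yields $\rho(H)\ge N-1+\frac{N'(N'-1)}{N}$.

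No step is a real obstacle; the only care needed is the exact bookkeeping of the scaling factors in \cref{definition:gluing-of-lambdas}. Equality holds exactly when both blocks are Dual-OHM.
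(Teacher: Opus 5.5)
Your proposal is correct and follows the paper's proof essentially step for step: you write $H = H_1 \glue H_2$, use \cref{theorem:gluing} to obtain $\rho(H) = \frac{N'}{N}\rho(H_1) + \frac{N'}{N-N'} + \frac{N}{N-N'}\rho(H_2)$, bound $\rho(H_1)\ge N'-1$ and $\rho(H_2)\ge N-N'-1$, and simplify. Your handling of the empty-block cases and your remark on when equality holds are correct additions.
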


\begin{proof}
Because $H$ is decomposable, we can write $H = H_1 \glue H_2$, where $H_1, H_2$ are optimal vertex algorithms of sizes $N'-1$ and $N-N'-1$, respectively (see the proof of \cref{corollary:intermediate-iterate-guarantee-for-decomposable-algorithm}).
Then, by \cref{theorem:gluing}, we have
\begin{align*}
    \rho(H) = \sum_{j=1}^{N-1} \lambda_{k(j),j}^\star (H) & = \sum_{j=1}^{N'-1} \lambda_{k(j),j}^\star (H) + \lambda_{N,N'}^\star (H) + \sum_{j=N'+1}^{N-1} \lambda_{k(j),j}^\star (H) \\
    & = \frac{N'}{N} \sum_{j=1}^{N'-1} \lambda_{k(j),j}^\star (H_1) + \frac{N'}{N-N'} + \frac{N}{N-N'} \sum_{j=1}^{N-N'-1} \lambda_{k(j),j}^\star (H_2) \\
    & \ge \frac{N'(N'-1)}{N} + \frac{N'}{N-N'} + \frac{N(N-N'-1)}{N-N'} \\
    & = \frac{N'(N'-1)}{N} + N - 1 
\end{align*}
where the inequality uses $\sum_{j=1}^{N'-1} \lambda_{k(j),j}^\star (H_1) = \rho(H_1) \ge N'-1$ and $\sum_{j=1}^{N-N'-1} \lambda_{k(j),j}^\star (H_2) = \rho(H_2) \ge N-N'-1$.
\end{proof}

\paragraph{Maximum of $\rho(H)$.}
The above result suggests that having more intermediate guarantees will increase $\rho(H)$, and OHM is at the extreme in this direction, as it is (the unique) anytime optimal algorithm \citep{YoonRyuGrimmerInvariance2025}.
Naturally, we have
\begin{proposition}
\label{proposition:OHM-is-least-robust}
For $N\ge 2$, OHM satisfies 
\begin{align*}
    \rho (H_\text{OHM}(N-1)) 
    = \sum_{j=1}^{N-1} \lambda_{j+1,j}^\star (H_\text{OHM}(N-1)) = \frac{N^2 - 1}{3} ,
\end{align*}
and it is the unique maximum for $\rho(H)$ over basic optimal vertex algorithms in $\cH_\star(N-1)$.
\end{proposition}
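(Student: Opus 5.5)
The plan is to compute $\rho$ recursively through the gluing theorem and then argue by strong induction on $N$ over basic algorithms. The base case is $N=1$, with $\rho([\,\,])=0$.

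\textbf{Recursion for $\rho$.} For any decomposition $H = H_1 \glue H_2$ with $H_1\in\cH_\star(a-1)$, $H_2\in\cH_\star(b-1)$ and $a+b=N$, \cref{theorem:gluing} (as in the proof of \cref{proposition:intermediate-guarantee-costs-robustness}) gives the exact identity
\[
    \rho(H) = \frac{a}{N}\rho(H_1) + \frac{a}{b} + \frac{N}{b}\rho(H_2).
\]

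\textbf{Value for OHM.} We have $H_\text{OHM}(N-1) = H_\text{OHM}(N-2)\glue[\,\,]$, that is, $a=N-1$ and $b=1$. Writing $r_N = \rho(H_\text{OHM}(N-1))$ with $r_1=0$, the recursion becomes $r_N = \frac{N-1}{N} r_{N-1} + (N-1)$. Substituting the inductive hypothesis $r_{N-1}=\frac{(N-1)^2-1}{3}$ gives
\[
    r_N = (N-1)\left(\tfrac{N-2}{3}+1\right) = \tfrac{N^2-1}{3}.
\]
Since only the arcs $(j+1,j)$ carry positive weight, this proves the displayed formula.

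\textbf{Maximality and uniqueness.} Let $H$ be a basic vertex algorithm with $N\ge2$. By \cref{proposition:basic-iff-noncrossing} and \cref{lemma:leftmost-arc-to-N-gives-decomposition}, $H = H_1\glue H_2$ with $H_1,H_2$ basic, for some $1\le a\le N-1$ and $b=N-a$. The coefficients $\frac{a}{N}$ and $\frac{N}{b}$ are positive, so the induction hypothesis $\rho(H_i)\le \frac{(\text{size}+1)^2-1}{3}$ yields
\[
    \rho(H)\le f(b) := \frac{a(a^2-1)}{3N} + \frac{a}{b} + \frac{N(b^2-1)}{3b}, \qquad a = N-b .
\]
Equality holds only if both $H_1$ and $H_2$ attain their maxima, that is, only if they are the OHM matrices of their sizes.

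The remaining task, and the main computational step, is to show $f(b)\le \frac{N^2-1}{3}$ for $1\le b\le N-1$, with equality iff $b=1$. By the OHM computation, $f(1)=\frac{N^2-1}{3}$. I would clear denominators by multiplying by $3Nb$ and substitute $a=N-b$. This gives a polynomial inequality
\[
    P(N,b) := bN(N^2-1) - b a(a^2-1) - 3Na - N^2(b^2-1) \ge 0 .
\]
I expect $P$ to factor as $(b-1)\,Q(N,b)$ with $Q>0$ for $2\le b\le N-1$; note that $P(N,1)=0$ holds automatically. A sanity check is $N=3$, $b=2$: there $f=2<\frac{8}{3}$. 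If the factorization is messy, the fallback is to expand $P$ in powers of $b$ and bound $Q$ directly using $b\le N-1$.

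Once this is done, equality forces $b=1$, so $H_2=[\,\,]$ and $H_1 = H_\text{OHM}(N-2)$ by the induction hypothesis on uniqueness. Hence $H = H_\text{OHM}(N-2)\glue[\,\,] = H_\text{OHM}(N-1)$, which gives both the maximality and the uniqueness claims.
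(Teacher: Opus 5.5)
Your proposal is correct and follows essentially the same route as the paper: the gluing recursion for $\rho$, strong induction through the unique decomposition of a basic diagram, and reduction to the one-variable inequality $\sigma(N')\le\sigma(N-1)$, which is your $f(b)$ with $b=N-N'$. The factorization you anticipated does hold, namely $P(N,b)=(b-1)(b+1)(N-b)(2N-b)$, which is exactly the paper's closed form $\sigma(N-1)-\sigma(N')=\frac{N'(N+N')((N-N')^2-1)}{3N(N-N')}$, so $Q=(b+1)(N-b)(2N-b)>0$ for $1\le b\le N-1$.
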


\begin{proof}
For $H = H_\text{OHM}(N-1)$, we have $k(j)=j+1$ and $\lambda_{j+1,j}^\star(H) = \frac{j(j+1)}{N}$ for $j=1,\dots,N-1$, so their sum is
\begin{equation*}
    \rho (H_\text{OHM}(N-1)) = \sum_{j=1}^{N-1} \lambda_{j+1,j}^\star (H_\text{OHM}(N-1)) = \frac{1}{N} \left( \sum_{j=1}^{N-1} (j^2 + j) \right) = \frac{1}{N} \left( \frac{N(N-1)(2N-1)}{6} + \frac{N(N-1)}{2} \right) = \frac{N^2 - 1}{3} .
\end{equation*}
Next, we use induction on $N$ to show that OHM is the unique maximum over basic optimal algorithms.
In the base case $N=2$, $H_\text{OHM}(1) = H_\text{Dual-OHM}(1) = \begin{bmatrix} \frac{1}{2} \end{bmatrix}$ is the only basic optimal algorithm and it has $\lambda_{2,1}^\star (H) = 1$, and we have nothing to prove.

For $N\ge 3$, assume that $\sum_{j=1}^{\ell-1} \lambda_{k(j),j}^\star (H') \le \frac{\ell^2 - 1}{3}$ for all basic optimal $H' \in \cH_\star(\ell-1)$ with equality holding if and only if $H' = H_\text{OHM}(\ell-1)$ for $\ell=2,\dots,N-1$.
Now let $H \in \cH_\star(N-1)$ be a basic optimal algorithm, so that it is decomposable at a unique $N' \in \{1,\dots,N-1\}$ by \cref{lemma:leftmost-arc-to-N-gives-decomposition} and we can write $H = H_1 \glue H_2$.
Then, using the induction hypothesis and \cref{theorem:gluing}, we obtain
\begin{align*}
    \rho(H) = \sum_{j=1}^{N-1} \lambda_{k(j),j}^\star (H) & 
    = \frac{N'}{N} \rho (H_1) + \frac{N'}{N-N'} + \frac{N}{N-N'} \rho (H_2) \\
    & \le \frac{N'}{N} \frac{(N')^2 - 1}{3} + \frac{N'}{N-N'} + \frac{N}{N-N'} \frac{(N-N')^2 - 1}{3} := \sigma(N') .
\end{align*}
Observe that we have $\sigma(N-1) = \frac{N^2 - 1}{3}$, and for $N' < N-1$, we have 
\begin{align*}
    \sigma(N-1) - \sigma(N') = \frac{N'(N+N')((N-N')^2 - 1)}{3N(N-N')} > 0
\end{align*}
from direct simplification. Therefore, we have $\rho(H) \le \frac{N^2 - 1}{3}$ and the equality holds if and only if $N'=N-1$ and $\rho(H_1) = \frac{(N-1)^2 - 1}{3}$, which is equivalent to
\[
    H_1 = H_\text{OHM} (N-2) \iff H = H_\text{OHM} (N-1)
\]
by induction hypothesis and the fact that $H_\text{OHM} (N-1) = H_\text{OHM} (N-2) \glue [\,\,]$. This completes the induction, and hence the proof.
\end{proof}

\paragraph{The trade-off between anytime guarantees and robustness.}
The previous discussion shows that OHM and Dual-OHM lie at opposite ends of the anytime--robustness spectrum. 
Dual-OHM minimizes $\rho(H)$ but provides no guarantees for intermediate iterates, whereas OHM guarantees every intermediate iterate at the cost of robustness, resulting in the largest $\rho(H)$ among optimal vertex algorithms.
The algorithms from the previous section, RDO and FSDM, lie between these two extremes, each offering a different compromise between anytime guarantees and robustness.
Both algorithms are \textit{quasi-anytime}, which we define as having a sequence of iterates $y_{m_k}$ with convergence guarantees, where $\{m_k\}_{k=0,1,\dots}$ is an increasing sequence of natural numbers.
On the other hand, their robustness can be analyzed as follows:
\begin{itemize}
    \item RDO (with period $p$) provides guarantees at linearly spaced iterates, and applying \cref{theorem:gluing} with $N=np+1$ and $N'=(n-1)p+1$ it satisfies
    \[
        \rho\left( H_{\text{RDO}(p)}^{(n)} \right) = \sum_{k=(n-1)p+1}^{np} \lambda_{N,k}^\star \left( H_{\text{RDO}(p)}^{(n)} \right) + \frac{N'}{N} \rho\left( H_{\text{RDO}(p)}^{(n-1)} \right) = np + \frac{(n-1)p+1}{np+1} \rho\left( H_{\text{RDO}(p)}^{(n-1)} \right) 
    \]
    for $n=1,2,\dots$ (with $\rho\left( H_{\text{RDO}(p)}^{(0)} \right) = 0$).
    Therefore, letting $a_n = (np+1) \rho\left( H_{\text{RDO}(p)}^{(n)} \right)$, we obtain the recurrence $a_n = a_{n-1} + np(np+1)$ and $a_0 = 0$, so we obtain
    \begin{align*}
        \rho\left( H_{\text{RDO}(p)}^{(n)} \right) = \frac{a_n}{np+1} = \frac{1}{np+1} \sum_{k=1}^n kp(kp+1) = \frac{pn(n+1)(2np+p+3)}{6(np+1)} = \frac{n^2 p}{3} + \cO(n) \approx \frac{N^2}{3p}.
    \end{align*}
    So the robustness is approximately improved by a factor of $p$ over OHM.

    \item For FSDM, applying \cref{theorem:gluing} with $N=2^n$ and $N'=2^{n-1}$, we observe that for $n=1,2,\dots$
    \begin{align*}
        \rho \left( H_{\text{FSDM}}(2^n-1) \right) & = \frac{N'}{N} \rho \left( H_{\text{FSDM}}(2^{n-1}-1) \right) + \lambda_{N,N'}^\star \left( H_{\text{FSDM}}(2^n-1) \right) + \frac{N}{N-N'} \rho\left( H_{\text{FSDM}}(2^{n-1}-1) \right) \\
        & = \frac{5}{2} \rho \left( H_{\text{FSDM}}(2^{n-1}-1) \right) + 1 
    \end{align*}
    where we use $\lambda_{N,N'}^\star \left( H_{\text{FSDM}}(2^n-1) \right) = \frac{N'}{N-N'} = 1$.
    Therefore, $b_n = \rho \left( H_{\text{FSDM}}(2^{n}-1) \right) + \frac{2}{3}$ is a geometric sequence satisfying $b_n = \frac{5}{2} b_{n-1}$ with $b_0 = \frac{2}{3}$. This implies
    \begin{align*}
        \rho \left( H_{\text{FSDM}}(2^{n}-1) \right) = \frac{2}{3} \left(\frac{5}{2}\right)^n - \frac{2}{3} \approx \frac{2}{3} N^{\log_2 \frac{5}{2}} < \frac{2}{3} N^{1.322} .
    \end{align*}
\end{itemize}

\section{Experiments}
\label{section:experiments}

In this section, we provide numerical demonstrations that the theory we developed is predictive.
In particular, we compare the four algorithms: OHM, Dual-OHM, Repeated Dual-OHM (RDO) and Fractal Self-Dual Method (FSDM).

First, we compare their behavior on the worst-case nonexpansive linear operator $\opT\colon \reals^N \to \reals^N$ from \citep{ParkRyu2022_exact}, defined by 
\begin{align*}
    \opT(x_1, \dots, x_N) = \begin{bmatrix} 
    0 & 0 & \cdots & -1 \\ 
    1 & 0 & \cdots & 0  \\
    \vdots & \ddots & \ddots & \vdots \\
    0 & \cdots & 1 & 0
    \end{bmatrix} \begin{bmatrix}
        x_1 \\ x_2 \\ \vdots \\ x_N
    \end{bmatrix}
    = (-x_N, x_1, \dots, x_{N-1})
\end{align*}
which has the unique fixed point $y_\star = 0$.
We set $N=1024$ and use the period $p=33$ (a divisor of $N-1$) for RDO, and plot $\sqnorm{y_k - \opT y_k}$.
For the initial point, we set $y_0 = -\frac{1}{\sqrt{N}} (1,1,\dots,1)$.
For this problem, OHM converges most steadily---as it is an anytime algorithm---while the other algorithms match its performance at the iterations where they have convergence guarantees (Figure~\ref{fig:worst-case}).

Next, we replace $\opT$ with a non-worst-case operator, namely the contractive operator $\gamma\opT$ with $\gamma=0.975$ (Figure~\ref{fig:non-worst-case}), while keeping all the other parameters fixed.
For this setting, we observe that Dual-OHM achieves the terminal accuracy fastest (within the first quarter of iterations).
Notably, FSDM displays a staircase behavior, having sharp progress at iteration numbers of the form $2^k - 1$.
RDO behaves similarly, but periodically and with less pronounced improvements.

\begin{figure}[t]
    \centering
    \begin{subfigure}[t]{0.4\linewidth}
        \centering
        \includegraphics[width=\linewidth]{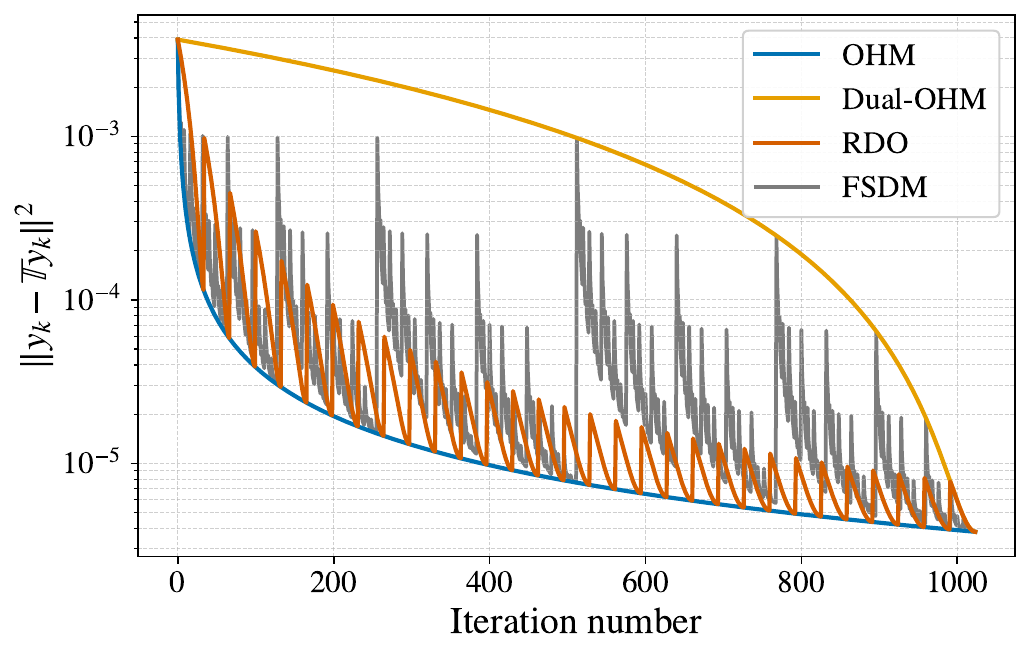}
        \caption{Worst-case $\opT$ from \citep{ParkRyu2022_exact}}
        \label{fig:worst-case}
    \end{subfigure}
    \hspace{0.04\linewidth}
    \begin{subfigure}[t]{0.4\linewidth}
        \centering
        \includegraphics[width=\linewidth]{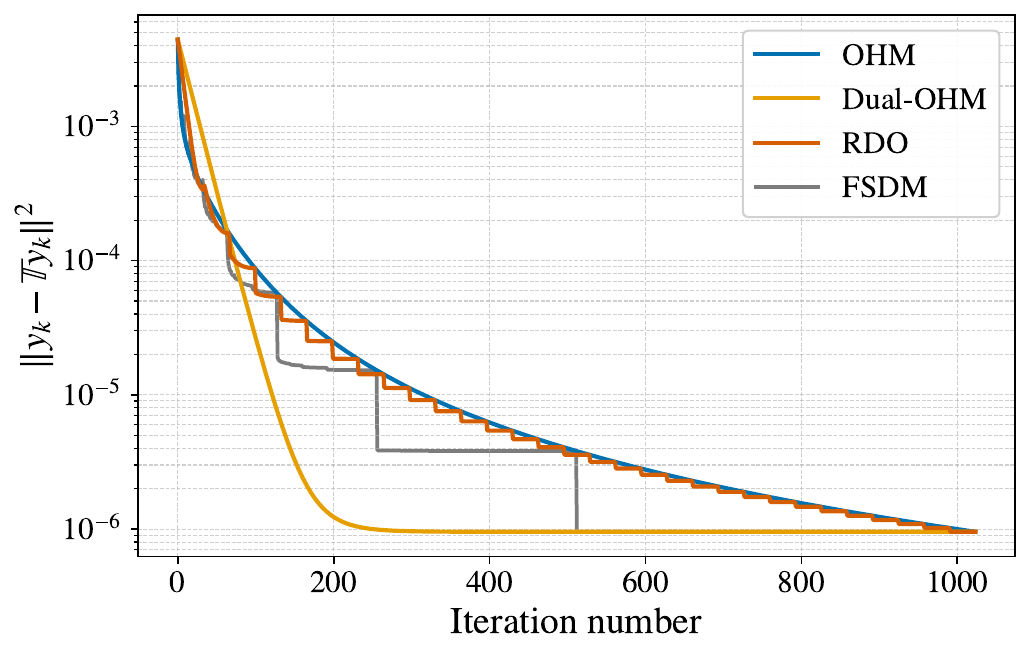}
        \caption{Non-worst-case (contractive) $\gamma\opT$}
        \label{fig:non-worst-case}
    \end{subfigure}

    \vspace{0.8em}

    \begin{subfigure}[t]{0.4\linewidth}
        \centering
        \includegraphics[width=\linewidth]{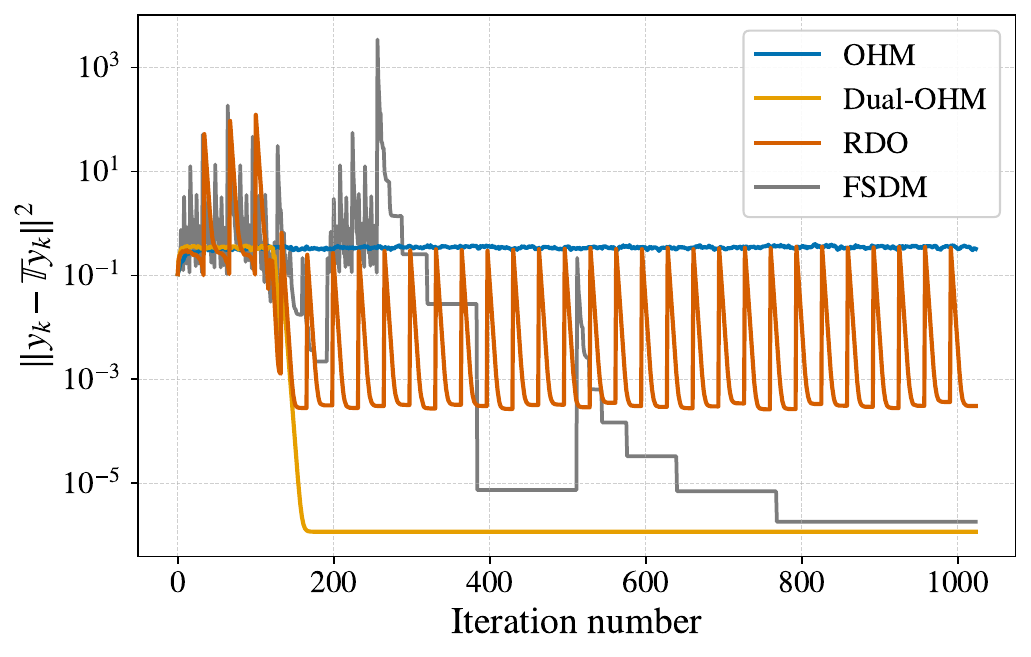}
        \caption{Operator $\opT_\delta$ with bounded violation}
        \label{fig:bounded-violation}
    \end{subfigure}
    \hspace{0.04\linewidth}
    \begin{subfigure}[t]{0.4\linewidth}
        \centering
        \includegraphics[width=\linewidth]{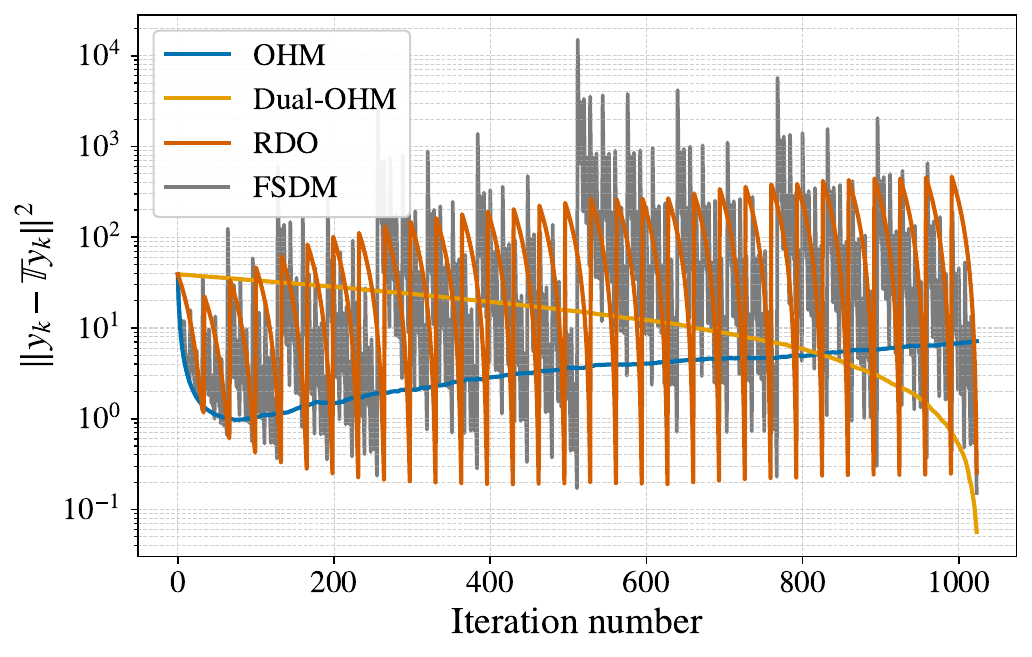}
        \caption{Operator with unbounded violation}
        \label{fig:unbounded-violation}
    \end{subfigure}

    \caption{Performance comparison of OHM, Dual-OHM, RDO (periodic), and FSDM on four distinct fixed-point operators $\opT\colon \reals^d \to \reals^d$. We take $N=1024$, $d=N$ for the first two experiments, and $d=2N$ for the last two experiments. For RDO, we run the periodic version with $p=33$.}
    \label{figure:performance-plots}
\end{figure}

Finally, we construct $\opT_\delta \colon \reals^{2N} \to \reals^{2N}$ satisfying \eqref{eqn:nonexpansive-with-violation} by using a sign hash function $\mathsf{Hash}(\cdot)$ which, given a vector $w \ne \mathbf{0}$ of arbitrary length, outputs a pseudo-random vector of the same length whose entries are $\pm1$. For $\delta > 0$, define
\[
    \opT_\delta(x_1, x_2) = \left( \gamma\opT x_1 , \gamma\opT x_2 + \frac{\delta}{2\sqrt{N}} \mathsf{Hash}(x_1) \right)
\]
where $\opT\colon \reals^N \to \reals^N$ is the worst-case linear operator defined above and $0 < \gamma < 1$ is a contraction factor. Then, we have
\begin{align*}
    \sqnorm{\opT_\delta (x_1, x_2) - \opT_\delta (y_1, y_2)} & \le \frac{1}{\gamma^2} \sqnorm{\big( \gamma (\opT x_1 - \opT y_1) , \gamma (\opT x_2 - \opT y_2) \big) } + \frac{1}{1-\gamma^2} \sqnorm{\delta v(x_1,y_1)} \\
    & \le \sqnorm{(x_1, x_2) - (y_1, y_2)} + \frac{\delta^2}{1-\gamma^2}
\end{align*}
where $v(x_1, y_1) = \frac{1}{2\sqrt{N}} (\mathsf{Hash}(x_1) - \mathsf{Hash}(y_1)) \in \reals^N$ has all its entries within $\left[-\frac{1}{\sqrt{N}}, \frac{1}{\sqrt{N}} \right]$, from which the second inequality follows.
This shows that $\opT_\delta$ satisfies \eqref{eqn:nonexpansive-with-violation} with $\epsilon = \frac{\delta^2}{1-\gamma^2}$.
We set $\mathsf{Hash}(\mathbf{0}) = \mathbf{0}$ so that $\mathbf{0}_{2N}$ is a fixed point of $\opT_\delta$.

In Figure~\ref{fig:bounded-violation}, we set $y_0 = -\frac{1}{\sqrt{N}} (\mathbf{1}_N, \mathbf{0}_N)$, $\gamma=0.8$ and $\delta=0.5$, and run all algorithms with $N=1024$ and $p=33$.
As the theory predicts, we observe that Dual-OHM exhibits the most stable behavior against violation of nonexpansiveness, OHM performs worst, and RDO and FSDM display in-between performance.
In Figure~\ref{fig:unbounded-violation}, we consider the case $\gamma=1$: the squared-norm violation bound~\eqref{eqn:nonexpansive-with-violation} becomes unbounded as $\gamma\to 1$, and we instead have $\norm{\opT_\delta (x_1,x_2) - \opT_\delta (y_1,y_2)} \le \norm{(x_1, x_2) - (y_1, y_2)} + \delta$.
For this setting, we take the initial point $y_0 = -\frac{R}{\sqrt{N}} (\mathbf{1}_N, \mathbf{0}_N)$ with $R=10^2$.
Interestingly, even in this case, we observe the same hierarchy of increasing robustness $\text{OHM} < \text{RDO}(p) < \text{FSDM} < \text{Dual-OHM}$.

\section{Conclusion}

Together with \citep{YoonRyuGrimmerInvariance2025}, this work marks another step toward a more comprehensive understanding of optimal acceleration for fixed-point algorithms and the closely related settings of monotone inclusions and minimax optimization \citep{MokhtariOzdaglarPattathil2020_unified, yoonAcceleratedMinimaxAlgorithms2025a}.
The story of acceleration for these problems began with Halpern iterations, or so-called anchor-type algorithms \citep{Halpern1967_fixed, Lieder2021_convergence, Kim2021_accelerated, Diakonikolas2020_halpern, YoonRyu2021_accelerated, LeeKim2021_fast, Tran-DinhLuo2021_halperntype}.
These methods have been studied extensively and further developed through connections to ODE models and Nesterov acceleration \citep{botFastOptimisticGradient2025, SuhParkRyu2023_continuoustime, tran-dinhHalpernsFixedpointIterations2024}.
Although the algorithms proposed in the aforementioned works are formally distinct, they can still be broadly categorized as anchor-type variants, since they all focus on deriving anytime or asymptotic convergence guarantees.
H-duals of anchor-type algorithms (dual-anchor type) \citep{YoonKimSuhRyu2024_optimal} were then proposed as a materially different acceleration mechanism that does not provide intermediate-iterate guarantees.
While this initially appeared to be a drawback, our results suggest that it is better understood as a benign trade-off for robustness.
Together with the new algorithms we propose, which attain distinct midpoints along this trade-off curve, this may enable a formal study of Pareto optimality and the extension of this perspective to stochastic fixed-point problems and similar settings, offering a distinct approach from existing work on the topic \citep{CaiSongGuzmanDiakonikolas2022_stochastic, bravoStochasticFixedpointIterations2024, caiVarianceReducedHalpern2024, 
chenNearoptimalAlgorithmsMaking2024, bravoStochasticHalpernIteration2026, pischkeAsymptoticRegularityGeneralised2026}.
One could even develop additional tie-breaking criteria beyond intermediate-iterate guarantees or robustness to noise, such as average-case behavior.
Our composing theory may provide a tractable handle and insights for addressing these research directions in future work.

\section*{Acknowledgments}

Benjamin Grimmer's work was supported as a fellow of the Alfred P.\ Sloan Foundation.

\bibliographystyle{plainnat}
\bibliography{ref}

\appendix

\section{Proof of \texorpdfstring{\cref{lemma:nonzero-lambda-exists}}{Lemma 2}}
\label{section:proof-of-sparsity-lemma}

We first state and prove some useful combinatorial lemmas.

\begin{lemma}[Chu--Vandermonde identity]
\label{lemma:chu-vandermonde}
For any $\alpha,\beta\in\reals$ and any integer $n\ge 0$,
\[
    \sum_{m=0}^{n} \binom{\alpha}{m}\binom{\beta}{n-m}
    = \binom{\alpha+\beta}{n} .
\]
\end{lemma}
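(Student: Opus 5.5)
We will prove the identity by generating functions, treating $\binom{\alpha}{m}=\alpha(\alpha-1)\cdots(\alpha-m+1)/m!$ as a polynomial in $\alpha$. The first step is to establish the statement for nonnegative integers $\alpha,\beta$. In that case the binomial theorem gives $(1+t)^\alpha(1+t)^\beta=(1+t)^{\alpha+\beta}$ as polynomials in $t$. Comparing the coefficient of $t^n$ on both sides yields $\sum_{m=0}^n\binom{\alpha}{m}\binom{\beta}{n-m}=\binom{\alpha+\beta}{n}$. This works because the coefficient of $t^n$ in a product is the Cauchy convolution of the coefficient sequences. It also remains valid when $n$ exceeds $\alpha$ or $\beta$, since $\binom{\alpha}{m}=0$ for integer $0\le\alpha<m$.

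Alternatively, one can use a counting argument for integer $\alpha,\beta$: choose $n$ elements from the disjoint union of an $\alpha$-set and a $\beta$-set, and sort the choices according to how many, say $m$, come from the first set.

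The second step extends the identity to all real $\alpha,\beta$ by polynomial identity. Fix $n$. Both sides of the claimed identity are polynomials in $(\alpha,\beta)$ of total degree at most $n$. For each fixed $\beta\in\mathbb{N}$, the difference is a polynomial in $\alpha$ that vanishes on all of $\mathbb{N}$, so it is identically zero in $\alpha$. Now fix any real $\alpha$. The difference, viewed as a polynomial in $\beta$, vanishes on all of $\mathbb{N}$, so it is identically zero in $\beta$. This proves the identity for all $\alpha,\beta\in\mathbb{R}$.

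A different route avoids the extension step: use the formal power series $(1+t)^\alpha=\sum_m\binom{\alpha}{m}t^m$ over $\mathbb{R}$ together with the rule $(1+t)^\alpha(1+t)^\beta=(1+t)^{\alpha+\beta}$. However, that rule for real exponents itself requires the identity being proved (or an analytic argument on $|t|<1$). For that reason the polynomial-density argument is cleaner.

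The only delicate point is the extension from integers to reals. The degree bookkeeping is routine, and a polynomial vanishing on infinitely many points is zero. So I expect no real obstacle.
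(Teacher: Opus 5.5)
Your proof is correct, but it takes a different route from the paper.

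The paper proves the lemma in one line. It compares the coefficients of $t^n$ in $(1+t)^{\alpha+\beta}=(1+t)^\alpha(1+t)^\beta$, expanding each factor as a binomial series with real exponent. As you point out, if $(1+t)^\alpha$ is read purely as the formal series $\sum_m \binom{\alpha}{m} t^m$, then the multiplicativity used there is equivalent to the identity itself. So the paper's one-liner implicitly relies on Newton's generalized binomial theorem: the series converges to $(1+t)^\alpha$ for $|t|<1$, and after a Cauchy product of absolutely convergent series, the power-series coefficients are unique. The payoff is brevity.

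Your argument avoids analysis entirely. It uses the finite binomial theorem for nonnegative integers, plus the fact that a univariate polynomial vanishing on $\mathbb{N}$ is zero, applied first in $\alpha$ and then in $\beta$. This is self-contained and non-circular, at the cost of a few extra lines.

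Both versions cover what the paper actually needs later, namely negative upper arguments such as $\binom{-a}{b}$ and $\binom{-m-1}{r}$. Your total-degree remark is harmless but unnecessary; only polynomiality in each variable separately is used.
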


\begin{proof}
Compare the coefficient of $t^n$ in the following identity using the formal binomial series expansion:
\[
    \sum_{k=0}^\infty \binom{\alpha+\beta}{k} t^k = (1+t)^{\alpha+\beta} = (1+t)^\alpha(1+t)^\beta = \left( \sum_{i=0}^\infty \binom{\alpha}{i} t^i \right) \left( \sum_{j=0}^\infty \binom{\beta}{j} t^j \right) .
\]
\end{proof}

\begin{lemma}
\label{lemma:pascal-matrix-properties}
Let $r$ be a positive integer. Define $\vS_r,\vP_r\in\reals^{r\times r}$ by
\begin{align*}
    \vS_r = \begin{bmatrix}
        \binom{1}{0} & -\binom{2}{0} & \cdots & (-1)^{r-1} \binom{r}{0} \\
        -\binom{2}{1} & \binom{3}{1} & \cdots & (-1)^{r} \binom{r+1}{1} \\
        \vdots & \vdots & & \vdots \\
        (-1)^{r-1} \binom{r}{r-1} & (-1)^{r} \binom{r+1}{r-1} & \cdots & \binom{2r-1}{r-1}
    \end{bmatrix} , \quad & \text{i.e.,} \quad (\vS_r)_{a,b} = (-1)^{a+b}\binom{a+b-1}{a-1}, \\
    \vP_r = \begin{bmatrix}
         \binom{2}{1} & -\binom{3}{1} & \cdots & (-1)^{r+1} \binom{r+1}{1} \\
        -\binom{3}{2} & \binom{4}{2} & \cdots & (-1)^{r+2} \binom{r+2}{2} \\ 
        \vdots & \vdots & \cdots & \vdots \\
        (-1)^{r+1} \binom{r+1}{r} & (-1)^{r+2} \binom{r+2}{r} & \cdots & \binom{2r}{r} 
    \end{bmatrix} , \quad & \text{i.e.,} \quad  (\vP_r)_{a,b} = (-1)^{a+b}\binom{a+b}{a} .
\end{align*}
Then the following properties hold:
\begin{enumerate}[label=\normalfont(\roman*)]
    \item $\det \vS_r = 1$; therefore, every leading principal minor of $\vS_r$ is also $1$.
    \item The last row and last column of $\vS_r^{-1}$ are given by
    \[
        \ve_r^{\transpose} \vS_r^{-1} = 
    \begin{bmatrix}
        \binom{r}{1} & \binom{r}{2} & \cdots & \binom{r}{r}
    \end{bmatrix} := \vb_r^{\transpose} , \qquad 
        \vS_r^{-1} \ve_r =
        \begin{bmatrix}
            \binom{r-1}{0} & \binom{r-1}{1} & \cdots & \binom{r-1}{r-1}
        \end{bmatrix}^{\transpose} := \vc_r .
    \]
    \item $\vP_r = \vR_r^\transpose \vR_r$, where 
    \begin{align*}
        \vR_r = \begin{bmatrix}
            \binom{1}{0} & -\binom{2}{0} & \cdots & (-1)^{r-1} \binom{r}{0} \\
            \binom{1}{1} & -\binom{2}{1} & \cdots & (-1)^{r-1} \binom{r}{1} \\
            & -\binom{2}{2} & \cdots & (-1)^{r-1} \binom{r}{2} \\
            & & \ddots & \vdots \\
            & & & (-1)^{r-1} \binom{r}{r}
        \end{bmatrix} \in \reals^{(r+1) \times r}
        , \quad \text{i.e.,} \quad
        (\vR_r)_{a,b} = (-1)^{b-1}\binom{b}{a-1} .
    \end{align*}
    In particular, $\vP_r$ is symmetric positive definite.
    \item Let $\vd_r = \left( \vc_{r+1} \right)_{1:r} = \begin{bmatrix}
        \binom{r}{0} & \binom{r}{1} & \cdots & \binom{r}{r-1}
    \end{bmatrix}^{\transpose} \in \reals^r$. Then
    \[
        \vR_r \vd_r 
        = (-1)^{r+1}\left(\vd_{r+1} - \ve_{r+1}\right).
    \]
\end{enumerate}
\end{lemma}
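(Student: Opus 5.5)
The plan is to prove the four parts of \cref{lemma:pascal-matrix-properties} in the order (iii), (i), (ii), (iv), since the factorization in (iii) mirrors an LU-type factorization that also handles (i).

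For (i), I will exhibit $\vS_r$ as a product of a unit lower and a unit upper triangular matrix. Write $(\vS_r)_{a,b} = (-1)^{a+b}\binom{(a-1)+(b-1)+1}{a-1}$. By \cref{lemma:chu-vandermonde},
\[
\binom{a+b-1}{a-1} = \sum_{m} \binom{a-1}{m}\binom{b}{a-1-m} = \sum_{m}\binom{a-1}{m}\binom{b}{b-a+1+m}.
\]
I will choose the splitting so that one factor is supported on $m\le a-1$ and the other on $m\le b-1$, with diagonal binomials $\binom{k}{k}=1$. For example, use $\binom{a+b-1}{a-1}=\sum_{m}\binom{a-1}{m}\binom{b}{m+1}$, which is Vandermonde with $\binom{b}{m+1}=\binom{b}{b-m-1}$. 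This gives $\vS_r = \vL\vD\vU$ with $\vL_{a,m+1}=\binom{a-1}{m}$ (unit lower triangular) and $\vU_{m+1,b}=\binom{b}{m+1}$. Here $\vU$ is upper triangular because $\binom{b}{m+1}=0$ when $m+1>b$, and its diagonal is $\binom{b}{b}=1$. The signs $(-1)^{a+b}$ are absorbed as $\vD_\pm \vL \vU \vD_\pm$ with $\vD_\pm=\mathrm{diag}((-1)^a)$. Hence $\det\vS_r=1$, and the leading principal submatrices factor in the same way, so their determinants are also $1$.

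For (ii), I will simply verify the claimed vectors rather than invert. I need
\[
\sum_a \binom{r}{a}(-1)^{a+b}\binom{a+b-1}{a-1}=\delta_{b,r}, \qquad \sum_b (-1)^{a+b}\binom{a+b-1}{a-1}\binom{r-1}{b-1}=\delta_{a,r}.
\]
Both are alternating binomial sums. I will use $(-1)^{a-1}\binom{a+b-1}{a-1}=\binom{-b-1}{a-1}$ (upper negation) and then apply \cref{lemma:chu-vandermonde} with real upper index. For the first identity this becomes a coefficient of $(1+t)^{r}(1+t)^{-b-1}$, i.e.\ $\binom{r-b-1}{\cdot}$ at a suitable index, which vanishes for $b<r$ because $\binom{n}{k}$ with $0\le n<k$ is zero, and equals $1$ for $b=r$. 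The second identity is handled the same way. Index bookkeeping at the boundary terms is the main place I expect mistakes.

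For (iii), I compute $(\vR_r^\transpose\vR_r)_{a,b}=(-1)^{a+b}\sum_{i}\binom{a}{i}\binom{b}{i}=(-1)^{a+b}\binom{a+b}{a}$, again by Vandermonde. Positive definiteness follows because $\vR_r$ has full column rank: its rows $2,\dots,r+1$ form an upper triangular matrix with diagonal $\pm\binom{b}{b}\neq0$.

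For (iv), the $a$-th entry is $\sum_{b=1}^r(-1)^{b-1}\binom{b}{a-1}\binom{r}{b-1}$. I will evaluate it with a generating-function or negation argument: $\binom{b}{a-1}\binom{r}{b-1}$ summed with alternating sign is a finite-difference identity whose value is $(-1)^{r+1}\binom{r+1}{a-1}$ corrected by the missing $b=r+1$ term. Extending the sum to $b=r+1$ gives a full alternating sum, which vanishes or collapses. The leftover boundary term produces the $\binom{r+1}{a-1}$ entries of $\vd_{r+1}$, and the $-\ve_{r+1}$ accounts for $a=r+2$ being absent (so the last entry is $\binom{r+1}{r}-$ its correction). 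This boundary accounting is the step I expect to be the main obstacle, and I would verify it on $r=1,2$ first to fix the signs.
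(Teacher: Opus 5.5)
Parts (ii) and (iii) of your plan follow the paper. The paper also verifies $\vS_r\vc_r=\ve_r$ and $\vS_r^\transpose\vb_r=\ve_r$ via upper negation and Chu--Vandermonde, and proves (iii) by the same Vandermonde sum. One small correction: for the column identity, where you sum over $b$, you need the negation $\binom{-a}{b}=(-1)^b\binom{a+b-1}{b}$. The one you wrote, $\binom{-b-1}{a-1}$, puts the summation variable in the upper index. For (i) and (iv) you take different routes from the paper. Both are viable, but each has a concrete misstep as written.

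\textbf{Part (i).} The identity $\binom{a+b-1}{a-1}=\sum_m\binom{a-1}{m}\binom{b}{m+1}$ is false. Since $\binom{a-1}{m}=\binom{a-1}{a-1-m}$, Vandermonde gives $\binom{a+b-1}{a}$ for the right-hand side. For example, $a=1$, $b=2$ gives $2$, while $\lvert(\vS_r)_{1,2}\rvert=1$. So your signed product $\mathrm{diag}((-1)^a)\,\vL\vU\,\mathrm{diag}((-1)^b)$ is $\vS_r^\transpose$, not $\vS_r$.

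The conclusion survives, because determinants and leading principal minors are invariant under transposition. The correct factorization of $\vS_r$ itself puts the shift on the other factor:
$\binom{a+b-1}{a-1}=\sum_{m\ge0}\binom{a}{m+1}\binom{b-1}{m}$,
so $\vL_{a,m+1}=\binom{a}{m+1}$ is unit lower triangular and $\vU_{m+1,b}=\binom{b-1}{m}$ is unit upper triangular. Reindexing your correct first Vandermonde line instead gives $\binom{b}{m}$, which has a subdiagonal of ones and is not triangular.

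Compared with the paper: the paper uses $\vS_r\vc_r=\ve_r$ for a single column operation giving $\det\vS_r=\det\vS_{r-1}$, then inducts, so it recycles the computation already needed for (ii). Your LU route is independent of (ii) and yields an explicit triangular factorization.

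\textbf{Part (iv).} The finite-difference idea works and is shorter than the paper's argument, which uses $\binom{r}{m}\binom{m}{t}=\binom{r}{t}\binom{r-t}{m-t}$ together with Pascal's rule. However, your explanation of the $-\ve_{r+1}$ term is wrong: no index $a=r+2$ is involved. The correct bookkeeping is as follows.

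With $p(m)=\binom{m+1}{a-1}$, the $a$-th entry is
$\sum_{m=0}^{r-1}(-1)^m\binom{r}{m}p(m)=\sum_{m=0}^{r}(-1)^m\binom{r}{m}p(m)+(-1)^{r+1}\binom{r+1}{a-1}$.

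The full sum is $(-1)^r$ times the $r$-th forward difference of $p$ at $0$. Since $p$ has degree $a-1$, this vanishes for $a\le r$. For $a=r+1$, $p$ has degree exactly $r$ with leading coefficient $1/r!$, so the full sum equals $(-1)^r$.

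That nonvanishing top-degree difference is what turns the last entry into $(-1)^{r+1}\bigl(\binom{r+1}{r}-1\bigr)$, i.e., the source of $-\ve_{r+1}$.
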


\begin{proof}
We first prove that $\vS_r \vc_r = \ve_r$ and $\vS_r^\transpose \vb_r = \ve_r$. Then, once we show (i), this will immediately imply (ii).
For $a=1,\dots,r$, using $\binom{-a}{u}=(-1)^u\binom{a+u-1}{u}$ for $u=0,1,\dots$ and \cref{lemma:chu-vandermonde},
\begin{align*}
    (\vS_r \vc_r)_a
    & = \sum_{b=1}^{r} (-1)^{a+b}\binom{a+b-1}{a-1}\binom{r-1}{b-1} = (-1)^{a}\sum_{b=1}^{r} (-1)^{b} \binom{a+b-1}{b}\binom{r-1}{b-1} \\
    & = (-1)^{a} \sum_{b=1}^{r} \binom{-a}{b}\binom{r-1}{r-b} = (-1)^a \binom{r-a-1}{r} .
\end{align*}
If $a<r$, then $\binom{r-1-a}{r}=0$; if $a=r$ then $\binom{-1}{r}=(-1)^r$ so we see that the above expression becomes $1$. This shows that $\vS_r \vc_r = \ve_r$.
Similarly, for $a=1,\dots,r$, we have $\binom{-a-1}{u} = (-1)^u \binom{a+u}{u}$ for $u=0,1,\dots$, so
\begin{align*}
    \left(\vS_r^\transpose \vb_r \right)_a = \sum_{b=1}^r (-1)^{a+b} \binom{a+b-1}{b-1} \binom{r}{b} = (-1)^{a-1} \sum_{b=1}^r \binom{-a-1}{b-1} \binom{r}{r-b} = (-1)^{a-1} \binom{r-a-1}{r-1} =
    \begin{cases}
        0 & \text{if } a \le r-1 \\
        1 & \text{if } a = r
    \end{cases}
\end{align*}
which shows that $\vS_r^\transpose \vb_r = \ve_r$.

\smallskip

Now we prove (i). Let $\vU_r = \begin{bmatrix} \ve_1 & \cdots & \ve_{r-1} & \vc_r \end{bmatrix}$. Because the last entry of $\vc_r$ is $1$, $\vU_r$ is upper triangular with unit diagonal, so $\det \vU_r = 1$. Since $\vS_r \vc_r = \ve_r$, we have
\[
    \vS_r \vU_r =
    \begin{bmatrix}
        \vS_{r-1} & 0 \\
        * & 1
    \end{bmatrix}.
\]
Therefore, $\det \vS_r = \det(\vS_r \vU_r)=\det \vS_{r-1}$. 
Then we can start from the base case $\det \vS_1=1$ and apply induction to conclude that $\det \vS_r=1$ for all $r\ge 1$. 
The statement on leading principal minors follows immediately, because the leading $s\times s$ principal submatrix of $\vS_r$ is exactly $\vS_s$ for $s<r$.

For (iii), observe that
\begin{align*}
    (\vR_r^\transpose \vR_r)_{i,j}
    & = \sum_{a=1}^{r+1} (-1)^{i-1}\binom{i}{a-1}(-1)^{j-1}\binom{j}{a-1} = (-1)^{i+j}\sum_{t=0}^{\min\{i,j\}} \binom{i}{t}\binom{j}{t} \\
    & = (-1)^{i+j}\sum_{t=0}^{\min\{i,j\}} \binom{i}{t}\binom{j}{j-t} = (-1)^{i+j}\binom{i+j}{j}
    = (\vP_r)_{i,j},
\end{align*}
where the second last equality follows from \cref{lemma:chu-vandermonde}. 
This shows $\vP_r = \vR_r^\transpose \vR_r$. 
Furthermore, the bottom $r\times r$ block of $\vR_r$ is upper triangular with diagonal entries $\pm 1$, so $\vR_r$ has full column rank. 
This implies that $\vP_r = \vR_r^\transpose \vR_r$ is positive definite.

Finally, we prove (iv). 
For $a=1,\dots,r+1$, we have
\begin{align}
\label{eqn:Rrdr-entry-identity}
    (\vR_r\vd_r)_a = \sum_{b=1}^{r}(-1)^{b-1}\binom{b}{a-1}\binom{r}{b-1} = \sum_{m=0}^{r-1} (-1)^{m} \binom{r}{m} \binom{m+1}{a-1} 
\end{align}
where we make the change of index $m=b-1$.
Next, note that for any $t=0,\dots,r-1$, we have $\binom{r}{m} \binom{m}{t} = \binom{r}{t} \binom{r-t}{m-t}$ for $m=t,\dots,r-1$, so
\begin{align}
\label{eqn:binom-rm-binom-mt-identity}
    \sum_{m=0}^{r-1}(-1)^m \binom{r}{m} \binom{m}{t} = \binom{r}{t} \sum_{m=t}^{r-1} (-1)^m\binom{r-t}{m-t} = (-1)^{r+1}\binom{r}{t}
\end{align}
where in the first equality, we use $\binom{m}{t} = 0$ for $m<t$, and the second equality uses
\[
    \sum_{m=t}^{r-1} (-1)^m\binom{r-t}{m-t} = \sum_{j=0}^{r-t-1} (-1)^{j+t} \binom{r-t}{j} = (-1)^t \left( -(-1)^{r-t} \binom{r-t}{r-t} \right) = (-1)^{r+1} .
\]
Now, when $a=1$, \eqref{eqn:Rrdr-entry-identity} becomes $(\vR_r\vd_r)_1 = \sum_{m=0}^{r-1} (-1)^{m} \binom{r}{m} = (-1)^{r+1}$, which is the first entry of $(-1)^{r+1} (\vd_{r+1} - \ve_{r+1})$.
For $a\ge 2$, we use $\binom{m+1}{a-1} = \binom{m}{a-1} + \binom{m}{a-2}$ to rewrite it as
\begin{align}
    (\vR_r\vd_r)_a & = \sum_{m=0}^{r-1} (-1)^{m} \binom{r}{m} \left( \binom{m}{a-1} + \binom{m}{a-2} \right) \nonumber \\
    & = \sum_{m=0}^{r-1}(-1)^m \binom{r}{m} \binom{m}{a-1} + \sum_{m=0}^{r-1}(-1)^m \binom{r}{m} \binom{m}{a-2} . \label{eqn:Rrdr-entry-decomposed-summation}
\end{align}
If $a\le r$, applying \eqref{eqn:binom-rm-binom-mt-identity}, the last expression yields
\begin{align*}
    (\vR_r\vd_r)_a = (-1)^{r+1} \binom{r}{a-1} + (-1)^{r+1} \binom{r}{a-2} = (-1)^{r+1} \binom{r+1}{a-1} = (-1)^{r+1} (\vd_{r+1})_a .
\end{align*}
When $a=r+1$, the first summation in \eqref{eqn:Rrdr-entry-decomposed-summation} vanishes, and applying \eqref{eqn:binom-rm-binom-mt-identity} to the second summation yields
\begin{align*}
    (\vR_r\vd_r)_a = (-1)^{r+1} \binom{r}{r-1} = (-1)^{r+1} \left( (\vd_{r+1})_{r+1} - 1 \right)
\end{align*}
which proves the desired identity.

\end{proof}

\begin{proof}[Proof of~\cref{lemma:nonzero-lambda-exists}]
Fix $j\in\{1,\dots,N-1\}$; write $r=N-j$ and let
\[
    \vq_j =
    \begin{bmatrix}
        Q(1,j) & Q(2,j) & \cdots & Q(r,j)
    \end{bmatrix}^{\transpose} \in \reals^r .
\]
Assume to the contrary that $H \in \cH_\star(N-1)$ and $\lambda_{j+1,j}^\star=\lambda_{j+2,j}^\star=\cdots=\lambda_{N,j}^\star=0$. 
We show that this forces $\vS_r \vq_j = 0$ by induction.
The starting point is clear: we have
\[
    0 = \frac{1}{N}\lambda_{N,j}^\star = \sum_{a=1}^{r} (-1)^{a-1} Q(a,j) = \left(\vS_r \right)_{1,:} \vq_j = \left(\vS_r \vq_j\right)_1 .
\]
Next, assuming that we have already established $\left(\vS_r \vq_j\right)_\ell = 0$ for all $\ell=1,\dots,s$, where $1\le s < r$, we have
\begin{align*}
    0 = \lambda_{N-s,j}^\star & = N \sum_{a=1}^{s} \sum_{b=1}^{N-j} (-1)^{a+b-1} \binom{a+b}{a} Q(a,N-s) Q(b,j) \\
    & = N \sum_{a=1}^{s} Q(a,N-s) \sum_{b=1}^{r} (-1)^{a+b+1} \binom{a+b}{b} Q(b,j) \\
    & = N \sum_{a=1}^{s} Q(a,N-s) \left( \vS_r \right)_{a+1,:} \vq_j \\
    & = N Q(s,N-s) \left( \vS_r \right)_{s+1,:} \vq_j 
\end{align*}
where the last equality uses $\left( \vS_r \right)_{\ell,:} \vq_j = 0$ for $\ell=2,\dots,s$ by the induction hypothesis.
Because $H \in \cH_\star(N-1)$, it satisfies the H-invariance conditions~\eqref{eqn:H-invariance}, and in particular $Q(N-1,1) = h_{1,1} h_{2,2} \cdots h_{N-1,N-1} = \frac{1}{N}\binom{N}{N} = \frac{1}{N} \ne 0$.
Therefore, $Q(s,N-s) = h_{N-s,N-s} h_{N-s+1,N-s+1} \dots h_{N-1,N-1} \ne 0$, and this implies $\left( \vS_r \right)_{s+1,:} \vq_j = 0$, completing the induction.

Now we have established $\vS_r \vq_j = 0$. However, by \cref{lemma:pascal-matrix-properties}(i), $\vS_r$ is nonsingular, so this implies $\vq_j=0$. In particular,
\[
    Q(r,j)=Q(N-j,j)=h_{j,j}h_{j+1,j+1} \cdots h_{N-1,N-1} = 0 ,
\]
which is a contradiction to $Q(N-1,1) = h_{1,1} h_{2,2} \cdots h_{N-1,N-1} \ne 0$.
\end{proof}

\section{Proof of \texorpdfstring{\cref{theorem:all-sparsity-patterns-give-optimal-vertex}}{Theorem 1}}
\label{section:vertex-algorithms-existence-proof}

The proof is structured as follows: we show that if $H\in \cH_\star(N-1)$ satisfying $\lambda_{\ell,j}^\star = 0$ for all $1\le j < \ell \le N$ with $\ell \ne k(j)$ exists, then this sparsity pattern of $\lambda^\star$ enforces the direction of $\begin{bmatrix} Q(1,j) & Q(2,j) & \cdots & Q(N-j,j) \end{bmatrix}^\transpose$ for each $j=1,\dots,N-1$.
That is, when we normalize it to have the last entry $1$, it must be a fixed vector $\vv_j \in \reals^{N-j}$ with positive entries that is determined solely from the indices $k(N-1),k(N-2),\dots,k(1)$.
Then, we show that the H-invariance conditions~\eqref{eqn:H-invariance} in fact determine the diagonal values $Q(N-j,j) > 0$, and hence the entire Q-profile. 
Then, by virtue of Meta Algorithm~\ref{meta-alg:find-H-from-Q}, this in turn recovers $H$.
This argument proves both uniqueness and existence of $H$, constructively.

For $j=N-1$ we have $k(N-1) = N$, and we simply have $\vv_{N-1} = \begin{bmatrix} 1 \end{bmatrix}$ and $Q(1,N-1) \ne 0$ because $H \in \cH_\star(N-1)$ (nothing to prove). 
Next, we use backward induction to recursively construct $\vv_j$ for $j=N-2,N-3,\dots,1$, and show that $\begin{bmatrix} Q(1,j) & Q(2,j) & \cdots & Q(N-j,j) \end{bmatrix}^\transpose \in \mathrm{Sp}\{\vv_j\}$.
For the base case $j=N-2$, we divide into cases—either \textbf{(i)} $k(N-2) = N-1$ or \textbf{(ii)} $k(N-2) = N$.
In the former case, we have
\begin{align*}
    \lambda^\star_{N,N-2} = N(Q(1,N-2) - Q(2,N-2)) = 0 \implies \begin{bmatrix} Q(1,N-2) \\ Q(2,N-2) \end{bmatrix} \in \spann \left\{\begin{bmatrix} 1 \\ 1 \end{bmatrix}\right\} ,
\end{align*}
and in the latter case, we have 
\begin{align*}
    \lambda^\star_{N-1,N-2} = N \cdot Q(1,N-1) \left( -2Q(1,N-2) + 3Q(2,N-2) \right) = 0 \implies \begin{bmatrix} Q(1,N-2) \\ Q(2,N-2) \end{bmatrix} \in \spann \left\{\begin{bmatrix} \frac{3}{2} \\ 1 \end{bmatrix}\right\} .
\end{align*}
In both cases, we can take $\vv_{N-2} = \begin{bmatrix} * & 1 \end{bmatrix}^\transpose$ and $\begin{bmatrix} Q(1,N-2) & Q(2,N-2) \end{bmatrix}^{\transpose}$ is a nonzero multiple of $\vv_{N-2}$ since $Q(2,N-2) \ne 0$.

Now let $j \in \{1,\dots,N-3\}$, and suppose for all $k=j+1,\dots,N-2$ we have
\begin{align}
\label{eqn:Q-vector-belongs-to-vk-span}
    \mathbf{0} \ne \begin{bmatrix} Q(1,k) & \cdots & Q(N-k,k) \end{bmatrix}^\transpose \in \spann\{\vv_k\} 
\end{align}
where $(\vv_k)_{N-k} = 1$.

Consider the equations $\lambda_{k,j}^\star = 0$ for $k=N,N-1,\dots,j+1$, $k \ne k(j)$.
We will show that each of these equations can be reduced to a linear equation in $Q(1,j), \dots, Q(N-j,j)$.
When $k=N$ (which is considered only when $k(j) < N$), this is immediate: by \eqref{eqn:H-certificate-N},
\begin{align*}
    \lambda_{N,j}^\star = 0 \iff Q(1,j) - Q(2,j) + \cdots + (-1)^{N-j-1} Q(N-j,j) = 0 .
\end{align*}
For $k=N-1$ (which is considered only when $k(j) \ne N-1$), by \eqref{eqn:H-certificate-others}, we also immediately have
\begin{align*}
    \lambda_{N-1,j}^\star = 0 & \iff Q(1,N-1) \left( -2Q(1,j) + 3Q(2,j) + \cdots + (-1)^{N-j} (N-j+1) Q(N-j,j) \right) = 0 \\
    & \iff -2Q(1,j) + 3Q(2,j) + \cdots + (-1)^{N-j} (N-j+1) Q(N-j,j) = 0 .
\end{align*}
For any other $k \ne k(j)$ that is less than $N-1$, we use \eqref{eqn:H-certificate-others} and the induction hypothesis \eqref{eqn:Q-vector-belongs-to-vk-span} to obtain 
\begin{align*}
    \lambda_{k,j}^\star & = N\sum_{\ell=1}^{N-j} \sum_{m=1}^{N-k} (-1)^{\ell+m-1} \binom{\ell+m}{m} Q(\ell,j) Q(m,k) \\
    & = N \begin{bmatrix} Q(1,k) & \cdots & Q(N-k,k) \end{bmatrix} 
    \underbrace{\begin{bmatrix}
        -\binom{2}{1} & \binom{3}{1} & \cdots & (-1)^{N-j} \binom{N-j+1}{1} \\
        \binom{3}{2} & -\binom{4}{2} & \cdots & (-1)^{N-j+1} \binom{N-j+2}{2} \\ 
        \vdots & \vdots & \cdots & \vdots \\
        (-1)^{N-k} \binom{N-k+1}{N-k} & (-1)^{N-k+1} \binom{N-k+2}{N-k} & \cdots & (-1)^{2N-k-j-1} \binom{2N-k-j}{N-k} 
    \end{bmatrix}}_{:=\vA_{k,j}}    
    \begin{bmatrix} Q(1,j) \\ \vdots \\ Q(N-j,j) \end{bmatrix} \\
    & = N \cdot Q(N-k,k) \vv_k^\transpose \vA_{k,j} \begin{bmatrix} Q(1,j) \\ \vdots \\ Q(N-j,j) \end{bmatrix} \\
    & = N \cdot Q(N-k,k) \underbrace{\begin{bmatrix} \vv_k^\transpose & 0 & \cdots & 0 \end{bmatrix}}_{\in \reals^{N-j-1}} \vA_{j+1,j} \begin{bmatrix} Q(1,j) \\ \vdots \\ Q(N-j,j) \end{bmatrix} \\
    & = N \cdot Q(N-k,k) \underbrace{\begin{bmatrix} 0 & \vv_k^\transpose & 0 & \cdots & 0 \end{bmatrix}}_{\in \reals^{N-j}} \vS_{N-j} 
    \begin{bmatrix} Q(1,j) \\ \vdots \\ Q(N-j,j) \end{bmatrix}
\end{align*}
where $\vS_{N-j}$ is the $(N-j) \times (N-j)$ Pascal-type matrix from \cref{lemma:pascal-matrix-properties}; in particular, it is nonsingular by part~(i):
\begin{align*}
    \vS_{N-j} = \begin{bmatrix}
        1 & -1 & \cdots & (-1)^{N-j-1} \\
        -\binom{2}{1} & \binom{3}{1} & \cdots & (-1)^{N-j} \binom{N-j+1}{1} \\
        \binom{3}{2} & -\binom{4}{2} & \cdots & (-1)^{N-j+1} \binom{N-j+2}{2} \\ 
        \vdots & \vdots & \cdots & \vdots \\
        (-1)^{N-j-1} \binom{N-j}{N-j-1} & (-1)^{N-j} \binom{N-j+1}{N-j-1} & \cdots & \binom{2N-2j-1}{N-j-1} 
    \end{bmatrix} .
\end{align*}
Now consider the $(N-j) \times (N-j)$ lower triangular matrix
\begin{align}
\label{eqn:matrix-Lj}
    \vL_j = \begin{bmatrix}
        1 & 0 & 0 & 0 & \multicolumn{2}{c}{\cdots} & \multicolumn{2}{c}{\cdots} & 0 \\
        0 & 1 & 0 & 0 & \multicolumn{2}{c}{\cdots} & \multicolumn{2}{c}{\cdots} & 0 \\
        0 & \multicolumn{2}{c}{\vv_{N-2}^\transpose} & 0 & \multicolumn{2}{c}{\cdots} & \multicolumn{2}{c}{\cdots} & 0 \\
        \vdots &  & \vdots &  & \ddots \\
        0 & \multicolumn{4}{c}{\text{------\,}\vv_{k(j)}^\transpose\text{\,------}} & 0 & \cdots & \cdots & 0 \\
        0 & \multicolumn{5}{c}{\text{--------\,}\vv_{k(j)-1}^\transpose\text{\,---------}} & 0 & \cdots & 0 \\
        \vdots & \multicolumn{6}{c}{\ddots} & \ddots & \vdots \\
        0 & \multicolumn{7}{c}{\text{-----------------\,}\vv_{j+2}^\transpose\text{\,-----------------}} & 0 \\
        0 & \multicolumn{8}{c}{\text{--------------------\,}\vv_{j+1}^\transpose\text{\,--------------------}}
    \end{bmatrix} .
\end{align}
This matrix is invertible as its main diagonal is filled with $1$.
Also, by how $\vv_k$ are constructed, $\lambda_{k,j}^\star = 0$ is equivalent to 
\begin{align*}
    \left(\vL_j \right)_{N-k+1,:} \vS_{N-j} \begin{bmatrix} Q(1,j) \\ \vdots \\ Q(N-j,j) \end{bmatrix} = 0 .
\end{align*}
Therefore, if we define $\widetilde{\vL}_j$ as the matrix obtained by replacing the row $N-k(j)+1$ (containing $\vv_{k(j)}^\transpose$) by $\mathbf{0}^\transpose$, then the condition $\lambda_{k,j}^\star = 0$ for all $k\ne k(j)$ is equivalent to $\widetilde{\vL}_j \vS_{N-j} \begin{bmatrix} Q(1,j) & Q(2,j) & \cdots & Q(N-j,j) \end{bmatrix}^\transpose = \mathbf{0}$.
Because $\vS_{N-j}$ is nonsingular and $\widetilde{\vL}_j$ has rank $N-j-1$, the null space of $\widetilde{\vL}_j \vS_{N-j}$ is of dimension $1$.
In fact, it is spanned by the vector $\vu_j = \left(\vL_j \vS_{N-j} \right)^{-1} \ve_{N-k(j)+1}$ where $\ve_{i}$ is the $i$-th canonical basis vector, because $\vu_j$ is nonzero and $\vL_j \vS_{N-j} \vu_j = \ve_{N-k(j)+1}$, which implies that 
\begin{align*}
    \left(\vL_j \right)_{N-k+1,:} \vS_{N-j} \vu_j = 0 , \,\, \forall k \in \{j+1,\dots,N\}, \,\, k \ne k(j) \, \iff \, \widetilde{\vL}_j \vS_{N-j} \vu_j = \mathbf{0} .
\end{align*}
This shows that $\begin{bmatrix} Q(1,j) & \cdots & Q(N-j,j) \end{bmatrix}^\transpose \in \spann \{\vu_j\}$.
For the induction to go through, we must show that $\vu_j$ constructed as above has positive entries, and so does its normalization $\vv_j$ with last entry $1$.

Consider $\vT_{N-i} = \vS_{N-i}^{-1}$ and $\vM_i = (\vL_i \vS_{N-i})^{-1} = \vT_{N-i} \vL_i^{-1}$ for each $i=j,\dots,N-2$.
Because each $\vL_i$ is lower triangular and has main diagonal filled with $1$, we can write
\begin{align*}
    \vL_{i} = \begin{bmatrix}
        \vL_{i+1} & \mathbf{0} \\
        \vl_i^\transpose & 1
    \end{bmatrix}
\end{align*}
where $\mathbf{l}_i^\transpose = \begin{bmatrix} 0 & (\vv_{i+1})_{1:N-i-2}^\transpose \end{bmatrix}$.
Also, by \cref{lemma:pascal-matrix-properties}(ii), the bottom rightmost entry of $\vT_{N-i}$ is 1, so we can write
\begin{align*}
    \vT_{N-i} = \begin{bmatrix}
        \widetilde{\vT}_{N-i-1} & \vw_{i} \\
        \vt_{i}^\transpose & 1
    \end{bmatrix} .
\end{align*}
Taking the Schur complement, we obtain
\begin{align*}
    \begin{bmatrix}
        \vS_{N-i-1} & * \\
        * & *
    \end{bmatrix} 
    = \vS_{N-i} = \vT_{N-i}^{-1} = 
    \begin{bmatrix}
        \left( \widetilde{\vT}_{N-i-1} - \vw_{i} \vt_{i}^\transpose \right)^{-1} & * \\
        * & *
    \end{bmatrix} 
\end{align*}
which implies that $\widetilde{\vT}_{N-i-1} - \vw_{i} \vt_{i}^\transpose = \vS_{N-i-1}^{-1} = \vT_{N-i-1} \implies \widetilde{\vT}_{N-i-1} = \vT_{N-i-1} + \vw_{i} \vt_{i}^\transpose$.
Hence
\begin{align*}
    \vM_i = \vT_{N-i} \vL_i^{-1} & = 
    \begin{bmatrix}
        \vT_{N-i-1} + \vw_{i} \vt_{i}^\transpose & \vw_{i} \\
        \vt_{i}^\transpose & 1
    \end{bmatrix}
    \begin{bmatrix}
        \vL_{i+1}^{-1} & \mathbf{0} \\
        -\vl_i^\transpose \vL_{i+1}^{-1} & 1
    \end{bmatrix} \\
    & = \begin{bmatrix}
        \vT_{N-i-1} \vL_{i+1}^{-1} + \vw_{i} \left(\vt_{i}^\transpose - \vl_i^\transpose \right) \vL_{i+1}^{-1} & \vw_{i} \\
        \left(\vt_{i}^\transpose - \vl_i^\transpose \right) \vL_{i+1}^{-1} & 1
    \end{bmatrix} .
\end{align*}
Letting $\vm_i^\transpose = \left(\vt_{i}^\transpose - \vl_i^\transpose \right) \vL_{i+1}^{-1}$ and using $\vM_{i+1} = \vT_{N-i-1} \vL_{i+1}^{-1}$, we can more simply rewrite this as
\begin{align*}
    \vM_i = \begin{bmatrix}
        \vM_{i+1} + \vw_i \vm_i^\transpose & \vw_i \\
        \vm_i^\transpose & 1
    \end{bmatrix} .
\end{align*}
Here, in fact, $\vw_i$ is explicitly characterized by \cref{lemma:pascal-matrix-properties}(ii):
\begin{align}
\label{eqn:vec-wi-expression}
    \vw_i = \begin{bmatrix}
        \binom{N-i-1}{0} &
        \binom{N-i-1}{1} &
        \cdots &
        \binom{N-i-1}{N-i-2}
    \end{bmatrix}^\transpose .
\end{align}
Now observe that the lower left $(N-i-1) \times (N-i-1)$ submatrix of $\vS_{N-i}$ is the symmetric matrix
\begin{align*}
    -\vP_{N-i-1} = \left(\vS_{N-i}\right)_{2:N-i, 1:N-i-1} = 
    -\begin{bmatrix}
         \binom{2}{1} & -\binom{3}{1} & \cdots & (-1)^{N-i} \binom{N-i}{1} \\
        -\binom{3}{2} & \binom{4}{2} & \cdots & (-1)^{N-i+1} \binom{N-i+1}{2} \\ 
        \vdots & \vdots & \ddots & \vdots \\
        (-1)^{N-i} \binom{N-i}{N-i-1} & (-1)^{N-i+1} \binom{N-i+1}{N-i-1} & \cdots & \binom{2N-2i-2}{N-i-1} 
    \end{bmatrix} .
\end{align*}
Next, the upper left block of $\vL_i \vS_{N-i}$ is $\vL_{i+1} \vS_{N-i-1}$ because
\begin{align*}
    \vL_{i} \vS_{N-i} =
    \begin{bmatrix}
        \vL_{i+1} & \mathbf{0} \\
        \mathbf{l}_i^\transpose & 1
    \end{bmatrix}
    \begin{bmatrix}
        \vS_{N-i-1} & * \\
        * & *
    \end{bmatrix} =
    \begin{bmatrix}
        \vL_{i+1} \vS_{N-i-1} & * \\
        * & *
    \end{bmatrix}
\end{align*}
while the last row of $\vL_i \vS_{N-i}$ (excluding the last column) is $-\vv_{i+1}^\transpose \vP_{N-i-1}$ since
\begin{align*}
    \vL_i \vS_{N-i} =
    \begin{bmatrix}
        * & * \\
        0 & \vv_{i+1}^\transpose
    \end{bmatrix}
    \begin{bmatrix}
        * & * \\
        -\vP_{N-i-1} & *
    \end{bmatrix} =
    \begin{bmatrix}
        * & * \\
        -\vv_{i+1}^\transpose \vP_{N-i-1} & *
    \end{bmatrix} .
\end{align*}
Now investigating the last row of the matrix identity $\vI_{N-i} = \vM_i \vL_i \vS_{N-i}$ we obtain
\begin{align*}
    & \vI_{N-i} = 
    \vM_i \vL_i \vS_{N-i} =
     \begin{bmatrix}
        * & * \\
        \vm_i^\transpose & 1
    \end{bmatrix}
    \begin{bmatrix}
        \vL_{i+1} \vS_{N-i-1} & * \\
        -\vv_{i+1}^\transpose \vP_{N-i-1} & *
    \end{bmatrix} =
    \begin{bmatrix}
        * & * \\
        \vm_i^\transpose \vL_{i+1} \vS_{N-i-1} - \vv_{i+1}^\transpose \vP_{N-i-1} & * 
    \end{bmatrix} \\
    & \implies \mathbf{0}^\transpose = \vm_i^\transpose \vL_{i+1} \vS_{N-i-1} - \vv_{i+1}^\transpose \vP_{N-i-1} \\
    & \implies \vm_i^\transpose = \vv_{i+1}^\transpose \vP_{N-i-1} \vM_{i+1}
\end{align*}
where the last implication is obtained by multiplying $\vM_{i+1}$ to the right.
Because $\vv_{i+1}$ is obtained by dividing $\vu_{i+1} = \vM_{i+1} \ve_{N-k(i+1)+1}$ by its last entry, we have
\begin{align}
    \vm_i^\transpose = \vv_{i+1}^\transpose \vP_{N-i-1} \vM_{i+1} & = \frac{1}{(\vM_{i+1})_{N-i-1,N-k(i+1)+1}} \left( \vM_{i+1} \ve_{N-k(i+1)+1} \right)^\transpose \vP_{N-i-1} \vM_{i+1} \nonumber \\
    & = \frac{1}{(\vM_{i+1})_{N-i-1,N-k(i+1)+1}} \ve_{N-k(i+1)+1}^\transpose \vM_{i+1}^\transpose \vP_{N-i-1} \vM_{i+1} . \label{eqn:vec-mi-expression}
\end{align}
By \cref{lemma:pascal-matrix-properties}(iii), we have
\begin{align*}
    \vP_{N-i-1} = \vR_{N-i-1}^\transpose \vR_{N-i-1}
\end{align*}
where $\vR_{N-i-1}$ is the $(N-i) \times (N-i-1)$ matrix
\begin{align*}
    \vR_{N-i-1} = 
    \begin{bmatrix}
        \binom{1}{0} & -\binom{2}{0} & \cdots & (-1)^{N-i} \binom{N-i-1}{0} \\
        \binom{1}{1} & -\binom{2}{1} & \cdots & (-1)^{N-i} \binom{N-i-1}{1} \\
                     & -\binom{2}{2} & \cdots & (-1)^{N-i} \binom{N-i-1}{2} \\
                     &               & \ddots & \vdots \\
                     &               &        & (-1)^{N-i} \binom{N-i-1}{N-i-1}
    \end{bmatrix} .
\end{align*}
Letting $\vR_{N-i-1} \vM_{i+1} = \vW_{i+1} \in \reals^{(N-i)\times (N-i-1)}$, and noting that the last row of $\vM_{i+1}$ is $\widetilde \vm_{i+1}^\transpose := \begin{bmatrix} \vm_{i+1}^\transpose & 1 \end{bmatrix}$, we can rewrite \eqref{eqn:vec-mi-expression} as
\begin{align*}
    \vm_i^\transpose = \frac{1}{(\vM_{i+1})_{N-i-1,N-k(i+1)+1}} \ve_{N-k(i+1)+1}^\transpose (\vR_{N-i-1}\vM_{i+1})^\transpose \vR_{N-i-1}\vM_{i+1} = \frac{1}{(\widetilde\vm_{i+1})_{N-k(i+1)+1}} \ve_{N-k(i+1)+1}^\transpose \vW_{i+1}^\transpose \vW_{i+1} .
\end{align*}
Now we prove by backward induction on $j=N-2,\dots,1,0$ that \textbf{(i)} $\vM_j$ has positive entries, \textbf{(ii)} $\vW_j$ is \textit{row-sign consistent}, i.e., for each fixed row of $\vW_j$, its entries are either all nonnegative or all nonpositive, and \textbf{(iii)} the last row entries of $\vW_j$ are all nonzero.
For $j=1,\dots,N-2$, this will show that $\vu_j = \vM_j \ve_{N-k(j)+1}$ has positive entries, as desired.

For the base case $j=N-2$, we have $\vv_{N-1} = \begin{bmatrix} 1 \end{bmatrix}$, and thus,
\begin{align*}
    \vL_{N-2} \vS_2 = 
    \begin{bmatrix} 1 & 0 \\ 0 & 1 \end{bmatrix}
    \begin{bmatrix} 1 & -1 \\ -2 & 3 \end{bmatrix}
    \implies \vM_{N-2} = \begin{bmatrix} 1 & -1 \\ -2 & 3 \end{bmatrix}^{-1}
    = \begin{bmatrix} 3 & 1 \\ 2 & 1 \end{bmatrix} 
\end{align*}
clearly has positive entries.
Additionally,
\begin{align*}
    \vW_{N-2} = \vR_2 \vM_{N-2} = 
    \begin{bmatrix} 1 & -1 \\ 1 & -2 \\ 0 & -1 \end{bmatrix}
    \begin{bmatrix} 3 & 1 \\ 2 & 1 \end{bmatrix} 
    = \begin{bmatrix}
        1 & 0 \\ -1 & -1 \\ -2 & -1
    \end{bmatrix}
\end{align*}
which shows that $\vW_{N-2}$ is row-sign consistent and has nonzero entries in the last row.

Now for the induction step, suppose that the three conditions hold for $\vM_{j+1}$ and $\vW_{j+1}$. 
Then
\begin{align}
\label{eqn:vec-mj-identity}
    \vm_j^\transpose = \frac{1}{(\widetilde\vm_{j+1})_{N-k(j+1)+1}} \ve_{N-k(j+1)+1}^\transpose \vW_{j+1}^\transpose \vW_{j+1}
\end{align} 
has positive entries because $(\widetilde\vm_{j+1})_{N-k(j+1)+1} > 0$ and for $a,b=1,\dots,N-j-1$,
\[
    \left( \vW_{j+1}^\transpose \vW_{j+1} \right)_{a,b} = \inprod{\left(\vW_{j+1}\right)_{:,a}}{\left(\vW_{j+1}\right)_{:,b}} \ge \left(\vW_{j+1}\right)_{N-j,a} \left(\vW_{j+1}\right)_{N-j,b} > 0
\]
because $\vW_{j+1}$ is row-sign consistent and its last row entries $\vW_{j+1}$ are nonzero. 
Together with the induction hypothesis that $\vM_{j+1}$ has positive entries and \eqref{eqn:vec-wi-expression}, this implies 
$\vM_j = \begin{bmatrix}
    \vM_{j+1} + \vw_j \vm_j^\transpose & \vw_j \\
    \vm_j^\transpose & 1
\end{bmatrix}$
has positive entries as well.
Next,
\begin{align*}
    \vW_j = \vR_{N-j} \vM_j & = 
    \begin{bmatrix}
        \vR_{N-j-1} & (-1)^{N-j+1} \vw_{j-1} \\
        \mathbf{0}^\transpose & (-1)^{N-j+1}
    \end{bmatrix}   
    \begin{bmatrix}
        \vM_{j+1} + \vw_j \vm_j^\transpose & \vw_j \\
        \vm_j^\transpose & 1
    \end{bmatrix}
    \\
    & = \begin{bmatrix}
        \vR_{N-j-1} \vM_{j+1} + \vR_{N-j-1} \vw_j \vm_j^\transpose + (-1)^{N-j+1} \vw_{j-1} \vm_j^\transpose & \vR_{N-j-1} \vw_j + (-1)^{N-j+1} \vw_{j-1} \\
        (-1)^{N-j+1} \vm_j^\transpose & (-1)^{N-j+1} 
    \end{bmatrix}
\end{align*}
and this shows that all last row entries of $\vW_j$ are nonzero.
Additionally, by \cref{lemma:pascal-matrix-properties}(iv) we have
\begin{align*}
    \vR_{N-j-1} \vw_j = (-1)^{N-j} (\vw_{j-1} - \ve_{N-j}) .
\end{align*}
Using this together with the identity $\vR_{N-j-1} \vM_{j+1} = \vW_{j+1}$, we obtain
\begin{align*}
    \vW_j = \begin{bmatrix}
        \vW_{j+1} + (-1)^{N-j+1} \ve_{N-j} \vm_j^\transpose & (-1)^{N-j+1} \ve_{N-j} \\
        (-1)^{N-j+1} \vm_j^\transpose & (-1)^{N-j+1}
    \end{bmatrix} .
\end{align*}
The last row of $\vW_j$ is $(-1)^{N-j+1} \begin{bmatrix} \vm_j^\transpose & 1 \end{bmatrix}$, which is sign-consistent.
Also note that
\begin{align*}
    \left( \vW_j \right)_{1:N-j-1, 1:N-j} =
    \begin{bmatrix}
        \left( \vW_{j+1} \right)_{1:N-j-1, 1:N-j-1} & \mathbf{0}    
    \end{bmatrix}
\end{align*}
which implies that the first $N-j-1$ rows of $\vW_j$ are sign-consistent.
It only remains to check sign consistency of the $(N-j)$-th row of $\vW_j$.
We have
\begin{align}
    \left( \vW_j \right)_{N-j, 1:N-j} & = 
    \begin{bmatrix} 
        \left( \vW_{j+1} \right)_{N-j, 1:N-j-1} + (-1)^{N-j+1} \vm_j^\transpose & (-1)^{N-j+1}
    \end{bmatrix} \nonumber \\
    & = 
    \begin{bmatrix} 
        (-1)^{N-j} \vm_{j+1}^\transpose + (-1)^{N-j+1} \left(\vm_j\right)_{1:N-j-2}^\transpose & (-1)^{N-j} + (-1)^{N-j+1} (\vm_j)_{N-j-1} & (-1)^{N-j+1}
    \end{bmatrix} \nonumber \\
    & = 
    (-1)^{N-j+1} \begin{bmatrix} 
        \left(\vm_j\right)_{1:N-j-2}^\transpose - \vm_{j+1}^\transpose & (\vm_j)_{N-j-1} - 1 & 1 
    \end{bmatrix} \label{eqn:Wj-row-N-j-expression}
\end{align}
where the second identity holds because the last row of $\vW_{j+1}$ is $(-1)^{N-j} \begin{bmatrix} \vm_{j+1}^\transpose & 1 \end{bmatrix}$. 
Now for each $\ell=1,\dots,N-j-2$, using \eqref{eqn:vec-mj-identity} we have 
\begin{align*}
    (\vm_j)_\ell & = 
    \frac{1}{(\widetilde\vm_{j+1})_{N-k(j+1)+1}} \ve_{N-k(j+1)+1}^\transpose \vW_{j+1}^\transpose \vW_{j+1} \ve_\ell \\
    & = \frac{1}{(\widetilde\vm_{j+1})_{N-k(j+1)+1}} \inprod{\left( \vW_{j+1} \right)_{1:N-j,\ell}}{\left( \vW_{j+1} \right)_{1:N-j, N-k(j+1)+1}} \\
    & \ge \frac{1}{(\widetilde\vm_{j+1})_{N-k(j+1)+1}} \left( \vW_{j+1} \right)_{N-j, \ell} \left( \vW_{j+1} \right)_{N-j, N-k(j+1)+1} \\
    & = \frac{1}{(\widetilde\vm_{j+1})_{N-k(j+1)+1}} (-1)^{N-j} (\widetilde\vm_{j+1})_\ell \cdot (-1)^{N-j} (\widetilde\vm_{j+1})_{N-k(j+1)+1} \\
    & = (\vm_{j+1})_\ell 
\end{align*}
and similarly, 
\begin{align*}
    (\vm_j)_{N-j-1} & \ge \frac{1}{(\widetilde\vm_{j+1})_{N-k(j+1)+1}} \left( \vW_{j+1} \right)_{N-j, N-j-1} \left( \vW_{j+1} \right)_{N-j, N-k(j+1)+1} \\
    & = \frac{1}{(\widetilde\vm_{j+1})_{N-k(j+1)+1}} (-1)^{N-j} \cdot (-1)^{N-j} (\widetilde\vm_{j+1})_{N-k(j+1)+1} = 1 .
\end{align*}
Hence, in \eqref{eqn:Wj-row-N-j-expression}, we see that the $(N-j)$-th row has the consistent sign $(-1)^{N-j+1}$, completing the induction.

The previous arguments show that $\vv_{N-2}, \dots, \vv_1$ can be determined recursively.
Now, using this information, we determine the values of $Q(k,j)$ satisfying \eqref{eqn:H-invariance} and \eqref{eqn:Q-vector-belongs-to-vk-span}. 
If we denote $Q(N-j,j) = q_j$, then
\begin{align*}
    \begin{bmatrix}
        1 & q_{N-1} & q_{N-2} & \cdots & q_1
    \end{bmatrix}
    \vL_0 
    & =
    \begin{bmatrix}
        1 & Q(1,N-1) & Q(2,N-2) & \cdots & Q(N-1,1)
    \end{bmatrix}
    \begin{bmatrix}
        1 & 0 & 0 & \multicolumn{2}{c}{\cdots} & 0 \\
        0 & 1 & 0 & \multicolumn{2}{c}{\cdots} & 0 \\
        0 & \multicolumn{2}{c}{\vv_{N-2}^\transpose} & \multicolumn{2}{c}{\cdots} & 0 \\
        \vdots & \multicolumn{2}{c}{\vdots} & \multicolumn{2}{c}{\ddots} & \vdots \\
        0 & \multicolumn{5}{c}{\text{--------\,}\vv_{1}^\transpose\text{\,--------}}
    \end{bmatrix} \\
    & = \begin{bmatrix}
        1 & \sum_{j=1}^{N-1} Q(j,N-j) \left(\vv_{N-j} \right)_1 & \sum_{j=2}^{N-1} Q(j,N-j) \left(\vv_{N-j} \right)_2 & \cdots & Q(N-1,1)
    \end{bmatrix} \\
    & = \begin{bmatrix}
        1 & \sum_{j=1}^{N-1} Q(1,N-j) & \sum_{j=2}^{N-1} Q(2,N-j) & \cdots & Q(N-1,1)
    \end{bmatrix} \\
    & = \begin{bmatrix}
        \frac{1}{N} \binom{N}{1} &
        \frac{1}{N} \binom{N}{2} &
        \frac{1}{N} \binom{N}{3} &
        \cdots &
        \frac{1}{N} \binom{N}{N}
    \end{bmatrix}
\end{align*}
where the third equality uses \eqref{eqn:Q-vector-belongs-to-vk-span} and the last equality uses \eqref{eqn:H-invariance}.
This shows that $q_{N-1}, \dots, q_1$ are uniquely determined as
\begin{align*}
    \begin{bmatrix}
        1 & q_{N-1} & q_{N-2} & \cdots & q_1
    \end{bmatrix} 
    & = \begin{bmatrix}
        \frac{1}{N} \binom{N}{1} &
        \frac{1}{N} \binom{N}{2} &
        \frac{1}{N} \binom{N}{3} &
        \cdots &
        \frac{1}{N} \binom{N}{N}
    \end{bmatrix} \vL_0^{-1} \\
    & = \begin{bmatrix}
        \frac{1}{N} \binom{N}{1} &
        \frac{1}{N} \binom{N}{2} &
        \frac{1}{N} \binom{N}{3} &
        \cdots &
        \frac{1}{N} \binom{N}{N}
    \end{bmatrix} \vS_N \vM_0 \\
    & = \frac{1}{N} \begin{bmatrix}
        0 & 0 & \cdots & 0 & 1 
    \end{bmatrix} \vM_0 \\
    & = \frac{1}{N} \begin{bmatrix} \vm_0^\transpose & 1 \end{bmatrix} ,
\end{align*}
where the third identity uses \cref{lemma:pascal-matrix-properties}(ii).
In particular, $q_1,\dots,q_{N-1}$ are all positive numbers.

Finally, the set of Q-functions with positive values of $Q(N-j,j)$ for $j=1,\dots,N-1$ uniquely corresponds to an H-matrix $H$ via \cref{meta-alg:find-H-from-Q}.
The $H$ associated with $Q(\cdot,\cdot)$ determined as above is indeed optimal: by construction it satisfies the H-invariance conditions~\eqref{eqn:H-invariance} and $\lambda_{k,j}^\star(H) = 0$ for all $k\ne k(j)$, $j=1,\dots,N-1$.
For $\lambda_{k(j),j}^\star (H)$, we verify that they are positive: if $k(j) = N$ then we have
\begin{align*}
    \lambda_{k(j),j}^\star = \lambda_{N,j}^\star = N \sum_{\ell=1}^{N-j} (-1)^{\ell-1} Q(\ell,j) = N \ve_1^\transpose \vL_j \vS_{N-j} 
    \begin{bmatrix}
        Q(1,j) \\
        \vdots \\
        Q(N-j,j)
    \end{bmatrix}
    = N q_j \ve_1^\transpose \vL_j \vS_{N-j} \vv_j = \frac{Nq_j}{(\vM_j)_{N-j,1}} > 0
\end{align*}
where we use the convention $\vL_{N-1} = \vM_{N-1} = \begin{bmatrix} 1 \end{bmatrix} = \vS_1$ when $j=N-1$, and the last equality holds because $\vv_j = \frac{1}{(\vM_j)_{N-j,1}} \left( \vL_j \vS_{N-j} \right)^{-1} \ve_1$ by construction.
Similarly, if $k(j) < N$ we have
\begin{align*}
    \lambda_{k(j),j}^\star = N q_{k(j)} q_j \ve_{N-k(j)+1}^\transpose \vL_j \vS_{N-j} \vv_j = \frac{Nq_{k(j)} q_j}{(\vM_j)_{N-j,N-k(j)+1}} > 0
\end{align*}
where the last identity uses $\vv_j = \frac{1}{(\vM_j)_{N-j,N-k(j)+1}} \left( \vL_j \vS_{N-j} \right)^{-1} \ve_{N-k(j)+1}$.
\hfill
\qedsymbol
\bigskip

The above proof is constructive, and thus yields an implementable numerical procedure, Meta Algorithm~\ref{meta-alg:find-Q-from-sparsity-pattern}, for computing the Q-profile associated with the sparsity pattern~\eqref{eqn:lambda-sparsity-pattern} of H-certificates.
One can then use Meta Algorithm~\ref{meta-alg:find-H-from-Q} to recover the corresponding H-matrix $H \in \cH_\star(N-1)$.

\begin{algorithm}[t]
\caption{Meta algorithm for computing $Q$ from sparsity pattern}
\label{meta-alg:find-Q-from-sparsity-pattern} 
\textbf{Input: } $k(1),k(2),\dots,k(N-1)$
\begin{algorithmic}
\State $\vv_{N-1} = \begin{bmatrix} 1 \end{bmatrix}$
\For{$j=N-2,N-3,\dots,1$}
    \State Form $\vL_j$ as in \eqref{eqn:matrix-Lj}
    \State $\vu_j = \left(\vL_j \vS_{N-j}\right)^{-1} \ve_{N-k(j)+1}$
    \State $\vv_j = \vu_j / \left(\vu_j\right)_{N-j}$
\EndFor
\State Form $\vL_0$ from $\vv_1,\dots,\vv_{N-1}$ as in \eqref{eqn:matrix-Lj}
\State $\vq = \frac{1}{N}\begin{bmatrix} \binom{N}{1} & \binom{N}{2} & \cdots & \binom{N}{N} \end{bmatrix} \vL_0^{-1}$
\For{$j=1,\dots,N-1$}
    \State $q_j = (\vq)_{N-j+1}$
    \State $\begin{bmatrix} Q(1,j) & \cdots & Q(N-j,j) \end{bmatrix}^{\transpose} = q_j \vv_j$
\EndFor
\end{algorithmic}
\textbf{Output: } $\{Q(k,j)\}_{\substack{j=1,\dots,N-1\\k=1,\dots,N-j}}$
\end{algorithm}

\section{Proof of \texorpdfstring{\cref{theorem:gluing}}{Theorem 2}}
\label{section:proof-of-gluing-theorem}

\begin{lemma}
\label{lemma:Q-lambda-identities}
Let $H = \left( h_{k,j} \right) \in \reals^{(N-1)\times (N-1)}$ be a lower triangular matrix.
Then
\begin{enumerate}[label=\normalfont(\alph*)]
    \item With the convention $Q(a,b;H) = 0$ for $a+b > N$,
    \begin{align*}
        Q(m+1,j;H) = \sum_{k=j+1}^{N-1} Q(m,k;H) \sum_{\ell=j}^{k-1} h_{\ell,j}   
    \end{align*}
    for $m,j=1,\dots,N-1$.
    \item If $H$ satisfies H-invariance~\eqref{eqn:H-invariance}, then $\sum_{j=1}^{N-1} \lambda_{N,j}^\star(H) = N-1$.
    \item If $H$ satisfies H-invariance~\eqref{eqn:H-invariance}, then for $k=1,\dots,N-1$, 
    \begin{align*}
        \sum_{j=1}^{k-1} \lambda_{k,j}^\star(H) + \sum_{j=k+1}^N \lambda_{j,k}^\star(H) = N \sum_{\ell=1}^{N-1} \sum_{m=1}^{N-1} (-1)^{\ell+m} \binom{\ell+m}{m} Q(m,k; H) Q(\ell,k; H) . 
    \end{align*}
\end{enumerate}
\end{lemma}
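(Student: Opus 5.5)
For part~(a) I plan a direct regrouping of the chain sum defining $Q(m+1,j;H)$. Each admissible index chain $j=j(1)\le i(1)<j(2)\le\cdots\le i(m+1)$ splits into its first entry $h_{i(1),j}$ and a tail, which is an admissible chain of length $m$ starting in column $k:=j(2)$. I will group the terms by the value of $k\in\{j+1,\dots,N-1\}$. For fixed $k$, the first index ranges over $i(1)\in\{j,\dots,k-1\}$ independently of the tail, and the tails sum to $Q(m,k;H)$. This gives the product formula. When $m+j\ge N$ no tail exists, which matches the convention $Q(a,b;H)=0$ for $a+b>N$.

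For part~(b) I will sum \eqref{eqn:H-certificate-N} over $j$ and exchange the order of summation. Then H-invariance \eqref{eqn:H-invariance} gives
\[
\sum_{j=1}^{N-1}\lambda^\star_{N,j}=N\sum_{m=1}^{N-1}(-1)^{m-1}\tfrac1N\binom{N}{m+1}=\sum_{t=2}^{N}(-1)^t\binom{N}{t}.
\]
From $\sum_{t=0}^N(-1)^t\binom Nt=0$, the right-hand side equals $-(1-N)=N-1$.

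For part~(c) I will introduce, for any $a,b\in\{1,\dots,N-1\}$,
\[
F(a,b)=N\sum_{\ell=1}^{N-1}\sum_{m=1}^{N-1}(-1)^{\ell+m-1}\binom{\ell+m}{m}Q(\ell,a;H)Q(m,b;H).
\]
$F$ is symmetric because $\binom{\ell+m}{m}$ is symmetric in $(\ell,m)$, and $\lambda^\star_{b,a}=F(a,b)$ for $a<b\le N-1$ by \eqref{eqn:H-certificate-others}. Hence the left-hand side of~(c) equals $\sum_{i\ne k,\,i\le N-1}F(i,k)+\lambda^\star_{N,k}$, which I will rewrite as $\sum_{i=1}^{N-1}F(i,k)-F(k,k)+\lambda^\star_{N,k}$. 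The right-hand side of~(c) is exactly $-F(k,k)$. So it suffices to prove
\[
\sum_{i=1}^{N-1}F(i,k)+\lambda^\star_{N,k}=0 .
\]
Summing over $i$ and applying H-invariance to $\sum_iQ(m,i)$, the coefficient of $Q(\ell,k)$ becomes
\[
(-1)^{\ell-1}\Bigl[\sum_{m=1}^{N-1}(-1)^{m}\binom{\ell+m}{m}\binom{N}{m+1}+N\Bigr]=(-1)^{\ell-1}\sum_{m=0}^{N-1}(-1)^m\binom{\ell+m}{m}\binom{N}{m+1}.
\]
This step is the main obstacle: I need the binomial identity showing this sum vanishes for $1\le\ell\le N-1$. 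I expect it to follow by writing $(-1)^m\binom{\ell+m}{m}=\binom{-\ell-1}{m}$ and $\binom{N}{m+1}=\binom{N}{N-1-m}$. Then \cref{lemma:chu-vandermonde} gives $\binom{N-\ell-1}{N-1}$, which is $0$ because $0\le N-\ell-1<N-1$. This completes~(c).
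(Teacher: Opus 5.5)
Your proposal is correct, and it differs from the paper mainly in being self-contained. Part (b) is the same computation the paper gives. For parts (a) and (c), however, the paper gives no argument of its own: it cites Lemma~2 and Part~3 of the proof of Theorem~5 in \citep{YoonRyuGrimmerInvariance2025}.

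\textbf{Part (a).} Your regrouping of index chains by $k=j(2)$ is a complete proof. Once $k$ is fixed, the constraint $j\le i(1)\le k-1$ decouples from the tail chain starting in column $k$. No hypothesis on $H$ beyond lower-triangularity is needed.

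\textbf{Part (c).} Your symmetric form $F$ reduces the claim to $\sum_{i=1}^{N-1}F(i,k)+\lambda^\star_{N,k}=0$. This uses only the column-sum form of H-invariance, $\sum_i Q(m,i;H)=\frac1N\binom{N}{m+1}$, together with the identity $\sum_{m=0}^{N-1}\binom{-\ell-1}{m}\binom{N}{N-1-m}=\binom{N-\ell-1}{N-1}=0$ for $1\le\ell\le N-1$. That identity follows from \cref{lemma:chu-vandermonde}, the same tool the paper already uses in \cref{lemma:gluing-helper-identities}(b).

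\textbf{What each route buys.} Your route also clarifies what (c) says. It is equivalent to $\Lambda(H)_{k,k}=F(k,k)$ for $k\le N-1$: the degree-defined diagonal of $\Lambda(H)$ is given by the same quadratic formula in $Q$ as the off-diagonal H-certificates. Your reduced identity is then just row $k$ of $\Lambda(H)\mathbf 1=-e_N$. The paper's citation is shorter, but it leaves the appendix dependent on the earlier paper's machinery for two of the three parts.
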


\begin{proof}
(a) This is a restatement of \citep[Lemma~2]{YoonRyuGrimmerInvariance2025}.

\smallskip

\noindent
(b) By \eqref{eqn:H-certificate-N}, we have
\begin{align*}
    \sum_{j=1}^{N-1}\lambda_{N,j}^\star(H)
    & = N\sum_{j=1}^{N-1} \sum_{m=1}^{N-1} (-1)^{m-1} Q(m,j;H) \\
    & = N\sum_{m=1}^{N-1} (-1)^{m-1} \sum_{j=1}^{N-1} Q(m,j;H) \\
    & = \sum_{m=1}^{N-1} (-1)^{m-1}\binom{N}{m+1} \\
    & = N-1
\end{align*}
where the third equality follows from the H-invariance condition~\eqref{eqn:H-invariance} and the last equality from $0 = \sum_{m=-1}^{N-1} (-1)^{m-1} \binom{N}{m+1} = 1 - N + \sum_{m=1}^{N-1} (-1)^{m-1} \binom{N}{m+1}$.

\smallskip

\noindent
(c) This is shown in the proof of \citep[Theorem~5]{YoonRyuGrimmerInvariance2025}, Part~3.

\end{proof}

\begin{lemma}
\label{lemma:gluing-helper-identities}
Let $H=H_1\glue H_2$ be as in \cref{definition:gluing-of-H-matrices}. Then the following properties hold.
\begin{enumerate}[label=\normalfont(\alph*)]
    \item For $m=1,\dots,N-N'$,
    \[
        Q(m,N';H)=\frac{N'(N-N'+1)}{N(m+1)}\binom{N-N'-1}{m-1},
    \]
    and for $j=N'+1,\dots,N-1$,
    \[
        Q(m,j;H)=Q(m,j-N';H_2).
    \]
    
    \item For each $m=0,1,\dots,N-N'-1$,
    \[
        \sum_{n=1}^{N-N'} (-1)^{n} \binom{m+n-1}{m} Q(n,N'; H)
        =
        \begin{cases}
            -\frac{N'}{N(N-N')} & \text{if } m=0 \\
            \frac{N'}{N(N-N')} & \text{if } m=1 \\
            0 & \text{if } 2\le m\le N-N'-1 .
        \end{cases}
    \]
\end{enumerate}
\end{lemma}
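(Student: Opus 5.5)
Part (a) follows from the block structure of $H_1 \glue H_2$ in \eqref{eqn:H-matrix-gluing} together with the definition of the column-$N'$ entries. Part (b) is then a finite-difference computation using the closed form from (a).

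\textbf{Part (a), columns $j \ge N'+1$.} Every product in the definition of $Q(m,j;H)$ uses entries $h_{i(r),j(r)}$ with $j \le j(1)\le i(1) < j(2) \le \cdots \le i(m)\le N-1$. When $j\ge N'+1$, all row and column indices exceed $N'$. By \eqref{eqn:H-matrix-gluing}, the block of $H$ indexed by rows and columns in $\{N'+1,\dots,N-1\}$ is exactly $H_2$ with indices shifted by $N'$, and the admissible chains correspond bijectively under this shift. Hence $Q(m,j;H)=Q(m,j-N';H_2)$.

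\textbf{Part (a), column $N'$.} For $m=1,\dots,N-N'$, the stated formula for $Q(m,N';H)$ is the defining property of $h_{N'+1,N'},\dots,h_{N-1,N'}$ in \cref{definition:gluing-of-H-matrices}. What must be checked is that these entries exist and are unique, so that the definition is consistent.
\begin{itemize}
\item By \cref{lemma:Q-lambda-identities}(a), $Q(m+1,N';H)=\sum_{k>N'} Q(m,k;H)\sum_{\ell=N'}^{k-1}h_{\ell,N'}$. The factors $Q(m,k;H)$ here are the already-known values $Q(m,k-N';H_2)$.
\item $Q(1,N';H)$ is simply the full column sum.
\item In the unknown partial column sums $s_k=\sum_{\ell=N'}^{k-1} h_{\ell,N'}$, this is a triangular linear system, whose pivots are the products $Q(N-k,k;H)$ of diagonal entries of $H_2$.
\item These pivots are nonzero because $H_2\in\cH_\star(N-N'-1)$ forces $Q(N-N'-1,1;H_2)=\tfrac{1}{N-N'}\neq 0$ by \eqref{eqn:H-invariance}.
\end{itemize}
This is precisely the column-$N'$ recovery step of Meta Algorithm~\ref{meta-alg:find-H-from-Q}, so the prescribed Q-values are attained.

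\textbf{Part (b).} Write $M=N-N'$, so by (a), $Q(n,N';H)=\frac{N'(M+1)}{N(n+1)}\binom{M-1}{n-1}$. Using $\binom{M+1}{n+1}=\frac{(M+1)M}{(n+1)n}\binom{M-1}{n-1}$, the sum becomes
\[
\frac{N'}{NM}\sum_{n=1}^{M}(-1)^n\, n\binom{m+n-1}{m}\binom{M+1}{n+1}.
\]
Substitute $t=n+1$ and set $p(t)=(t-1)\binom{m+t-2}{m}$, which is a polynomial in $t$ of degree $m+1\le M$ whenever $m\le M-1$. The vanishing of $(M+1)$-th finite differences gives $\sum_{t=0}^{M+1}(-1)^t\binom{M+1}{t}p(t)=0$. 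The $t=1$ term is zero, and the sum above equals $-\sum_{t\ge2}(-1)^t\binom{M+1}{t}p(t)$, which is therefore $p(0)$. Finally, $p(0)=-\binom{m-2}{m}$, which equals $-1$ for $m=0$, $1$ for $m=1$, and $0$ for $m\ge 2$. Multiplying by $\frac{N'}{NM}$ gives exactly the three cases.

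The only delicate point is the sign and boundary bookkeeping in this finite-difference step, namely the $t=0,1$ terms and the generalized binomial values $\binom{-2}{0}=1$ and $\binom{-1}{1}=-1$. Everything else is routine.
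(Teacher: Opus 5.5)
Your proposal is correct. Part (a) is the same as the paper's: the column-$N'$ values are imposed by \cref{definition:gluing-of-H-matrices}, and for $j>N'$ only the $H_2$ block enters.

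Part (b) takes a genuinely different route.
\begin{itemize}
\item The paper splits $\frac{1}{n+1}\binom{M-1}{n-1}=\frac1M\binom{M}{n}-\frac{1}{M(M+1)}\binom{M+1}{n+1}$ and rewrites $(-1)^{n-1}\binom{m+n-1}{m}=\binom{-m-1}{n-1}$. It then evaluates two separate sums by Chu--Vandermonde, getting $-\binom{M-m-1}{M-1}$ and $-\binom{M-m}{M-1}$, and recombines them case by case.
\item You use the single rescaling $\frac{1}{n+1}\binom{M-1}{n-1}=\frac{n}{M(M+1)}\binom{M+1}{n+1}$. You then recognize the sum as the tail of an $(M+1)$-th finite difference of the polynomial $p(t)=(t-1)\binom{m+t-2}{m}$, which has degree $m+1\le M$. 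The answer is just the boundary value $p(0)=-\binom{m-2}{m}$.
\end{itemize}

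Your route is shorter and treats all three cases at once. It also makes clear why the range stops at $m\le N-N'-1$ (the degree condition) and why the value vanishes for $m\ge 2$ ($t=0$ is then a root of $\binom{m+t-2}{m}$). The paper's route instead reuses \cref{lemma:chu-vandermonde}, which it already needs for \cref{lemma:pascal-matrix-properties}. Your boundary bookkeeping ($p(1)=0$, $\binom{-2}{0}=1$, $\binom{-1}{1}=-1$) and the resulting signs are correct.

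One small point concerns your well-posedness remark in (a), which goes beyond what the paper does. Because $h_{N',N'}=\frac{N'(N-N')}{N}$ is prescribed separately, the $N-N'$ conditions on $Q(\cdot,N';H)$ constrain only $N-N'-1$ free entries. The system is therefore overdetermined by one equation, namely the top case $m=N-N'$. That equation reads $h_{N',N'}\,Q(N-N'-1,1;H_2)=\frac{N'}{N}$. It holds exactly because $Q(N-N'-1,1;H_2)=\frac{1}{N-N'}$, the same H-invariance fact you use for the pivots.

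Equivalently, if you treat $s_{N'+1}=h_{N',N'}$ as an unknown in your triangular system, you should say that the solved value coincides with the prescribed one. This is a one-line addition, and the lemma as stated does not need it, since it presupposes the definition.
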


\begin{proof}
(a) The formula for $Q(m,N';H)$ is built into \cref{definition:gluing-of-H-matrices} of $H_1 \glue H_2$. For $j>N'$, the Q-functions $Q(m,j;H)$ depend only on the lower-right block of $H$, which is exactly $H_2$ with column indices shifted by $N'$.

\smallskip

\noindent
(b) Let $M = N-N'$. By part~(a), for $n=1,\dots,N-N'$, we have $Q(n,N'; H) = \frac{N'(M+1)}{N(n+1)} \binom{M-1}{n-1}$.
Then, by the elementary identity
\[
    \frac{1}{n+1}\binom{M-1}{n-1}
    = \frac{1}{M}\binom{M}{n} - \frac{1}{M(M+1)}\binom{M+1}{n+1} ,
\]
we have
\begin{align}
    & \sum_{n=1}^{N-N'} (-1)^{n} \binom{m+n-1}{m} Q(n,N'; H) \nonumber \\
    & =
    \frac{N'(M+1)}{N} \sum_{n=1}^{M} (-1)^{n} \binom{m+n-1}{m}\frac{1}{n+1} \binom{M-1}{n-1} \nonumber \\
    & = \frac{N'(M+1)}{N} \left[ 
    \frac{1}{M}\sum_{n=1}^{M} (-1)^n \binom{m+n-1}{m}\binom{M}{n}
    - \frac{1}{M(M+1)}\sum_{n=1}^{M} (-1)^n \binom{m+n-1}{m}\binom{M+1}{n+1}
    \right] . \label{eqn:glued-matrix-Q-weighted-sum-decomposed}
\end{align}
Now note that $\binom{-m-1}{n-1} = (-1)^{n-1}\binom{m+n-1}{m}$. Using this together with \cref{lemma:chu-vandermonde}, we obtain
\begin{gather*}
    \sum_{n=1}^{M} (-1)^n \binom{m+n-1}{m}\binom{M}{n}
    = -\sum_{r=0}^{M-1}\binom{-m-1}{r}\binom{M}{M-1-r} = -\binom{M-m-1}{M-1} 
    \\
    \sum_{n=1}^{M} (-1)^n \binom{m+n-1}{m}\binom{M+1}{n+1}
    = -\sum_{r=0}^{M-1}\binom{-m-1}{r}\binom{M+1}{M-1-r} = -\binom{M-m}{M-1}.
\end{gather*}
For $m=0$, plugging this into \eqref{eqn:glued-matrix-Q-weighted-sum-decomposed} gives
\begin{align*}
    \sum_{n=1}^{N-N'} (-1)^n Q(n,N';H) = \frac{N'(M+1)}{N} \left(-\frac{1}{M} + \frac{1}{M+1}\right) = -\frac{N'}{NM} = -\frac{N'}{N(N-N')} 
\end{align*}
while for $m\ge 1$, we have
\begin{align*}
    \sum_{n=1}^{N-N'} (-1)^{n} \binom{m+n-1}{m} Q(n,N'; H) = \frac{N'(M+1)}{N} \cdot \frac{1}{M(M+1)} \binom{M-m}{M-1} = \frac{N'}{NM} \binom{M-m}{M-1} ,
\end{align*}
which is nonzero only for $m=1$, which gives $\frac{N'}{NM} = \frac{N'}{N(N-N')}$.

\end{proof}

We are now ready to prove \cref{theorem:gluing}.
Let $H_1 \in \cH_\star(N'-1)$ and $H_2 \in \cH_\star(N-N'-1)$.
For $H = H_1 \glue H_2$ as defined in \cref{definition:gluing-of-H-matrices}, we will directly verify the identity~\eqref{eqn:gluing-proof-template}, restated here for convenience:
\begin{align*}
    0 & = N \sqnorm{g_N} + \inprod{g_N}{x_N - y_0} + \sum_{1\le j < k \le N'} \frac{N'}{N} \lambda_{k,j}^\star (H_1) \inprod{x_k - x_j}{g_k - g_j} + \frac{N'}{N-N'} \inprod{x_{N} - x_{N'}}{g_N - g_{N'}} \\
    & \quad + \sum_{N'+1 \le j < k \le N} \frac{N}{N-N'} \lambda_{k-N',j-N'}^\star (H_2) \inprod{x_k - x_j}{g_k - g_j} .
\end{align*}
Once this is proved, as it follows the universal proof template~\eqref{eqn:optimal-family-proof-core-identity} for optimal algorithms, we have $H\in \cH_\star(N-1)$. 
Furthermore, by uniqueness of H-certificates, 
\begin{itemize}
    \item $\lambda_{k,j}^\star (H) = \frac{N'}{N} \lambda_{k,j}^\star (H_1)$ if $1\le j < k \le N'$
    \item $\lambda_{k,j}^\star (H) = 0$ if $1\le j < N' < k$
    \item $\lambda_{N,N'}^\star(H) = \frac{N'}{N-N'}$ and $\lambda_{k,N'}^\star(H) = 0$ for $N' < k < N$
    \item $\lambda_{k,j}^\star (H) = \frac{N}{N-N'} \lambda_{k-N',j-N'}^\star (H_2)$ if $N'+1 \le j < k \le N$
\end{itemize}
so $\lambda^\star(H) = \lambda^\star(H_1) \glue \lambda^\star(H_2)$.
When $H_1, H_2$ are vertex algorithms, the above directly implies that $\cG(H) = \cG(H_1) \glue \cG(H_2)$ as arc diagrams, by definition.

Now, to prove~\eqref{eqn:gluing-proof-template}, first note that the update rule for the algorithm defined by $H$ is identical to that defined by $H_1$ up to the iterates $y_{N'-1}$ and $x_{N'} = y_{N'-1} - g_{N'}$.
Therefore, we have
\begin{align}
    & 0 = N' \sqnorm{g_{N'}} + \inprod{g_{N'}}{x_{N'} - y_0} + \sum_{1\le j < k \le N'} \lambda_{k,j}^\star (H_1) \inprod{x_k - x_j}{g_k - g_j} \nonumber \\
    & \iff \sum_{1\le j < k \le N'} \lambda_{k,j}^\star (H_1) \inprod{x_k - x_j}{g_k - g_j} = -N' \sqnorm{g_{N'}} - \inprod{g_{N'}}{x_{N'} - y_0} . \label{eqn:gluing-early-iterations-ineq-sum}
\end{align}
Next, note that
\begin{align*}
    & y_{N'-1} = y_0 + \sum_{k=1}^{N'-1} (y_k - y_{k-1}) = y_0 + \sum_{k=1}^{N'-1} \left( -\sum_{j=1}^k 2(H_1)_{k,j} g_j \right) \\
    & \iff y_0 - y_{N'-1} = \sum_{k=1}^{N'-1} \sum_{j=1}^k 2(H_1)_{k,j} g_j = \sum_{j=1}^{N'-1} \left( \sum_{k=j}^{N'-1} 2(H_1)_{k,j} \right) g_j = - \frac{N}{N-N'} \sum_{j=1}^{N'-1} 2h_{N',j} g_j .
\end{align*}
Therefore, 
\begin{align}
\label{eqn:gluing-yN'-expression}
    y_{N'} = y_{N'-1} - 2h_{N',N'} g_{N'} - \sum_{j=1}^{N'-1} 2h_{N',j} g_j = y_{N'-1} - 2h_{N',N'} g_{N'} + \frac{N-N'}{N} (y_0 - y_{N'-1}) .
\end{align}
Next, for $k=N'+1,\dots,N-1$, we have
\begin{align}
\label{eqn:gluing-true-y-sequence}
    y_{k} = y_{k-1} - 2h_{k,N'} g_{N'} - \sum_{j=N'+1}^k 2(H_2)_{k-N', j-N'} g_j ,
\end{align}
which agrees with the update rule from $H_2$ except for the additional $-2h_{k,N'} g_{N'}$ term.
To exploit the optimality proof structure for $H_2$, define $\widetilde{y}_0 = y_{N'}$, $\widetilde{g}_i = g_{N'+i}$ for $i=1,\dots,N-N'$ and let
\begin{align}
\label{eqn:gluing-fake-y-sequence}
    \widetilde{y}_i = \widetilde{y}_{i-1} - \sum_{m=1}^i 2(H_2)_{i,m} \widetilde{g}_m 
\end{align}
for $i=1,\dots,N-N'-1$, and $\widetilde{x}_{i} = \widetilde{y}_{i-1} - \widetilde{g}_i$ for $i=1,\dots,N-N'$.
Here, although $2\widetilde{g}_i$ are \textbf{not} the fixed-point residuals computed at $\widetilde{y}_{i-1}$, the sequences $\widetilde{y}_i$ and $\widetilde{x}_{i+1}$ \textit{formally} behave like the iterates and residual variables generated by $H_2$.
Therefore, we have the identity
\begin{align}
\label{eqn:gluing-fake-sequence-proof-template}
    0 = (N-N') \sqnorm{\widetilde{g}_{N-N'}} + \inprod{\widetilde{g}_{N-N'}}{\widetilde{x}_{N-N'} - \widetilde{y}_0} + \sum_{1\le j' < k' \le N-N'} \lambda_{k',j'}^\star (H_2) \inprod{\widetilde{x}_{k'} - \widetilde{x}_{j'}}{\widetilde{g}_{k'} - \widetilde{g}_{j'}} ,
\end{align}
as this holds regardless of whether $\widetilde{g}_i$ is associated with the evaluation of $\opT$ at $\widetilde{y}_{i-1}$ or not---in fact, $\widetilde{g}_i \in \cX$ can be arbitrary vectors, and as long as $\widetilde{y}_i$ are defined as \eqref{eqn:gluing-fake-y-sequence}, the identity \eqref{eqn:gluing-fake-sequence-proof-template} holds.
Now, because the actual update rule of $y_k$ is given as~\eqref{eqn:gluing-true-y-sequence}, the difference between $y_k$ and $\widetilde{y}_{k-N'}$ accumulates by the amount $-2h_{k,N'} g_{N'}$ at each iteration $k$, which implies
\begin{align*}
    \widetilde{y}_i = y_{N'+i} + \sum_{k=N'+1}^{N'+i} 2h_{k,N'} g_{N'}
\end{align*}
for $i=1,\dots,N-N'-1$.
Likewise, because $\widetilde{x}_i = \widetilde{y}_{i-1} - \widetilde{g}_i$ and $x_{N'+i} = y_{N'+i-1} - g_{N'+i} = y_{N'+i-1} - \widetilde{g}_i$, we have
\begin{align*}
    \widetilde{x}_i = x_{N'+i} + \sum_{k=N'+1}^{N'+i-1} 2h_{k,N'} g_{N'} .
\end{align*}
Plugging these into~\eqref{eqn:gluing-fake-sequence-proof-template}, and substituting $\widetilde{g}_i, \widetilde{y}_0$ with $g_{N'+i}, y_{N'}$, we obtain
\begin{align*}
    0 & = (N-N') \sqnorm{g_N} + \inprod{g_N}{x_N + \sum_{k=N'+1}^{N-1} 2h_{k,N'}g_{N'} - y_{N'}} \\
    & \quad + \sum_{1\le j' < k' \le N-N'} \lambda_{k',j'}^\star (H_2) \inprod{x_{N'+k'} - x_{N'+j'} + \sum_{\ell=N'+j'}^{N'+k'-1} 2h_{\ell,N'} g_{N'} }{g_{N' + k'} - g_{N' + j'} } \\
    & = (N-N') \sqnorm{g_N} + \inprod{g_N}{x_N - y_{N'} + \sum_{k=N'+1}^{N-1} 2h_{k,N'}g_{N'} } \\
    & \quad + \sum_{N'+1 \le j < k \le N} \lambda_{k-N',j-N'}^\star (H_2) \inprod{x_k - x_j + \sum_{\ell=j}^{k-1} 2h_{\ell,N'} g_{N'} }{g_k - g_j } 
\end{align*}
where for the last equality, we apply the changes of indices $k=N'+k'$ and $j=N'+j'$.
From this, we deduce
\begin{align*}
    & \sum_{N'+1\le j < k \le N} \lambda_{k-N',j-N'}^\star (H_2) \inprod{x_k - x_j}{g_k - g_j} \\
    & = -(N-N') \sqnorm{g_N} - \inprod{g_N}{x_N - y_{N'} + \sum_{k=N'+1}^{N-1} 2h_{k,N'}g_{N'} } - \sum_{N'+1 \le j < k \le N} \lambda_{k-N',j-N'}^\star (H_2) \inprod{\sum_{\ell=j}^{k-1} 2h_{\ell,N'} g_{N'} }{g_k - g_j } .
\end{align*}
Using this identity and \eqref{eqn:gluing-yN'-expression}, we have
\begin{align}
    & \sum_{1\le j < k \le N'} \frac{N'}{N} \lambda_{k,j}^\star (H_1) \inprod{x_k - x_j}{g_k - g_j} + \frac{N'}{N-N'} \inprod{x_{N} - x_{N'}}{g_N - g_{N'}} \nonumber \\
    & \quad + \sum_{N'+1 \le j < k \le N} \frac{N}{N-N'} \lambda_{k-N',j-N'}^\star (H_2) \inprod{x_k - x_j}{g_k - g_j} \nonumber \\
    & = -\frac{N'}{N} \left( N' \sqnorm{g_{N'}} + \inprod{g_{N'}}{x_{N'} - y_0} \right) + \frac{N'}{N-N'} \inprod{x_N - y_{N'-1} + g_{N'}}{g_{N} - g_{N'}} \nonumber \\
    & \quad - N\sqnorm{g_N} - \frac{N}{N-N'} \inprod{g_N}{x_N - y_{N'} + \sum_{k=N'+1}^{N-1} 2h_{k,N'}g_{N'} } \nonumber \\
    & \quad - \underbrace{\frac{N}{N-N'} \sum_{N'+1 \le j < k \le N} \lambda_{k-N',j-N'}^\star (H_2) \inprod{\sum_{\ell=j}^{k-1} 2h_{\ell,N'} g_{N'} }{g_k - g_j }}_{=(\mathrm{I})} \nonumber \\
    & \begin{aligned}
        & = -\frac{N'}{N} \left( (N'-1) \sqnorm{g_{N'}} + \inprod{g_{N'}}{y_{N'-1} - y_0} \right) + \frac{N'}{N-N'} \left( \inprod{x_N - y_{N'-1}}{g_N} + \inprod{g_{N'}}{g_N} - \inprod{x_N - y_{N'-1} + g_{N'}}{g_{N'}}\right) \\
        & \quad - N\sqnorm{g_N} - \frac{N}{N-N'} \inprod{g_N}{x_N - \left( y_{N'-1} - 2h_{N',N'} g_{N'} + \frac{N-N'}{N} (y_0 - y_{N'-1}) \right) + \sum_{k=N'+1}^{N-1} 2h_{k,N'} g_{N'}} - (\mathrm{I})    
    \end{aligned}
    \label{eqn:ineq-summation-simplified-first}
\end{align}
where the last equality uses \eqref{eqn:gluing-yN'-expression}.
Our final goal is to show \eqref{eqn:gluing-proof-template}, which is equivalent to showing that the expression \eqref{eqn:ineq-summation-simplified-first} simplifies to $-N\sqnorm{g_N} - \inprod{g_N}{x_N - y_0}$.
Note that collecting the coefficients of the $\inprod{x_{N} - y_{N'-1}}{g_N}$ terms gives $\frac{N'}{N-N'} - \frac{N}{N-N'} = -1$, and thus \eqref{eqn:ineq-summation-simplified-first} becomes
\begin{align}
    & -\frac{N'}{N} \left( (N'-1) \sqnorm{g_{N'}} + \inprod{g_{N'}}{y_{N'-1} - y_0} \right) + \frac{N'}{N-N'} \left( \inprod{g_{N'}}{g_N} - \inprod{x_N - y_{N'-1} + g_{N'}}{g_{N'}}\right) \nonumber \\
    & \quad - N\sqnorm{g_N} - \inprod{g_N}{x_N - y_{N'-1}} - \frac{N}{N-N'} \inprod{g_N}{ 2h_{N',N'} g_{N'} - \frac{N-N'}{N} (y_0 - y_{N'-1}) + \sum_{k=N'+1}^{N-1} 2h_{k,N'} g_{N'}} - (\mathrm{I}) \nonumber \\
    & = -\frac{N'}{N} \left( (N'-1) \sqnorm{g_{N'}} + \inprod{g_{N'}}{y_{N'-1} - y_0} \right) + \frac{N'}{N-N'} \left( \inprod{g_{N'}}{g_N} - \inprod{x_N - y_{N'-1} + g_{N'}}{g_{N'}}\right) \nonumber \\
    & \quad - N\sqnorm{g_N} - \inprod{g_N}{x_N - y_{N'-1}} + \inprod{g_N}{y_0 - y_{N'-1}} - \frac{N}{N-N'} \inprod{g_N}{ 2Q(1,N'; H) g_{N'}} - (\mathrm{I}) \nonumber \\
    & = -\frac{N'}{N} \left( (N'-1) \sqnorm{g_{N'}} + \inprod{g_{N'}}{y_{N'-1} - y_0} \right) + \frac{N'}{N-N'} \left( \inprod{g_{N'}}{g_N} - \inprod{x_N - y_{N'-1} + g_{N'}}{g_{N'}}\right) \nonumber \\
    & \quad - N\sqnorm{g_N} - \inprod{g_N}{x_N - y_0} - \frac{N}{N-N'} \inprod{g_N}{ \frac{N'(N-N'+1)}{N} g_{N'}} - (\mathrm{I}) \nonumber \\
    & \begin{aligned}
        & = - N\sqnorm{g_N} - \inprod{g_N}{x_N - y_0} - \frac{N'}{N} \left( (N'-1) \sqnorm{g_{N'}} + \inprod{g_{N'}}{y_{N'-1} - y_0} \right) - N' \inprod{g_{N'}}{g_N} \\
        & \quad - \underbrace{\frac{N'}{N-N'} \inprod{x_N - y_{N'-1} + g_{N'}}{g_{N'}}}_{(\mathrm{II})} - (\mathrm{I}) .    
    \end{aligned}
    \label{eqn:ineq-summation-simplified-second}
\end{align}
Here we use $\sum_{k=N'}^{N-1} h_{k,N'} = Q(1,N';H) = \frac{N'(N-N'+1)}{2N}$, which is exactly \cref{lemma:gluing-helper-identities}(a) with $m=1$, and simplify the coefficient of $\inprod{g_{N'}}{g_N}$ via the identity $\frac{N'}{N-N'} - \frac{N'(N-N'+1)}{N-N'} = -N'$.
Now, we can rewrite $(\mathrm{II})$ as follows:
\begin{align*}
    (\mathrm{II}) & = \frac{N'}{N-N'} \inprod{x_N - y_{N'} + (y_{N'} - y_{N'-1}) + g_{N'}}{g_{N'}} \\
    & = \frac{N'}{N-N'} \inprod{y_{N-1} - g_N - y_{N'} - 2h_{N',N'} g_{N'} + \frac{N-N'}{N}(y_0 - y_{N'-1}) + g_{N'} }{g_{N'} } \\
    & = \frac{N'}{N-N'} \inprod{y_{N-1} - y_{N'} - 2h_{N',N'} g_{N'} + g_{N'}}{g_{N'}} - \frac{N'}{N-N'} \inprod{g_N}{g_{N'}} + \frac{N'}{N} \inprod{g_{N'}}{y_0 - y_{N'-1}} \\
    & = \frac{N'}{N-N'} \inprod{\left(1 - \sum_{k=N'}^{N-1} 2h_{k,N'} \right) g_{N'} - \sum_{j=N'+1}^{N-1} \sum_{k=j}^{N-1} 2(H_2)_{k-N',j-N'} g_j}{g_{N'}} - \frac{N'}{N-N'} \inprod{g_N}{g_{N'}} + \frac{N'}{N} \inprod{g_{N'}}{y_0 - y_{N'-1}} \\
    & = -\frac{N'(N'-1)}{N} \sqnorm{g_{N'}} - \frac{N'}{N-N'} \inprod{\sum_{j=N'+1}^{N-1} 2Q(1,j-N'; H_2) g_j}{g_{N'}} - \frac{N'}{N-N'} \inprod{g_N}{g_{N'}} + \frac{N'}{N} \inprod{g_{N'}}{y_0 - y_{N'-1}} 
\end{align*}
where the second last equality uses \eqref{eqn:gluing-true-y-sequence} and the last equality uses
\begin{align*}
    \frac{N'}{N-N'} (1-2Q(1,N';H)) = \frac{N'}{N-N'} \left(1 - \frac{N'(N-N'+1)}{N}\right) = \frac{N'}{N-N'} \cdot \frac{-(N-N')(N'-1)}{N} = -\frac{N'(N'-1)}{N} .
\end{align*}
Plugging the new expression for $(\mathrm{II})$ back into \eqref{eqn:ineq-summation-simplified-second}, we obtain
\begin{align*}
    - N\sqnorm{g_N} - \inprod{g_N}{x_N - y_0} - \underbrace{\left( \frac{N'(N-N'-1)}{N-N'} \inprod{g_{N'}}{g_N} - \frac{2N'}{N-N'} \inprod{\sum_{j=N'+1}^{N-1} Q(1,j-N'; H_2) g_j}{g_{N'}} \right)}_{\mathrm{(III)}} - (\mathrm{I}) .
\end{align*}
It remains to show that $\mathrm{(I)} + \mathrm{(III)} = 0$, which will conclude the proof.

We investigate the expression $\mathrm{(I)}$ now. 
With appropriate changes of summation order and index swapping, we have
\begin{align}
    \mathrm{(I)} & = \frac{N}{N-N'} \sum_{j=N'+1}^{N-1} \sum_{k=j+1}^N \lambda_{k-N',j-N'}^\star (H_2) \inprod{\sum_{\ell=j}^{k-1} 2h_{\ell,N'} g_{N'} }{g_k - g_j } \nonumber \\
    & = \frac{N}{N-N'} \sum_{j=N'+1}^{N-1} \sum_{k=j+1}^N \lambda_{k-N',j-N'}^\star (H_2) \sum_{\ell=j}^{k-1} 2h_{\ell,N'} \inprod{g_{N'}}{g_k} \nonumber \\
    & \quad - \frac{N}{N-N'} \sum_{j=N'+1}^{N-1} \sum_{k=j+1}^N \lambda_{k-N',j-N'}^\star (H_2) \sum_{\ell=j}^{k-1} 2h_{\ell,N'} \inprod{g_{N'}}{g_j} \nonumber \\
    & = \frac{N}{N-N'} \sum_{k=N'+1}^N \sum_{j=N'+1}^{k-1} \lambda_{k-N',j-N'}^\star (H_2) \sum_{\ell=j}^{k-1} 2h_{\ell,N'} \inprod{g_{N'}}{g_k} \nonumber \\
    & \quad - \frac{N}{N-N'} \sum_{j=N'+1}^{N-1} \sum_{k=j+1}^N \lambda_{k-N',j-N'}^\star (H_2) \sum_{\ell=j}^{k-1} 2h_{\ell,N'} \inprod{g_{N'}}{g_j} \nonumber \\
    & = \frac{N}{N-N'} \sum_{j=N'+1}^{N-1} \lambda_{N-N',j-N'}^\star (H_2) \sum_{\ell=j}^{N-1} 2h_{\ell,N'} \inprod{g_{N'}}{g_N} \nonumber \\
    & \quad + \frac{N}{N-N'} \sum_{j=N'+1}^{N-1} \sum_{k=N'+1}^{j-1} \lambda_{j-N',k-N'}^\star (H_2) \sum_{\ell=k}^{j-1} 2h_{\ell,N'} \inprod{g_{N'}}{g_j} \nonumber \\
    & \quad - \frac{N}{N-N'} \sum_{j=N'+1}^{N-1} \sum_{k=j+1}^N \lambda_{k-N',j-N'}^\star (H_2) \sum_{\ell=j}^{k-1} 2h_{\ell,N'} \inprod{g_{N'}}{g_j} \nonumber \\
    &
    \begin{aligned}
        & = \frac{N}{N-N'} \sum_{j=N'+1}^{N-1} \sum_{\ell=j}^{N-1}  2h_{\ell,N'} \lambda_{N-N',j-N'}^\star (H_2) \inprod{g_{N'}}{g_N} \\
        & \quad + \frac{N}{N-N'} \sum_{j=N'+1}^{N-1} \inprod{g_{N'} }{\left[ \sum_{k=N'+1}^{j-1} \sum_{\ell=k}^{j-1} 2h_{\ell,N'} \lambda_{j-N',k-N'}^\star (H_2) - \sum_{k=j+1}^N \sum_{\ell=j}^{k-1} 2h_{\ell,N'} \lambda_{k-N',j-N'}^\star (H_2) \right] g_j } .    
    \end{aligned}
    \label{eqn:ineq-summation-simplified-third}
\end{align}
Here, to simplify the coefficient of $\inprod{g_{N'}}{g_N}$ in the first term of \eqref{eqn:ineq-summation-simplified-third} we note that
\begin{align*}
    \lambda_{N-N',j-N'}^\star (H_2) = (N-N') \sum_{m=1}^{(N-N')-(j-N')} (-1)^{m-1} Q(m, j-N'; H_2) = (N-N') \sum_{m=1}^{N-j} (-1)^{m-1} Q(m,j; H)
\end{align*}
because $Q(\ell,j; H)$ for $j \ge N'+1$ relies only on the last $N-N'-1$ columns of $H$, which agrees with $H_2$.
With this characterization, we have
\begin{align*}
    & \frac{N}{N-N'} \sum_{j=N'+1}^{N-1} \sum_{\ell=j}^{N-1}  2h_{\ell,N'} \lambda_{N-N',j-N'}^\star (H_2) \\
    & = \frac{N}{N-N'} \sum_{j=N'+1}^{N-1} \left( 2Q(1,N'; H) - \sum_{\ell=N'}^{j-1} 2h_{\ell,N'} \right) \lambda_{N-N',j-N'}^\star (H_2) \\
    & = \frac{N \cdot 2Q(1,N'; H)}{N-N'} \sum_{j=N'+1}^{N-1} \lambda_{N-N',j-N'}^\star (H_2) - N \sum_{j=N'+1}^{N-1} \sum_{\ell=N'}^{j-1} 2h_{\ell,N'} \sum_{m=1}^{N-j} (-1)^{m-1} Q(m,j; H) \\
    & = \frac{N'(N-N'+1)}{N-N'} \sum_{i=1}^{N-N'-1} \lambda_{N-N',i}^\star (H_2) - 2N \sum_{m=1}^{N-N'-1} (-1)^{m-1} \sum_{j=N'+1}^{N-m} Q(m,j; H) \sum_{\ell=N'}^{j-1} h_{\ell,N'} \\
    & \stackrel{(\ast)}{=} \frac{N'(N-N'+1)(N-N'-1)}{N-N'} - 2N \sum_{m=1}^{N-N'-1} (-1)^{m-1} Q(m+1, N'; H) \\
    & \stackrel{(\bullet)}{=} \frac{N'(N-N'+1)(N-N'-1)}{N-N'} + 2N \left( \frac{N'}{N(N-N')} - Q(1,N';H) \right) \\
    & = \frac{N'(N-N'+1)(N-N'-1)}{N-N'} + \frac{2N'}{N-N'} - N'(N-N'+1) \\
    & = -\frac{N'(N-N'-1)}{N-N'} 
\end{align*}
where $(\ast)$ uses \cref{lemma:Q-lambda-identities}(b) for $\sum_{i=1}^{N-N'-1} \lambda_{N-N',i}^\star (H_2) = N-N'-1$ and \cref{lemma:Q-lambda-identities}(a) for 
\[
    \sum_{j=N'+1}^{N-m} Q(m,j; H) \sum_{\ell=N'}^{j-1} h_{\ell,N'} = Q(m+1,N';H) ,
\]
and $(\bullet)$ uses \cref{lemma:gluing-helper-identities}(b) with $\sum_{n=1}^{N-N'} (-1)^{n-1} Q(n,N';H) = \frac{N'}{N(N-N')}$.
Applying this simplification to \eqref{eqn:ineq-summation-simplified-third} and comparing it with $\mathrm{(III)}$, we see that the $\inprod{g_{N'}}{g_N}$ terms cancel out in $\mathrm{(I)} + \mathrm{(III)}$ and only the terms of the form $\inprod{g_{N'}}{g_j}$ for $N'+1 \le j \le N-1$ remain.
To handle this, we consider the second term in \eqref{eqn:ineq-summation-simplified-third}.
In the following, all occurrences of $Q(\cdot,\cdot)$ will be $Q(\cdot,\cdot;H)$ unless otherwise specified.
For each $j=N'+1,\dots,N-1$, we have
\begin{align}
    & \sum_{k=j+1}^N \sum_{\ell=j}^{k-1} 2h_{\ell,N'} \lambda_{k-N',j-N'}^\star (H_2) \nonumber \\
    & = \sum_{k=j+1}^{N} \left( \sum_{\ell=N'}^{k-1} 2h_{\ell,N'} - \sum_{\ell=N'}^{j-1} 2h_{\ell,N'} \right) \lambda_{k-N',j-N'}^\star (H_2) \nonumber \\
    & = \sum_{k=j+1}^{N-1} \left( \sum_{\ell=N'}^{k-1} 2h_{\ell,N'} - \sum_{\ell=N'}^{j-1} 2h_{\ell,N'} \right) \lambda_{k-N',j-N'}^\star (H_2) + \left( \sum_{\ell=N'}^{N-1} 2h_{\ell,N'} - \sum_{\ell=N'}^{j-1} 2h_{\ell,N'} \right) \lambda_{N-N',j-N'}^\star (H_2) \nonumber \\
    & = (N-N') \sum_{k=j+1}^{N-1} \sum_{\ell=N'}^{k-1} 2h_{\ell,N'} \sum_{m=1}^{N-N'-1} \sum_{n=1}^{N-N'-1} (-1)^{m+n-1} \binom{m+n}{m} Q(m,j) Q(n,k) - \sum_{k=j+1}^{N-1} \left( \sum_{\ell=N'}^{j-1} 2h_{\ell,N'} \right) \lambda_{k-N',j-N'}^\star (H_2) \nonumber \\
    & \quad + (N-N') \left( \sum_{\ell=N'}^{N-1} 2h_{\ell,N'} \right) \sum_{m=1}^{N-N'-1} (-1)^{m-1} Q(m,j) - \left( \sum_{\ell=N'}^{j-1} 2h_{\ell,N'} \right) \lambda_{N-N',j-N'}^\star (H_2) \nonumber \\
    & = (N-N') \sum_{m=1}^{N-N'-1} \sum_{n=1}^{N-N'-1} (-1)^{m+n-1} \binom{m+n}{m} Q(m,j) \sum_{k=j+1}^{N-1} Q(n,k) \sum_{\ell=N'}^{k-1} 2h_{\ell,N'} \nonumber \\
    & \quad + 2(N-N') Q(1,N') \sum_{m=1}^{N-N'-1} (-1)^{m-1} Q(m,j) - \left( \sum_{\ell=N'}^{j-1} 2h_{\ell,N'} \right) \left( \sum_{k=j+1}^{N} \lambda_{k-N',j-N'}^\star (H_2) \right) \nonumber \\
    & = (N-N') \sum_{m=1}^{N-N'-1} \sum_{n=1}^{N-N'-1} (-1)^{m+n-1} \binom{m+n}{m} Q(m,j) \underbrace{\sum_{k=N'+1}^{N-1} Q(n,k) \sum_{\ell=N'}^{k-1} 2h_{\ell,N'}}_{=\,2Q(n+1,N') \text{ by \cref{lemma:Q-lambda-identities}(a)}} \nonumber \\
    & \quad - (N-N') \sum_{m=1}^{N-N'-1} \sum_{n=1}^{N-N'-1} (-1)^{m+n-1} \binom{m+n}{m} Q(m,j) \sum_{k=N'+1}^{j} Q(n,k) \sum_{\ell=N'}^{k-1} 2h_{\ell,N'} \nonumber \\
    & \quad + 2(N-N') Q(1,N') \sum_{m=1}^{N-N'-1} (-1)^{m-1} Q(m,j) - \left( \sum_{\ell=N'}^{j-1} 2h_{\ell,N'} \right) \left( \sum_{k=j+1}^{N} \lambda_{k-N',j-N'}^\star (H_2) \right) \nonumber \\
    & = 2(N-N') \left[ \sum_{m=1}^{N-N'-1} \sum_{n=1}^{N-N'-1} (-1)^{m+n-1} \binom{m+n}{m} Q(m,j) Q(n+1,N') + Q(1,N') \sum_{m=1}^{N-N'-1} (-1)^{m-1} Q(m,j) \right] \nonumber \\
    & \quad - (N-N') \sum_{m=1}^{N-N'-1} \sum_{n=1}^{N-N'-1} \sum_{k=N'+1}^{j} (-1)^{m+n-1} \binom{m+n}{m} Q(m,j) Q(n,k) \sum_{\ell=N'}^{k-1} 2h_{\ell,N'} \nonumber \\
    & \quad - \left( \sum_{\ell=N'}^{j-1} 2h_{\ell,N'} \right) \left( \sum_{k=j+1}^{N} \lambda_{k-N',j-N'}^\star (H_2) \right) \nonumber \\
    &
    \begin{aligned}
        & = 2(N-N') \left[ \sum_{m=1}^{N-N'-1} \sum_{n=1}^{N-N'} (-1)^{m+n} \binom{m+n-1}{m} Q(m,j) Q(n,N') \right] \\
        & \quad - (N-N') \sum_{m=1}^{N-N'-1} \sum_{n=1}^{N-N'-1} \sum_{k=N'+1}^{j} (-1)^{m+n-1} \binom{m+n}{m} Q(m,j) Q(n,k) \sum_{\ell=N'}^{k-1} 2h_{\ell,N'} \\
        & \quad - \left( \sum_{\ell=N'}^{j-1} 2h_{\ell,N'} \right) \left( \sum_{k=j+1}^{N} \lambda_{k-N',j-N'}^\star (H_2) \right) .
    \end{aligned}
    \label{eqn:kjp1-to-N-ell-j-to-k-1-summation}
\end{align}
On the other hand,
\begin{align*}
    & \sum_{k=N'+1}^{j-1} \sum_{\ell=k}^{j-1} 2h_{\ell,N'} \lambda_{j-N',k-N'}^\star (H_2) \\
    & = (N-N') \sum_{k=N'+1}^{j-1} \sum_{\ell=k}^{j-1} 2h_{\ell,N'} \sum_{m=1}^{N-N'-1} \sum_{n=1}^{N-N'-1} (-1)^{m+n-1} \binom{m+n}{m} Q(m,j) Q(n,k) \\
    & = (N-N') \sum_{m=1}^{N-N'-1} \sum_{n=1}^{N-N'-1} \sum_{k=N'+1}^{j-1} (-1)^{m+n-1} \binom{m+n}{m} Q(m,j) Q(n,k) \sum_{\ell=k}^{j-1} 2h_{\ell,N'} \\
    & = (N-N') \sum_{m=1}^{N-N'-1} \sum_{n=1}^{N-N'-1} \sum_{k=N'+1}^{j} (-1)^{m+n-1} \binom{m+n}{m} Q(m,j) Q(n,k) \sum_{\ell=k}^{j-1} 2h_{\ell,N'} 
\end{align*}
where in the last equality, the summation over $k$ is rewritten to include $k=j$, which makes no change since in this case, the summand is vacuous: $\sum_{\ell=k}^{j-1} 2h_{\ell,N'} = 0$.
Now combining the above with \eqref{eqn:kjp1-to-N-ell-j-to-k-1-summation}, we obtain
\begin{align*}
    & \sum_{k=N'+1}^{j-1} \sum_{\ell=k}^{j-1} 2h_{\ell,N'} \lambda_{j-N',k-N'}^\star (H_2) - \sum_{k=j+1}^N \sum_{\ell=j}^{k-1} 2h_{\ell,N'} \lambda_{k-N',j-N'}^\star (H_2) \\
    & = 2(N-N') \left[ \sum_{m=1}^{N-N'-1} \sum_{n=1}^{N-N'} (-1)^{m+n-1} \binom{m+n-1}{m} Q(m,j) Q(n,N') \right] \\
    & \quad + (N-N') \sum_{m=1}^{N-N'-1} \sum_{n=1}^{N-N'-1} \sum_{k=N'+1}^{j} (-1)^{m+n-1} \binom{m+n}{m} Q(m,j) Q(n,k) \sum_{\ell=N'}^{j-1} 2h_{\ell,N'} \\
    & \quad + \left( \sum_{\ell=N'}^{j-1} 2h_{\ell,N'} \right) \left( \sum_{k=j+1}^{N} \lambda_{k-N',j-N'}^\star (H_2) \right) \\
    & = 2(N-N') \left[ \sum_{m=1}^{N-N'-1} \sum_{n=1}^{N-N'} (-1)^{m+n-1} \binom{m+n-1}{m} Q(m,j) Q(n,N') \right] \\
    & \quad + \left( \sum_{\ell=N'}^{j-1} 2h_{\ell,N'} \right) \left( \sum_{k=N'+1}^{j-1} \lambda_{j-N',k-N'}^\star (H_2) \right) \\
    & \quad + (N-N') \left( \sum_{\ell=N'}^{j-1} 2h_{\ell,N'} \right) \sum_{m=1}^{N-N'-1} \sum_{n=1}^{N-N'-1} (-1)^{m+n-1} \binom{m+n}{m} Q(m,j) Q(n,j) \\
    & \quad + \left( \sum_{\ell=N'}^{j-1} 2h_{\ell,N'} \right) \left( \sum_{k=j+1}^{N} \lambda_{k-N',j-N'}^\star (H_2) \right) \\
    & \stackrel{(\diamond)}{=} 2(N-N') \sum_{m=1}^{N-N'-1} (-1)^{m-1} Q(m,j; H) \underbrace{\sum_{n=1}^{N-N'} (-1)^{n} \binom{m+n-1}{m} Q(n,N'; H)}_{=\begin{cases} \frac{N'}{N(N-N')} & \text{if } m=1 \\ 0 & \text{otherwise} \end{cases} \text{ by \cref{lemma:gluing-helper-identities}(b)}} \\
    & = \frac{2N'}{N} Q(1,j; H) \\
    & = \frac{2N'}{N} Q(1,j-N'; H_2) 
\end{align*}
where $(\diamond)$ uses \cref{lemma:Q-lambda-identities}(c), together with the identities $Q(m,j)Q(n,j) = Q(m,j-N';H_2) Q(n,j-N';H_2)$.
Now applying all of the above simplifications to \eqref{eqn:ineq-summation-simplified-third}, we conclude that
\begin{align*}
    \mathrm{(I)} & = -\frac{N'(N-N'-1)}{N-N'} \inprod{g_{N'}}{g_N} + \frac{N}{N-N'} \sum_{j=N'+1}^{N-1} \frac{2N'}{N} Q(1,j-N'; H_2) \inprod{g_{N'}}{g_j} \\
    & = -\frac{N'(N-N'-1)}{N-N'} \inprod{g_{N'}}{g_N} + \frac{2N'}{N-N'} \sum_{j=N'+1}^{N-1} Q(1,j-N'; H_2) \inprod{g_{N'}}{g_j} \\
    & = -\mathrm{(III)} 
\end{align*}
which completes the proof.
\hfill
\qedsymbol

\section{Proof of \texorpdfstring{\cref{theorem:lambda-of-H-dual}}{Theorem 3}}
\label{section:proof-of-lambda-inversion-theorem}

Let $\cG=\cG(H)$ be the weighted diagram of $H$ from \cref{definition:diagram-of-algorithm}. Since $H\in\cH_\star(N-1)$,
all H-certificates are nonnegative, and by
\cref{lemma:nonzero-lambda-exists}, the support graph of $\cG$ is connected.
Therefore $L(\cG)$ is the Laplacian of a weighted connected graph, so it is positive semidefinite and $\ker L(\cG) = \mathrm{Span}\{\mathbf{1}\}$.
This implies that $-\Lambda(H) = L(\cG) + E_{N,N}$ is positive definite, and in particular, invertible.

Next, define the $N\times N$ matrices 
\begin{align*}
    J = \begin{bmatrix} 
        1 \\
        1 & 1 \\
        \vdots & \vdots & \ddots \\
        1 & 1 & \cdots & 1
    \end{bmatrix} , \quad 
    \widetilde{H} = \begin{bmatrix}
        0 \\
        h_{1,1} & 0 \\
        h_{2,1} & h_{2,2} & 0 \\
        \vdots & \vdots & \ddots & \ddots \\
        h_{N-1,1} & h_{N-1,2} & \cdots & h_{N-1,N-1} & 0
    \end{bmatrix} .
\end{align*}
We will show that H-certificates, or equivalently the matrix $\Lambda(H)$ containing them, can be uniquely characterized via identity~\eqref{eqn:core-identity-matrix-version} involving the above matrices.
The idea of the proof is to manipulate \eqref{eqn:core-identity-matrix-version} and derive a corresponding identity for $H^\at$, which will provide a characterization of the H-certificates of $H^\at$.

\begin{lemma}
\label{lemma:Lambda-is-unique-solution-to-matrix-equation}
Let $H \in \reals^{(N-1)\times (N-1)}$ satisfy the H-invariance conditions~\eqref{eqn:H-invariance}. Then, there exists a unique $N\times N$ symmetric matrix $\Lambda$ satisfying $\Lambda \mathbf{1} = -e_N$ and
\begin{align}
\label{eqn:core-identity-matrix-version}
    NE_{N,N} + \Lambda + \Lambda J\widetilde{H} + (\Lambda J\widetilde{H})^\transpose = O ,
\end{align}
namely, $\Lambda = \Lambda(H)$.
\end{lemma}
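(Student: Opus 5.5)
The plan is to rewrite the universal proof template~\eqref{eqn:optimal-family-proof-core-identity} as a quadratic form in the vectors $g_1,\dots,g_N$ (with $y_0$ as a free parameter) and read off its coefficient matrix. For existence I take this form to be exactly \eqref{eqn:core-identity-matrix-version}. For uniqueness I use a nilpotency argument showing the linear map $\Lambda \mapsto \Lambda + \Lambda J\widetilde H + (\Lambda J\widetilde H)^\transpose$ is injective on symmetric matrices.

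\textbf{Step 1: express $x$ in terms of $g$.} Stack the iterates as column "vectors of vectors" $x=(x_1,\dots,x_N)$ and $g=(g_1,\dots,g_N)$. From \eqref{eqn:H-matrix-representation-via-g-vectors}, $y_{k-1}-y_0=-\sum_{i=1}^{k-1}\sum_{j\le i}2h_{i,j}g_j$. Since $(J\widetilde H)_{k,j}=\sum_{i=1}^{k-1}h_{i,j}$, this gives $y_{k-1}=y_0-2(J\widetilde H g)_k$. Combined with $x_k=y_{k-1}-g_k$, we get
\[
x = y_0\mathbf 1-(I+2J\widetilde H)g .
\]

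\textbf{Step 2: rewrite the certificate sum with the Laplacian.} With weights $\lambda_{k,j}=\lambda^\star_{k,j}(H)$,
\[
\sum_{j<k}\lambda_{k,j}\inprod{x_k-x_j}{g_k-g_j}=\sum_{a,b}L_{a,b}\inprod{x_a}{g_b},
\qquad L=-\Lambda(H)-E_{N,N}.
\]
Because $L\mathbf 1=0$, the $y_0$ contribution drops out, and the sum equals $-\sum_{a,b}\big(L(I+2J\widetilde H)\big)_{b,a}\inprod{g_a}{g_b}$. The remaining terms of the template are
\[
N\|g_N\|^2+\inprod{g_N}{x_N-y_0}=N\|g_N\|^2-\sum_a\big((I+2J\widetilde H)\big)_{N,a}\inprod{g_N}{g_a}.
\]
Adding the two pieces, the $-E_{N,N}$ inside $L$ absorbs the second term. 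The whole template therefore becomes $\sum_{a,b}M_{a,b}\inprod{g_a}{g_b}$, where the symmetric part of $M$ is
\[
NE_{N,N}+\tfrac12\big(\Lambda(I+2J\widetilde H)+(I+2J\widetilde H)^\transpose\Lambda\big)=NE_{N,N}+\Lambda+\Lambda J\widetilde H+(\Lambda J\widetilde H)^\transpose .
\]

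\textbf{Step 3: conclude existence.} The template identity holds for arbitrary vectors $g_i$; it is a purely algebraic identity, exactly as exploited in the proof of \cref{theorem:gluing}. Taking $\cX$ of dimension at least $N$ and $g_i$ ranging over arbitrary tuples, the symmetric coefficient matrix must vanish. So $\Lambda(H)$ satisfies \eqref{eqn:core-identity-matrix-version}, and $\Lambda(H)\mathbf 1=-e_N$ holds by construction. The step I regard as the main obstacle is justifying that the template is an identity in free vectors $g$ for every $H$ satisfying \eqref{eqn:H-invariance}, including when some certificates are negative. I would take this from \citep{YoonRyuGrimmerInvariance2025}, where $\lambda^\star$ is defined precisely as the coefficients that make this identity hold. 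The index and sign bookkeeping in Step 2 is routine but error-prone.

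\textbf{Step 4: uniqueness.} Suppose $\Lambda_1,\Lambda_2$ are two solutions, and let $D=\Lambda_1-\Lambda_2$ with $K=J\widetilde H$. Then $D$ is symmetric and satisfies $D=-(DK+K^\transpose D)$. Here $K$ is strictly lower triangular, since $\widetilde H$ is strictly lower triangular and $J$ is lower triangular, so $K^N=0$. Substituting the relation into itself $m$ times expresses $D$ as a signed sum of terms $(K^\transpose)^aDK^b$ with $a+b=m$. Taking $m=2N-1$, every term has $a\ge N$ or $b\ge N$ and hence vanishes, so $D=0$. Injectivity does not even need the normalization $\Lambda\mathbf 1=-e_N$; that constraint is only consistent with, and satisfied by, $\Lambda(H)$.
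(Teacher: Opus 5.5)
Your proposal is correct. The existence half is the same as the paper's argument. The paper also rewrites the template \eqref{eqn:optimal-family-proof-core-identity} as $N\,\mathrm{Tr}(E_{N,N}G^\transpose G)-\mathrm{Tr}(\Lambda(H)G^\transpose X)$, substitutes $X=-2G\widetilde H^\transpose J^\transpose-G$ (your $x=y_0\mathbf 1-(I+2J\widetilde H)g$), and reads off the symmetric coefficient matrix. Like you, it relies on \citep{YoonRyuGrimmerInvariance2025} for the fact that the template is an identity in free vectors $g_i$ under \eqref{eqn:H-invariance}.

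The uniqueness half takes a different route.
\begin{itemize}
\item The paper reverses the derivation. For a symmetric $\Lambda$ with $\Lambda\mathbf 1=-e_N$ solving \eqref{eqn:core-identity-matrix-version}, the normalization lets $\mathrm{Tr}(\Lambda G^\transpose X)$ be rewritten as a weighted sum of the terms $\inprod{x_k-x_j}{g_k-g_j}$. So the off-diagonal entries of $\Lambda$ are valid template multipliers. The imported uniqueness of H-certificates then identifies them with $\lambda^\star(H)$, and $\Lambda\mathbf 1=-e_N$ fixes the diagonal.
\item You instead use that $K=J\widetilde H$ is strictly lower triangular. Hence $\Lambda\mapsto\Lambda K+K^\transpose\Lambda$ is nilpotent (its $(2N-1)$-st power vanishes), and $\Lambda\mapsto\Lambda+\Lambda K+K^\transpose\Lambda$ is invertible on symmetric matrices.
\end{itemize}

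Your argument is self-contained and gives a stronger statement. It needs neither the normalization nor the external uniqueness result. It shows that \eqref{eqn:core-identity-matrix-version} has exactly one symmetric solution for every lower triangular $H$, with \eqref{eqn:H-invariance} needed only to identify that solution as $\Lambda(H)$. It also re-proves uniqueness of H-certificates once the template exists, since two multiplier sets would give two solutions. Finally, it expresses $\Lambda(H)$ as a finite Neumann series in $K$. As a side effect, in the proof of \cref{theorem:lambda-of-H-dual} the check $\Phi(\Lambda)\mathbf 1=-e_N$ is no longer needed to identify $\Phi(\Lambda)$ with $\Lambda(H^\at)$. The paper's route gains only brevity, given the cited uniqueness.
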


\begin{proof}[Proof of \cref{lemma:Lambda-is-unique-solution-to-matrix-equation}]

For brevity, write $\lambda_{k,j}=\lambda_{k,j}^\star(H)$.
Let $y_0, g_1, \dots, g_N \in \cX$ and let $y_1, \dots, y_{N-1}$ be defined as
\begin{align*}
    y_{k+1} = y_k - \sum_{j=1}^{k+1} 2h_{k+1,j} g_{j} 
\end{align*}
for $k=0,\dots,N-2$, and $x_j = y_{j-1} - g_j$ for $j=1,\dots,N$. 
Because $H$ satisfies the H-invariance, and the H-certificates are characterized as the multipliers appearing in the ``universal proof template'', the identity \eqref{eqn:optimal-family-proof-core-identity} must hold.
Now if we define
\begin{align*}
    G = \begin{bmatrix} g_1 & g_2 & \cdots & g_N \end{bmatrix}, \quad X = \begin{bmatrix} x_1 - y_0 & x_2 - y_0 & \cdots & x_N - y_0 \end{bmatrix} \,\, \in \,\, \reals^{\dim(\cX) \times N} ,
\end{align*}
then 
\[
    \inprod{x_k - x_j}{g_k - g_j} = \inprod{(x_k - y_0) - (x_j - y_0)}{g_k - g_j} = (G^\transpose X)_{k,k} + (G^\transpose X)_{j,j} - (G^\transpose X)_{k,j} - (G^\transpose X)_{j,k} .
\]
Hence, we can rewrite \eqref{eqn:optimal-family-proof-core-identity} as
\begin{align}
    0 & = N (G^\transpose G)_{N,N} + (G^\transpose X)_{N,N} + \sum_{1\le j < k \le N} \lambda_{k,j} \left( (G^\transpose X)_{k,k} + (G^\transpose X)_{j,j} - (G^\transpose X)_{k,j} - (G^\transpose X)_{j,k} \right) \nonumber \\
    & = N (G^\transpose G)_{N,N} + (G^\transpose X)_{N,N} + \sum_{k=1}^N \left( \sum_{j=1}^{k-1} \lambda_{k,j} + \sum_{j=k+1}^N \lambda_{j,k} \right) (G^\transpose X)_{k,k} - \sum_{1\le j < k \le N}\lambda_{k,j} \left( (G^\transpose X)_{k,j} + (G^\transpose X)_{j,k} \right) \nonumber \\
    & = N (G^\transpose G)_{N,N} - \mathrm{Tr}\left( \Lambda(H) G^\transpose X \right) \nonumber \\
    & = N \cdot \mathrm{Tr}(E_{N,N} G^\transpose G) - \mathrm{Tr}\left( \Lambda(H) G^\transpose X \right) . \label{eqn:proof-template-rewritten-with-matrices}
\end{align}
Next, observe that
\begin{align*}
    x_k - y_0 = y_{k-1} - y_0 - g_k = \sum_{j=1}^{k-1} (y_j - y_{j-1}) - g_k = -\sum_{j=1}^{k-1} \sum_{\ell=1}^j 2h_{j,\ell} g_\ell - g_k = -\sum_{\ell=1}^{k-1} \left( \sum_{j=\ell}^{k-1} 2h_{j,\ell} \right) g_\ell - g_k
\end{align*}
and on the other hand, we have
\begin{align*}
    \widetilde{H} G^\transpose = 
    \begin{bmatrix}
        0 \\
        h_{1,1} g_1^\transpose \\
        h_{2,1} g_1^\transpose + h_{2,2} g_2^\transpose \\
        \vdots \\
        h_{N-1,1} g_1^\transpose + \dots + h_{N-1,N-1} g_{N-1}^\transpose
    \end{bmatrix} 
    \implies
    J\widetilde{H} G^\transpose = 
    \begin{bmatrix}
        0 \\
        h_{1,1} g_1^\transpose \\
        (h_{1,1} + h_{2,1}) g_1^\transpose + h_{2,2} g_2^\transpose \\
        \vdots \\
        (h_{1,1} + \dots + h_{N-1,1}) g_1^\transpose + \dots + h_{N-1,N-1} g_{N-1}^\transpose
    \end{bmatrix} 
\end{align*}
which shows that
\begin{align*}
    X = \begin{bmatrix} -g_1 & -2h_{1,1}g_1 - g_2 & -2(h_{1,1}+h_{2,1})g_1 - 2h_{2,2}g_2 - g_3 & \cdots & -\sum_{\ell=1}^{N-1} \sum_{j=\ell}^{N-1} 2h_{j,\ell} g_\ell - g_N \end{bmatrix} = -2G\widetilde{H}^\transpose J^\transpose - G .
\end{align*}
Substituting this into \eqref{eqn:proof-template-rewritten-with-matrices}, we obtain
\begin{align*}
    0 = N \cdot \mathrm{Tr}(E_{N,N} G^\transpose G) + \mathrm{Tr}\left( \Lambda(H) G^\transpose G (2\widetilde{H}^\transpose J^\transpose + I) \right) = N \cdot \mathrm{Tr}(E_{N,N} G^\transpose G) + \mathrm{Tr}\left( (2\widetilde{H}^\transpose J^\transpose + I) \Lambda(H) G^\transpose G \right) .
\end{align*}
Denoting the space of all symmetric $N\times N$ matrices by $\mathbb{S}^{N\times N}$ and endowing it with the inner product $\inprod{A}{B} = \mathrm{Tr}(AB)$, the previous identity can be rewritten as
\begin{align*}
    0 & = \inprod{N E_{N,N} + \frac{1}{2} \left[ (2\widetilde{H}^\transpose J^\transpose + I) \Lambda(H) + \left( (2\widetilde{H}^\transpose J^\transpose + I) \Lambda(H) \right)^\transpose \right]}{G^\transpose G} \\
    & = \inprod{N E_{N,N} + \widetilde{H}^\transpose J^\transpose \Lambda(H) + \Lambda(H)J\widetilde{H} + \Lambda(H) }{G^\transpose G}
\end{align*}
where we use $\Lambda(H) = \Lambda(H)^\transpose$.
Then the matrix $NE_{N,N} + \Lambda(H) + \Lambda(H) J\widetilde{H} + (\Lambda(H) J\widetilde{H})^\transpose$ is symmetric and has inner product $0$ with any rank-$1$ positive semidefinite matrix (because $G$ is arbitrary), which implies
\begin{align*}
    NE_{N,N} + \Lambda(H) + \Lambda(H) J\widetilde{H} + (\Lambda(H) J\widetilde{H})^\transpose = O .
\end{align*}
Finally, $\Lambda(H)\mathbf{1} = -e_N$ because $\Lambda(H) = -L(\cG) - E_{N,N}$ and $\mathbf{1} \in \ker L(\cG)$.

Conversely, if $\Lambda \in \mathbb{S}^{N\times N}$ satisfies $\Lambda \mathbf{1} = -e_N$ and \eqref{eqn:core-identity-matrix-version}, then we can reverse the above derivation and conclude that the off-diagonal entries of $\Lambda$ satisfy \eqref{eqn:optimal-family-proof-core-identity}.
Then, by uniqueness of H-certificates, those off-diagonals must be $\lambda_{k,j}$.
The condition $\Lambda\mathbf{1} = -e_N$ then determines the diagonal entries, and we conclude that $\Lambda = \Lambda(H)$. 
\end{proof}

We introduce another matrix that has useful properties for handling the algebra.

\begin{lemma}
\label{lemma:T-properties}
Let $T = \begin{bmatrix} 
    {} & {} & {} & 1 \\
    & & 1 \\
    & \iddots \\
    1 &
\end{bmatrix} \in \reals^{N\times N}$.
Then $T$ is symmetric, and satisfies
\begin{itemize}
    \item $T^2 = I_N$,
    \item $JT = TJ^\transpose$,
    \item $TM^\transpose T = M^\at$ for any lower triangular $M \in \reals^{N\times N}$,
    \item $(JT)^{-1} = TJ^{-1} = \Delta$, $\Delta^\transpose=\Delta$, and $\Delta \mathbf{1} = e_N$.
\end{itemize}
\end{lemma}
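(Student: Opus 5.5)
The claims of \cref{lemma:T-properties} are elementary facts about the exchange matrix $T=[\delta_{i,N+1-j}]$. The plan is to verify each one through entrywise index computations, with $T$ acting as the index reversal $i\mapsto N+1-i$.

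First, symmetry and $T^2=I_N$. Symmetry holds because $\delta_{i,N+1-j}=\delta_{j,N+1-i}$. For the square, $(T^2)_{i,k}=\sum_j \delta_{i,N+1-j}\delta_{j,N+1-k}=\delta_{i,k}$, since reversing twice gives the identity.

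Second, the identity $JT=TJ^\transpose$. We have $J_{i,j}=1_{j\le i}$. Then $(JT)_{i,k}=J_{i,N+1-k}=1_{N+1-k\le i}$. On the other side, $(TJ^\transpose)_{i,k}=(J^\transpose)_{N+1-i,k}=J_{k,N+1-i}=1_{N+1-i\le k}$. Both indicators equal $1_{i+k\ge N+1}$, so they agree.

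Third, the H-dual identity, which is the only point needing care about index conventions. For $M$ lower triangular, $(TM^\transpose T)_{k,j}=(M^\transpose)_{N+1-k,N+1-j}=M_{N+1-j,N+1-k}$. This is the anti-diagonal transpose: its support is again lower triangular, since $j\le k$ iff $N+1-k\le N+1-j$. It matches \eqref{eqn:H-dual} after the index shift appropriate to the size at hand. For $(N-1)\times(N-1)$ matrices the formula reads $h_{N-j,N-k}$, which is exactly the same reflection with $N$ replaced by $N-1$. I would simply note that the lemma uses the $N\times N$ version of the anti-diagonal transpose.

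Fourth, the claims about $\Delta$.
\begin{itemize}
\item Since $J^{-1}$ is the first-difference matrix with $1$ on the diagonal and $-1$ on the subdiagonal, we get $(JT)^{-1}=T^{-1}J^{-1}=TJ^{-1}$ using $T^{-1}=T$.
\item Computing $(TJ^{-1})_{i,k}=(J^{-1})_{N+1-i,k}$ places $1$ at $k=N+1-i$ and $-1$ at $k=N-i$. This is precisely the displayed matrix $\Delta$, with $1$s on the anti-diagonal and $-1$s just above it.
\item Symmetry of $\Delta$ follows from $\Delta^\transpose=J^{-\transpose}T=T(TJ^{-\transpose}T)$ together with $TJ^{-\transpose}T=J^{-1}$. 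The latter comes from the second bullet: $JT=TJ^\transpose$ gives $J=TJ^\transpose T$, hence $J^{-1}=TJ^{-\transpose}T$. Thus $\Delta^\transpose=TJ^{-1}=\Delta$.
\item Finally, $\Delta\mathbf 1=TJ^{-1}\mathbf 1=Te_1=e_N$, because $J^{-1}\mathbf 1=e_1$ (first differences of a constant vector vanish except in the first entry).
\end{itemize}

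None of these steps is a real obstacle. The only place requiring attention is checking that the indexing of $M^\at$ for $N\times N$ matrices is consistent with \eqref{eqn:H-dual}, which I would state explicitly.
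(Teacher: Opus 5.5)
Your proposal is correct and follows essentially the same route as the paper: entrywise index computations using $T_{ij}=\delta_{i,N+1-j}$, with the same derivation of $\Delta$ from $J^{-1}$. The only cosmetic difference is the symmetry of $\Delta$: you use $J^{-1}=TJ^{-\transpose}T$, while the paper writes $\Delta^\transpose=(TJ^\transpose)^{-1}=(JT)^{-1}$; both follow immediately from $JT=TJ^\transpose$.
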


\begin{proof}[Proof of \cref{lemma:T-properties}]
We can express the entries of $T, J, M^\at$ as
\[
(T)_{ij}=\delta_{i,N+1-j}, \quad
(J)_{ij}=
\begin{cases}
    1 & \text{if } i\ge j \\
    0 & \text{if } i < j
\end{cases} ,
\quad
(M^\at)_{ij} = (M)_{N+1-j,N+1-i}.
\]
The symmetry of $T$ and $T^2 = I_N$ are obvious.
Next, 
\[
(JT)_{ij}
= \sum_{k=1}^N (J)_{ik} (T)_{kj}
= J_{i,N+1-j} 
=
\begin{cases}
    1 & \text{if } i+j\ge N+1 \\
    0 & \text{if } i+j<N+1
\end{cases} ,
\]
and similarly
\[
(TJ^\transpose)_{ij}
=\sum_{k=1}^N T_{ik}J^\transpose_{kj}
=J^\transpose_{N+1-i,j}
=J_{j,N+1-i}
=
\begin{cases}
    1 & \text{if } i+j\ge N+1 \\
    0 & \text{if } i+j<N+1
\end{cases} ,
\]
which shows that \(JT=TJ^\transpose\).

Now let \(M\in\reals^{N\times N}\) be lower triangular. Then
\[
(TM^\transpose T)_{ij}
=\sum_{k,\ell=1}^N T_{ik}(M^\transpose)_{k\ell}T_{\ell j}
=(M^\transpose)_{N+1-i,N+1-j}
=M_{N+1-j,N+1-i}
=(M^\at)_{ij},
\]
and therefore \(TM^\transpose T=M^\at\).

Finally, $(JT)^{-1}=T^{-1}J^{-1}=TJ^{-1}$, and because
\[
J^{-1}
=
\begin{bmatrix}
1      &        &        &        \\
-1     & 1      &        &        \\
       & \ddots & \ddots &        \\
       &        & -1     & 1
\end{bmatrix}
\]
we have
\[
(TJ^{-1})_{ij} 
= (J^{-1})_{N+1-i,j}
=
\begin{cases}
    1, & j=N+1-i,\\
    -1, & j=N-i,\\
    0, & \text{otherwise},
\end{cases}
\]
which agrees with \(\Delta\). 
Then clearly $\Delta^\transpose = \left((JT)^\transpose\right)^{-1} = (TJ^\transpose)^{-1} = (JT)^{-1} = \Delta$.
It is straightforward to see $\Delta \mathbf{1} = e_N$.
\end{proof}

\vspace{0.2cm}
\noindent
For simplicity, we write $\Lambda = \Lambda (H)$ from now on.
Multiplying $(JT\Lambda)^\transpose = \Lambda TJ^\transpose = \Lambda JT$ to the left and $JT\Lambda$ to the right to \eqref{eqn:core-identity-matrix-version}, we obtain
\begin{align*}
    & \Lambda JT \left( NE_{N,N} + \Lambda + \Lambda J\widetilde{H} + (\Lambda J\widetilde{H})^\transpose \right) JT\Lambda = O \\
    & \implies N \cdot \Lambda JTE_{N,N} JT \Lambda + \Lambda (JT\Lambda JT) \Lambda + \Lambda (JT\Lambda JT) T\widetilde{H} JT\Lambda + \Lambda (JT\widetilde{H}^\transpose T) (TJ^\transpose \Lambda JT)\Lambda = O 
\end{align*}
where we use symmetry of $\Lambda$ and $T$, and $T^2 = I_N$.
Multiplying the above by $\Lambda^{-1}$ from the left and right sides and using $JT = TJ^\transpose$, we obtain
\begin{align*}
    O & = N \cdot JTE_{N,N} JT + JT\Lambda JT + (JT\Lambda JT) (T\widetilde{H}T) J^\transpose + J(T\widetilde{H}^\transpose T) (JT\Lambda JT) \\
    & = N \cdot JTE_{N,N} JT + JT\Lambda JT + (JT\Lambda JT) \big(\widetilde{H}^\at\big)^\transpose J^\transpose + J\widetilde{H}^\at (JT\Lambda JT)
\end{align*}
where the last line uses $\widetilde{H}^\at = T\widetilde{H}^\transpose T$ and $\big(\widetilde{H}^\at\big)^\transpose = T\widetilde{H} T$ which follows from \cref{lemma:T-properties}. Multiplying $(JT\Lambda JT)^{-1}$ to both left and right sides of the above, we obtain
\begin{align*}
    N \cdot (\Lambda JT)^{-1} E_{N,N} (JT\Lambda)^{-1} + (JT\Lambda JT)^{-1} + \big(\widetilde{H}^\at\big)^\transpose J^\transpose (JT\Lambda JT)^{-1} + (JT\Lambda JT)^{-1} J\widetilde{H}^\at = O .
\end{align*}
Note that because $\Lambda \mathbf{1} = -e_N$, we have $(\Lambda JT)^{-1} e_N = (JT)^{-1} \Lambda^{-1} e_N = -\Delta \mathbf{1} = -e_N$, so $(\Lambda JT)^{-1} E_{N,N} = -E_{N,N}$. Hence
\begin{align*}
    (\Lambda JT)^{-1} E_{N,N} (JT\Lambda)^{-1} = (-E_{N,N}) (TJ^\transpose \Lambda)^{-1} = -\left( (\Lambda JT)^{-1} E_{N,N}\right)^\transpose = E_{N,N} ,
\end{align*}
which shows that $\Phi(\Lambda) = (JT\Lambda JT)^{-1} = \Delta \Lambda^{-1} \Delta$ satisfies
\begin{align*}
    N E_{N,N} + \Phi(\Lambda) + \big(\widetilde{H}^\at\big)^\transpose J^\transpose \Phi(\Lambda) + \Phi(\Lambda) J\widetilde{H}^\at = O .
\end{align*}
Note that $\Phi(\Lambda)$ is symmetric since $\Delta$ and $\Lambda$ are, and
\begin{align*}
    \Phi(\Lambda) \mathbf{1} = \Delta \Lambda^{-1} \Delta \mathbf{1} = \Delta \Lambda^{-1} e_N = -\Delta \mathbf{1} = -e_N .
\end{align*}
Therefore, applying \cref{lemma:Lambda-is-unique-solution-to-matrix-equation} with $H^\at$ in place of $H$ (which is possible since $H^\at$ satisfies H-invariance conditions whenever $H$ does \citep{YoonKimSuhRyu2024_optimal}), we conclude that 
\begin{align*}
    \Lambda(H^\at) = \Phi(\Lambda) = (JT\Lambda JT)^{-1} = \Delta \Lambda^{-1} \Delta
\end{align*}
as desired.
Here, we used the fact that the $N\times N$ embedding $H \mapsto \widetilde H$ is compatible with H-dual, i.e., $\widetilde{H^\at} = \widetilde{H}^\at .$

\section{Proof of \texorpdfstring{\cref{proposition:FSDM-recurrence}}{Proposition 5}}
\label{section:proof-of-fsdm-recurrence}

We prove the slightly stronger, purely algebraic statement that the stated recurrence holds for the update rule induced by \(H^{(n)}\) with \eqref{eqn:H-matrix-representation-via-g-vectors} for arbitrary vectors \(g_1,\dots,g_{2^n-1}\). 
The claim then follows by taking \(g_t=\frac12(y_{t-1}-\opT y_{t-1})\).

Recall that we denote $H^{(n)} = H_{\text{FSDM}}(2^n-1)\in \mathbb R^{(2^n-1)\times(2^n-1)}$.
We use induction on $n$ to show that \eqref{eqn:FSDM-recurrence} holds for all $1\le j\le 2^n - 1$.
When $n=1$, because $H^{(1)} = \begin{bmatrix} \frac{1}{2} \end{bmatrix}$, we have $y_1 = y_0 - 2h_{1,1} g_1 = y_0 - g_1$ and this agrees with \eqref{eqn:FSDM-recurrence}. 

Now let $n\ge 1$ and suppose that \eqref{eqn:FSDM-recurrence} holds for all $1\le j\le 2^n - 1$.
We consider the iterations $2^n \le j \le 2^{n+1} - 1$.
When $j=2^n$, then applying \cref{theorem:gluing} with $N'=2^n$ and $N=2^{n+1}$ shows that the update rule must be
\begin{align*}
    y_{2^n} = y_{2^n - 1} - 2h_{2^n, 2^n} g_{2^n} + \frac{1}{2} \left( y_0 - y_{2^n - 1} \right) = y_{2^n - 1} - 2^n g_{2^n} + \frac{1}{2} \left( y_0 - y_{2^n - 1}\right) 
\end{align*}
where we use $h_{2^n,2^n} = 2^{n-1}$, which follows from \cref{lemma:FSDM-H-matrix}. 
Next, for $2^n < j \le 2^{n+1} - 1$, by the block structure of $H^{(n+1)}$ in \cref{lemma:FSDM-H-matrix}, if we let $j' = j - 2^n$ then the $j$-th row of $H^{(n+1)}$ agrees with the $j'$-th row (up to shift of column indices by $2^n$) except for the $2^n$-th column elements $h_{j,2^{n}} = -\frac{1}{2} b_{j'}^{(n)}$.
Hence
\begin{align*}
    y_{j} = y_{2^n} - \sum_{i=2^n+1}^{j} 2h_{i,2^n} g_{2^n} - \sum_{i=2^n+1}^{j} \sum_{u=i}^j 2h_{u,i} g_i .
\end{align*}
Now if we define: for \(s=0,1,\dots,2^n-1\),
\[
    \widetilde y_s
    =
    y_{2^n+s}
    +
    \sum_{\ell=2^n+1}^{2^n+s} 2h^{(n+1)}_{\ell,2^n} g_{2^n},
    \qquad
    \widetilde g_s = g_{2^n+s}\quad(s=1,\dots,2^n-1)
\]
with the empty sum understood as zero (so that $\widetilde{y}_0 = y_{2^n}$), then
\begin{align*}
    \widetilde{y}_{j'} = \widetilde{y}_0 - \sum_{i=2^n+1}^{j} \sum_{u=i}^{j} 2h_{u,i} g_i = \widetilde{y}_0 - \sum_{i'=1}^{j'} \sum_{u=2^n+i'}^{2^n+j'} 2h_{u,2^n+i'} g_{2^n+i'} = \widetilde{y}_0 - \sum_{i'=1}^{j'} \sum_{u'=i'}^{j'} 2h_{u',i'} \widetilde{g}_{i'} 
\end{align*}
where we apply the change of indices $i'=i-2^n, u'=u-2^n$ and use $h_{u,i} = h_{u-2^n, i-2^n} = h_{u',i'}$.
Note that the above recurrence for $\widetilde{y}_{j'}$ is exactly the same as the recurrence for $y_{j'}$ with $\widetilde{g}_{i'}$ in place of $g_{i'}$.
Also note that $\nu_2(j') = \nu_2 (j-2^n) = \nu_2(j)$, which implies $m(j') = j' - 2^{\nu_2(j')} = j - 2^n - 2^{\nu_2(j)} = m(j) - 2^n$.
Hence, the binary expansion of $m(j)$ must take the form
\begin{align*}
    m(j) = 2^{n} + 2^{i_2} + \dots + 2^{i_p}
\end{align*}
with $n > i_2 > \dots > i_p$ and thus $m(j') = 2^{i_2} + \dots + 2^{i_p}$.
Then, by the induction hypothesis, we can use \eqref{eqn:FSDM-recurrence} to obtain
\begin{align*}
    \widetilde{y}_{j'} = \widetilde{y}_{j'-1} - 2^{\nu_2 (j')} \widetilde{g}_{j'} + \frac{1}{2} \left( \widetilde{y}_{m(j')} - \widetilde{y}_{j'-1} \right) + \sum_{r=1}^{p-1} 2^{\nu_2 (j') - r} \widetilde{g}_{t'_{p-r}}
\end{align*}
where $t'_r = \sum_{q=2}^{r+1} 2^{i_q} = t_{r+1} - 2^n$.
Plugging the definition of $\widetilde{y}_i, \widetilde{g}_i$ back in, the above identity becomes
\begin{align*}
    y_j + \sum_{i=2^n+1}^j 2h_{i,2^n} g_{2^n} = y_{j-1} + \sum_{i=2^n+1}^{j-1} 2h_{i,2^n} g_{2^n} - 2^{\nu_2(j)} g_j + \frac{1}{2} \left( y_{m(j)} - y_{j-1} - \sum_{i=m(j)+1}^{j-1} 2h_{i,2^n} g_{2^n} \right) + \sum_{r=1}^{p-1} 2^{\nu_2(j) - r} g_{t_{p-r+1}} 
\end{align*}
and simplifying, we obtain
\begin{align*}
    y_j & = y_{j-1} - 2^{\nu_2 (j)} g_j + \frac{1}{2} \left( y_{m(j)} - y_{j-1} \right) + \sum_{r=1}^{p-1} 2^{\nu_2(j) - r} g_{t_{p-r+1}} - \left( \frac{1}{2} \sum_{i=m(j)+1}^{j-1} 2h_{i,2^n} + 2h_{j,2^n} \right) g_{2^n} \\
    & = y_{j-1} - 2^{\nu_2 (j)} g_j + \frac{1}{2} \left( y_{m(j)} - y_{j-1} \right) + \sum_{r=1}^{p-1} 2^{\nu_2(j) - r} g_{t_{p-r+1}} + \left( \frac{1}{2} \sum_{i=m(j)+1}^{j-1} b_{i-2^n}^{(n)} + b_{j-2^n}^{(n)} \right) g_{2^n} .
\end{align*}
Therefore, the proof is complete once we show $\frac{1}{2} \sum_{i=m(j)+1}^{j-1} b_{i-2^n}^{(n)} + b_{j-2^n}^{(n)} = \frac{1}{2} \sum_{i'=m(j')+1}^{j'-1} b_{i'}^{(n)} + b_{j'}^{(n)} = 2^{\nu_2(j) - p}$. 
Here, note that $1\le j'<2^n$, $\nu_2(j) = \nu_2(j')$ and the binary expansion of $m(j')$ has $p-1$ distinct powers of 2, so we are done once we show the following lemma.

\begin{lemma}
\label{lemma:b_vector_sum}
For $u=1,\dots,2^n-1$, if $m(u) = u - 2^{\nu_2(u)} = 2^{i_1} + \dots + 2^{i_q}$ is the binary expansion ($i_1 > \dots > i_q$), then
\begin{align}
\label{eqn:b_vector_sum_formula}
    \frac{1}{2} \sum_{i=m(u)+1}^{u-1} b_{i}^{(n)} + b_{u}^{(n)} = 2^{\nu_2(u)-q-1} .
\end{align}
\end{lemma}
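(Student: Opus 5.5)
The plan is induction on $n$, using the recursive description of $b^{(n)}$ inherited from \cref{lemma:FSDM-H-matrix}. Since $b^{(n)}$ is $a^{(n)}$ with its entries in reverse order, reversing $a^{(n)} = \begin{bmatrix} \frac12 a^{(n-1)} & \frac{2^{n-1}+1}{4} & a^{(n-1)} \end{bmatrix}$ gives
\[
    b^{(n)} = \begin{bmatrix} b^{(n-1)} & \frac{2^{n-1}+1}{4} & \frac12 b^{(n-1)} \end{bmatrix},
\]
where each block $b^{(n-1)}$ has length $2^{n-1}-1$. In particular, $b^{(n)}_i = b^{(n-1)}_i$ for $i<2^{n-1}$, $b^{(n)}_{2^{n-1}} = \frac{2^{n-1}+1}{4}$, and $b^{(n)}_i = \frac12 b^{(n-1)}_{i-2^{n-1}}$ for $i>2^{n-1}$.

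For the base case $n=1$, the only index is $u=1$. Then $m(1)=0$, so $q=0$ and the sum is empty, and the left side equals $b^{(1)}_1=\frac12 = 2^{0-0-1}$.

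For the inductive step, fix $n\ge 2$ and assume \eqref{eqn:b_vector_sum_formula} holds at level $n-1$. I will split into three cases according to the position of $u$ relative to $2^{n-1}$.

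\emph{Case $u<2^{n-1}$.} Every index $i\le u$ lies in the first block, so the left side is computed entirely from $b^{(n-1)}$. The quantities $m(u)$, $q$ and $\nu_2(u)$ do not depend on $n$, so the claim is exactly the induction hypothesis.

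\emph{Case $u=2^{n-1}$.} Here $m(u)=0$ and $q=0$. The left side is
\[
\tfrac12\sum_{i=1}^{2^{n-1}-1} b^{(n-1)}_i + \tfrac{2^{n-1}+1}{4}.
\]
Reversing the order of entries does not change their sum, so identity \eqref{eqn:FSDM-a_nm1-summation} from the proof of \cref{lemma:FSDM-H-matrix} gives $\sum_i b^{(n-1)}_i = \sum_i a^{(n-1)}_i = \frac{2^{n-1}-1}{2}$. Substituting, the left side becomes $\frac{2^{n-1}-1}{4}+\frac{2^{n-1}+1}{4} = 2^{n-2} = 2^{\nu_2(u)-0-1}$, as required.

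\emph{Case $2^{n-1}<u\le 2^n-1$.} Set $u'=u-2^{n-1}\in\{1,\dots,2^{n-1}-1\}$. Then $\nu_2(u)=\nu_2(u')$ and $m(u)=2^{n-1}+m(u')$. Hence the binary expansion of $m(u)$ is that of $m(u')$ with $2^{n-1}$ prepended, so $q=q'+1$, where $q'$ is the number of terms in the expansion of $m(u')$. Since $m(u)\ge 2^{n-1}$, every index $i$ in the range $m(u)+1,\dots,u$ exceeds $2^{n-1}$ and so lies in the third block, where $b^{(n)}_i=\frac12 b^{(n-1)}_{i-2^{n-1}}$. Therefore the left side equals $\frac12$ times the left side of \eqref{eqn:b_vector_sum_formula} for $u'$ at level $n-1$. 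By the induction hypothesis this is $\frac12\cdot 2^{\nu_2(u')-q'-1}=2^{\nu_2(u)-q-1}$.

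I expect no step to be a serious obstacle. The points needing care are the bookkeeping in the third case (checking that $m(u)\ge 2^{n-1}$, so the summation range avoids the middle entry, and that the count $q$ increases by exactly one) and the use of the column-sum identity \eqref{eqn:FSDM-a_nm1-summation} in the middle case.
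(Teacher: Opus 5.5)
Your proof is correct and follows the paper's argument: induction on $n$ using $b^{(n)} = \begin{bmatrix} b^{(n-1)} & \frac{2^{n-1}+1}{4} & \frac12 b^{(n-1)} \end{bmatrix}$, obtained by reversing the recursion for $a^{(n)}$. The cases are the same three, $u<2^{n-1}$, $u=2^{n-1}$ (via the column-sum identity \eqref{eqn:FSDM-a_nm1-summation}) and $u>2^{n-1}$ (halving with $q=q'+1$); your explicit check that $m(u)\ge 2^{n-1}$ keeps the summation range inside the third block is a detail the paper leaves implicit.
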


\begin{proof}[Proof of \cref{lemma:b_vector_sum}]
We use induction on $n$ to show that \eqref{eqn:b_vector_sum_formula} holds for all $u=1,\dots,2^n-1$.
First, for $n=1$, we have $b_1^{(1)} = \frac{1}{2}$, which agrees with \eqref{eqn:b_vector_sum_formula} because $\nu_2(1) = 0$ and $q=0$ for $u=1$ since $m(1) = 0$ has no terms in its binary expansion.

Next, let $n\ge 1$ and assume that \eqref{eqn:b_vector_sum_formula} holds for all $u=1,\dots,2^n-1$.
Note that by \cref{lemma:FSDM-H-matrix}, we have
\begin{align}
\label{eqn:b_vector_recurrence}
    b^{(n+1)} = \begin{bmatrix} \left( b^{(n)} \right)^\transpose & \frac{2^{n}+1}{4} & \left( \frac{1}{2} b^{(n)} \right)^\transpose \end{bmatrix}^\transpose ,
\end{align}
so the first $2^n-1$ entries of $b^{(n+1)}$ are exactly the same as $b^{(n)}$.
Hence, the formula \eqref{eqn:b_vector_sum_formula} immediately holds for $b^{(n+1)}$ and $v=1,\dots,2^n-1$, and we only need to consider the case $2^n \le v \le 2^{n+1}-1$.
For $v=2^n$, observe that $\nu_2(v)=n$, $m(v)=0$ and $q=0$.
From the proof of \cref{lemma:FSDM-H-matrix}, we have $\sum_{j=1}^{2^n-1} b_j^{(n)} = \sum_{j=1}^{2^n-1} a_j^{(n)} = \frac{2^n-1}{2}$, which implies
\[
    \frac{1}{2} \sum_{j=1}^{2^n-1} b_j^{(n+1)} + b_{2^n}^{(n+1)} = \frac{1}{2} \sum_{j=1}^{2^n-1} b_j^{(n)} + \frac{2^n+1}{4} = \frac{2^n-1}{4} + \frac{2^n+1}{4} = 2^{n-1} = 2^{\nu_2(v) - q - 1} ,
\]
as desired.
For $2^n < v \le 2^{n+1} - 1$, observe that $\nu_2(v) = \nu_2 (v')$ where $v' = v - 2^n$, so we have $m(v) = m(v') + 2^n$.
Also, for any $2^n < i \le 2^{n+1} - 1$, by \eqref{eqn:b_vector_recurrence}, we have $b_i^{(n+1)} = \frac{1}{2} b_{i-2^n}^{(n)}$.
Therefore,
\begin{align*}
    \frac{1}{2} \sum_{i=m(v)+1}^{v-1} b_i^{(n+1)} + b_{v}^{(n+1)} & = \frac{1}{2} \sum_{i=m(v)+1}^{v-1} \frac{1}{2} b_{i-2^n}^{(n)} + \frac{1}{2} b_{v-2^n}^{(n)} \\
    & = \frac{1}{4} \sum_{i'=m(v')+1}^{v'-1} b_{i'}^{(n)} + \frac{1}{2} b_{v'}^{(n)} = \frac{1}{2} \left( \frac{1}{2} \sum_{i'=m(v')+1}^{v'-1} b_{i'}^{(n)} + b_{v'}^{(n)}\right) = \frac{1}{2} \cdot 2^{\nu_2(v')-q} = 2^{\nu_2(v)-q-1} 
\end{align*}
where we have used the induction hypothesis for $v'$, together with the fact that $m(v') = m(v) - 2^n$ has the binary expansion given by removing $2^n$ from that of $m(v)$, so it has $q-1$ terms (powers of $2$).
This completes the induction, and hence the proof.
\end{proof}

\end{document}

%% file: macros.tex
\DeclareMathOperator{\spann}{span}

\newcommand{\cut}[1]{{}}

\newcommand{\vb}{{\mathbf{b}}}
\newcommand{\vc}{{\mathbf{c}}}
\newcommand{\vd}{{\mathbf{d}}}
\newcommand{\ve}{{\mathbf{e}}}

\newcommand{\vl}{{\mathbf{l}}}
\newcommand{\vm}{{\mathbf{m}}}

\newcommand{\vq}{{\mathbf{q}}}

\newcommand{\vt}{{\mathbf{t}}}
\newcommand{\vu}{{\mathbf{u}}}
\newcommand{\vv}{{\mathbf{v}}}
\newcommand{\vw}{{\mathbf{w}}}

\newcommand{\vA}{{\mathbf{A}}}

\newcommand{\vI}{{\mathbf{I}}}

\newcommand{\vL}{{\mathbf{L}}}
\newcommand{\vM}{{\mathbf{M}}}

\newcommand{\vP}{{\mathbf{P}}}

\newcommand{\vR}{{\mathbf{R}}}
\newcommand{\vS}{{\mathbf{S}}}
\newcommand{\vT}{{\mathbf{T}}}
\newcommand{\vU}{{\mathbf{U}}}

\newcommand{\vW}{{\mathbf{W}}}

\newcommand{\cA}{{\mathcal{A}}}

\newcommand{\cC}{{\mathcal{C}}}

\newcommand{\cE}{{\mathcal{E}}}
\newcommand{\cF}{{\mathcal{F}}}
\newcommand{\cG}{{\mathcal{G}}}
\newcommand{\cH}{{\mathcal{H}}}
\newcommand{\cI}{{\mathcal{I}}}

\newcommand{\cM}{{\mathcal{M}}}

\newcommand{\cO}{{\mathcal{O}}}

\newcommand{\cT}{{\mathcal{T}}}
\newcommand{\cU}{{\mathcal{U}}}
\newcommand{\cV}{{\mathcal{V}}}
\newcommand{\cW}{{\mathcal{W}}}
\newcommand{\cX}{{\mathcal{X}}}

\usepackage[bb=boondox]{mathalfa}

\usepackage{xparse}
\DeclareFontFamily{U}{ntxmia}{}
\DeclareFontShape{U}{ntxmia}{m}{it}{<-> ntxmia }{}
\DeclareFontShape{U}{ntxmia}{b}{it}{<-> ntxbmia }{}
\DeclareSymbolFont{lettersA}{U}{ntxmia}{m}{it}
\SetSymbolFont{lettersA}{bold}{U}{ntxmia}{b}{it}

\ExplSyntaxOn
\NewDocumentCommand{\varmathbb}{m}
 {
  \tl_map_inline:nn { #1 }
   {
    \use:c { varbb##1 }
   }
 }
\tl_map_inline:nn { ABCDEFGHIJKLMNOPQRSTUVWXYZ }
 {
  \exp_args:Nc \DeclareMathSymbol{varbb#1}{\mathord}{lettersA}{\int_eval:n { `#1+67 }}
 }
\exp_args:Nc \DeclareMathSymbol{varbbk}{\mathord}{lettersA}{169}
\ExplSyntaxOff

\newcommand*{\gra}{\mathrm{Gra}\,}

\newcommand{\reals}{\mathbb{R}}

\newcommand{\opA}{{\varmathbb{A}}}

\newcommand{\opI}{{\varmathbb{I}}}

\newcommand{\opT}{{\varmathbb{T}}}

\newcommand{\inprod}[2]{\left\langle #1,#2 \right\rangle}
\newcommand{\sqnorm}[1]{\left\| #1 \right\|^2}

\newcommand{\norm}[1]{\left\|#1\right\|}

\newcommand{\at}{\textsf{A}}

\newcommand{\fA}{\mathfrak{A}}

\newcommand{\transpose}{\mathsf{T}}

\newcommand{\glue}{\mathbin{\bowtie}}

\newcommand{\redd}[1]{{\color{red}#1}}

%% file: ref.bib
@book{bauschke2017convex,
  title={Convex Analysis and Monotone Operator Theory in Hilbert Spaces},
  author={Bauschke, Heinz H and Combettes, Patrick L},
  year={2017},
  publisher={Springer},
  edition={Second}
}

@article{Diakonikolas2020_halpern,
  title = {Halpern Iteration for Near-Optimal and Parameter-Free Monotone Inclusion and Strong Solutions to Variational Inequalities},
  author = {Diakonikolas, Jelena},
  year = {2020},
  journal = {Conference on Learning Theory}
}

@article{Halpern1967_fixed,
  title = {Fixed Points of Nonexpanding Maps},
  author = {Halpern, Benjamin},
  year = {1967},
  journal = {Bulletin of the American Mathematical Society},
  volume = {73},
  number = {6},
  pages = {957--961}
}

@article{Kim2021_accelerated,
  title = {Accelerated Proximal Point Method for Maximally Monotone Operators},
  author = {Kim, Donghwan},
  year = {2021},
  journal = {Mathematical Programming},
  volume = {190},
  number = {1{\textendash}2},
  pages = {57--87}
}

@article{KimFessler2021_optimizing,
  title = {Optimizing the Efficiency of First-Order Methods for Decreasing the Gradient of Smooth Convex Functions},
  author = {Kim, Donghwan and Fessler, Jeffrey A.},
  year = {2021},
  journal = {Journal of Optimization Theory and Applications},
  volume = {188},
  number = {1},
  pages = {192--219}
}

@article{KimOzdaglarParkRyu2023_timereversed,
  title = {Time-Reversed Dissipation Induces Duality between Minimizing Gradient Norm and Function Value},
  author = {Kim, Jaeyeon and Ozdaglar, Asuman E. and Park, Chanwoo and Ryu, Ernest K.},
  year = {2023},
  journal = {Neural Information Processing Systems}
}

@article{KimParkOzdaglarDiakonikolasRyu2023_mirror,
  title = {Mirror Duality in Convex Optimization},
  author = {Kim, Jaeyeon and Park, Chanwoo and Ozdaglar, Asuman and Diakonikolas, Jelena and Ryu, Ernest K.},
  year = {2023},
  journal = {arXiv:2311.17296},
  eprint = {2311.17296},
  archiveprefix = {arxiv}
}

@article{Krasnoselskii1955_two,
  title = {Two Remarks on the Method of Successive Approximations},
  author = {Krasnosel'skii, M. A.},
  year = {1955},
  journal = {Uspekhi Matematicheskikh Nauk},
  volume = {10},
  number = {1},
  pages = {123--127},
  ajournal = {Usp. Mat. Nauk}
}

@article{LeeKim2021_fast,
  title = {Fast Extra Gradient Methods for Smooth Structured Nonconvex-Nonconcave Minimax Problems},
  author = {Lee, Sucheol and Kim, Donghwan},
  year = {2021},
  journal = {Neural Information Processing Systems}
}

@article{Lieder2021_convergence,
  title = {On the Convergence Rate of the {{Halpern-iteration}}},
  author = {Lieder, Felix},
  year = {2021},
  journal = {Optimization Letters},
  volume = {15},
  number = {2},
  pages = {405--418}
}

@article{Mann1953_mean,
  title = {Mean Value Methods in Iteration},
  author = {Mann, William Robert},
  year = {1953},
  journal = {Proceedings of the American Mathematical Society},
  volume = {4},
  number = {3},
  pages = {506--510},
  ajournal = {Proc. Amer. Math. Soc.}
}

@article{MokhtariOzdaglarPattathil2020_unified,
  title = {A Unified Analysis of Extra-Gradient and Optimistic Gradient Methods for Saddle Point Problems: {{Proximal}} Point Approach},
  author = {Mokhtari, Aryan and Ozdaglar, Asuman and Pattathil, Sarath},
  editor = {Chiappa, Silvia and Calandra, Roberto},
  year = {2020},
  journal = {International Conference on Artificial Intelligence and Statistics}
}

@article{ParkRyu2022_exact,
  title = {Exact Optimal Accelerated Complexity for Fixed-Point Iterations},
  author = {Park, Jisun and Ryu, Ernest K.},
  year = {2022},
  journal = {International Conference on Machine Learning}
}

@book{ryuLargescaleConvexOptimization2022,
  title = {Large-Scale Convex Optimization: {{Algorithms}} \& Analyses via Monotone Operators},
  author = {Ryu, Ernest K. and Yin, Wotao},
  year = 2022,
  publisher = {Cambridge University Press},
  address = {Cambridge}
}

@article{SabachShtern2017_first,
  title = {A First Order Method for Solving Convex Bilevel Optimization Problems},
  author = {Sabach, Shoham and Shtern, Shimrit},
  year = {2017},
  journal = {SIAM Journal on Optimization},
  volume = {27},
  number = {2},
  pages = {640--660},
  publisher = {{Society for Industrial and Applied Mathematics}},
  urldate = {2021-09-26}
}

@article{SuhParkRyu2023_continuoustime,
  title = {Continuous-Time Analysis of Anchor Acceleration},
  author = {Suh, Jaewook J. and Park, Jisun and Ryu, Ernest K.},
  year = {2023},
  journal = {Neural Information Processing Systems}
}

@article{Tran-DinhLuo2021_halperntype,
  title = {Halpern-Type Accelerated and Splitting Algorithms for Monotone Inclusions},
  author = {{Tran-Dinh}, Quoc and Luo, Yang},
  year = {2021},
  journal = {arXiv:2110.08150},
  eprint = {2110.08150},
  archiveprefix = {arxiv}
}

@article{YoonRyu2021_accelerated,
  title = {Accelerated Algorithms for Smooth Convex-Concave Minimax Problems with $\mathcal{O}(1/k^2)$ Rate on Squared Gradient Norm},
  author = {Yoon, TaeHo and Ryu, Ernest K.},
  year = {2021},
  journal = {International Conference on Machine Learning},
  series = {Proceedings of Machine Learning Research}
}

@article{botFastOptimisticGradient2025,
  title = {Fast {{Optimistic Gradient Descent Ascent}} ({{OGDA}}) {{Method}} in {{Continuous}} and {{Discrete Time}}},
  author = {Bo{\c t}, Radu Ioan and Csetnek, Ern{\"o} Robert and Nguyen, Dang-Khoa},
  year = 2025,
  journal = {Foundations of Computational Mathematics},
  volume = {25},
  number = {1},
  pages = {163--222}
}

@article{YoonKimSuhRyu2024_optimal,
  title={Optimal acceleration for minimax and fixed-point problems is not unique},
  author={Yoon, TaeHo and Kim, Jaeyeon and Suh, Jaewook J and Ryu, Ernest K.},
  journal={International Conference on Machine Learning},
  year={2024}
}

@article{yoonAcceleratedMinimaxAlgorithms2025a,
  title = {Accelerated Minimax Algorithms Flock Together},
  author = {Yoon, TaeHo and Ryu, Ernest K.},
  year = 2025,
  journal = {SIAM Journal on Optimization},
  volume = {35},
  number = {1},
  pages = {180--209}
}

@article{YoonRyuGrimmerInvariance2025,
  title={H-invariance theory: A complete characterization of minimax optimal fixed-point algorithms},
  author={Yoon, TaeHo and Ryu, Ernest K.\ and Grimmer, Benjamin},
  journal={arXiv:2511.14915},
  year={2025}
}

@article{ChaikenCombinatorial1982,
  title={A combinatorial proof of the all minors matrix tree theorem},
  author={Chaiken, Seth},
  journal={SIAM Journal on Algebraic Discrete Methods},
  volume={3},
  number={3},
  pages={319--329},
  year={1982},
  publisher={SIAM}
}

@article{grimmerComposingOptimizedStepsize,
  title = {Composing Optimized Stepsize Schedules for Gradient Descent},
  author = {Grimmer, Benjamin and Shu, Kevin and Wang, Alex L.},
  journal = {Mathematics of Operations Research},
  volume = {0},
  number = {0},
  eprint = {https://pubsonline.informs.org/doi/pdf/10.1287/moor.2024.0764},
  year = {2025}
}

@article{altschulerAccelerationStepsizeHedging2025,
  title = {Acceleration by Stepsize Hedging: {{Multi-step}} Descent and the Silver Stepsize Schedule},
  author = {Altschuler, Jason M. and Parrilo, Pablo A.},
  year = 2025,
  journal = {Journal of The ACM},
  volume = {72},
  number = {2},
  publisher = {Association for Computing Machinery},
  address = {New York, NY, USA}
}

@article{altschulerAccelerationStepsizeHedging2025a,
  title = {Acceleration by Stepsize Hedging: {{Silver Stepsize Schedule}} for Smooth Convex Optimization},
  author = {Altschuler, Jason M. and Parrilo, Pablo A.},
  year = 2025,
  journal = {Mathematical Programming},
  volume = {213},
  number = {1},
  pages = {1105--1118}
}

@article{grimmerAcceleratedObjectiveGap2025,
  title = {Accelerated Objective Gap and Gradient Norm Convergence for Gradient Descent via Long Steps},
  author = {Grimmer, Benjamin and Shu, Kevin and Wang, Alex L.},
  year = 2025,
  journal = {INFORMS Journal on Optimization},
  volume = {7},
  number = {2},
  eprint = {https://doi.org/10.1287/ijoo.2024.0057},
  pages = {156--169}
}

@article{grimmerProvablyFasterGradient2024,
  title = {Provably Faster Gradient Descent via Long Steps},
  author = {Grimmer, Benjamin},
  year = 2024,
  journal = {SIAM Journal on Optimization},
  volume = {34},
  number = {3},
  pages = {2588--2608}
}

@article{zhangAcceleratedGradientDescent2026,
  title = {Accelerated Gradient Descent by Concatenation of Stepsize Schedules},
  author = {Zhang, Zehao and Jiang, Rujun},
  year = 2026,
  journal = {SIAM Journal on Optimization},
  volume = {36},
  number = {2},
  pages = {1182--1210}
}

@article{kimProofExactConvergence2024,
  title = {A Proof of the Exact Convergence Rate of Gradient Descent},
  author = {Kim, Jungbin},
  year = 2024,
  journal = {arXiv:2412.04427},
  eprint = {2412.04427},
  archiveprefix = {arXiv}
}

@article{BauschkeMoffatWang2012_firmly,
  title = {Firmly Nonexpansive Mappings and Maximally Monotone Operators: Correspondence and Duality},
  author = {Bauschke, Heinz H and Moffat, Sarah M and Wang, Xianfu},
  year = 2012,
  journal = {Set-Valued and Variational Analysis},
  volume = {20},
  pages = {131--153},
  publisher = {Springer}
}

@article{tran-dinhHalpernsFixedpointIterations2024,
  title = {From {{Halpern}}'s Fixed-Point Iterations to {{Nesterov}}'s Accelerated Interpretations for Root-Finding Problems},
  author = {{Tran-Dinh}, Quoc},
  year = 2024,
  journal = {Computational Optimization and Applications},
  volume = {87},
  number = {1},
  pages = {181--218}
}

@article{CaiSongGuzmanDiakonikolas2022_stochastic,
  title = {Stochastic {{Halpern}} Iteration with Variance Reduction for Stochastic Monotone Inclusions},
  author = {Cai, Xufeng and Song, Chaobing and Guzm{\'a}n, Crist{\'o}bal and Diakonikolas, Jelena},
  editor = {Koyejo, S. and Mohamed, S. and Agarwal, A. and Belgrave, D. and Cho, K. and Oh, A.},
  year = 2022,
  journal = {Neural Information Processing Systems}
}

@article{shuUnifiedTreatmentDuality2026,
  title = {A {{Unified Theory}} of {{H-Duality}} in {{First-Order Methods}}},
  author = {Shu, Kevin and Wang, Alex L.},
  year = 2026,
  journal = {Forthcoming}
}

@article{contrerasOptimalErrorBounds2023,
  title = {Optimal Error Bounds for Non-Expansive Fixed-Point Iterations in Normed Spaces},
  author = {Contreras, Juan Pablo and Cominetti, Roberto},
  year = 2023,
  journal = {Mathematical Programming},
  volume = {199},
  number = {1},
  pages = {343--374}
}

@article{bravo2026minimax,
  title = {Minimax-Optimal Halpern Iterations for Lipschitz Maps},
  author = {Bravo, Mario and Cominetti, Roberto and Lee, Jongmin},
  year = 2026,
  journal = {arXiv:2601.15996},
  eprint = {2601.15996},
  archiveprefix = {arXiv}
}

@article{leeAcceleratingValueIteration2023,
  title = {Accelerating Value Iteration with Anchoring},
  author = {Lee, Jongmin and Ryu, Ernest},
  editor = {Oh, A. and Naumann, T. and Globerson, A. and Saenko, K. and Hardt, M. and Levine, S.},
  year = 2023,
  journal = {Neural Information Processing Systems}
}

@article{bravoUniversalBoundsFixed2022,
  title = {Universal Bounds for Fixed Point Iterations via Optimal Transport Metrics},
  author = {Bravo, Mario and Champion, Thierry and Cominetti, Roberto},
  year = 2022,
  journal = {Applied Set-Valued Analysis \& Optimization},
  volume = {4},
  number = {3}
}

@article{black2024linear,
  title = {From Linear Programming to Colliding Particles},
  author = {Black, Alexander E and L{\"u}tjeharms, Niklas and Sanyal, Raman},
  year = 2024,
  journal = {arXiv:2405.08506},
  eprint = {2405.08506},
  archiveprefix = {arXiv}
}

@article{bravoStochasticFixedpointIterations2024,
  title = {Stochastic Fixed-Point Iterations for Nonexpansive Maps: {{Convergence}} and Error Bounds},
  author = {Bravo, Mario and Cominetti, Roberto},
  year = 2024,
  journal = {SIAM Journal on Control and Optimization},
  volume = {62},
  number = {1},
  pages = {191--219}
}

@article{bravoStochasticHalpernIteration2026,
  title = {Stochastic {{Halpern}} Iteration in Normed Spaces and Applications to Reinforcement Learning},
  author = {Bravo, Mario and Contreras, Juan Pablo},
  year = 2026,
  journal = {Mathematical Programming}
}

@article{caiVarianceReducedHalpern2024,
  title = {Variance Reduced Halpern Iteration for Finite-Sum Monotone Inclusions},
  author = {Cai, Xufeng and Alacaoglu, Ahmet and Diakonikolas, Jelena},
  year = 2024,
  journal = {International Conference on Learning Representations}
}

@article{chenNearoptimalAlgorithmsMaking2024,
  title = {Near-Optimal Algorithms for Making the Gradient Small in Stochastic Minimax Optimization},
  author = {Chen, Lesi and Luo, Luo},
  year = 2024,
  journal = {Journal of Machine Learning Research},
  volume = {25},
  number = {387},
  pages = {1--44}
}

@article{pischkeAsymptoticRegularityGeneralised2026,
  title = {Asymptotic {{Regularity}} of a {{Generalised Stochastic Halpern Scheme}}},
  author = {Pischke, Nicholas and Powell, Thomas},
  year = 2026,
  journal = {Journal of Optimization Theory and Applications},
  volume = {210},
  number = {1},
  pages = {3}
}
